\documentclass[oneside,article]{memoir}
\usepackage{amsmath}         
\usepackage{amsthm,thmtools,thm-restate} 
\usepackage{enumitem}        

\usepackage{tikz}
  \usetikzlibrary{matrix,arrows,positioning}
  
\usepackage{tikz-cd}

\usepackage{float} 
\usepackage{caption, subcaption} 


\usepackage[nobysame,alphabetic,initials]{amsrefs} 
\usepackage[hidelinks]{hyperref}                   
\usepackage[capitalize,nosort]{cleveref}           


\usepackage{amssymb}
\usepackage[mathscr]{euscript}
\usepackage{relsize}
\usepackage{stmaryrd}
\usepackage{stackengine}

\usepackage{indentfirst} 
\usepackage{multicol}

\usepackage[inline,nomargin]{fixme} 
  \fxsetup{targetlayout=color}

\usepackage{wrapfig}
\usepackage{float} 


\isopage[12]

\setlrmargins{*}{*}{1}
\checkandfixthelayout

\counterwithout{section}{chapter}
\usepackage{appendix}


  \newcommand{\adj}{\dashv}





  \newcommand{\Ho}{\operatorname{Ho}}


  \newcommand{\op}{{\mathord\mathrm{op}}}



  \newcommand{\push}{\cup}

  \newcommand{\colim}{\operatorname*{colim}}



  \newcommand{\slice}{\mathbin\downarrow}


  \newcommand{\Ex}{\operatorname{Ex}}

  \newcommand{\nerve}{\operatorname{N}}

  \newcommand{\Sd}{\operatorname{Sd}}


  \newcommand{\bd}{\partial}





  \newcommand{\id}[1][]{\operatorname{id}_{#1}}
  

  \newcommand{\simp}[1]{\mathord\Delta^{#1}}


  \newcommand{\horn}[2]{\Lambda^{#1}_{#2}}



  \newcommand{\uvar}{\mathord{\relbar}}


  \renewcommand{\hat}{\widehat}








  \newcommand{\ncat}[1]{\mathsf{#1}}


  \newcommand{\sSet}{\ncat{sSet}}


  \newcommand{\from}{\colon}

  \newcommand{\ito}{\hookrightarrow}











  \declaretheorem[style=definition,within=section]{definition}
  \declaretheorem[style=definition,numberlike=definition]{example}
  \declaretheorem[style=definition,numberlike=definition]{remark}

  \declaretheorem[style=plain,numberlike=definition]{corollary}
  \declaretheorem[style=plain,numberlike=definition]{lemma}
  \declaretheorem[style=plain,numberlike=definition]{proposition}
  \declaretheorem[style=plain,numberlike=definition]{theorem}

  \declaretheorem[style=plain,numbered=no,name=Theorem]{theorem*}
  \declaretheorem[style=plain,numbered=no,name=Conjecture]{conjecture*}

  \Crefname{corollary}{Corollary}{Corollaries}
  \Crefname{definition}{Definition}{Definitions}
  \Crefname{lemma}{Lemma}{Lemmas}
  \Crefname{proposition}{Proposition}{Propositions}
  \Crefname{remark}{Remark}{Remarks}
  \Crefname{theorem}{Theorem}{Theorems}
  \Crefname{conjecture}{Conjecture}{Conjectures}


  
  \newlist{axioms}{enumerate}{1}
  \Crefname{axiomsi}{}{}




  \newenvironment{tikzeq*}
  {
    \begingroup
    \begin{equation*}
    \begin{tikzpicture}[baseline=(current bounding box.center)]
  }
  {
    \end{tikzpicture}
    \end{equation*}
    \endgroup
    \ignorespacesafterend
  }

  \tikzset
  {
    diagram/.style=
    {
      matrix of math nodes,
      column sep={4.3em,between origins},
      row sep={4em,between origins},
      text height=1.5ex,
      text depth=.25ex
    },
    over/.style={preaction={draw=white,-,line width=6pt}},
    every to/.style={font=\footnotesize},
    inj/.style={right hook->},
    surj/.style={-{Latex[open]}},
    cof/.style={>->},
    fib/.style={->>},
  }



  \DeclareFontFamily{U}{mathx}{\hyphenchar\font45}

  \DeclareFontShape{U}{mathx}{m}{n}{
    <5> <6> <7> <8> <9> <10>
    <10.95> <12> <14.4> <17.28> <20.74> <24.88>
    mathx10}{}

  \DeclareSymbolFont{mathx}{U}{mathx}{m}{n}

  \DeclareFontFamily{U}{mathb}{\hyphenchar\font45}

  \DeclareFontShape{U}{mathb}{m}{n}{
    <5> <6> <7> <8> <9> <10>
    <10.95> <12> <14.4> <17.28> <20.74> <24.88>
    mathb10}{}

  \DeclareSymbolFont{mathb}{U}{mathb}{m}{n}

  \DeclareMathAccent{\widebar}{0}{mathx}{"73}

  \DeclareMathSymbol{\Rsh}{\mathrel}{mathb}{"E9}

  \DeclareFontFamily{U}{MnSymbolA}{}

  \DeclareFontShape{U}{MnSymbolA}{m}{n}{
    <-6> MnSymbolA5
    <6-7> MnSymbolA6
    <7-8> MnSymbolA7
    <8-9> MnSymbolA8
    <9-10> MnSymbolA9
    <10-12> MnSymbolA10
    <12-> MnSymbolA12}{}

  \DeclareSymbolFont{MnSyA}{U}{MnSymbolA}{m}{n}

  \DeclareMathSymbol{\twoheaddownarrow}{\mathrel}{MnSyA}{27}


  \newcommand{\MSC}[1]{%
    \let\thempfn\relax
    \footnotetext[0]{2020 Mathematics Subject Classification: #1.}
  }

\tikzstyle{vertex}=[circle, fill, minimum size=4pt, inner sep=0pt]


\newcommand{\Cat}{\mathsf{Cat}}


\newcommand{\fcat}[2]{{#2}^{#1}} 

\newcommand{\adjunct}[4]{#1 \from #3 \rightleftarrows #4 : \! #2} 



\newcommand{\face}[2]{\partial^{#1}_{#2}} 
\newcommand{\degen}[2]{\sigma^{#1}_{#2}} 

\newcommand{\boxcat}{\mathord{\square}} 

\newcommand{\Kan}{\mathsf{Kan}}
\newcommand{\cKan}{\Kan_{\boxcat}}
\newcommand{\sKan}{\Kan_{\Delta}}

\newcommand{\Deltaaug}{\Delta_{\mathsf{aug}}} 
\newcommand{\sSetaug}{\sSet_{\mathsf{aug}}} 
\newcommand{\join}{\ast} 

\newcommand{\cCat}{\Cat_{\boxcat}} 
\newcommand{\cohnerve}[1][\boxcat]{{\nerve}_{#1}} 




\newcommand{\restr}[2]{{#1}|_{#2}} 

\newcommand{\mc}[1]{\mathcal{#1}} 

\newcommand{\maxm}[1]{{#1}^{\#}} 
\newcommand{\minm}[1]{{#1}^{\flat}} 
\newcommand{\natm}[1]{{#1}^{\natural}} 


\newcommand{\mcore}{\operatorname{Core_+}} 

\newcommand{\sd}{\operatorname{sd}} 

\newcommand{\msd}{\operatorname{sd}_+}
\newcommand{\mSd}{\operatorname{Sd}_+}
\newcommand{\msSet}{\sSet_+}
\newcommand{\mEx}{\operatorname{Ex}_+}
\newcommand{\msdop}{\msd^\op}
\newcommand{\mSdop}{\mSd^\op}
\newcommand{\mExop}{\mEx^\op}

\newcommand{\msub}{\mathrm{max}_!}
\newcommand{\msup}{\mathrm{max}^*}
\newcommand{\msupb}{\overline{\msup}}
\newcommand{\minsub}{\mathrm{min}_!}
\newcommand{\minsup}{\mathrm{min}^*}
\newcommand{\minsupb}{\overline{\minsup}}


\newcommand{\LI}[2]{{}^* \! L^{#1}_{#2}}
\newcommand{\dfLI}[1][k]{\LI{n}{#1}}
\newcommand{\LJ}[2]{L^{#1}_{#2}}
\newcommand{\dfLJ}[1][k]{\LJ{n}{#1}}
\newcommand{\RI}[2]{{}^* \! R^{#1}_{#2}}
\newcommand{\dfRI}[1][k]{\RI{n}{#1}}
\newcommand{\RJ}[2]{R^{#1}_{#2}}
\newcommand{\dfRJ}[1][k]{\RJ{n}{#1}}

\newcommand{\lpathloop}[2][W]{#2 \slice #1}
\newcommand{\rpathloop}[2][W]{#1 \slice #2}
\newcommand{\lcone}{C^L}
\newcommand{\rcone}{C^R}
\newcommand{\LF}[1]{\mathrm{LF}_{#1}}
\newcommand{\RF}[1]{\mathrm{RF}_{#1}}
\newcommand{\Sp}{\mc{S}} 
\newcommand{\ULF}{\Sp_*} 
\newcommand{\map}{\operatorname{map}}
\newcommand{\eqsd}{\operatorname{\hat{\mathrm{sd}}}} 
\newcommand{\eqSd}{\operatorname{\hat{\mathrm{Sd}}}} 


\thanksmarkseries{arabic}
\author{
  Daniel Carranza \thanks{Johns Hopkins University} \and 
  Krzysztof Kapulkin \thanks{University of Western Ontario} \and
  Zachery Lindsey \thanks{Northwestern University}
}
\title{Calculus of Fractions for Quasicategories} 
\date{\today}

\begin{document}

  \maketitle

  \begin{abstract}
    We describe a generalization of Gabriel and Zisman's Calculus of Fractions to quasicategories, showing that the two essentially coincide for the nerve of a category.
    We then prove that the marked Ex-functor can be used to compute the localization of a marked quasicategory satisfying our condition and that the appropriate (co)completeness properties of the quasicategory carry over to its localization.
    \MSC{18N55, 18N60, 18N40, 55U35, 18N50}
  \end{abstract}

\tableofcontents*

\section*{Introduction} \label{sec:intro}

The localization of a category at a class of maps (usually thought of as some sort of `weak equivalences') refers to a universal or free way of inverting these maps in the category.
Computing it is in general hard, as the morphisms in the localization are classes of finite zigzags, e.g., $\cdot \leftarrow \cdot \rightarrow \cdot \leftarrow \cdots \rightarrow \cdot$ where the `backwards' maps are weak equivalences.
It is perhaps not surprising that even if the category in question is locally small, its localization need not be.
Likewise, it is unreasonable to expect the localization to possess any interesting categorical structure, like limits or colimits, and indeed very few examples of localizations do, even when the category being localized is itself complete and cocomplete.

Introduced in \cite{gabriel-zisman}, \emph{calculus of fractions} is a set of conditions on a marked category, i.e., a pair $(\mc{C}, W)$ consisting of a category and a class of maps, which simplifies this task.
Shall $(\mc{C}, W)$ satisfy (or admit) calculus of fractions, the set of maps between any two objects can be described using zigzags of length 2 instead of general zigzags.
In addition, several (co)completeness properties can be carried over from $\mc{C}$ to its localization.
As such, calculus of fractions is a powerful tool for computing localization, see, e.g., \cite{margolis:spectra}.
We should note here that there are two sets of conditions one could impose: calculus of \emph{left} fractions and calculus of \emph{right} fractions.
Throughout this introduction, we speak of the former, but all statements have dual versions, given in the paper, involving the latter.

In this paper, we present a generalization of the calculus of fractions to the theory of $(\infty, 1)$-categories, taken here to be quasicategories.
Given a marked quasicategory $(\mc{C}, W)$, we can express our conditions as a certain lifting property against a family of poset inclusions of a half-cube into a whole cube.
This resembles the familiar condition from the usual calculus of fractions \cite[Def.~2.2]{gabriel-zisman} which asks that every (co)span be completed to a square.

As a test case, we consider quasicategories arising as nerves of categories and show that the nerve of a marked category $(\mc{C}, W)$ satisfies (our quasicategorical) calculus of fractions exactly when $(\mc{C}, W)$ satisfies \emph{proper} calculus of fractions.
The properness condition is a mild strengthening of the definition found in \cite{gabriel-zisman} and is satisfied by most interesting examples.
In particular, it is implied by the 2-out-of-3 property.

As indicated above, the key result proven in \cite[Ch.~1]{gabriel-zisman} is that if $(\mc{C}, W)$ satisfies calculus of left fractions, then the localization of $\mc{C}$ at $W$ is given by the \emph{category of left fractions} $W^{-1}\mc{C}$, i.e., its objects are zigzags $\cdot \to \cdot \overset{\sim}{\leftarrow} \cdot$ of length $2$.
Our replacement of it is the \emph{marked Ex-functor}.
Recall that the Ex-functor \cite{kan:css} is a functor on simplicial sets given by $(\Ex X)_n = \sSet(\sd[n], X)$, where $\sd[n]$ is the poset of non-empty subsets of the linear order $[n] = \{ 0 \leq 1 \leq \ldots \leq n \}$ ordered by inclusion.
We `upgrade' it to a functor $\mEx$ from marked simplicial sets to simplicial sets by equipping $\sd[n]$ with a marking: a $1$-simplex $A_0 \subseteq A_1$ is marked if and only if $\max A_0 = \max A_1$.
We note that the marked $\sd$ functor was previously considered by several authors, including: Barwick and Kan \cite{barwick-kan:relative} in their work on the model structure of the category of categories with weak equivalences, Szumi{\l}o \cite{szumilo:frames} in his proof that the homotopy theories of cofibration categories and cocomplete quasicategories are equivalent; and Lazarev, Sylvan, Tanaka \cite{lazarev-sylvan-tanaka:sectors} in their work on the $\infty$-category of stabilized Liouville sectors.

With this definition, we can state our first main theorem:

\begin{theorem*}[cf.~\cref{mEx-qcat,mEx-computes-localization}]
  If a marked quasicategory $(\mc{C}, W)$ satisfies calculus of left fractions, then $\mEx(\mc{C}, W)$ is a quasicategory and the canonical map $\max^* \from \mc{C} \to \mEx(\mc{C}, W)$, induced by $\max \from \sd[n] \to [n]$, is the localization of $\mc{C}$ at $W$.
\end{theorem*}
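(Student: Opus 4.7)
The plan is to reduce both assertions to lifting problems in marked simplicial sets via the adjunction $\msd \adj \mEx$ and to discharge these using the half-cube-to-whole-cube lifts that define calculus of left fractions. An inner horn $\horn{n}{k} \to \mEx(\mc{C}, W)$ with $0 < k < n$ transposes to a marked extension problem for $\msd(\horn{n}{k}) \to (\mc{C}, W)$ along $\msd(\horn{n}{k}) \ito \msd(\simp{n})$, so showing that $\mEx(\mc{C}, W)$ is a quasicategory reduces to showing that this marked inclusion lies in the weak saturation of the half-cube/whole-cube inclusions. I would construct an explicit cellular filtration of $\msd(\simp{n})$ relative to $\msd(\horn{n}{k})$, ordering the missing non-degenerate simplices so that each successive attachment is a pushout along one of the allowed cube inclusions, and verifying that the marking convention on $\msd[n]$ (an edge $A_0 \subseteq A_1$ is marked iff $\max A_0 = \max A_1$) lines up correctly with the markings required for these lifts.

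For the localization claim one must verify (i) that $\msup$ sends edges of $W$ to equivalences in $\mEx(\mc{C}, W)$, and (ii) that for every quasicategory $\mc{D}$ the restriction functor $\fcat{\mEx(\mc{C}, W)}{\mc{D}} \to \fcat{\mc{C}}{\mc{D}}$ is fully faithful with essential image the sub-quasicategory of $W$-inverting functors. Claim (i) is direct: the image of $f \in W$ corresponds to the composite $\msd(\simp{1}) \to \simp{1} \xrightarrow{f} \mc{C}$, and a homotopy inverse in $\mEx(\mc{C}, W)$ can be exhibited via a single left-fractions lift, using that every edge of $\msd(\simp{1})$ incident to the maximum vertex is marked and therefore sent into $W$. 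Claim (ii) is the main obstacle. My plan is to build an explicit quasi-inverse to the restriction: given $F \from \mc{C} \to \mc{D}$ inverting $W$, post-compose any simplex $\msd(\simp{n}) \to \mc{C}$ of $\mEx(\mc{C}, W)$ with $F$ and then descend the result along $\max \from \msd(\simp{n}) \to \simp{n}$ by applying calculus of left fractions inside $\mc{D}$ (marked by its equivalences), thereby producing an $n$-simplex of $\mc{D}$.

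The technical heart is showing that these descent choices assemble coherently in $n$ and that the resulting assignment is a homotopy inverse to $\msup^*$. I expect this to rely on precisely the cellular-filtration lemma used in the first part, now applied inside $\mc{D}$ and parametrized by the original map $\msd(\simp{n}) \to \mc{C}$; once that filtration argument is available once and for all, the universal property should follow by a standard induction on simplicial dimension in $\fcat{\mc{C}}{\mc{D}}$. The principal combinatorial difficulty I anticipate is keeping careful track of the interaction between the $\max$-marking on $\msd[n]$ and the faces/horns of $\simp{n}$, since both the quasicategory check and the inverse construction funnel through the same filtration.
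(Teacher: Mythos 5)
Your reduction of both claims to lifting problems along $\mSd \Lambda^n_k \ito \mSd \simp{n}$ is the right starting point, but the plan for the first half has a genuine gap: this marked inclusion does \emph{not} lie in the weak saturation of the half-cube inclusions $\dfLJ \ito \dfLI$ alone, so no cellular filtration by those attachments can exist. If it did, $\mEx(X,W)$ would be a quasicategory for \emph{any} marked simplicial set with the CLF lifting property, with no use of the hypothesis that $\mc{C}$ is a quasicategory or that $W$ is weakly closed under composition --- yet the simplices of $\mSd\simp{n}$ that are chains starting at a subset not containing $k$, and the vertex $[n]\setminus\{k\}$ itself, cannot be attached along half-cube inclusions at all. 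The paper's proof of \cref{mEx-qcat} needs three distinct mechanisms: a single pushout of $\dfLJ \ito \dfLI$ (producing the subobject $J^n_k$), then an extension along $J^n_k \subseteq K^n_k$ which is shown (\cref{knk-lnk-inner-anodyne}, via the ``simple inner horn decomposition'' machinery of \cref{anodyne-decomp-is-anodyne}) to lie in the saturation of minimally marked inner horns together with $\maxm{(\horn{2}{1})} \ito \maxm{(\simp{2})}$ --- this is exactly where quasicategoricity of $\mc{C}$ and weak closure of $W$ under composition enter --- and finally a retraction of $\mSd\simp{n}$ onto $K^n_k$ (\cref{mEx-qcat-retraction}) rather than a cell attachment. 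Your proposal omits the latter two mechanisms entirely.

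The localization half has a second gap. Descending a simplex $\mSd\simp{n} \to \mc{C}$ along $\max$ inside $\natm{\mc{D}}$ ``by applying calculus of left fractions'' requires choosing fillers simplex by simplex, and nothing in the proposal makes these choices compatible with faces and degeneracies; there is no reason such choices assemble into a simplicial map $\mEx(\mc{C},W) \to \mc{D}$, let alone one exhibiting an equivalence of functor quasicategories. What you are implicitly trying to construct by hand is a quasi-inverse to $\msup \from \mc{D} \to \mEx\natm{\mc{D}}$, i.e.\ the ``base case'' that marking at equivalences changes nothing --- and that is the hardest ingredient of the paper's argument (\cref{ex-nat-weq}), proved not by simplexwise descent but by a bisimplicial diagonal argument in the style of Latch--Thomason--Wilson. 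The paper then avoids verifying the universal property directly: it invokes the characterization of localizations as weak equivalences in the marked model structure (\cref{msSet-weq-is-localization}), compares $\msup$ with the actual localization via a comparison functor $F$, and closes the argument with the technical $J$-homotopy of \cref{min-ex-min-J-htpy} and a 2-out-of-6 argument in \cref{mEx-computes-localization}. Your ``standard induction on simplicial dimension'' would need an obstruction theory playing the role of these ingredients, and the proposal supplies none. (A small point in your favour: inverting $\msup(w)$ for $w \in W$ needs no CLF lift at all --- the paper writes down an explicit $J$-shaped diagram built from degeneracies, valid for any marked simplicial set.)
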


We note here that our calculus of fractions condition is satisfied by a wide range of marked quasicategories.
For example, all reflective localizations (referred to simply as localizations in \cite[Def.~5.2.7.2]{lurie:htt}) satisfy calculus of fractions.

As an immediate corollary, we deduce that for a quasicategory $\mc{C}$, the simplicial set $\Ex \mc{C}$ is a Kan complex if and only if $\mc{C}^\sharp$ admits calculus of left fractions.
This in particular generalizes the work of Meier and Ozornova \cite{meier-ozornova:partial-model}, who studied when $\Ex$ of the nerve of a category is a Kan complex, to the setting of quasicategories.

Moreover, from the theorem above, we then deduce that if $(\mc{C}, W)$ satisfies calculus of left fractions, then $W$ is saturated (i.e.~it is precisely the class of maps inverted by the localization functor) if and only if it satisfies the 2-out-of-6 property (\cref{saturated_iff_2-outta-6}).
This mirrors the analogous result for $1$-categories, proven in \cite{kashiwara-schapira}.

We then generalize another theorem of Gabriel and Zisman's, namely that if a marked category $(\mc{C}, W)$ satisfies calculus of left fractions, then the canonical functor from $\mc{C}$ to its localization preserves finite colimits, and moreover if $\mc{C}$ is cocomplete, then so is its localization.
In \cite{gabriel-zisman}, this is proven by establishing that the hom-sets in the category of fractions are filtered colimits of those in $\mc{C}$.
The same line of argument applies in our setting, although the proofs are less straightforward:

\begin{theorem*}[cf.~\cref{mapping-space-is-colim,lpathloop-filtered,localization-has-limits}]
Suppose that a marked quasicategory $(\mc{C}, W)$ satisfies calculus of left fractions and $W$ is closed under 2-out-of-3.
  \begin{enumerate}
    \item The mapping space in $\mEx(\mc{C}, W)$ from $x$ to $y$ is a filtered colimit of mapping spaces from $x$ to $y'$, indexed by marked $1$-simplices $y \overset{\sim}{\rightarrow} y'$.
    \item If $\mc{C}$ is finitely cocomplete, then so is $\mEx(\mc{C}, W)$.
    Moreover, $\max^* \from \mc{C} \to \mEx(\mc{C}, W)$ preserves finite colimits.
  \end{enumerate}
\end{theorem*}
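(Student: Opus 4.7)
The plan is to follow the classical Gabriel-Zisman argument, with the technical heart being part (1); once the filtered-colimit description of mapping spaces is established, parts (2) and (3) follow formally from the commutation of filtered colimits with finite limits in spaces.

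For (1), I would introduce the indexing category $\lpathloop{y}$ whose objects are marked $1$-simplices $y \weto y'$, with morphisms the evident compatible marked maps. The first sub-step is to prove $\lpathloop{y}$ is filtered as a quasicategory --- this is the separately-labelled statement \cref{lpathloop-filtered}. Given two objects $y \weto y_1$ and $y \weto y_2$, calculus of left fractions produces a completing square; higher-dimensional joining conditions reduce to exactly the half-cube to whole-cube lifting property that defines calculus of left fractions, so the argument should be a direct unwinding. Next, I would construct the comparison map
\[
  \colim_{y \weto y'} \map_{\mc{C}}(x, y') \to \map_{\mEx(\mc{C}, W)}(x, y)
\]
by unwinding the definition of $\mEx$: an $n$-simplex on the right is a suitably restricted map out of a marked cubical subdivision, and the restriction along $\{1\} \subset \simp{1}$ extracts the marked target $y \weto y'$, with the remaining data defining an $n$-simplex of $\map_\mc{C}(x, y')$. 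Surjectivity on $\pi_0$ is immediate from the length-$2$ zigzag $x \to y' \weot y$; bijectivity at higher homotopy levels again uses calculus of fractions to rectify competing representatives.

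For (2) and (3), let $F \from K \to \mc{C}$ be a finite diagram with colimit $c$. To verify that $\msup c$ is the colimit of $\msup \circ F$ in $\mEx(\mc{C}, W)$, it suffices to check the mapping-space condition: for every $y$,
\begin{align*}
  \map_{\mEx(\mc{C}, W)}(\msup c, y)
  &\simeq \colim_{y \weto y'} \map_{\mc{C}}(c, y') \\
  &\simeq \colim_{y \weto y'} \lim_{k \in K} \map_{\mc{C}}(F(k), y') \\
  &\simeq \lim_{k \in K} \colim_{y \weto y'} \map_{\mc{C}}(F(k), y') \\
  &\simeq \lim_{k \in K} \map_{\mEx(\mc{C}, W)}(\msup F(k), y),
\end{align*}
using part (1) twice, the colimit property in $\mc{C}$, and the fact that finite limits commute with filtered colimits of spaces. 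This proves $\msup$ preserves finite colimits, yielding (3). For existence of colimits in $\mEx(\mc{C}, W)$, I would use (1) and the finiteness of $K$ to lift any finite diagram coherently to $\mc{C}$: since $K$ has finitely many simplices, only finitely many marked maps need be chosen, and filteredness of the various $\lpathloop{y}$ together with $2$-out-of-$3$ for $W$ (which ensures that composites of chosen marked maps remain marked) allows these choices to be assembled into a genuine diagram in $\mc{C}$, whose colimit is the desired colimit in $\mEx(\mc{C}, W)$.

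The main obstacle is step (1b): verifying the colimit comparison on higher simplices. One must show that two $n$-simplices of $\mEx(\mc{C}, W)$ with the same boundary can be connected in the filtered colimit, and dually that every higher-dimensional coherence data descends from the colimit. I expect this requires an inductive scheme on simplicial degree combined with a cofinality argument, showing that enough generic completions exist in $\lpathloop{y}$ to build all required fillers. The $2$-out-of-$3$ hypothesis on $W$ is critical: without it, the composites of marked maps produced when filling higher simplices would not be guaranteed marked, breaking the closure of $\lpathloop{y}$ under the operations used in the inductive step.
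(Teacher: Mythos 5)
Your overall architecture (marked slice, filteredness, colimit formula, then deduce preservation of finite colimits by commuting filtered colimits with finite limits) mirrors the paper, and your computation for preservation is essentially the proof of part (a) of \cref{localization-has-limits}. However, the technical core of statement (1) is missing, and the step you yourself flag as ``the main obstacle'' is exactly where the real work lies. First, an $n$-simplex of $\map_L(\mEx(\mc{C},W),x,y)$ is a map out of $\mSd(\simp{n+1}/\face{}{0})$, i.e.\ a fully coherent fraction datum; it does \emph{not} split simplex-wise into ``a marked edge $y\to y'$ plus an $n$-simplex of $\map_{\mc{C}}(x,y')$,'' so your comparison map is not defined as described, and one must also explain how $\mc{C}(x,-)\circ U\Pi$ is even a functor to spaces (it is not strict). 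The paper handles this by proving the isomorphism $\map_L(\mEx(\mc{C},W),x,y)\cong\mEx(\LF{(\mc{C},W)}(x,y))$ via an explicit pushout decomposition of $\mSd\simp{n+1}$ (\cref{mapping-space-is-mEx-LF}), observing that $\LF{(\mc{C},W)}(x,y)$ itself satisfies CLF so that $\mEx$ of it is its Kan fibrant replacement (\cref{mEx-computes-localization}), and then identifying that fibrant replacement with the $\infty$-colimit because $\LF{(\mc{C},W)}(x,y)\to\lpathloop{y}$ is a homotopy pullback of the left fibration classified by $\mc{C}(x,-)$ (\cref{pb-colim-for-mapping-space}); your ``induction on simplicial degree plus cofinality'' is a placeholder for this argument, not a proof. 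Second, filteredness of $\lpathloop{y}$ is not a direct unwinding of the $\dfLJ\ito\dfLI$ lifting property: filteredness requires cones under arbitrary $\bd\simp{n}$-shaped diagrams, a different shape from the half-cube posets. The paper's route (\cref{pathloop-qcat-clf,filtered-equiv-fibr-replacement-contr,lpathloop-filtered}) is that the marked slice inherits CLF (via a nontrivial lemma on the marked arrow category), is maximally marked by 2-out-of-3, and a maximally marked quasicategory satisfying CLF is filtered if and only if it is weakly contractible (this uses $\Ex=\mEx\circ\maxm{(-)}$ and a subdivision lifting lemma); contractibility then comes from the initial object of the slice.

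For existence of finite colimits in (2) you appeal to rectifying an arbitrary finite diagram $K\to\mEx(\mc{C},W)$ to a diagram in $\mc{C}$ ``by finitely many choices assembled by filteredness.'' That is a substantive strictification claim (essential surjectivity of the comparison between diagram quasicategories over $K$), and finiteness of $K$ plus filteredness of the slices does not by itself produce the required coherence data; as stated this step would not go through. The paper avoids it entirely: it reduces to finite coproducts (whose diagrams already lie in the image of $\msup$ on objects) and coequalizers, and proves only that a parallel pair in $\mEx(\mc{C},W)$ is $E^1$-homotopic to one factoring through $\msup$ (\cref{mEx-coeq-equiv}), using CLF once to replace the two cospans by ones with a common wrong-way map; combined with preservation of finite colimits under $\msup$ this yields all finite colimits.
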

Using the theorem, we can immediately prove the following three facts further characterizing the condition of calculus of fractions and relating the 1-categorical and quasicategorical localizations:
\begin{itemize}
  \item If $(\mc{C}. W)$ is (the nerve of) a $1$-category satisfying calculus of left fractions, then its localization is again (the nerve of) a $1$-category (\cref{1-cat-clf-localization-is-1-cat}).
  \item If $\mc{C}$ has finite colimits and $W$ is saturated, then $(\mc{C}, W)$ satisfies calculus of left fractions if and only if its localization has finite colimits and the localization functor preserves them (\cref{clf-iff-fin-lim-preserve}).
  \item If $(\mc{C}, W)$ satisfies calculus of left and right fractions and $\mc{C}$ is a stable quasicategory, then its localization is again a stable quasicategory (\cref{stable_clf_crf}).
  In the context of abelian (instead of stable) $1$-categories, this is due to \cite{gabriel-zisman}.
\end{itemize}

\textbf{Relation to the work of D.-C.~Cisinski.}
Ideas similar to those of \cref{sec:mapping-space} are considered in \cite[\S 7.2]{cisinski:higher-categories}.
One of the main results of \cref{sec:mapping-space} is that if $(\mc{C}, W)$ satisfies calculus of (right) fractions then the mapping spaces in the localization may be computed as a filtered colimit of mapping spaces in $\mc{C}$, indexed by the ``marked slice'' over the domain (as indicated above).
For us, this is a not-so-easy consequence of the definition of the marked Ex-functor.
In \cite[Thm.~7.2.8]{cisinski:higher-categories}, an analogous colimit formula (which is no longer filtered in general) is proven for any sufficiently well-behaved model of the marked slice.
Moreover in \cite[Rem.~7.2.10]{cisinski:higher-categories}, a model for this colimit is given whose 0-simplices are spans/right fractions, just as in the mapping spaces of our marked Ex (though the two do not coincide in general).
In the language of \cite[Def.~7.2.6]{cisinski:higher-categories}, a well-behaved model for the marked slice at an object $x$ is called a \emph{calculus of right fractions} at $x$.

\textbf{Comments on style.}
As mentioned before, calculus of fractions appears in two forms: calculus of left fractions and calculus of right fractions.
For the readers' convenience, we give most statements in both forms; the only exceptions being technical lemmas that are only needed for a proof of another theorem and that we do not expect to be of independent interest.

In addition, we assume that the reader is familiar with the basics of simplicial homotopy theory \cite{goerss-jardine} and the main ideas of quasicategory theory \cite{cisinski:higher-categories,joyal:theory-of-qcats,lurie:htt}.
While we have included some background material on these in \cref{sec:quasicategories}, it is rather brief.
We try to give precise references, notably to \cite{cisinski:higher-categories,joyal:qcat-kan,joyal:theory-of-qcats,lurie:htt} for all facts used in the paper, but we realize that this does not replace a comprehensive introduction.

\textbf{Organization of the paper.}
In the first three sections, we collect the necessary background on calculus of fractions (\cref{sec:classical-clf}), following \cite{gabriel-zisman}, quasicategories (\cref{sec:quasicategories}), and marked simplicial sets (\cref{sec:marked-sSet}).
As indicated above, throughout the paper, we assume familiarity with foundational notions and results of simplicial homotopy theory, so this review is rather brief and intended mostly to establish the necessary notation.
We then define calculus of fractions for quasicategories (\cref{sec:calculus-def}), compare it to the classical version (\cref{sec:clf-infty-vs-classical}), and provide a wide range of examples (\cref{sec:clf-examples}).
We then move towards describing the localizations of marked quasicategories admitting a calculus of fractions.
In \cref{sec:mEx-is-qcat}, we define the marked Ex-functor and show that it takes marked quasicategories satisfying calculus of fractions to quasicategories.
Before showing that it computes the localization of an arbitrary marked quasicategory in \cref{sec:localization-general}, we first prove (\cref{sec:localization-at-equivs}) this result when the quasicategory is marked at equivalences.
We then turn our attention to properties of the resulting localization.
In \cref{sec:mapping-space}, we give a formula for the mapping spaces in the localization as a filtered colimit of those in the quasicategory itself; and in \cref{sec:limits}, we prove results about existence of finite (co)limits in the localization as well as preservation thereof by the localization functor.

\textbf{Acknowledgements.}
This paper grew out of the third author's Ph.D.~thesis written in 2018 at Indiana University under the supervision of Mike Mandell.
The three of us are deeply grateful to Mike for suggesting this research direction to us.
We would also like to thank Andrew Blumberg and Mike Mandell for writing \cite{blumberg-mandell:string-topology}, which provided the initial motivation and an application of some of the results found in this paper.

This material is based upon work supported by the National Science Foundation under Grant No.~DMS-1928930 while the first two authors participated in a program hosted by the Mathematical Sciences Research Institute in Berkeley, California, during the 2022--23 academic year.

\chapter*{Preliminaries}
\cftaddtitleline{toc}{chapter}{Preliminaries}{}

\section{Classical calculus of fractions} \label{sec:classical-clf}

In this section, we recall the classical results on calculus of fractions following \cite[Ch.~1]{gabriel-zisman}.
The main results of interest are \cref{classical-fractions-compute-localization,classical-fractions-colims}.
These results both have analogues in the quasicategorical case and our exposition in this section is intended to reflect the structure of the paper going forward.
In particular, the analogue of \cref{classical-fractions-compute-localization} is proven in \cref{sec:localization-general}, and the analogue of \cref{classical-fractions-colims} is proven in \cref{sec:mapping-space,sec:limits}.

We first define marked categories, which package the data of a category with the collection of morphisms one wishes to localize at.
\begin{definition} 
	A \emph{marked category} is a pair $(\mc{C}, W)$ where $\mc{C}$ is a category and $W$ is a collection of morphisms in $\mc{C}$.
\end{definition}
In a marked category $(\mc{C}, W)$, we write either
$\begin{tikzcd}[sep = 1.3em, cramped]
	x \ar[r, "\sim"{xshift=-0.3ex}] & y
\end{tikzcd}$ or $\begin{tikzcd}[sep = 1.3em, cramped]
	x \ar[r, "\sim"'{xshift=-0.3ex}] & y
\end{tikzcd}$
to indicate that the morphism $\begin{tikzcd}[sep = 1.2em, cramped] x \ar[r] & y \end{tikzcd}$ is in $W$.
For a category $\mc{C}$, we write $\natm{\mc{C}}$ for the pair $\mc{C}$ marked at all isomorphisms of $\mc{C}$.
Given marked categories $(\mc{C}, W), (\mc{C}', W')$, we write $\Cat_{+}((\mc{C}, W), (\mc{C}', W'))$ for the full subcategory of the functor category from $\mc{C}$ to $\mc{D}$ consisting of functors which sends morphisms in $W$ to morphisms in $W'$.

We recall the definition of the localization of $\mc{C}$ at a class $W$.
\begin{definition} \label{def:ordinary-localization}
	Let $(\mc{C}, W)$ be a marked category.
	The \emph{localization} of $\mc{C}$ at $W$ is a functor
	\[ \gamma \from \mc{C} \to \mc{C}[W^{-1}] \]
	into a category $\mc{C}[W^{-1}]$ such that
	\begin{itemize}
		\item the functor $\gamma$ sends morphisms in $W$ to isomorphisms in $\mc{C}[W^{-1}]$; and
		\item for any category $\mc{D}$, the pre-composition functor
		\[ \gamma^* \from \Cat(\mc{C}[W^{-1}], \mc{D}) \to \Cat_+((\mc{C}, W), \natm{\mc{D}}) \]
		is an equivalence of categories.
	\end{itemize}
\end{definition}

In \cite{gabriel-zisman}, a construction is given for the localization of a pair $(\mc{C}, W)$ which satisfies \emph{calculus of fractions}.
This condition allows one to construct a workable model of the localization in which morphisms between objects become (co)spans of morphisms rather than arbitrary zigzags.
\begin{definition} \label{def:classical-clf}
	A marked category $(\mc{C}, W)$ satisfies \emph{calculus of left fractions} (or \emph{CLF}) if the following conditions hold:
	\begin{enumerate}
		\item $W$ is closed under composition and contains identities;
		\item any span $(f \from X \to Y, w \from X \to X')$ with $w \in W$ can be completed to a commutative square
		\[ \begin{tikzcd}
			X \ar[r, "w", "\sim"'] \ar[d, "f"'] & X' \ar[d, dotted] \\
			Y \ar[r, "\sim"', dotted, "w'"] & Y'
		\end{tikzcd} \]
		with $w' \in W$;
		\item given parallel morphisms $f, g \from X \to Y$, if there exists $w \from X' \to X$ in $W$ such that $fw = gw$ then there exists $v \from Y \to Y'$ in $W$ such that $vf = vg$ (as in the diagram)
		\[ \begin{tikzcd}
			X' \ar[r, "w", "\sim"'] & X \ar[r, yshift=0.7ex, "f"] \ar[r, yshift=-0.7ex, "g"'] & Y \ar[r, "v", "\sim"', dotted] & Y'
		\end{tikzcd} \]
	\end{enumerate}
\end{definition}
We also introduce a slight strengthening of calculus of fractions.
\begin{definition} \label{def:proper-clf}
	A marked category $(\mc{C}, W)$ satisfies \emph{proper CLF} if it satisfes conditions (1) and (3) of \cref{def:classical-clf}, as well as the following strengthening of condition (2):
	\begin{enumerate}
		\item[2'.] Any span $(f \from X \to Y, w \from X \to X')$ with $w \in W$ can be completed to a commutative square
		\[ \begin{tikzcd}
			X \ar[r, "w", "\sim"'] \ar[d, "f"'] & X' \ar[d, dotted, "f'"] \\
			Y \ar[r, "\sim"', dotted, "w'"] & Y'
		\end{tikzcd} \]
		with $w' \in W$.
		Moreover, if $f$ is in $W$ then so is $f'$.
	\end{enumerate}
\end{definition}
\begin{definition} \label{def:classical-crf}
	A marked category $(\mc{C}, W)$ satisfies \emph{calculus of right fractions} (or \emph{CRF}) if $(\mc{C}^\op, W)$ satisfies CLF.
	It satisfies \emph{proper CRF} if $(\mc{C}^\op, W)$ satisfies proper CLF.
\end{definition}
In \cref{sec:calculus-def}, we will introduce the notion of calculus of fractions for quasicategories.
When viewing an ordinary category as a quasicategory, this notion will agree with proper CLF (respectively, proper CRF) but not CLF (respectively, CRF).
\begin{remark}
	Most examples which satisfy CLF (or CRF) also satisfy proper CLF (respectively, proper CRF).
	For instance, if $W$ is closed under 2-out-of-3 then the two are equivalent.
	The diagram
	\[ \begin{tikzcd}
		{} & \cdot \ar[r, "\sim"] & \cdot \ar[r, "\sim"] & \dots \\
		\cdot \ar[ur, "\sim"] \ar[dr, "\sim"'] & {} & {} & \dots \\
		{} & \cdot \ar[r, "\sim"] \ar[uur] & \cdot \ar[r, "\sim"] \ar[uur] \ar[from=uul, crossing over] & \dots \ar[from=uul, crossing over]
	\end{tikzcd} \]
	depicts an example of a marked category which satisfies CLF but not proper CLF.
\end{remark}

We describe the model of the localization constructed in \cite{gabriel-zisman} for a pair $(\mc{C}, W)$ satisfying CLF.

Define a category $\mc{C} W^{-1}$ whose objects are those of $\mc{C}$.
The set of morphisms from $x$ to $y$ in $\mc{C} W^{-1}$ is the set of cospans
\[ \begin{tikzcd}
	x \ar[r, "f"] & y' & y \ar[l, "w", "\sim"', swap]
\end{tikzcd} \]
where $w$ is a weak equivalence, subject to the equivalence relation defined by $(f, w) \sim (g, v)$ if there exists a commutative diagram
\[ \begin{tikzcd}
	{} & y' \ar[d, dotted] & {} \\
	x \ar[ur, "f"] \ar[dr, "g"'] \ar[r, dotted] & y''' & y \ar[ul, "w"'] \ar[dl, "v"] \ar[l, dotted, "\sim"'] \\
	{} & y'' \ar[u, dotted]  & {}
\end{tikzcd} \]
In particular, conditions (2) and (3) of CLF (\cref{def:classical-clf}) are required to define composition of morphisms.
With this, there is an evident functor $\mc{C} \to \mc{C}W^{-1}$ which is identity on objects and which sends a morphism $f$ to the equivalence class of the co-span $(f, \id)$.

Dually, if $(\mc{C}, W)$ satisfies CRF then there is an analogous construction $W^{-1} \mc{C}$ whose morphisms are given by spans.

\begin{theorem}[{\cite[Prop.~2.4]{gabriel-zisman}}] \label{classical-fractions-compute-localization}
	Let $(\mc{C}, W)$ be a marked category.
	\begin{enumerate}
		\item If $(\mc{C}, W)$ satisfies calculus of left fractions then the functor $\mc{C} \to \mc{C} W^{-1}$ is the localization of $\mc{C}$ at $W$.
		\item If $(\mc{C}, W)$ satisfies calculus of right fractions then the functor $\mc{C} \to W^{-1} \mc{C}$ is the localization of $\mc{C}$ at $W$. \qed
	\end{enumerate}
\end{theorem}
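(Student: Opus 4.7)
The plan is to prove statement (1); statement (2) then follows by passing to opposite categories, since $(\mc{C}, W)$ satisfies CRF \iff $(\mc{C}^\op, W)$ satisfies CLF, and the proposed localization $W^{-1}\mc{C}$ is naturally identified with $(\mc{C}^\op W^{-1})^\op$. The universal property of \cref{def:ordinary-localization} is self-dual, so this reduction is legitimate.

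For statement (1), I would proceed in three phases. First, verify that $\mc{C} W^{-1}$ is a well-defined category. This breaks into: (a)~showing that the stated relation on cospans is an equivalence relation, where transitivity requires combining two witnesses by completing spans via axiom (2) of CLF and then collapsing redundant weak equivalences via axiom (3); (b)~defining composition of $[(f, w)]$ with $[(g, v)]$ by completing the span $y' \overset{w}{\leftarrow} y \overset{g}{\to} z'$ to a square via axiom (2), yielding a cospan $(g'f, w'v)$ whose second leg lies in $W$ by axiom (1), and then showing this composite is independent of both the choice of completion and the choice of representatives (using axiom (3)); (c)~checking that $[(\id, \id)]$ serves as a two-sided identity and that composition is associative.

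Second, verify that the canonical functor $\gamma \from \mc{C} \to \mc{C} W^{-1}$, sending $f \mapsto [(f, \id)]$, sends every $w \in W$ to an isomorphism: explicitly, $[(\id, w)]$ is a two-sided inverse to $[(w, \id)]$, which can be checked directly from the composition rule. Third, establish the universal property. Given $F \from \mc{C} \to \mc{D}$ sending $W$ to isomorphisms, define $\bar{F} \from \mc{C} W^{-1} \to \mc{D}$ on objects as $F$ and on a class $[(f, w)]$ as $F(w)^{-1} F(f)$; well-definedness on equivalence classes is immediate from the relation, and functoriality follows by applying $F$ to the completion square used in composition and inverting. This constructs a strict section of $\gamma^*$ at the level of objects. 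For the 2-categorical refinement, a natural transformation $\bar{F} \Rightarrow \bar{G}$ is determined by its components (since $\gamma$ is bijective on objects), and the naturality square for $[(\id, w)]$ is obtained from the naturality square for $w$ by inverting $F(w)$ and $G(w)$; together this exhibits $\gamma^*$ as an isomorphism of categories, hence an equivalence.

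The main obstacle is phase 1(b): well-definedness of composition is the only place where all three CLF axioms are used in concert. Concretely, given $(f_1, w_1) \sim (f_2, w_2)$, one must produce an equivalence between the composites with a fixed $(g, v)$, which demands building a nested diagram of squares via iterated applications of axiom (2) and then reconciling parallel arrows, after post-composition with a further weak equivalence, by axiom (3). All the combinatorial content of Gabriel and Zisman's argument lives here; once composition is shown to be well-defined, the remainder of the proof is essentially formal.
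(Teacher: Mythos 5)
Your outline is correct, and it is essentially the classical Gabriel--Zisman argument: the paper itself gives no proof of this statement, citing \cite[Prop.~2.4]{gabriel-zisman} and stating it with a \qed, and the cited proof proceeds exactly as you describe (well-definedness of the cospan category via axioms (1)--(3), invertibility of $\gamma(w)$, and the universal property realized by $[(f,w)] \mapsto F(w)^{-1}F(f)$, with $\gamma^*$ in fact an isomorphism onto $\Cat_+((\mc{C},W),\natm{\mc{D}})$). Your reduction of (2) to (1) by passing to opposite categories is also the intended one, matching \cref{def:classical-crf}.
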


Moreover, under certain conditions on $(\mc{C}, W)$, the category $\mc{C} W^{-1}$ is locally small and admits finite colimits (dually, the category $W^{-1} \mc{C}$ is locally small and admits finite limits).
In the following statement, we write $\lpathloop{x}$ (dually $\rpathloop{x}$) for the full subcategory of the slice category $x \slice \mc{C}$ (dually $\mc{C} \slice x$) comprised of weak equivalences $x \xrightarrow{\sim} x'$ (dually $x' \xrightarrow{\sim} x$).
\begin{theorem}[{\cite[Prop.~3.1 \& Cor.~3.2]{gabriel-zisman}}] \label{classical-fractions-colims}
	Let $x, y$ be objects in a locally small marked category $(\mc{C}, W)$.
	\begin{enumerate}
		\item Suppose $(\mc{C}, W)$ satisfies CLF and that $\lpathloop{y}$ is finally small.
		Then,
		\begin{itemize}
			\item the set of maps in $\mc{C} W^{-1}$ from $x$ to $y$ is computed by the filtered colimit
			\[ \mc{C}W^{-1}(x, y) \cong \colim\limits_{y' \in \lpathloop{y}} \mc{C}(x, y'); \]
			\item the functor $\mc{C} \to \mc{C} W^{-1}$ preserves finite colimits; and
			\item if $\mc{C}$ admits all finite colimits then so does $\mc{C} W^{-1}$.
		\end{itemize}
		\item Suppose $(\mc{C}, W)$ satisfies CRF and that $\rpathloop{x}$ is finally small.
		Then,
		\begin{itemize}
			\item the set of maps in $W^{-1} \mc{C}$ from $x$ to $y$ is computed by the filtered colimit
			\[ \mc{C}W^{-1}(x, y) \cong \colim\limits_{x' \in (\rpathloop{x})^\op} \mc{C}(x', y); \]
			\item the functor $\mc{C} \to W^{-1} \mc{C}$ preserves finite limits; and
			\item if $\mc{C}$ admits all finite limits then so does $ W^{-1} \mc{C}$. \qed	
		\end{itemize}
	\end{enumerate}
\end{theorem}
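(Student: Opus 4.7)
The plan is to prove part (1) under CLF and deduce part (2) by dualizing via $\mc{C}^\op$. The first key observation is that CLF makes $\lpathloop{y}$ filtered: given two objects $y \weto y_1$ and $y \weto y_2$ of $\lpathloop{y}$, axiom (2) applied to this span produces a common upper bound $y_3$, and the composed diagonal $y \weto y_3$ lies in $W$ by closure under composition, so $y_3 \in \lpathloop{y}$; parallel arrows in $\lpathloop{y}$ are equalized by axiom (3); connectedness is immediate since identities lie in $W$. The smallness hypothesis on $\lpathloop{y}$ then guarantees that the colimits below are set-valued.

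With filteredness in hand, the hom-set formula follows by constructing the natural map
\[ \Phi \from \colim_{y' \in \lpathloop{y}} \mc{C}(x, y') \to \mc{C}W^{-1}(x, y) \]
sending $f \from x \to y'$ (viewed as an element over $w \from y \weto y'$) to the class of the cospan $(f, w)$. Surjectivity is immediate from the definition of $\mc{C}W^{-1}$. Injectivity unpacks the defining equivalence relation: if $(f, w) \sim (g, v)$ via the intermediate zigzag, CLF axioms (2) and (3) allow one to promote that zigzag to a single object of $\lpathloop{y}$ witnessing that $f$ and $g$ agree in the colimit. For preservation of finite colimits, given a finite colimit cocone $D \to c$ with $D \from J \to \mc{C}$, the hom-set formula together with the fact that filtered colimits commute with finite limits in $\Set$ yields
\[ \mc{C}W^{-1}(\gamma c, y) \iso \colim_{y'} \lim_{j \in J^\op} \mc{C}(D(j), y') \iso \lim_{j \in J^\op} \mc{C}W^{-1}(\gamma D(j), y), \]
which is the universal property identifying $\gamma c$ as the colimit of $\gamma D$.

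The final item requires producing colimits of \emph{arbitrary} finite diagrams $F \from J \to \mc{C}W^{-1}$, not only those of the form $\gamma D$. The strategy is strictification: choose a cospan representative for each generating morphism $F(j \to j')$, then inductively use axiom (2) to align these cospans into common codomains along each composable chain in $J$, and use axiom (3) to force the composition and identity equations to hold on the nose after passing through a further weak equivalence. Since $J$ is finite, only finitely many cospans need be aligned, yielding a diagram $\tilde F \from J \to \mc{C}$ together with an isomorphism $\gamma \tilde F \iso F$ in the functor category. Then $\gamma(\colim \tilde F)$ is a colimit of $F$ by the preservation statement just proven. The principal obstacle is this strictification step: keeping the construction finite while simultaneously enforcing all composition triangles and identity axioms requires a careful induction on the finite poset of composable tuples of $J$, and this is where axioms (2) and (3) interact in the most delicate way.
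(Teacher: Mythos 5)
The paper does not reprove this statement---it is quoted from Gabriel--Zisman---so the comparison is with the classical argument. Your treatment of the first two bullets is correct and is essentially that argument: CLF makes $\lpathloop{y}$ filtered (axiom (2) plus closure of $W$ under composition gives cocones over pairs of objects, axiom (3) coequalizes parallel arrows), the cospan presentation of $\mc{C}W^{-1}(x,y)$ is literally a presentation of the filtered colimit $\colim_{y'}\mc{C}(x,y')$, and preservation of finite colimits follows from the hom formula together with commutation of filtered colimits with finite limits in $\Set$. The same skeleton reappears in the paper's quasicategorical analogues later on.

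The gap is in the last bullet. Your strictification claim---that every finite diagram $F \from J \to \mc{C}W^{-1}$ is isomorphic to $\gamma\tilde F$ for some $\tilde F \from J \to \mc{C}$---is exactly the step you concede is the principal obstacle, and the inductive scheme you sketch does not visibly close: enforcing one composition relation means post-composing with a fresh weak equivalence at some object, and transporting the rest of the diagram across that weak equivalence (via axiom (2)) only recovers previously enforced relations up to pre-composition with a weak equivalence, which must then be repaired by axiom (3), perturbing further objects. When $J$ is acyclic this cascade can be organized along a linear order of the objects and terminates, but for finite $J$ with non-identity endomorphisms (e.g.\ $J = BG$ for a finite group, or the walking idempotent) there is no such order, and your ``induction on the finite poset of composable tuples'' has no well-founded measure of progress; whether such rigidification is possible at all is a coherence problem that CLF does not obviously solve. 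The classical proof sidesteps it: a category with finite coproducts and coequalizers has all finite colimits; coproduct diagrams are discrete and $\gamma$ is bijective on objects, so these exist and are preserved by the bullet you already proved; and for a parallel pair $u, v \from X \rightrightarrows Y$ in $\mc{C}W^{-1}$, axiom (2) yields a common denominator $t \from Y \weto Y'$ with $u = \gamma(t)^{-1}\gamma(f)$ and $v = \gamma(t)^{-1}\gamma(g)$, so the pair is isomorphic as a diagram to $(\gamma f, \gamma g) \from X \rightrightarrows Y'$, whose coequalizer is $\gamma$ of the coequalizer of $(f,g)$. (The paper's quasicategorical version makes the same move: part (b) of \cref{localization-has-limits} is deduced from part (a) via \cref{mEx-coeq-equiv}, which strictifies only coequalizer diagrams.) Restricting your strictification to those two shapes makes the argument complete; as stated for arbitrary finite $J$ it is unsupported.
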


\section{Quasicategories} \label{sec:quasicategories}

In this section, we review aspects of quasicategory theory such as: detecting equivalences, the Joyal model structure, joins, slices, mapping spaces, and (co)limits.
Throughout, we will assume familiarity with the basics of simplicial homotopy theory, including the Kan--Quillen model structure.

We write $\sSet$ for the category of simplicial sets.
Arbitrary simplicial sets will typically be denoted with capital letters e.g.~$X, Y, \dots$.
We follow standard notation, including writing $[n]$ for the linear order $\{ 0 \leq 1 \leq \dots \leq n \}$ and $\Delta$ for the category of finite non-empty linear orders.
We write
\[ \face{}{i} \from [n-1] \to [n], \quad \degen{}{i} \from [n] \to [n-1] \]
for the face and degeneracy poset maps, respectively, omitting the data of the domain and codomain from our notation.
We write $\simp{n}$ for the representable $n$-simplex, $\bd \simp{n}$ for its boundary, and $\horn{n}{k}$ for the $(n, k)$-horn.
The action of simplicial operators will be written on the right, e.g.~$x\degen{}{0}$ denotes the 0-degeneracy of an $n$-simplex $x \in X_n$.

\begin{definition}
	A \emph{quasicategory} is a simplicial set $\mc{C}$ with the right lifting property against the set
	\[ \{ \horn{n}{k} \ito \simp{n} \mid n \geq 2, \ 0 < k < n \}. \]
\end{definition}
\begin{example}
	If $\mc{C}$ is a category then its nerve $N \mc{C} \in \sSet$
	is a quasicategory.
\end{example}
We adopt the convention of referring to 0-simplices in a quasicategory as \emph{objects} and 1-simplices in a quasicategory as \emph{morphisms}.

Let $E^1$ denote the nerve of the contractible category with two objects $0, 1$.
\begin{definition}
	A 1-simplex $f$ in a simplicial set $X$ is an \emph{$E^1$-equivalence} if, as a map $f \from \simp{1} \to X$, it admits a lift to a map $E^1 \to X$.
	\[ \begin{tikzcd}
		\simp{1} \ar[r, "f"] \ar[d, hook] & X \\
		E^1 \ar[ur, dotted]
	\end{tikzcd} \]
\end{definition}
\begin{example}
	If $\mc{C}$ is a category then an $E^1$-equivalence in its nerve is exactly an isomorphism in $\mc{C}$.
\end{example}
Let $J$ denote the simplicial set
\[ \begin{tikzcd}[sep = large]
	1 \ar[r] \ar[rd, equal, ""{name=L}] & 0 \ar[d] \ar[rd, equal, ""'{name=R}] \ar[to=L, Rightarrow, shorten=0.5ex] & {} \\
	{} & 1 \ar[r] \ar[to=R, Rightarrow, shorten=0.5ex] & 0
\end{tikzcd} \]
Unless otherwise specified, any map $\simp{1} \ito J$ will denote the inclusion of the 1-simplex from 0 to 1.
\begin{definition}
	A 1-simplex $f$ in a simplicial set $X$ is a \emph{$J$-equivalence} if, as a map $f \from \simp{1} \to X$, it admits a lifts to a map $J \to X$.
	\[ \begin{tikzcd}
		\simp{1} \ar[r] \ar[d, hook] & X \\
		J \ar[ur, dotted]
	\end{tikzcd} \]
\end{definition}

We recall that $E^1$-equivalences and $J$-equivalence coincide in a quasicategory.
\begin{proposition} \label{e1-equiv-j-equiv-eq}
	If $X$ is a quasicategory then a 1-simplex in $X$ is a $J$-equivalence if and only if it is an $E^1$-equivalence. \qed
\end{proposition}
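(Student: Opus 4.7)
The plan is to handle the two implications separately, with the direction from $E^1$-equivalence to $J$-equivalence being essentially formal and the converse reducing to Joyal's characterization of equivalences in a quasicategory.

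For the easy direction, I would first construct a simplicial map $\phi \from J \to E^1$ that restricts to the identity on the distinguished $1$-simplex $\simp{1} \ito J$. Such a map exists because $E^1$ is the nerve of the codiscrete groupoid on $\{0,1\}$: the distinguished morphism $0 \to 1$ has a unique inverse $1 \to 0$ in $E^1$, and each of the two non-degenerate $2$-simplices of $J$ (witnessing the one-sided inverse relations displayed in its defining diagram) has a unique matching $2$-simplex in $E^1$, forcing $\phi$ to send both non-degenerate $1 \to 0$ morphisms of $J$ to the same edge of $E^1$. Composing any lift $E^1 \to X$ of $f$ with $\phi$ then yields the desired lift $J \to X$.

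For the converse, suppose $f \from \simp{1} \to X$ extends to a lift $g \from J \to X$. Passing the two non-degenerate $2$-simplices of $J$ through $g$ yields $2$-simplices in $X$ that exhibit $f$ as possessing both a left and a right homotopy inverse in the quasicategorical sense. By Joyal's characterization of equivalences in a quasicategory (see, e.g., \cite[\S 1.2.4]{lurie:htt} or \cite{joyal:theory-of-qcats}), a $1$-simplex is invertible in $\Ho(X)$ if and only if it admits such a one-sided inverse on each side; consequently $f$ is an equivalence in $X$. Equivalences in a quasicategory are precisely the $1$-simplices that extend along $\simp{1} \ito E^1$, completing the proof.

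The main obstacle is Joyal's characterization theorem itself, whose proof upgrades one-sided inverses to coherent two-sided inverses via iterated inner horn filling and then fills out the higher coherences needed to produce a map from $E^1$. Since this is a standard, widely-cited result, the work on our end is limited to the routine bookkeeping that a $J$-lift supplies precisely the one-sided inverse data the theorem requires.
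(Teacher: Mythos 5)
Your proposal is correct, and it is essentially the standard argument that the paper itself implicitly appeals to: note that the paper offers no proof of this proposition at all — it is recalled as a known fact (stated with \qed), deferring precisely to Joyal's theorem that an edge which becomes invertible in $\Ho(X)$ extends along $\simp{1} \ito E^1$, which is the ingredient carrying your nontrivial direction. Your easy direction, via a map $J \to E^1$ under $\simp{1}$ (which exists since $E^1$ is the nerve of the contractible groupoid, so the assignment on vertices and edges automatically respects the two $2$-simplices of $J$), and your reduction of the converse to the observation that a $J$-lift supplies one-sided inverses on each side in $\Ho(X)$, are both sound.
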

In light of \cref{e1-equiv-j-equiv-eq}, we will refer to both $E^1$-equivalences and $J$-equivalences in a quasicategory as simply \emph{equivalences} (or \emph{invertible}).

The simplicial sets $E^1$ and $J$ give well-behaved interval objects for defining a notion of homotopy.
\begin{definition} \leavevmode
	\begin{itemize}
		\item For simplicial maps $f, g \from X \to Y$, an \emph{$E^1$-homotopy} (or \emph{natural equivalence}) from $f$ to $g$ is a map $\alpha \from X \times E^1 \to Y$ such that $\restr{\alpha}{X \times \{ 0 \}} = g$ and $\restr{\alpha}{X \times \{ 1 \}} = g$.
		\item A map $f \from X \to Y$ is an \emph{$E^1$-homotopy equivalence} if there exists $g \from Y \to X$ along with $E^1$-homotopies $gf \sim \id[X]$ and $fg \sim \id[Y]$.
		\item For simplicial maps $f, g \from X \to Y$, a \emph{$J$-homotopy} from $f$ to $g$ is a map $\alpha \from X \times J \to Y$ such that $\restr{\alpha}{X \times \{ 0 \}} = f$ and $\restr{\alpha}{X \times \{ 1 \}} = g$.
		\item A map $f \from X \to Y$ is a \emph{$J$-homotopy equivalence} if there exists $g \from Y \to X$ along with $J$-homotopies $gf \sim \id[X]$ and $fg \sim \id[Y]$.
	\end{itemize}
\end{definition}
If $Y$ is a quasicategory then any map $\alpha \from X \times \simp{1} \to Y$ whose restriction $\restr{\alpha}{\{ x \} \times \simp{1}} \from \{ x \} \times \simp{1} \to Y$ along every 0-simplex $x \in X_0$ is invertible lifts to a natural equivalence.

Recall that quasicategories are the fibrant objects in the \emph{Joyal model structure} on simplicial sets.
\begin{theorem}[{\cite[Thm.~6.12]{joyal:theory-of-qcats}}] \label{joyal-model-str}
	The category of simplicial sets $\sSet$ admits a model structure where
	\begin{itemize}
		\item cofibrations are monomorphisms;
		\item fibrant objects are quasi-categories; fibrations between fibrant objects are inner isofibrations;
		\item weak equivalences are weak categorical equivalences. \qed
	\end{itemize}
\end{theorem}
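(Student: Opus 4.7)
The plan is to follow Joyal's original approach, exhibiting both weak factorization systems via explicit generating sets and then characterizing weak equivalences through fibrant replacements.

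The cofibration/trivial fibration factorization is standard: the monomorphisms of $\sSet$ are generated by the boundary inclusions $\bd\simp{n} \ito \simp{n}$ for $n \geq 0$, and Quillen's small object argument provides the factorization; trivial fibrations are, by definition, the maps with the right lifting property against all monomorphisms. For the other factorization, define a \emph{categorical fibration} to be a map with the right lifting property against the saturation $\mc{A}$ of the set of inner horn inclusions $\horn{n}{k} \ito \simp{n}$ (for $n \geq 2$, $0 < k < n$) together with the pushout-products $(\{0\} \ito J) \pp (\bd\simp{n} \ito \simp{n})$ for $n \geq 0$. The small object argument then yields the second factorization. Fibrant objects are precisely the quasicategories: inner horn filling gives the quasicategory condition, while the additional $J$-lifting property is automatic in any quasicategory by \cref{e1-equiv-j-equiv-eq}.

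Next, call a map $f \from X \to Y$ a \emph{weak categorical equivalence} if for every quasicategory $\mc{C}$ the induced map $\fcat{Y}{\mc{C}} \to \fcat{X}{\mc{C}}$ is a $J$-homotopy equivalence of quasicategories; equivalently, any fibrant replacement of $f$ is a $J$-homotopy equivalence between quasicategories. The 2-out-of-3 property follows formally from the corresponding property of $J$-homotopy equivalences, and the class is closed under retracts by inspection.

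The main obstacle is to identify the trivial cofibrations (cofibrations that are weak equivalences) with the class $\mc{A}$. One direction is that every member of $\mc{A}$ is a weak equivalence: inner anodynes induce trivial Kan fibrations on $\fcat{(-)}{\mc{C}}$ for any quasicategory $\mc{C}$ (a classical theorem of Joyal), and the generator $\{0\} \ito J$ is a weak equivalence because $J$ models the walking isomorphism, so any two $J$-homotopic maps into a quasicategory agree up to the chosen notion of homotopy. Conversely, given a trivial cofibration $f$, factor it via the small object argument as $f = p \circ i$ with $i \in \mc{A}$ and $p$ a categorical fibration; by 2-out-of-3, $p$ is a weak equivalence between quasicategories. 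The key nontrivial ingredient, Joyal's \emph{special outer horn} argument, shows that a categorical fibration between quasicategories which is also a weak equivalence has the right lifting property against all monomorphisms, and is therefore a trivial fibration. A standard retract argument then places $f$ in $\mc{A}$. Finally, the characterization of fibrations between fibrant objects as inner isofibrations follows by combining inner horn lifting with lifting against $\{0\} \ito J$, using \cref{e1-equiv-j-equiv-eq} to identify $J$-lifts with the markings picking out equivalences.
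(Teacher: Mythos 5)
Your outline follows Joyal's strategy, but there is a genuine gap at the decisive step: the identification of the trivial cofibrations with the saturated class $\mc{A}$ generated by the inner horn inclusions and the pushout-products $(\{0\} \ito J) \pp (\bd\simp{n} \ito \simp{n})$. In your converse direction you factor a trivial cofibration $f \from A \to B$ as $p \circ i$ with $i \in \mc{A}$ and $p$ having the right lifting property against $\mc{A}$, and then invoke the special-outer-horn (pseudo-fibration) theorem to conclude that $p$ is a trivial fibration. That theorem applies only to maps between quasicategories; here $B$ is an arbitrary simplicial set, so neither $B$ nor the intermediate object need be fibrant, and the argument only covers trivial cofibrations with fibrant codomain --- not enough to verify the factorization and lifting axioms for arbitrary maps. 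Moreover, if the identification did hold, the Joyal model structure would be cofibrantly generated by this explicit set, and every ``naive'' fibration (right lifting against $\mc{A}$) would be a fibration of the model structure; no such description is available, which is exactly why the theorem (here and in Joyal) characterizes fibrations only \emph{between fibrant objects} as inner isofibrations. A smaller instance of the same issue: your claim that the $J$-lifting condition is ``automatic'' in a quasicategory by \cref{e1-equiv-j-equiv-eq} conceals the real content, namely Joyal's theorem that $\mc{C}^J \to \mc{C}$ is a trivial fibration for any quasicategory $\mc{C}$, which is again the special-outer-horn result rather than a formal consequence.

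For comparison, the paper does not prove this statement at all --- it is quoted from Joyal --- and the standard proofs obtain the second factorization abstractly rather than from an explicit cellular class: Cisinski's treatment uses his theory of localizers, and Lurie (HTT, Thm.~2.2.5.1) applies a Smith-type recognition principle to the accessible class of categorical equivalences, with the pseudo-fibration theorem entering only to identify fibrations and fibrant objects after existence is established. Your first factorization system, the definition and 2-out-of-3 property of weak categorical equivalences, and the verification that members of $\mc{A}$ are weak equivalences are all fine; what is missing is a mechanism (accessibility/Smith's theorem, or Joyal's reduction arguments) that produces the (trivial cofibration, fibration) factorization without presupposing that $\mc{A}$ exhausts the trivial cofibrations.
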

In particular, both $E^1$-homotopy equivalences and $J$-homotopy equivalences are weak categorical equivalences.

We also recall the definition of left fibrations, which we use in \cref{sec:limits} to compute colimits of $\infty$-groupoids.
\begin{definition}
	A \emph{left fibration} is a simplicial map $f \from X \to Y$ with the right lifting property against the set
	\[ \{ \horn{n}{k} \ito \simp{n} \mid n \geq 1, \ 0 \leq k < n \}. \]
\end{definition}
In particular, a left fibration between quasicategories is a fibration (\cite[Prop.~4.10]{joyal:theory-of-qcats}).

In \cref{sec:limits}, we also make use of the join and slice constructions, which we now define.
Let $\Deltaaug$ denote the category obtained by adjoining an initial object $[-1]$ to $\Delta$.
Define the \emph{ordinal sum} functor $\Deltaaug \times \Deltaaug \to \Deltaaug$ by
\[ ([m], [n]) \mapsto [m + n + 1]. \]
By Day convolution, this gives a monoidal product on the presheaf category, which we denote by
\[ \uvar \join \uvar \from \sSetaug \times \sSetaug \to \sSetaug . \]
The inclusion $\Delta \subseteq \Deltaaug$ induces a forgetful functor on presheaf categories $\sSetaug \to \sSet$ (which we do not introduce notation for).
This functor admits a right adjoint $X \mapsto \hat{X}$ defined by
\[ \hat{X}_n := \begin{cases}
	\{ \ast \} & n = -1 \\
	X_n & \text{otherwise.}
\end{cases} \]
\begin{definition}
	The \emph{join} of simplicial sets is the functor $\uvar \join \uvar \sSet \times \sSet \to \sSet$ defined by
	\[ X \join Y := \hat{X} \join \hat{Y}. \]
\end{definition}
\begin{proposition}[{\cite[Prop.~3.2]{joyal:theory-of-qcats}}]
	For simplicial sets $X, Y$, the $n$-simplices of $X \join Y$ are given by the formula
	\[ (X \join Y)_n := X_n \sqcup Y_n \sqcup \coprod\limits_{i + j + 1 = n} X_i \times Y_j . \]
\end{proposition}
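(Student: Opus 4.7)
The plan is to unwind the definition of the join as a Day convolution on augmented simplicial sets and show that the resulting coend formula collapses to a coproduct. By the universal property of Day convolution,
\[ (\hat{X} \join \hat{Y})_n = \int^{[i], [j] \in \Deltaaug} \Deltaaug([n], [i] + [j]) \times \hat{X}_i \times \hat{Y}_j, \]
where $[i] + [j] = [i+j+1]$ denotes the ordinal sum in $\Deltaaug$; since $(X \join Y)_n = (\hat{X} \join \hat{Y})_n$ by definition, it suffices to evaluate this coend explicitly.

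The key combinatorial step is that every morphism $f \from [n] \to [i+j+1]$ in $\Deltaaug$ admits a \emph{unique} decomposition $f = f_1 + f_2$ as an ordinal sum, where $f_1 \from [p] \to [i]$, $f_2 \from [q] \to [j]$, and $p + q + 1 = n$. Indeed, identifying $[i+j+1]$ with $[i] + [j]$, the preimage under $f$ of the left block $\{0, \ldots, i\}$ is an initial segment $\{0, \ldots, p\}$ of $[n]$ (possibly empty, in which case $p = -1$), and the preimage of the right block is the complementary final segment. This forces $p, q \geq -1$ with $p + q + 1 = n$ and determines $f_1, f_2$ uniquely.

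Using the coend relation $((f_1 + f_2) \circ \mathrm{id}_{[n]}, x, y) \sim (\mathrm{id}_{[n]}, f_1^* x, f_2^* y)$, every element $(f, x, y)$ of the coend can be rewritten in a normal form $(\mathrm{id}_{[n]}, x', y')$ indexed by a decomposition $[n] = [p] + [q]$. I would then produce a bijection
\[ \coprod_{p + q + 1 = n} \hat{X}_p \times \hat{Y}_q \;\longrightarrow\; (\hat{X} \join \hat{Y})_n \]
by sending $(x', y') \in \hat{X}_p \times \hat{Y}_q$ to the class of $(\mathrm{id}_{[n]}, x', y')$; existence of the normal form gives surjectivity, while uniqueness of the ordinal-sum decomposition gives injectivity. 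Unpacking the augmentation then contributes $\hat{X}_{-1} \times \hat{Y}_n \iso Y_n$ when $p = -1$, dually $X_n$ when $q = -1$, and terms $X_i \times Y_j$ with $i, j \geq 0$ and $i + j + 1 = n$ otherwise -- exactly the stated formula.

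The main obstacle is the unique ordinal-sum decomposition lemma; once that is established, the passage to the normal form in the coend and the identification with the claimed coproduct are essentially formal bookkeeping.
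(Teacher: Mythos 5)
The paper does not prove this proposition at all --- it is imported verbatim from Joyal (\cite[Prop.~3.2]{joyal:theory-of-qcats}), so there is no in-paper argument to compare against; your proposal is a correct, self-contained proof, and it is essentially the standard one. The Day convolution coend is the right starting point, and the crux --- that every $f \from [n] \to [i]+[j]$ in $\Deltaaug$ decomposes uniquely as $f_1 + f_2$ with $f_1 \from [p] \to [i]$, $f_2 \from [q] \to [j]$, $p+q+1=n$, because the preimage of the left block is an initial segment --- is exactly the combinatorial fact that makes the formula work (equivalently, it gives a natural bijection $\Deltaaug([n],[i]+[j]) \cong \coprod_{p+q+1=n}\Deltaaug([p],[i])\times\Deltaaug([q],[j])$, after which the coend splits and co-Yoneda finishes the job). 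One point to tighten: ``uniqueness of the decomposition gives injectivity'' is not quite enough as stated, since elements of the coend are identified through zigzags that may pass through non-normal forms. The clean fix is to define the inverse directly on representatives by $(f,x,y) \mapsto (f_1^*x, f_2^*y)$ at index $p$, and check it respects the generating relations; this follows from the naturality of the decomposition, i.e.\ $(\phi+\psi)\circ(f_1+f_2) = (\phi f_1)+(\psi f_2)$, so the index $p$ and the components transform consistently. With that one-line verification added, your argument is complete, including the bookkeeping that the $p=-1$ and $q=-1$ terms contribute $Y_n$ and $X_n$ via $\hat{X}_{-1} = \hat{Y}_{-1} = \{\ast\}$.
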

If $X$ and $Y$ are quasicategories then $X \join Y$ is again a quasicategory (\cite[Cor.~3.23]{joyal:theory-of-qcats}).

The simplicial set $X \join Y$ admits two natural inclusions
\[ i_0 \from X \ito (X \join Y) \quad i_1 \from Y \ito (X \join Y). \]
Thus, we may view the functors $\uvar \join X$ and $X \join \uvar$ as taking values in the slice under $X$, i.e.~
\[ \uvar \join X, X \join \uvar \from \sSet \to X \slice \sSet . \]
In this sense, both functors admit right adjoints (\cite[Prop.~3.12]{joyal:theory-of-qcats}).
These are the \emph{slice over} and \emph{slice under} functors, respectively.
Explicitly, these functors send an object $(Y, f \from X \to Y)$ to the simplicial sets $f \slice Y$ and $Y \slice f$ defined by
\[ \begin{array}{@{(}r@{ \ \slice  \ }l@{ )_n \ := \ }l}
	f & Y & X \slice \sSet \left( (X \join \simp{n}, i_0), (Y, f) \right) \\
	Y & f & X \slice \sSet \left( (\simp{n} \join X, i_1), (Y, f) \right).
\end{array} \]
There are natural projection maps
\[ f \slice Y \to Y \quad Y \slice f \to Y \]
defined by restriction along the inclusion $\simp{n} \to \simp{n} \join X$.

If $Y$ is a quasicategory then both the slice over and the slice under $Y$ are quasicategories (\cite[Cor.~3.20]{joyal:theory-of-qcats}).
As such, we occasionally refer to them as \emph{slice quasicategories}.

Recall that \emph{mapping spaces} in a quasicategory may be constructed in multiple equivalent ways, one of which is as a certain fiber of the slice under the domain (or over the codomain).
\begin{definition}
	Let $x, y$ be objects in a quasicategory $\mc{C}$.
	\begin{enumerate}
		\item The \emph{left mapping space} $\map_L(\mc{C}, x, y)$ from $x$ to $y$ in $\mc{C}$ is the pullback
		\[ \begin{tikzcd}
			\map_L(\mc{C}, x, y) \ar[r] \ar[d] \ar[rd, phantom, "\lrcorner" very near start] & x \slice \mc{C} \ar[d] \\
			\simp{0} \ar[r, "y"] & \mc{C}
		\end{tikzcd} \]
		\item The \emph{right mapping space} $\map_R(\mc{C},x, y)$ from $x$ to $y$ in $\mc{C}$ is the pullback 
		\[ \begin{tikzcd}
			\map_R(\mc{C}, x, y) \ar[r] \ar[d] \ar[rd, phantom, "\lrcorner" very near start] & \mc{C} \slice y \ar[d] \\
			\simp{0} \ar[r, "x"] & \mc{C}
		\end{tikzcd} \]
	\end{enumerate}
\end{definition}
\begin{proposition}[{\cite[Prop.~1.2.2.3 \& Cor.~4.2.1.8]{lurie:htt}}]
	The left and right mapping spaces are homotopy-equivalent Kan complexes. \qed
\end{proposition}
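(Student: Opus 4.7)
The plan has two parts: showing both constructions are Kan complexes, then exhibiting a homotopy equivalence between them.

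\textbf{Kan complex property.} First I would show that $x \slice \mc{C} \to \mc{C}$ is a left fibration and, dually, that $\mc{C} \slice y \to \mc{C}$ is a right fibration. This is a standard consequence of the join/slice adjunction together with Joyal's pushout-join lemma: the pushout-join of $\{0\} \ito \simp{1}$ with any inner horn inclusion is left anodyne, and adjointing across gives the slice projection the right lifting property against $\horn{n}{k} \ito \simp{n}$ for $0 \leq k < n$. Since left fibrations are stable under pullback, the map $\map_L(\mc{C}, x, y) \to \simp{0}$ is a left fibration. Any left fibration whose base is a Kan complex is itself a Kan fibration, so the total space $\map_L(\mc{C}, x, y)$ is a Kan complex. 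The case of $\map_R(\mc{C}, x, y)$ is entirely dual.

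\textbf{Homotopy equivalence.} To compare the two models, I would introduce a two-sided version
\[ \operatorname{Hom}(x, y) := \{x\} \times_{\mc{C}} \fcat{\simp{1}}{\mc{C}} \times_{\mc{C}} \{y\}, \]
where the two maps $\fcat{\simp{1}}{\mc{C}} \to \mc{C}$ are the endpoint evaluations. An edge from $x$ to $y$ can simultaneously be viewed as an object of $x \slice \mc{C}$ sitting over $y$ and as an object of $\mc{C} \slice y$ sitting over $x$; these identifications assemble into canonical comparison maps $\operatorname{Hom}(x, y) \to \map_L(\mc{C}, x, y)$ and $\operatorname{Hom}(x, y) \to \map_R(\mc{C}, x, y)$. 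The key claim is that each comparison map is a trivial Kan fibration; this would link the two mapping spaces by a zig-zag of trivial fibrations of Kan complexes, yielding the desired homotopy equivalence.

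The main obstacle is establishing that these comparison maps are trivial fibrations. This ultimately rests on the fact that the endpoint inclusions $\{0\}, \{1\} \ito \simp{1}$ are Joyal trivial cofibrations, so that taking internal hom into $\mc{C}$ and then fibering over the appropriate vertex produces trivial Kan fibrations between the resulting fibers. The delicate combinatorial point is matching the slice construction (defined via the join $\simp{n} \join \simp{0}$) with the corresponding fiber of $\fcat{\simp{1}}{\mc{C}}$ at the level of $n$-simplices; this is a standard but nontrivial calculation in Joyal's theory of quasicategories, and once carried out completes the proof.
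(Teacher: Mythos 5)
The paper does not actually prove this statement---it is quoted with a citation to \cite[Prop.~1.2.2.3 \& Cor.~4.2.1.8]{lurie:htt}---so what matters is whether your sketch reconstructs a correct argument. Your first half does: the slice projection $x \slice \mc{C} \to \mc{C}$ is a left fibration by the join/pushout-join adjunction argument, left fibrations are stable under pullback, and a left fibration over $\simp{0}$ is a Kan fibration; dually for $\map_R$. That is exactly how Prop.~1.2.2.3 goes.

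The comparison step, however, has a genuine gap. The canonical simplicial maps relating the two-sided space $\operatorname{Hom}(x,y) = \{x\} \times_{\mc{C}} \fcat{\simp{1}}{\mc{C}} \times_{\mc{C}} \{y\}$ to the one-sided ones go in the opposite direction from what you wrote: an $n$-simplex of $\map_R(\mc{C},x,y)$ is a map $\simp{n} \join \simp{0} \to \mc{C}$, and precomposition with the natural quotient $\simp{n} \times \simp{1} \to \simp{n} \join \simp{0}$ (collapsing $\simp{n} \times \{1\}$) yields inclusions $\map_R(\mc{C},x,y) \ito \operatorname{Hom}(x,y)$ and, dually, $\map_L(\mc{C},x,y) \ito \operatorname{Hom}(x,y)$. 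There is no canonical map out of $\operatorname{Hom}(x,y)$: the evident degreewise section of the quotient fails naturality in $[n]$ (already for the two cofaces $[0] \to [1]$ it picks out the diagonal edge of $\simp{1} \times \simp{1}$ in one case and a vertical edge in the other), so ``viewing an edge both ways'' does not assemble into a simplicial map. Moreover the comparison maps cannot be trivial Kan fibrations: they are monomorphisms, and a monomorphism with the right lifting property against $\bd\simp{n} \ito \simp{n}$ is an isomorphism, which these are not. The correct statement (this is \cite[Cor.~4.2.1.8]{lurie:htt}) is that these inclusions are weak homotopy equivalences of Kan complexes, and the substantive input is the comparison of the ordinary slice $\mc{C} \slice y$ with the fat slice (\cite[Prop.~4.2.1.5]{lurie:htt}): the inclusion is a categorical equivalence compatible with the right-fibration projections to $\mc{C}$, hence induces homotopy equivalences on fibers over $x$. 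Your proposed justification via ``the endpoint inclusions $\{0\}, \{1\} \ito \simp{1}$ are Joyal trivial cofibrations'' is also false---they are Kan--Quillen but not Joyal weak equivalences---so this step needs to be replaced by the slice-versus-fat-slice comparison (or a direct left/right-anodyne argument), not merely patched.
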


Using slice quasicategories, one may define the notion of cones over (and under) a diagram, as well as limits and colimits in a quasicategory.
\begin{definition}
	Let $\mc{C}$ be a quasicategory and $f \from K \to \mc{C}$ be a simplicial map.
	\begin{enumerate}
		\item An \emph{initial object} in $\mc{C}$ is an object $x \in \mc{C}$ such that any map $f \from \bd \simp{n} \to \mc{C}$ satisfying $\restr{f}{\{ 0 \}} = x$ can be lifted to a map $\simp{n} \to \mc{C}$.
		\item A \emph{cone} under $f$ is a lift of $f$ to a map $K \join \simp{0} \to \mc{C}$, i.e.~an object of $f \slice \mc{C}$.
		\item The \emph{colimit cone} of $f$ is the initial object of $f \slice \mc{C}$.
	\end{enumerate}
\end{definition}
These definitions dualize in a straightforward way.
\begin{definition}
	Let $\mc{C}$ be a quasicategory and $f \from K \to \mc{C}$ be a simplicial map. 
	\begin{enumerate}
		\item A \emph{terminal object} in $\mc{C}$ is an object $x \in \mc{C}$ such that any map $f \from \bd \simp{n} \to \mc{C}$ satisfying $\restr{f}{\{ n+1 \}} = x$ can be lifted to a map $\simp{n} \to \mc{C}$.
		\item A \emph{cone} over $f$ is a lift of $f$ to a map $\simp{0} \join K \to \mc{C}$, i.e.~an object of $\mc{C} \slice f$.
		\item The \emph{limit cone} of $f$ is the terminal object of $\mc{C} \slice f$.
	\end{enumerate}
\end{definition}
Note that the mapping space between any two initial (or terminal) objects is contractible \cite[Prop.~4.4]{joyal:qcat-kan}, hence any two colimits (or limits) of $f$ are equivalent.

\section{Marked simplicial sets and localization} \label{sec:marked-sSet}

Our final section of preliminaries describes the analogues of marked categories and localization for quasicategories.
Mainly, we are interested in the category of marked simplicial sets and the model structure thereon.
We begin with the definition of a marked simplicial set.
\begin{definition}
	A \emph{marked simplicial set} is a pair $(X, W)$ where $X$ is a simplicial set and $W \subseteq X_1$ is a subset of 1-simplices which contains all degenerate 1-simplices.
\end{definition}
We write $\msSet$ for the category of marked simplicial sets and simplicial maps which preserve marked 1-simplices.
This category is Cartesian closed, and we write $(-)^{(X, W)} \from \msSet \to \msSet$ for the left adjoint to the functor $(X, W) \times - \from \msSet \to \msSet$.
Moreover, if $(X', W')$ is a marked quasicategory then $(X', W')^{(X, W)}$ is a quasicategory for any $(X, W) \in \msSet$ \cite[Rem.~3.1.3.1]{lurie:htt}.

We make use of the following functors between simplicial sets and marked simplicial sets.
\begin{proposition}
	There is an adjoint quadruple 
	\[ (\minm{(\uvar)} \adj U \adj \maxm{(\uvar)} \adj \mcore) \]
	between marked simplicial sets and simplicial sets where
	\begin{itemize}
		\item $\minm{(-)} \from \sSet \to \msSet$ is the \emph{minimal marking} functor which sends $X$ to the pair $(X, \degen{}{0} X_0)$, where $\degen{}{0} X_0$ denotes the collection of degenerate 1-simplices in $X$;
		\item $U \from \msSet \to \sSet$ is the forgetful functor which sends a marked simplicial set $(X, W)$ to its underlying simplicial set $X$.
		\item $\maxm{(-)} \from \sSet \to \msSet$ is the \emph{maximal marking} functor which sends $X$ to the pair $(X, X_1)$; and
		\item $\mcore \from \msSet \to \sSet$ is the \emph{marked core} which sends $(X, W)$ to the maximal simplicial subset of $X$ whose 1-simplices are all marked. \qed
	\end{itemize}
\end{proposition}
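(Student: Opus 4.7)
The plan is to verify each of the three adjunctions directly by establishing a natural bijection on hom-sets, exploiting the fact that each marking condition either becomes automatic or encodes a simple subsimplicial-set inclusion. All four functors are obviously functorial, so the naturality of the bijections (in both arguments) will be clear once the pointwise bijection is checked.

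First, for $\minm{(\uvar)} \adj U$, I would observe that a marked simplicial map $\minm{X} \to (Y, W)$ is a simplicial map $f \from X \to Y$ such that every marked $1$-simplex of $\minm{X}$ lands in $W$. Since the marked $1$-simplices of $\minm{X}$ are exactly the degenerate ones, and simplicial maps always send degenerate simplices to degenerate simplices (which lie in $W$ by definition of a marked simplicial set), this condition is vacuous. Hence the hom-sets $\msSet(\minm{X}, (Y, W))$ and $\sSet(X, U(Y, W))$ coincide as subsets of the set of maps of underlying graded sets.

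Second, for $U \adj \maxm{(\uvar)}$, a marked simplicial map $(X, W) \to \maxm{Y}$ is a simplicial map $X \to Y$ whose marked $1$-simplices (those in $W$) land in marked $1$-simplices of $\maxm{Y}$; but every $1$-simplex of $\maxm{Y}$ is marked, so this is again automatic and yields the desired bijection with $\sSet(U(X, W), Y)$.

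Third, for $\maxm{(\uvar)} \adj \mcore$, I would first note that $\mcore(Y, W)$ is well-defined as the union of all simplicial subsets of $Y$ whose $1$-simplices all lie in $W$; this union is again such a subset because the condition ``every $1$-simplex lies in $W$'' is preserved under unions. A marked simplicial map $\maxm{X} \to (Y, W)$ is then a simplicial map $f \from X \to Y$ such that every $1$-simplex of $X$ maps into $W$, which is equivalent to $f$ factoring through $\mcore(Y, W)$; this produces the required bijection with $\sSet(X, \mcore(Y, W))$.

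There is no substantive obstacle here; the only mild point requiring care is the well-definedness of $\mcore$ as a right adjoint, i.e., that the collection of simplicial subsets of $Y$ whose $1$-simplices all lie in $W$ has a maximum (which follows from closure under unions, as noted). The naturality of all three bijections in both variables is immediate from the fact that all bijections are identities on underlying data.
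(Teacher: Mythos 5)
Your proof is correct: all three hom-set bijections are verified exactly as one should, including the only nontrivial points (degenerate $1$-simplices are automatically marked in the codomain, and $\mcore$ is well-defined and receives the image of any map out of a maximally marked object). The paper states this proposition without proof, and your direct verification is precisely the standard argument it leaves implicit.
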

Additionally, we have the \emph{natural marking} functor 
\[ \natm{(\uvar)} \from \sSet \to \msSet \]
which sends $X$ to the pair $(X, W)$ where $W$ is the set of $J$-equivalences in $X$.
The natural marking is functorial as simplicial maps preserve $J$-equivalences.
\begin{remark}
	The minimal marking functor $\minm{(-)} \from \sSet \to \msSet$ admits a further left adjoint which sends a marked simplicial set $(X, W)$ to the quotient $X / W$.
	That is, the pushout
	\[ \begin{tikzcd}
		\coprod\limits_{w \in W} \simp{1} \ar[r, "w"] \ar[d] \ar[rd, phantom, "\ulcorner" very near end] & X \ar[d] \\
		\coprod\limits_{w \in W} \simp{0} \ar[r] & X / W
	\end{tikzcd} \]
	However, we will not make use of this functor.
\end{remark}

The category $\msSet$ is a reflective subcategory of a presheaf category.
The representable presheaves are the minimally marked simplices $\minm{(\simp{0})}, \minm{(\simp{1})}, \minm{(\simp{2})}, \dots$ along with the maximally marked 1-simplex $\maxm{(\simp{1})}$.
In particular, every marked simplicial set is a canonical colimit of representable presheaves.

We now discuss generalizations of closure under composition and closure under 2-out-of-3 to the quasicategorical setting.
\begin{definition}
	Let $(X, W)$ be a marked simplicial set.
	\begin{enumerate}
		\item The set $W$ is \emph{weakly closed under composition} if every map $\maxm{(\horn{2}{1})} \to (X, W)$ lifts to a map $\maxm{(\simp{2})} \to (X, W)$.
		\item The set $W$ is \emph{strongly closed under composition} if, given any 2-simplex $u \in X_2$ whose $\face{}{0}$- and $\face{}{2}$-faces are marked, we have that its $\face{}{1}$-face is also marked.
		\item The set $W$ is \emph{closed under 2-out-of-3} if, given a 2-simplex $u \in X_2$ such that any two faces of $u$ are marked, the third face is also marked.
	\end{enumerate}
\end{definition}
\begin{remark}
	We can rephrase the above properties in terms of properties of the marked core of $(X, W)$.
	Namely, the set $W$ is
	\begin{enumerate}
		\item weakly closed under composition if and only if the marked core $\mcore(X, W)$ has the right lifting property with respect to the horn inclusion $\horn{2}{1} \ito \simp{2}$;
		\item strongly closed under composition if and only if the marked core inclusion ${\mcore(X, W)} \ito X$ is an inner fibration;
		\item closed under 2-out-of-3 if and only if the marked core inclusion $\mcore(X, W) \ito X$ is a Kan fibration.
	\end{enumerate}
	In particular, closure under 2-out-of-3 implies strong closure under composition.

	If $X$ is a quasicategory then condition (1) is equivalent to $\mcore(X, W)$ being a quasicategory.
	Thus, if $W$ is strongly closed under composition then it is weakly closed under composition.
	This is not true in general, with the maximally marked horn $\maxm{(\horn{2}{1})}$ providing a counter-example.
\end{remark}

Note the join of simplicial sets naturally ascends to a monoidal product on marked simplicial sets as follows: 
given marked simplicial sets $(X, W)$ and $(X', W')$, we write $(X, W) \join (X', W')$ for the marking on $X \join X'$ given by the images of $W$ and $W'$ under the join inclusions, respectively.
The induced endofunctors
\[ - \join (X, W) , (X, W) \join - \from \msSet \to \msSet \]
have right adjoint defined on an object $(X', W', f\from (X, W) \to (X', W'))$ by
\[ \begin{array}{@{(}r@{ \ \slice  \ }l@{, W')_n \ := \ }l}
	f & X' & (X, W) \slice \msSet \left( (\simp{n} \join X, i_2), (Y, f) \right) \\
	X' & f & (X, W) \slice \msSet \left( (X \join \simp{n}, i_1), (Y, f) \right)
\end{array} \]
where a 1-simplex $(X, W) \join \simp{1} \to (X', W')$ (or $\simp{1} \join (X, W) \to (X', W')$) is marked if it ascends to a map $(X, W) \join \maxm{(\simp{1})} \to (X', W')$ (or $\maxm{(\simp{1})} \join (X, W) \to (X', W')$).
That is, the marked simplices are created by the pre-image of $W$ under the projection map $(f \slice X', W') \to (X', W')$.

We now define localization of quasicategories.
\begin{definition} \label{def:infty-localization}
	Let $(\mc{C}, W)$ be a marked quasicategory.
	The \emph{localization} of $\mc{C}$ at $W$ is a marked map
	\[ \gamma \from (\mc{C}, W) \to \natm{\mc{C}[W^{-1}]} \]
	from $\mc{C}$ to some quasicategory $\mc{C}[W^{-1}]$ with the property that, for any quasicategory $\mc{D}$, the induced map
	\[ \gamma^* \from \sSet(\mc{C}[W^{-1}], \mc{D}) \to \msSet((\mc{C}, W), \natm{\mc{D}}) \]
	is an equivalence of quasicategories.
\end{definition}
\begin{remark}
	We distinguish two ``edge-cases'' of localization.
	First, when $(\mc{C}, W) = \natm{\mc{C}}$, the localization is an equvalence in the Joyal model structure.
	Second, when $(\mc{C}, W) = \maxm{\mc{C}}$, the localization is a fibrant replacement of $\mc{C}$ in the Kan--Quillen model structure.
	This follows from the proof of \cite[Prop.~7.1.3]{cisinski:higher-categories}, which shows that the localization fits into (and can be computed as) a homotopy pushout
	\[ \begin{tikzcd}
		\mcore(\mc{C}, W) \ar[r, hook] \ar[d] & \mc{C} \ar[d, "\gamma"] \\
		\Ex^\infty \left( \mcore(\mc{C}, W) \right) \ar[r] & \mc{C}[W^{-1}]
	\end{tikzcd} \]
	of quasicategories.
	This also shows the localization always exists and is unique up to categorical equivalence.
\end{remark}
For an ordinary marked category $(\mc{C}, W)$, it is generally not the case that the ``$\infty$-localization'' of $(N\mc{C}, W)$ (in the sense of \cref{def:infty-localization}) is categorically-equivalent to the nerve of its ``1-localization'' (in the sense of \cref{def:ordinary-localization}).
For instance, every $\infty$-groupoid arises as the localization of a maximally-marked poset, but not every $\infty$-groupoid is weakly equivalent (in the Joyal model structure) to the nerve of a category.
However, when passing to homotopy categories, the $\infty$-localization of $(N\mc{C}, W)$ becomes the 1-localization of $(\Ho N \mc{C}, W) = (\mc{C}, W)$ (cf.~\cite[Rem.~7.1.6]{cisinski:higher-categories}).

The corresponding notion of homotopy for marked simplicial sets is given by \emph{marked homotopies}.
\begin{definition} \leavevmode
	\begin{enumerate}
		\item For marked maps $f, g \from (X, W) \to (X', W')$, a \emph{marked homotopy} from $f$ to $g$ is a marked map $\alpha \from (X, W) \times \maxm{(\simp{1})} \to (X', W')$ such that $\restr{\alpha}{X \times \{ 0 \}} = f$ and $\restr{\alpha}{X \times \{ 1 \}} = g$.
		\item A map $f \from (X, W) \to (X', W')$ is a \emph{marked homotopy equivalence} if there exists $g \from (X', W') \to (X, W)$ along with marked homotopies $gf \sim \id[(X, W)]$ and $fg \sim \id[(X', W')]$.
	\end{enumerate}
\end{definition}

The following result shows that marked homotopies between preorders are preserved by the join.
\begin{proposition} \label{preorder-marked-htpy-join}
	For any marked simplicial set $(X, W)$, the functors
	\[ - \join (X, W), (X, W) \join - \from \msSet \to \msSet \]
	preserve marked homotopies between preorders (and hence marked homotopy equivalences between preorders).
\end{proposition}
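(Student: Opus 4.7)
The plan is to construct the required homotopy directly and to verify that it preserves markings. I will treat the functor $- \join (X, W)$; the case $(X, W) \join -$ is dual. Fix marked preorders $(P, W_P)$ and $(Q, W_Q)$, marked maps $f, g \from (P, W_P) \to (Q, W_Q)$, and a marked homotopy $\alpha \from (P, W_P) \times \maxm{(\simp{1})} \to (Q, W_Q)$ from $f$ to $g$. The goal is to produce a marked homotopy $\tilde{\alpha} \from ((P, W_P) \join (X, W)) \times \maxm{(\simp{1})} \to ((Q, W_Q) \join (X, W))$ from $f \join \id$ to $g \join \id$, built so as to act by $\alpha$ on the $P$-part and by the identity on the $X$-part.

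To define $\tilde{\alpha}$ on an $n$-simplex $(s, \tau)$ of $(P \join X) \times \simp{1}$, I would split on the coproduct decomposition $(P \join X)_n = P_n \sqcup X_n \sqcup \coprod_{i + j + 1 = n} P_i \times X_j$: set $\tilde{\alpha}(p, \tau) := \alpha(p, \tau)$ for $s = p \in P_n$; set $\tilde{\alpha}(x, \tau) := x$ for $s = x \in X_n$; and set $\tilde{\alpha}((p, x), \tau) := (\alpha(p, \tau|_{[0, i]}), x)$ for a mixed join simplex $(p, x) \in P_i \times X_j$, where $\tau|_{[0, i]}$ denotes the restriction of $\tau \from [n] \to [1]$ along the inclusion $[0, i] \ito [n]$. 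I would then verify that $\tilde{\alpha}$ is a simplicial map: the key calculations are $(\face{}{k} \tau)|_{[0, i]} = \tau|_{[0, i]}$ for $k > i$ and $(\face{}{k} \tau)|_{[0, i - 1]} = \face{}{k}(\tau|_{[0, i]})$ for $k \leq i$, which match the face formulae for join simplices exactly; degeneracies are analogous.

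The main content is the marking check. A marked $1$-simplex of $(P \join X) \times \maxm{(\simp{1})}$ projects to a marked $1$-simplex of $P \join X$, and by construction of the join marking, this is the union of the images of $W_P$ and $W$ under the two join inclusions together with degeneracies. Crucially, no ``cross'' $1$-simplex from $P$ to $X$ is marked, so it suffices to consider three cases: if the projection lies in $P$ with class in $W_P$, then $\tilde{\alpha}$ acts as $\alpha$ and preserves markings by hypothesis; if it lies in $X$ with class in $W$, then $\tilde{\alpha}$ acts as the identity on $X$; and degenerate $1$-simplices are routinely handled (for the $P$-part, using that $\alpha$ sends a degenerate $P$-$1$-simplex paired with any $\simp{1}$-$1$-simplex to a marked edge of $Q$). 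Restricting $\tilde{\alpha}$ along $(P \join X) \times \{0\}$ and $(P \join X) \times \{1\}$ recovers $f \join \id$ and $g \join \id$ by direct inspection. The main subtlety I expect is correctly identifying the marking on $P \join X$ — specifically, that cross edges carry no marking — which is precisely what makes the naive ``$\alpha$ on $P$, identity on $X$'' formula preserve markings without further modification.
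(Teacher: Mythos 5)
Your construction is correct, and the homotopy you build is in fact the very same map as the paper's: the paper defines an interchange map $(P \join \minm{(\simp{m})}) \times \maxm{(\simp{1})} \to (P \times \maxm{(\simp{1})}) \join \minm{(\simp{m})}$ as a map of preorders (this is the only place the preorder hypothesis enters), extends it by colimits in the $(X,W)$-variable to a natural transformation, and then composes with the given homotopy joined with $(X,W)$; unwinding that composite on a mixed join simplex gives exactly your formula $((p,x),\tau) \mapsto (\alpha(p,\tau|_{[0,i]}), x)$. The difference lies in how simpliciality is verified: the paper gets it for free from the preorder-level description plus extension by colimits, whereas you check faces, degeneracies, and markings directly against the join decomposition. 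Your route costs some bookkeeping (the identities $(\face{}{k}\tau)|_{[0,i]} = \tau|_{[0,i]}$ for $k > i$ and $(\face{}{k}\tau)|_{[0,i-1]} = \face{}{k}(\tau|_{[0,i]})$ for $k \leq i$ are the right ones, and the boundary cases where a face of a mixed simplex lands in the pure $P$- or pure $X$-part also check out), but it buys generality: nothing in your argument uses that $P$ and $Q$ are preorders, so you in fact prove the statement with ``preorders'' replaced by arbitrary marked simplicial sets. Your marking analysis is also exactly the relevant point: since cross edges of the join carry no marking and the product marking requires both coordinates marked, the only marked edges to treat are those coming from $W_P$ (handled by markedness of $\alpha$) and those coming from $W$ (handled by the identity on the $X$-part), with degenerate edges subsumed in these two cases because markings contain degeneracies.
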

\begin{proof}
	We prove the result for $- \join X$, as the proof for $X \join -$ is analogous.

	For $m \geq 0$ and a preorder $P$, consider the map 
	\[ \alpha \from (P \join \minm{(\simp{m})}) \times \maxm{(\simp{1})} \to (P \times \maxm{(\simp{1})}) \join \minm{(\simp{m})} \] 
	defined, as a preorder map, by
	\[ \alpha (x, t) := \begin{cases}
		(x, t) & x \in P \\
		x & x \in \simp{m}.
	\end{cases} \]
	It is straightforward to verify this map preserves markings and induces a natural transformation
	\[ \begin{tikzcd}
		 {} & P \slice \msSet \ar[d, Rightarrow, "\alpha", shorten=1.5ex] \ar[rd, "{- \times \id[\maxm{(\simp{1})}]}"] & {} \\
		\msSet \ar[ur, "{P \join -}"] \ar[rr, "{(P \times \maxm{(\simp{1})}) \join -}"'] & {} & (P \times \maxm{(\simp{1})}) \slice \msSet
	\end{tikzcd} \]
	via extension by colimits.
	With this, we have that for any marked homotopy $H \from P \times \maxm{(\simp{1})} \to Q$ from $f$ to $g$ and $(X, W) \in \msSet$, the composite
	\[ (P \join (X, W)) \times \maxm{(\simp{1})} \xrightarrow{\alpha_{(X, W)}} (P \times \maxm{(\simp{1})}) \join (X, W) \xrightarrow{H \join (X, W)} Q \join (X, W) \]
	is a marked homotopy from $f \join (X, w)$ to $g \join (X, W)$.
\end{proof}

Recall from \cite{lurie:htt} that $\msSet$ admits a model structure (the model structure for Cartesian fibrations over $\simp{0}$).
\begin{theorem}[{\cite[Props.~3.1.3.7 \& 3.1.4.1]{lurie:htt}}]
	The category $\msSet$ admits a model structure where
	\begin{itemize}
		\item cofibrations are monomorphisms;
		\item fibrant objects are quasicategories marked at equivalences. \qed
	\end{itemize}
\end{theorem}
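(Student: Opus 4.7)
The strategy is to apply J.\ Smith's recognition theorem for combinatorial model structures to the locally presentable category $\msSet$. I would take as generating cofibrations the set $I = \{\minm{(\bd\simp{n})} \ito \minm{(\simp{n})} : n \geq 0\} \cup \{\minm{(\simp{1})} \ito \maxm{(\simp{1})}\}$, whose saturation is exactly the class of monomorphisms in $\msSet$, giving the desired cofibrations. For generating trivial cofibrations, I would assemble a set $J$ with three flavors of morphisms: (i) minimally-marked inner horn inclusions $\minm{(\horn{n}{k})} \ito \minm{(\simp{n})}$ for $0 < k < n$, to make the underlying simplicial set a quasicategory; (ii) appropriately partially-marked extensions of the outer horns $\horn{2}{0}$ and $\horn{2}{2}$ into $\simp{2}$ (with outer edges marked), forcing closure of the marking under composition and $2$-out-of-$3$; and (iii) the inclusion $\minm{\{0\}} \ito \maxm{J}$, which will force every marked edge to be a genuine equivalence in the underlying quasicategory.

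Next I would define the weak equivalences as those morphisms $f$ that induce weak categorical equivalences on marked cores of internal homs $\mcore\bigl((Y,U)^{(-, W)}\bigr)$ for every $J$-injective target $(Y, U)$. Smith's theorem then requires verifying: (a) $2$-out-of-$3$ and closure under retracts for weak equivalences (formal from the definition); (b) every map with the right lifting property against $I$ is a weak equivalence, via a standard argument that trivial fibrations induce trivial fibrations on internal hom objects; and (c) every $J$-cell map is a weak equivalence. The last of these requires an explicit understanding of the $J$-injective objects.

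The main obstacle---and the technical bulk of the proof---is therefore identifying the $J$-injective objects with precisely the quasicategories marked at equivalences. The forward direction is engineered into the construction of $J$. For the converse, given a quasicategory $X$ marked exactly at its equivalences, I would verify lifts against each generator in $J$: the inner-horn lifts exist because $X$ is a quasicategory, and the resulting fillers carry the required markings because equivalences in a quasicategory are closed under composition and $2$-out-of-$3$ (a standard consequence of \cref{joyal-model-str} together with \cref{e1-equiv-j-equiv-eq}); the lift against $\minm{\{0\}} \ito \maxm{J}$ amounts to extending a marked edge to a map out of $\maxm{J}$, which is the content of \cref{e1-equiv-j-equiv-eq}. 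Once this identification is in place, verifying step (c) above reduces to checking directly that each generator in $J$ induces an equivalence of Kan complexes on mapping spaces into any fibrant object, which may be done on each of the three families separately, completing the application of Smith's theorem.
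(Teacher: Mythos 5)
The paper itself offers no argument for this statement---it is quoted directly from \cite[Props.~3.1.3.7 \& 3.1.4.1]{lurie:htt}---so your proposal has to stand on its own; to your credit, its architecture (a recognition theorem, cofibrations generated by the minimally marked boundary inclusions together with $\minm{(\simp{1})} \ito \maxm{(\simp{1})}$, weak equivalences detected by mapping into the prospective fibrant objects) is essentially the architecture of Lurie's actual proof, and the cofibration half is correct: the saturation of your $I$ is exactly the monomorphisms. The gap is in your set $J$, and it is fatal because both your class $W$ and your step (c) are defined in terms of the $J$-injective objects. Concretely, lifting against $\minm{(\simp{0})} \ito \maxm{J}$ is vacuous: for \emph{any} marked simplicial set, the constant map at the chosen vertex is a lift, since every edge in its image is degenerate and hence marked. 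So your generator (iii) forces nothing at all; the generator that forces ``marked $\Rightarrow$ equivalence'' is $\maxm{(\simp{1})} \ito \maxm{J}$ (naturally marked quasicategories do lift against this, since every edge of $J$ becomes invertible in its homotopy category and the extension exists by \cref{e1-equiv-j-equiv-eq}).

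More seriously, even after that repair nothing in your $J$ forces the \emph{reverse} inclusion, that every equivalence is marked. The properties your families are designed to enforce---underlying quasicategory, marking closed under composition and 2-out-of-3, marked edges invertible---are strictly weaker than ``marked at the equivalences'': for instance $E^1$ with only its degenerate edges marked satisfies all of them (in the nerve of a groupoid, a $2$-simplex with two identity faces has identity third face), yet its edge $0 \to 1$ is an unmarked equivalence. This is precisely why Lurie's marked anodyne generators include the maps $K^\flat \to K^\sharp$ for every Kan complex $K$, together with marked outer horns in \emph{all} dimensions, not just $n = 2$ (see \cite[Def.~3.1.1.1]{lurie:htt}); in the present setting a finite substitute such as $\minm{J} \ito \maxm{J}$ would do, since any map $J \to \mc{C}$ lands in equivalences and every equivalence extends over $J$. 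Without such generators, the identification ``$J$-injective $=$ quasicategory marked at equivalences''---the step you yourself single out as the technical bulk---is false, the class $W$ you define by testing against $J$-injective targets is too small, and the construction cannot yield the stated fibrant objects. (A smaller point: for Smith's theorem it is not enough to check that the generators of $J$ are weak equivalences; one needs the intersection of the cofibrations with $W$ to be stable under pushout and transfinite composition, which is where the real work in Lurie's argument lies.)
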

We also note that marked homotopy equivalences are weak equivalences in this model structure.
The key property of this model structure which we wish to use is the following.
\begin{proposition}[{\cite[Prop.~1.1.3]{hinich:dwyer-kan-localization-revisited}}] \label{msSet-weq-is-localization}
	A marked map of the form $(\mc{C}, W) \to \natm{\mc{D}}$ between quasicategories is a weak equivalence if and only if it is the localization of $\mc{C}$ at $W$. \qed
\end{proposition}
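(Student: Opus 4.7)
The plan is to exploit that Lurie's model structure on $\msSet$ is Cartesian closed, so internal homs compute derived mapping spaces whenever the target is fibrant. Since every object of $\msSet$ is cofibrant and $\natm{\mc{E}}$ is fibrant for any quasicategory $\mc{E}$, the internal hom $\natm{\mc{E}}^{(X, W)}$ is a fibrant model for the derived mapping space $\mathbb{R}\mathrm{Map}_{\msSet}((X, W), \natm{\mc{E}})$. Applying the standard characterization of weak equivalences via derived mapping spaces out to fibrant objects, $\gamma \from (\mc{C}, W) \to \natm{\mc{D}}$ is a weak equivalence if and only if for every quasicategory $\mc{E}$ the induced map
\[ \gamma^* \from \natm{\mc{E}}^{\natm{\mc{D}}} \to \natm{\mc{E}}^{(\mc{C}, W)} \]
is a weak equivalence in $\msSet$.

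Next I would unpack these two internal homs. The left-hand object $\natm{\mc{E}}^{\natm{\mc{D}}}$ is naturally marked on the functor quasicategory $\mc{E}^{\mc{D}} = \sSet(\mc{D}, \mc{E})$, because equivalences in a functor quasicategory are exactly the pointwise equivalences. The right-hand object $\natm{\mc{E}}^{(\mc{C}, W)}$ has underlying simplicial set equal to $\msSet((\mc{C}, W), \natm{\mc{E}})$, i.e.\ the sub-simplicial-set of $\mc{E}^{\mc{C}}$ spanned by simplices whose $W$-edges land in equivalences of $\mc{E}$, and this is again naturally marked. Hence $\gamma$ is a weak equivalence in $\msSet$ iff the underlying map $\sSet(\mc{D}, \mc{E}) \to \msSet((\mc{C}, W), \natm{\mc{E}})$ is an equivalence of quasicategories for every quasicategory $\mc{E}$, which is precisely the universal property of \cref{def:infty-localization}.

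The main obstacle will be the last comparison step: passing from a weak equivalence in $\msSet$ between two fibrant (naturally marked) quasicategories to the statement that the underlying simplicial map is a Joyal equivalence. A clean route is to check that the forgetful functor $U \from \msSet \to \sSet$ is a right Quillen functor into the Joyal model structure that both preserves fibrant objects (carrying $\natm{\mc{D}}$ to $\mc{D}$) and preserves and reflects weak equivalences between them, since the characterization of weak equivalences between fibrant objects in the two structures agree. Once this comparison is in place, all remaining steps are standard model-categorical bookkeeping, and I would carry them out in the order above.
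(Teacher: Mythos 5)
The paper does not actually prove this statement --- it is quoted from Hinich, and your argument is essentially the standard one behind that citation, so in substance you are reconstructing the intended proof rather than diverging from it. Your outline is sound: over a point the fibrant objects of $\msSet$ are exactly the naturally marked quasicategories, every object is cofibrant, and Lurie's weak equivalences (Cartesian equivalences) are \emph{defined} by mapping into fibrant objects, so your first reduction is close to definitional once unwound; the identifications $U\bigl(\natm{\mc{E}}^{\natm{\mc{D}}}\bigr) \cong \sSet(\mc{D}, \mc{E})$ and $U\bigl(\natm{\mc{E}}^{(\mc{C},W)}\bigr) \cong \msSet((\mc{C},W),\natm{\mc{E}})$ then match the universal property of \cref{def:infty-localization} verbatim. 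Three points deserve to be made explicit. First, the identification of $U\bigl(\natm{\mc{E}}^{\natm{\mc{D}}}\bigr)$ with the full functor quasicategory uses that any functor of quasicategories preserves equivalences (so no simplex of $\mc{E}^{\mc{D}}$ is discarded by the marking condition), not only that equivalences in $\mc{E}^{\mc{D}}$ are pointwise. Second, if you do not want to quote Lurie's definition of Cartesian equivalences directly, the converse of your first reduction (``internal homs equivalent for all fibrant targets implies $\gamma$ is a weak equivalence'') needs a short argument, e.g.\ evaluating homotopy classes of maps from the cofibrant unit $\simp{0}$ to get bijections $[\natm{\mc{D}}, \natm{\mc{E}}] \to [(\mc{C},W), \natm{\mc{E}}]$ and then Yoneda in $\Ho(\msSet)$; the forward direction uses that $\msSet$ is a Cartesian monoidal model category, so $\natm{\mc{E}}^{(-)}$ is contravariantly right Quillen. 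Third, your ``main obstacle'' is real but easily closed: $U$ is right Quillen for the $\minm{(-)} \dashv U$ adjunction, so it preserves weak equivalences between fibrant objects, and it reflects them because a categorical equivalence $\natm{\mc{C}} \to \natm{\mc{D}}$ of naturally marked quasicategories is automatically a marked homotopy equivalence (its $J$-homotopy inverse and the homotopies preserve the natural marking, and the paper records that marked homotopy equivalences are weak equivalences in $\msSet$). With these details filled in, your proof is complete and matches the argument the paper delegates to Hinich.
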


This model structure is Quillen equivalent to the Joyal model structure via the minimal marking adjunction.
\begin{theorem}[{\cite[Prop.~3.1.5.3]{lurie:htt}}]
	The adjunction
	\[ \adjunct{\minm{(\uvar)}}{U}{\sSet}{\msSet} \]
	is a Quillen equivalence between the Joyal model structure on $\sSet$ and the model structure on $\msSet$. \qed
\end{theorem}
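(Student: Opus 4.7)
The plan is to verify the Quillen adjunction directly from the definitions, then establish the Quillen equivalence via the derived counit criterion using \cref{msSet-weq-is-localization}.

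For the Quillen adjunction, cofibrations in both model structures are the monomorphisms, and since $\minm{(-)}$ is identity on underlying simplicial sets, it preserves monomorphisms. To show that $\minm{(-)}$ also preserves trivial cofibrations, I would equivalently verify that $U$ preserves fibrations between fibrant objects. Fibrant objects of $\msSet$ have the form $\natm{\mc{C}}$ for a quasicategory $\mc{C}$, so $U\natm{\mc{C}} = \mc{C}$ is Joyal-fibrant. The characterization of fibrations in $\msSet$ between such fibrant objects via right lifting against the generating trivial cofibrations (inner horn inclusions together with the marked endpoint inclusions) translates under $U$ to the lifting conditions defining isofibrations of quasicategories, so $U$ of a fibration is a Joyal fibration.

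For the Quillen equivalence, it suffices (by the standard criterion, noting that every simplicial set is cofibrant) to verify that the derived counit $\minm{U\natm{\mc{C}}} = \minm{\mc{C}} \to \natm{\mc{C}}$ is a weak equivalence in $\msSet$ for every quasicategory $\mc{C}$. This map is the identity on underlying simplicial sets but compares the minimal marking (only degenerate 1-simplices) with the natural marking (all equivalences). By \cref{msSet-weq-is-localization}, it is a weak equivalence if and only if it exhibits $\mc{C}$ as the localization of itself at the degenerate 1-simplices; since these are identities in $\mc{C}$ and hence already invertible, the identity functor trivially satisfies the universal property of the localization.

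The main obstacle is the identification of fibrations between fibrant objects in the Quillen adjunction step, which requires carefully unpacking Lurie's generating trivial cofibrations of $\msSet$ and checking that the right lifting conditions descend cleanly under $U$ to the Joyal lifting conditions (inner anodyne extensions together with lifts against $\{0\} \ito J$). Once this technical step is in place, the rest of the argument follows directly from \cref{msSet-weq-is-localization}, with no further homotopical content to check.
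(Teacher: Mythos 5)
Your verification of the counit via \cref{msSet-weq-is-localization} is fine (the identity does exhibit $\mc{C}$ as its own localization at the degenerate edges), but the reduction you invoke is not a correct criterion, and this leaves a genuine gap in the Quillen-equivalence step. Cofibrancy of every simplicial set only lets you compute the derived counit as the ordinary counit $\minm{\mc{C}} \to \natm{\mc{C}}$; knowing that the derived counit is invertible at every fibrant object says exactly that the derived right adjoint $\mathbb{R}U$ is fully faithful, which is strictly weaker than being a Quillen equivalence. (For instance, for any left Bousfield localization the identity adjunction has invertible derived counit at all fibrant objects, yet it is a Quillen equivalence only when the localization is trivial.) To finish you must also check the derived unit condition: for every simplicial set $X$, the map $X \to U R(\minm{X})$ into the underlying simplicial set of a marked fibrant replacement is a weak categorical equivalence; equivalently, by the standard two-part criterion (Hovey, Prop.~1.3.16), you may keep the counit condition provided you additionally show that $\minm{(\uvar)}$ \emph{reflects} weak equivalences between (all, since all are cofibrant) simplicial sets. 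Neither is formal: reflection amounts to identifying marked homotopy classes of maps $\minm{X} \to \natm{\mc{D}}$ with $J$-homotopy classes of maps $X \to \mc{D}$ for every quasicategory $\mc{D}$ and knowing that Joyal equivalences are detected by these, which is precisely where the substantive comparison of the two model structures lives; your proposal never touches it.

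A secondary imprecision: in the Quillen-adjunction step you treat the marked anodyne maps (inner horns plus the marked endpoint inclusion) as generating trivial cofibrations of $\msSet$. They are not; as with the Joyal model structure, the trivial cofibrations of the marked model structure admit no such explicit generating set, and marked anodynes form a proper subclass. What you actually need is the separate (nontrivial) characterization from \cite[Prop.~3.1.5.3]{lurie:htt} and its surroundings that a map between fibrant objects of $\msSet$ is a fibration if and only if its underlying map is an isofibration of quasicategories; granting that input, your application of the Joyal--Tierney criterion (left adjoint preserves cofibrations, right adjoint preserves fibrations between fibrant objects) is correct. Note also that the paper offers no proof of this theorem at all, citing Lurie instead, so the issue here is not divergence from the paper's argument but the incompleteness of yours.
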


\chapter*{$(\infty, 1)$-calculus of fractions}
\cftaddtitleline{toc}{chapter}{$(\infty, 1)$-calculus of fractions}{}

\section{$(\infty, 1)$-calculus of fractions --- definition} \label{sec:calculus-def}

In this section, we introduce the definition of calculus of fractions for quasicategories.
Namely, the marked quasicategories satisfying calculus of (left or right) fractions will be those that satisfy a certain right lifting property.

To describe this right lifing property, we first give an auxiliary construction.
\begin{definition} \label{def:lnk} \leavevmode
	\begin{enumerate}
		\item For $n \geq 0$ and $0 \leq k \leq n$, define a subset $\dfLI$ of the powerset $\mathcal{P}[n]$, viewed as a simplicial set, by
		\[ \dfLI := \{ A \subseteq [n] \mid k \in A \}. \]
		A 1-simplex $A_0 \subseteq A_1$ in $\dfLI$ is marked if $\max(A_0) = \max(A_1)$.
		\item For $n \geq 0$ and $0 \leq k \leq n$, let $\dfRI$ be the opposite simplicial set of $\dfLI$, where a 1-simplex $A_0 \supseteq A_1$ is marked if $\min A_0 = \min A_1$.
		\item For $n \geq 0$ and $0 \leq k \leq n$, let $\dfLJ$ be the maximal simplicial subset of $\dfLI$ which omits the 0-simplex $[n]$.
		\item For $n \geq 0$ and $0 \leq k \leq n$, let $\dfRJ$ be the maximal simplicial subset of $\dfRI$ which omits the 0-simplex $[n]$.
	\end{enumerate}
\end{definition}
\begin{example}
	We depict the pairs $\LJ{2}{1} \subseteq \LI{2}{1}$, $\LJ{2}{2} \subseteq \LI{2}{2}$, and $\LJ{3}{1} \subseteq \LI{3}{1}$, respectively.
	\begin{table}[H]
		\centering
		\begin{subtable}{0.49\textwidth}
			\begin{tabular}{c@{ \ }c@{ \ }c}
				$\begin{tikzcd}
					\{ 1 \} \ar[r, "\sim"] \ar[d] & \{ 0, 1 \} \\
					\{ 1, 2 \}
				\end{tikzcd}$ & $\subseteq$ & $\begin{tikzcd}
					\{ 1 \} \ar[r, "\sim"] \ar[d] & \{ 0, 1 \} \ar[d] \\
					\{ 1, 2 \} \ar[r, "\sim"] & \{ 0, 1, 2 \}
				\end{tikzcd}$
			\end{tabular}
			\caption*{The inclusion $\LJ{2}{1} \subseteq \LI{2}{1}$.}
		\end{subtable} \hfill \begin{subtable}{0.49\textwidth}
			\begin{tabular}{c@{ \ }c@{ \ }c}
				$\begin{tikzcd}
					\{ 2 \} \ar[r, "\sim"] \ar[d, "\sim"'] & \{ 0, 2 \} \\
					\{ 1, 2 \}
				\end{tikzcd}$ & $\subseteq$ & $\begin{tikzcd}
					\{ 2 \} \ar[r, "\sim"] \ar[d, "\sim"'] & \{ 0, 2 \} \ar[d, "\sim"] \\
					\{ 1, 2 \} \ar[r, "\sim"] & \{ 0, 1, 2 \}
				\end{tikzcd}$
			\end{tabular}
			\caption*{The inclusion $\LJ{2}{2} \subseteq \LI{2}{2}$.}
		\end{subtable}
	\end{table}
	\begin{table}[H]
		\centering
		\[ \begin{array}{c c c}
			\begin{tikzcd}[cramped, column sep = small]
				\{ 1 \} \ar[rr, "\sim"] \ar[dd] \ar[rd] & {} & \{ 0, 1 \} \ar[dd] \ar[rd] & {} \\
				{} & \{ 1,2 \} \ar[rr, crossing over, "\sim" near start]  & {} & \{ 0, 1, 2 \} \\
				\{ 1,3 \} \ar[rr, "\sim" near start] \ar[rd, "\sim"'] & {} & \{0,1, 3 \} & {} \\
				{} & \{ 1, 2, 3 \} \ar[from=uu, crossing over] & {} & {}
			\end{tikzcd} & \subseteq & \begin{tikzcd}[cramped, column sep = small]
				\{ 1 \} \ar[rr, "\sim"] \ar[dd] \ar[rd] & {} & \{ 0, 1 \} \ar[dd] \ar[rd] & {} \\
				{} & \{ 1,2 \} \ar[rr, crossing over, "\sim" near start]  & {} & \{ 0, 1, 2 \} \ar[dd] \\
				\{ 1,3 \} \ar[rr, "\sim" near start] \ar[rd, "\sim"'] & {} & \{0,1, 3 \} \ar[rd, "\sim"] & {} \\
				{} & \{ 1, 2, 3 \} \ar[from=uu, crossing over] \ar[rr, "\sim"] & {} & \{ 0, 1, 2, 3 \}
			\end{tikzcd}
		\end{array} \]
		\caption*{The inclusion $\LJ{3}{1} \subseteq \LI{3}{1}$.}
	\end{table}
	We also depict the pairs $\RJ{2}{0} \subseteq \RI{2}{0}$, $\RJ{2}{1} \subseteq \RI{2}{1}$, and $\RJ{3}{2} \subseteq \RI{3}{2}$, respectively.
	\begin{table}[H]
		\centering
		\begin{subtable}[b]{0.49\textwidth}
			\[ \begin{array}{c@{ \ }c@{ \ }c}
				\begin{tikzcd}
					{} & \{ 0, 1 \} \ar[d, "\sim"] \\
					\{ 0, 2 \} \ar[r, "\sim"] & \{ 0 \}
				\end{tikzcd} & \subseteq & \begin{tikzcd}
					\{ 0, 1, 2 \} \ar[r, "\sim"] \ar[d, "\sim"'] & \{ 0, 1 \} \ar[d, "\sim"] \\
					\{ 0, 2 \} \ar[r, "\sim"] & \{ 0 \}
				\end{tikzcd}
			\end{array} \]
			\caption*{The inclusion $\RJ{2}{0} \subseteq \RI{2}{0}$}
		\end{subtable} \hfill \begin{subtable}[b]{0.49\textwidth}
			\[ \begin{array}{c@{ \ }c@{ \ }c}
				\begin{tikzcd}
					{} & \{ 0, 1 \} \ar[d] \\
					\{ 1, 2 \} \ar[r, "\sim"] & \{ 1 \}
				\end{tikzcd} & \subseteq & \begin{tikzcd}
					\{ 0, 1, 2 \} \ar[r, "\sim"] \ar[d] & \{ 0, 1 \} \ar[d] \\
					\{ 1, 2 \} \ar[r, "\sim"] & \{ 1 \}
				\end{tikzcd}
			\end{array} \]
			\caption*{The inclusion $\RJ{2}{1} \subseteq \RI{2}{1}$.}
		\end{subtable}
	\end{table}
	\begin{table}[H]
		\centering
		\[ \begin{array}{c c c}
			\begin{tikzcd}[cramped, column sep = small]
				{} & {} & \{ 1, 2, 3 \} \ar[dd, "\sim" near end] \ar[rd] & {} \\
				{} & \{ 0, 2, 3 \} \ar[rr, crossing over]  & {} & \{ 2, 3 \} \ar[dd, "\sim"] \\
				\{ 0, 1, 2 \} \ar[rr] \ar[rd, "\sim"'] & {} & \{ 1, 2 \} \ar[rd] & {} \\
				{} & \{ 0, 2 \} \ar[from=uu, crossing over, "\sim", near end] \ar[rr] & {} & \{ 2 \}
			\end{tikzcd} & \subseteq & \begin{tikzcd}[cramped, column sep = small]
				\{ 0, 1, 2, 3 \} \ar[rr] \ar[dd, "\sim"'] \ar[rd, "\sim"] & {} & \{ 1, 2, 3 \} \ar[dd, "\sim" near end] \ar[rd] & {} \\
				{} & \{ 0, 2, 3 \} \ar[rr, crossing over]  & {} & \{ 2, 3 \} \ar[dd, "\sim"] \\
				\{ 0, 1, 2 \} \ar[rr] \ar[rd, "\sim"'] & {} & \{ 1, 2 \} \ar[rd] & {} \\
				{} & \{ 0, 2 \} \ar[from=uu, crossing over, "\sim", near end] \ar[rr] & {} & \{ 2 \}
			\end{tikzcd}
		\end{array} \]
		\caption*{The inclusion $\RJ{3}{2} \subseteq \RI{3}{2}$.}
	\end{table}
\end{example}
The isomorphism $F \from [n]^\op \to [n]$ defined by $F(i) = n - i$ induces a map 
\[ F_* \from (\dfLI)^\op \to \dfRI[n-k] \] 
which sends a subset to its image under $F$.
\begin{proposition} \label{lnk-rnk-iso}
	The map $F_* \from (\dfLI)^\op \to \dfRI[n-k]$ is an isomorphism.
	Moreover, $F_*$ restricts to an isomorphism
	\[ \restr{F_*}{\dfLJ} \from (\dfLJ)^\op \xrightarrow{\cong} \dfRJ[n-k]. \]
	\qed
\end{proposition}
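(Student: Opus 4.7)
The plan is to unpack the definitions and observe that essentially all the content of the proposition is bookkeeping about the order-reversing involution $F$ on $[n]$.

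First I would note that since $F \from [n] \to [n]$ is an order-reversing bijection, the direct image map $A \mapsto F(A)$ on powersets satisfies $A_0 \subseteq A_1$ if and only if $F(A_0) \subseteq F(A_1)$. Hence it is an isomorphism of posets $\mathcal{P}[n] \to \mathcal{P}[n]$ (not order-reversing, since applying $F$ elementwise to both sides of an inclusion reverses each element but preserves the subset relation). Since moreover $k \in A$ if and only if $n-k \in F(A)$, the induced map restricts to a poset isomorphism $\dfLI \to \dfLI[n-k]$, and in particular to an isomorphism of underlying simplicial sets $(\dfLI)^\op \to (\dfLI[n-k])^\op = \dfRI[n-k]$.

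Next I would check that $F_*$ respects markings. A 1-simplex in $(\dfLI)^\op$ is an inclusion $A_0 \subseteq A_1$, and it is marked precisely when $\max A_0 = \max A_1$. Its image under $F_*$ is the inclusion $F(A_0) \subseteq F(A_1)$, viewed in $\dfRI[n-k]$, which is marked precisely when $\min F(A_0) = \min F(A_1)$. The key identity is $\min F(A) = n - \max A$, which holds because $F$ reverses the order. Hence the marking conditions on source and target coincide, and $F_*$ (as well as its inverse $(F^{-1})_* = F_*$) preserves marked simplices.

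Finally, for the restricted statement, note that $\dfLJ$ is obtained from $\dfLI$ by removing the single vertex $[n]$, and similarly $\dfRJ[n-k]$ is obtained from $\dfRI[n-k]$ by removing $[n]$. Since $F([n]) = [n]$, the isomorphism $F_*$ sends the omitted vertex to the omitted vertex and therefore restricts to an isomorphism $(\dfLJ)^\op \xrightarrow{\cong} \dfRJ[n-k]$. There is no real obstacle here; the only thing to be careful about is keeping track of the two distinct uses of opposites (on $[n]$ and on the simplicial set built from $\mathcal{P}[n]$), which is precisely why the two markings, which look asymmetric at first glance, correspond under $F_*$.
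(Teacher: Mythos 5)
Your proof is correct and is exactly the routine verification the paper intends: the proposition is stated with no written proof (it is asserted as immediate), and your unpacking --- $F_*$ preserves inclusions, exchanges the conditions $k \in A$ and $n-k \in F(A)$, converts $\max$-markings to $\min$-markings via $\min F(A) = n - \max A$, and fixes the omitted vertex $[n]$ so that it restricts to the $J$-parts --- is precisely that check.
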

\begin{definition} \label{def:infty-clf}
	Let $(\mc{C}, W)$ be a marked quasicategory.
	\begin{enumerate}
		\item We say $(\mc{C}, W)$ satisfies \emph{calculus of left fractions} (or CLF) if $W$ is weakly closed under composition and $(\mc{C}, W)$ has the right lifting property against the set
		\[ \{ \dfLJ \ito \dfLI \mid n \geq 2, \ 0 < k \leq n \}. \]
		That is, for $n \geq 2$ and $0 < k \leq n$, any map $\dfLJ \to (\mc{C}, W)$ lifts to a map $\dfLI \to (\mc{C}, W)$.
		\[ \begin{tikzcd}
			\dfLJ \ar[r] \ar[d, hook] & (\mc{C}, W) \\
			\dfLI \ar[ur, dotted]
		\end{tikzcd} \]
		\item We say $(\mc{C}, W)$ satisfies \emph{calclulus of right fractions} (or CRF) if $W$ is weakly closed under composition and $(\mc{C}, W)$ has the right lifting property against the set
		\[ \{ \dfRJ \ito \dfRI \mid n \geq 2, \ 0 \leq k < n \}. \]
	\end{enumerate}
\end{definition}
\begin{remark}
	It follows from \cref{lnk-rnk-iso} that $(\mc{C}, W)$ satisfies CLF if and only if $(\mc{C}^\op, W)$ satisfies CRF. 
\end{remark}

A priori, the quasicategorical definition of calculus of fractions may not agree with the classical definition when viewing an ordinary category as a quasicategory.
We will show in \cref{1-cat-crf} that the quasicategorical notion agrees with that of proper calculus of fractions (\cref{def:proper-clf}).
As our focus going forward will be on quasicategories, we use the terms \emph{calculus of left fractions} and \emph{CLF} (or \emph{calculus of right fractions} and \emph{CRF}, respectively) to mean the \emph{quasi-categorical} notion unless otherwise specified.

The following proposition establishes that the $k=n$ lifting condition in CLF (and the $k=0$ condition in CRF) is redundant.
\begin{proposition} \label{crf-implies-good-crf}
	Let $(X, W)$ be a marked simplicial set.
	\begin{enumerate}
		\item If $(X, W)$ has the right lifting property against $\{ \dfLJ[n-1] \to \dfLI[n-1] \mid n \geq 2 \}$ then it has the right lifting property against $\{ \dfLJ[n] \to \dfLI[n] \mid n \geq 2 \}$.
		\item If $(X, W)$ has the right lifting property against $\{ \dfRJ[1] \to \dfRI[1] \mid n \geq 2 \}$ then it has the right lifting property against $\{ \dfRJ[0] \to \dfRI[0] \mid n \geq 2 \}$.
	\end{enumerate} 
\end{proposition}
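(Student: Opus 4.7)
The plan is to prove (1) by exhibiting $\LJ{n}{n} \hookrightarrow \LI{n}{n}$ as a retract (in the arrow category of marked simplicial sets) of $\LJ{n+1}{n} \hookrightarrow \LI{n+1}{n}$. Since right lifting properties are preserved under retracts of arrows, and the latter inclusion lies in the hypothesized family (it is $\dfLJ[n'-1] \hookrightarrow \dfLI[n'-1]$ for $n' = n+1 \geq 3 \geq 2$), this will yield the desired RLP against $\LJ{n}{n} \hookrightarrow \LI{n}{n}$ for each $n \geq 2$. Part (2) will then follow from (1) via the duality of \cref{lnk-rnk-iso} applied to $(X, W)^\op$.

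The retract structure will be built from two explicit poset maps. The section $s \from \LI{n}{n} \to \LI{n+1}{n}$ is defined by $s(A) := A \cup \{n+1\}$, identifying $\LI{n}{n}$ with the subposet of $\LI{n+1}{n}$ on subsets containing both $n$ and $n+1$. The retraction $\pi \from \LI{n+1}{n} \to \LI{n}{n}$ is given by
\[ \pi(A) := \begin{cases} A \setminus \{n+1\} & \text{if } n+1 \in A, \\ \{n\} & \text{otherwise.} \end{cases} \]
I will verify that $s$ and $\pi$ are monotone, preserve markings, restrict to the punctured versions $\LJ{n}{n} \leftrightarrows \LJ{n+1}{n}$ (using $s(A) = \{0,\dots,n+1\}$ iff $A = \{0,\dots,n\}$, and $\pi(A) = \{0,\dots,n\}$ iff $A = \{0,\dots,n+1\}$), and satisfy $\pi \circ s = \id$. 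These routine checks establish that $(s, \pi)$ exhibit the claimed retract in the arrow category.

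The most delicate verification will be that $\pi$ is a monotone, marking-preserving map. Monotonicity reduces to three cases depending on whether $A, A'$ contain $n+1$; the nontrivial mixed case (where $A$ does not contain $n+1$ but $A'$ does) works because every element of $\LI{n+1}{n}$ contains $n$, so $\pi(A) = \{n\} \subseteq A' \setminus \{n+1\} = \pi(A')$. For markings, edges of $\LI{n+1}{n}$ with common max either both lie in the ``max $n$'' part (mapping to the degeneracy at $\{n\}$) or both in the ``max $n+1$'' part (mapping to edges of max $n$ after removing $n+1$); edges crossing between the two layers are unmarked in the domain, so the fact that $\pi$ sends them to marked edges in the codomain imposes no constraint. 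The remaining diagram-chase to confirm the retract squares commute is then immediate from the definitions.
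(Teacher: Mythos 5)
Your proposal is correct and takes essentially the same route as the paper: the paper also exhibits $\LJ{n}{n} \ito \LI{n}{n}$ as a retract of $\LJ{n+1}{n} \ito \LI{n+1}{n}$ using the section $A \mapsto A \cup \{n+1\}$ and the very same retraction (written there via the degeneracy $\degen{}{n}$ on subsets containing $n+1$, collapsing the rest to $\{n\}$), and disposes of (2) by duality. Your write-up merely spells out the monotonicity, marking, and restriction checks that the paper leaves implicit.
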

\begin{proof}
	We prove (1), as (2) is dual.
	Given $n \geq 2$, the inclusion $\LI{n}{n} \to \LI{n+1}{n}$ defined, as a poset map, by $A \mapsto A \cup \{ n+1 \}$ has a retraction $r \from \LI{n+1}{n} \to \LI{n}{n}$ defined by
	\[ r(A) := \begin{cases}
		\sigma_n(S) & \text{if $n+1 \in S$} \\
		\{ n \} & \text{otherwise}.
	\end{cases} \]
	This map restricts so that the diagram
	\[ \begin{tikzcd}
		\LJ{n}{n} \ar[r, hook] \ar[d] & \LJ{n+1}{n} \ar[r, dotted, "r \vert_{\LJ{n+1}{n}}"] \ar[d] & \LJ{n}{n} \ar[d] \\
		\LI{n}{n} \ar[r, hook] & \LI{n+1}{n} \ar[r, "r"] & \LI{n}{n}
	\end{tikzcd} \]
	commutes.
	Thus, $\LJ{n}{n} \to \LI{n}{n}$ is a retract of $\LJ{n+1}{n} \to \LI{n+1}{n}$.
\end{proof}

\section{$(\infty,1)$-calculus of fractions vs.~classical calculus of fractions} \label{sec:clf-infty-vs-classical}

We now show that the quasicategorical definition of calclulus of fractions agrees with that of \emph{proper} calculus of fractions, as stated in \cref{def:proper-clf} (cf.~\cref{1-cat-crf}).

To show this, we use the following technical lemma which encapsulates our usage of the ``co-equalising'' condition from classical CLF (i.e.~condition (3) of \cref{def:classical-clf}).

\begin{lemma} \label{crf-inductive-square-equaliser-lemma}
	Let $(\mc{C}, W)$ be a marked category satisfying CLF.
	For $m \geq 1$, let
	\[ \{ f_i, g_i \from x_i \rightrightarrows y \mid i = 1, \dots, m \} \]
	be a set of $m$ parallel pairs of morphisms with identical codomains.
	If, for each $i$, there exists a weak equivalence $w_i$ such that $f_i w_i = g_i w_i$ then there exists a weak equivalence $u$ such that $u f_i = u g_i$ for all $i = 1, \dots, m$.
\end{lemma}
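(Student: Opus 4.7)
The plan is to prove the lemma by induction on $m$, using condition (3) of classical CLF at each step and closure of $W$ under composition (condition (1) of classical CLF) to glue the resulting post-equalising weak equivalences.

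For the base case $m = 1$, the hypothesis gives $w_1$ with $f_1 w_1 = g_1 w_1$, and then condition (3) of \cref{def:classical-clf} immediately supplies a weak equivalence $u \from y \to y'$ with $u f_1 = u g_1$.

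For the inductive step, assume the claim holds for any collection of $m-1$ parallel pairs sharing a common codomain. Applied to $\{(f_i, g_i)\}_{i = 1, \dots, m-1}$ and the weak equivalences $w_1, \dots, w_{m-1}$, this yields a weak equivalence $u' \from y \to y'$ with $u' f_i = u' g_i$ for $i = 1, \dots, m-1$. The key observation is that the given $w_m \from x_m' \to x_m$ also equalises the post-composed pair $(u' f_m, u' g_m)$, since
\[ (u' f_m) w_m = u' (f_m w_m) = u' (g_m w_m) = (u' g_m) w_m . \]
Hence, applying condition (3) of CLF to the pair $u' f_m, u' g_m \from x_m \to y'$ with witness $w_m$, we obtain a weak equivalence $u'' \from y' \to y''$ with $u'' u' f_m = u'' u' g_m$. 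Setting $u := u'' u'$, which belongs to $W$ by closure under composition, we then have $u f_i = u'' u' f_i = u'' u' g_i = u g_i$ for $i = 1, \dots, m-1$ as well as for $i = m$, completing the induction.

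I do not anticipate any real obstacle: the argument is essentially a direct iteration of CLF(3), and the only subtlety is recognising that the original witness $w_m$ survives unchanged after post-composition by $u'$, so no further use of the ``span-completion'' condition (2) is needed here.
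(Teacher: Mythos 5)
Your proof is correct and follows essentially the same argument as the paper: induct, post-compose the next pair by the already-constructed equalising weak equivalence, note that the original witness $w$ still equalises it, apply condition (3) of CLF, and compose the resulting weak equivalences (using closure of $W$ under composition). The paper merely phrases the induction over the index $i$ with $m$ fixed rather than over $m$, which is the same argument.
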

\begin{proof}
	We fix $m$ and construct $u$ by induction over $i$.
	For $i = 1$, this holds by definition of CLF.

	Fix $i$ and suppose there exists a weak equivalence $u^i$ such that $u^i f_j = u^i g_j$ for all $j = 1, \dots, i$.
	As $u^i f_{i+1} w_{i+1} = u^i g_{i+1} w_{i+1}$ and $(\mc{C}, W)$ satisfies CLF, there exists a weak equivalence $u'$ such that $u' u^i f_{i+1} = u' u^i g_{i+1}$.
	Setting $u = u' u^i$ gives that $u f_{j} = u g_j $ for $j = 1, \dots, i+1$.
\end{proof}

\begin{theorem} \label{1-cat-crf} 
	Let $(\mc{C},W)$ be a category with weak equivalenes. 
	\begin{enumerate}
		\item The pair $(\mc{C}, W)$ satisfies proper CLF if and only if $(N \mc{C}, W)$ satisfies CLF.
		\item The pair $(\mc{C}, W)$ satisfies proper CRF if and only if $(N \mc{C}, W)$ satisfies CRF.
	\end{enumerate}
\end{theorem}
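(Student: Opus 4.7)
The plan is to prove (1); part (2) then follows by duality, combining \cref{lnk-rnk-iso} with the fact that proper CRF for $(\mathcal{C}, W)$ is, by definition, proper CLF for $(\mathcal{C}^{\op}, W)$. For (1), I treat the two implications separately, starting with the assumption that $(N\mathcal{C}, W)$ satisfies CLF. Axiom (1) of proper CLF is immediate: weak closure under composition in $N\mathcal{C}$ supplies fillers for maximally-marked $(2,1)$-horns, which in a $1$-category amounts to composition-closure of $W$, and identities are marked by hypothesis. The span-completion axiom (2) is exactly the $n = 2,\,k = 1$ lifting: a marked functor $\LJ{2}{1} \to \mathcal{C}$ is a span $(f \colon X \to Y,\, w \colon X \xrightarrow{\sim} X')$, and its extension to $\LI{2}{1}$ supplies the commuting square with $w' \in W$. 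The ``preservation'' strengthening of axiom (2') uses the $n = 2,\,k = 2$ lifting: when $f$ is also in $W$, the same span data assembles into a marked functor $\LJ{2}{2} \to \mathcal{C}$, and the extension to $\LI{2}{2}$ yields a completing square with both $w'$ and $f'$ marked.

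Axiom (3), the co-equalizing condition, is more subtle since posets admit no parallel morphisms. Given $f, g \colon x \rightrightarrows y$ and $w \colon x' \to x$ in $W$ with $fw = gw$, I define a marked functor $F \colon \LJ{3}{1} \to \mathcal{C}$ by setting $F(\{1\}) = x'$, $F(\{0,1\}) = x$, and $F(A) = y$ for each of the five remaining objects of $\LJ{3}{1}$; on morphisms, I take $F(\{0,1\} \to \{0,1,2\}) = f$, $F(\{0,1\} \to \{0,1,3\}) = g$, and $\id[y]$ on every other inclusion among the identified copies of $y$. Functoriality of $F$ is equivalent to the hypothesis $fw = gw$. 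Extending along $\LJ{3}{1} \hookrightarrow \LI{3}{1}$ adjoins $y' := F([3])$ together with three morphisms $y \to y'$: the marked $v := F(\{0,1,3\} \to [3])$, the marked $v' := F(\{1,2,3\} \to [3])$, and the unconstrained $h := F(\{0,1,2\} \to [3])$. Commutativity at the identified objects $F(\{1,2\}) = y$ and $F(\{1,3\}) = y$ forces $h = v'$ and $v = v'$ respectively, and then commutativity at $F(\{0,1\}) = x$ yields $vf = vg$ with $v \in W$, which is axiom (3).

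For the converse, assume $(\mathcal{C}, W)$ satisfies proper CLF. Weak closure under composition is immediate. For the lifting against $\LJ{n}{k} \hookrightarrow \LI{n}{k}$, by \cref{crf-implies-good-crf} it suffices to treat $0 < k < n$; and since $\LI{n}{k} = \LJ{n}{k} \cup \{[n]\}$, the task is to define $F([n]) \in \mathcal{C}$ together with maps $F(A) \to F([n])$ for each $A \in \LJ{n}{k}$, marked precisely when $n \in A$. The plan is to induct on $n$. At the inductive step, I first extend $F$ over the upper sub-poset $\{A \in \LI{n}{k} \colon n \in A\}$ -- isomorphic to $\LI{n-1}{k}$ but with every edge marked, so that $F$ restricted to $\{A \in \LJ{n}{k} \colon n \in A\}$ takes values in $W$ -- by iteratively invoking the preservation clause of axiom (2'). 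I then close off with a single span completion along the remaining maximal element $[n] \setminus \{n\}$, which produces the one unmarked edge into $F([n])$. When distinct maximal elements of $\LJ{n}{k}$ yield competing candidates for $F([n])$, I invoke \cref{crf-inductive-square-equaliser-lemma} to post-compose by a single weak equivalence that simultaneously equalizes all the resulting parallel pairs.

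The principal obstacle is this last direction: organizing the bookkeeping so that every new edge $F(A) \to F([n])$ lies in $W$ precisely when $n \in A$, compatibly with all functoriality constraints already imposed on $F$ over $\LJ{n}{k}$. The preservation clause of proper CLF is decisive here, since naive iterated span completion would yield at most one marked edge per step, whereas the structure of $\LI{n}{k}$ demands that an entire maximally-marked sub-poset be produced simultaneously. A more conceptual alternative would be to exhibit $\LJ{n}{k} \hookrightarrow \LI{n}{k}$ as lying in the weakly saturated class generated by $\LJ{2}{1} \hookrightarrow \LI{2}{1}$ and $\LJ{2}{2} \hookrightarrow \LI{2}{2}$, from which the entire claim would follow by general lifting theory; but establishing such a cell-complex description appears to require essentially the same combinatorial analysis.
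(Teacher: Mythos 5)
Your forward direction and the reduction of (2) to (1) by duality are correct and essentially reproduce the paper's argument: axioms (1), (2), and the properness clause (2') come from the $n=2$ liftings (with $k=1$ and $k=2$), and your $\LJ{3}{1}$-diagram for axiom (3) --- two non-identity vertices $x'$, $x$ and five copies of $y$, with functoriality encoding $fw=gw$ and the filler forcing the three new arrows into $F([3])$ to coincide and coequalize $f,g$ --- is the same cube the paper uses. The converse, however, has a genuine gap, and it sits exactly where you say the ``principal obstacle'' is. First, your opening reduction via \cref{crf-implies-good-crf} to $0<k<n$ is circular relative to your induction: the extension over the upper sub-poset $\{A\in\LI{n}{k} : n\in A\}$ that your inductive step requires is precisely the maximally marked lifting problem $\LJ{n-1}{n-1}\ito\LI{n-1}{n-1}$ (every edge of $\LI{m}{m}$ is marked, since every subset contains $m$), i.e.\ the top case one dimension down --- which you have excluded from your inductive hypothesis, and which \cref{crf-implies-good-crf} would only return to you from the inner case $(n,n-1)$ in the very dimension you are currently trying to prove. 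Second, ``iteratively invoking the preservation clause of (2')'' does not produce this extension: one application of (2') marks a single cospan completion, whereas for $n-1\ge 3$ you must cone off an entire punctured cube of weak equivalences coherently, which needs \cref{crf-inductive-square-equaliser-lemma} and an induction of its own --- essentially the statement being proven; your final paragraph concedes exactly this. Third, after your single span completion at $[n]\setminus\{n\}$, the cocone condition for the mixed edges $A\subseteq B$ with $n\notin A$, $n\in B$ is not automatic; this failure of commutativity (not merely ``competing candidates for $F([n])$'') is what the equalizer lemma has to repair, and the bookkeeping that keeps every leg with $n\in A$ in $W$ while doing so is the unproved core. (A minor overstatement: the lift only needs the legs with $n\in A$ to be marked; legs with $n\notin A$ are unconstrained, so ``marked precisely when $n\in A$'' asks for more than is required.)

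For comparison, the paper's proof resolves exactly these points by structuring the induction differently: it keeps $k=n$ in the inductive statement, and in the step for $\LJ{n+1}{k}$ it restricts $f$ along $S\mapsto\partial^n_k(S)\cup\{k\}$ to the sub-poset of subsets containing both $k$ and $d=\max([n+1]\setminus\{k\})$, which is an instance of $\LJ{n}{n}\ito\LI{n}{n}$ and hence of the inductive hypothesis; it then performs a single span completion --- this is where properness is used again, to keep the new leg marked in the case $k=n+1$ --- and finally observes that each remaining mixed square commutes after pre-composition with a weak equivalence, so that one application of \cref{crf-inductive-square-equaliser-lemma} (for all the finitely many bad pairs at once) produces the weak equivalence $u$ that fixes the apex. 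Your sketch has all the right ingredients ((2'), the equalizer lemma, induction on $n$), but without this organization --- or a separately proved lemma that maximally marked punctured cubes of weak equivalences can be coned off --- the decisive step of the converse remains unestablished.
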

\begin{proof}
	We show (1), as (2) is dual.
	
	Suppose $(N \mc{C}, W)$ satisfies CLF.
	Given a diagram,
	\[ \begin{tikzcd}
		\cdot \ar[r, "(\sim)"] \ar[d, "\sim"'] & \cdot \\
		\cdot & {}
	\end{tikzcd} \]
	a filler exists as $(N \mc{C}, W)$ has the right lifting property against $\LJ{2}{1} \ito \LI{2}{1}$ and $\LJ{2}{0} \ito \LI{2}{0}$.

	For $f, g \in \mc{C}$ and $w \in W$ such that $fw = gw$,
	define a map $\LJ{3}{1} \to (N \mc{C}, W)$ by the diagram:
	\[ \begin{tikzcd}
		\cdot \ar[rr, "w"] \ar[rd, "fw"] \ar[dd, "gw"'] & {} & \cdot \ar[rd, "f"] \ar[dd, "g", near end] & {} \\
		{} & \cdot \ar[rr, equal, crossing over] & {} & \cdot \\
		\cdot \ar[rr, equal] \ar[rd, equal] & {} & \cdot & {} \\
		{} & \cdot \ar[from=uu, crossing over, equal] & {} & 
	\end{tikzcd} \]
	This map admits a lift $\LI{3}{1} \to (N \mc{C}, W)$ as in the diagram:
	\[ \begin{tikzcd}
		\cdot \ar[rr, "w"] \ar[rd, "fw"] \ar[dd, "gw"'] & {} & \cdot \ar[rd, "f"] \ar[dd, "g", near end] & {} \\
		{} & \cdot \ar[rr, equal, crossing over] & {} & \cdot \ar[dd, dotted, "u_1"] \\
		\cdot \ar[rr, equal] \ar[rd, equal] & {} & \cdot \ar[rd, dotted, "u_2"] & {} \\
		{} & \cdot \ar[from=uu, crossing over, equal] \ar[rr, dotted, "u_3"] & {} & \cdot
	\end{tikzcd} \]
	The right face witnesses that $u_1 f = u_2 g$.
	The maps $u_2$ and $u_3$ are weak equivalences by assumption.
	The front and bottom faces witness that $u_1 = u_2 = u_3$, which suffices.

	Now, suppose $(\mc{C}, W)$ satisfies proper CLF.
	Weak equivalences are (strongly) closed under composition by assumption, so it remains to show $(N \mc{C}, W)$ has the right lifting property with respect to the set of maps
	\[ \{ \dfLJ \to \dfLI \mid n \geq 2, \ 0 < k \leq n \}. \]
	We proceed by induction on $n$.
	The base case $n = 2$ follows by definition of proper CLF.

	Fix $f \from \LJ{n+1}{k} \to (\mc{C}, W)$ and suppose $(N \mc{C}, W)$ has the right lifting property against
	\[ \{ \dfLJ \to \dfLI \mid 0 < k \leq n \}. \]
	It suffices to construct an object $x \in \mc{C}$ and, for $i \neq k$, a morphism 
	\[ \varphi_i \from f([n+1] - \{ i \}) \to x \]
	which is a weak equivalence for $i \neq n+1$ and such that the square
	\[ \begin{tikzcd}
		f([n+1] - \{i, j\}) \ar[r] \ar[d] & f([n+1] - \{ j \}) \ar[d, "\varphi_j"] \\
		f([n+1] - \{ i \}) \ar[r, "\varphi_{i}"] & x
	\end{tikzcd} \]
	commutes for all $i, j = 0, \dots, n+1$ where $i, j \neq k$.
	
	Let $d$ denote the value $\partial^{n}_{k}(n)$. That is,
	\[ d := \begin{cases}
		n & \text{if } k = n+1 \\
		n+1 & \text{otherwise.}
	\end{cases} \]
	The inclusion $\LJ{n}{n} \to \LJ{n+1}{k}$ defined by $S \mapsto \partial^n_k(S) \cup \{ k \}$ gives a lifting problem
	\[ \begin{tikzcd}
		\LJ{n}{n} \ar[r, hook] \ar[d] & \LJ{n+1}{k} \ar[r, "f"] & (N \mc{C}, W) \\
		\LI{n}{n} \ar[urr, dotted, "g_0"']
	\end{tikzcd} \]
	which admits a lift $g_0 \from \LI{n}{n} \to (\mc{C}, W)$ by the inductive hypothesis.
	Let $x_0$ denote the object $\restr{g_0}{ \{ [n] \} }$ and, for $i = 0, \dots, n+1$ such that $i \neq k, d$, let 
	\[ \varphi_{0,i} \from f([n+1] - \{ i \}) \to x_0 \] 
	denote the image under $g_0$ of the 1-simplex $[n] - \{ (\face{n}{k})^{-1}(i) \} \subseteq [n]$.
	Note this map is a weak equivalence except when $d = n$ and $i = n+1$.

	The diagram
	\[ \begin{tikzcd}
		f([n+1] - \{ 0,d \}) \ar[d, "\sim"'] \ar[r] & f([n+1] - \{ 0 \}) \ar[r, "\varphi_{0,0}", "\sim"'] & x_0 \\
		f([n+1] - \{ d \}) & {} & {}
	\end{tikzcd} \]
	admits a lift
	\[ \begin{tikzcd}
		f([n+1] - \{ 0, d \}) \ar[d, "\sim"'] \ar[r] & f([n+1] - \{ 0 \}) \ar[r, "\varphi_{0,0}", "\sim"'] & x_0 \ar[d, "w", "\sim"'] \\
		f([n+1] - \{ d \}) \ar[rr, dotted, "\varphi_{1,d}"] & {} & x_1
	\end{tikzcd} \]
	thus defining a map $\varphi_{1,d}$.
	If $k = n+1$ then $d = n$, which gives that the top map (and hence the bottom map) is a weak equivalence.
	For $i \neq k, d$, let
	\[ \varphi_{1,i} \from f([n+1] - \{ i \}) \to x_1 \]
	denote the composite $\varphi_{0,i} \circ w$.
	Note that $\varphi_{1,i}$ is a weak equivalence for $i \neq n+1$.

	Fix $i, j = 0, \dots, n+1$ such that $i, j \neq k$.
	If $i, j \neq d$ then the square
	\[ \begin{tikzcd}
		f([n+1] - \{ i, j \}) \ar[r] \ar[d] & f([n+1] - \{ i \}) \ar[d, "\varphi_{1,i}"] \\
		f([n+1] - \{ j \}) \ar[r, "\varphi_{1,j}"] & x_1
	\end{tikzcd} \]
	commutes by construction.
	Otherwise, we have that every face except the front face in the cube
	\[ \begin{tikzcd}[cramped, column sep = small]
		f([n+1] - \{0, i, d\}) \ar[rr] \ar[dd] \ar[rd, "\sim"] & {} & f([n+1] - \{ 0, i \}) \ar[dd] \ar[rd] \\
		{} & f([n+1] - \{ i, d \}) \ar[rr, crossing over] & {} & f([n+1] - \{ i \}) \ar[dd] \\
		f([n+1] - \{ 0, d \}) \ar[rr] \ar[rd] & {} & f([n+1] - \{ 0 \}) \ar[rd] & {} \\
		{} & f([n+1] - \{ d \}) \ar[rr] \ar[from=uu, crossing over] & {} & x_1
	\end{tikzcd} \]
	commutes.
	Thus, the front face commutes after pre-composition with the top-left weak equivalence.
	By \cref{crf-inductive-square-equaliser-lemma}, there exists a weak equivalence
	\[ u \from x_1 \to x \]
	such that $x$ and $\varphi_i = u \circ \varphi_{1,i}$ assemble to give a lift of $f \from \LJ{n+1}{k} \to (\mc{C}, W)$.
\end{proof}
\begin{remark}
	The proof of the reverse direction in \cref{1-cat-crf} proves a slightly stronger result: given a marked quasicategory $(\mc{C}, W)$, if
	\begin{itemize}
		\item $W$ is weakly closed under composition; and
		\item $(\mc{C}, W)$ has the right lifting property against the maps $\LJ{2}{1} \ito \LI{2}{1}$ and $\LJ{3}{1} \ito \LI{3}{1}$,
	\end{itemize}
	then its homotopy category $\Ho \mc{C}$, when marked at all morphisms which are identified with a morphism in $W$, satisfies classical CLF.
	If $(\mc{C}, W)$ additionally has the right lifting property against $\LJ{2}{2} \ito \LI{2}{2}$ then $\Ho \mc{C}$ at this marking satisfies proper CLF.
	The dual statement holds for (proper) CRF.
\end{remark}

\section{Examples of calculus of fractions} \label{sec:clf-examples}

This section is devoted to giving examples of marked quasicategories (beyond nerves of 1-categories) which satisfy calculus of fractions.
In particular, analogues of the examples given in \cite[Ch.~1]{gabriel-zisman} are covered in this section.
These include: marking at equivalences (\cref{equivs-satisfy-cf}), marking at the inverse image of equivalences under a limit-preserving functor (\cref{limits-create-crf}), and pulling back CLF/CRF from an adjunction (\cref{clf-from-adjunction}).
Note that this last example applies to all reflective localizations (\cref{reflective-localization-satisfies-clf}).

We also introduce the notion of a \emph{simple inner horn decomposition} in \cref{def:simple-inner-horn-decomp}.
This is a technical condition for showing when a marked map has the left lifting property against all marked quasicategories $(\mc{C}, W)$ where $W$ is weakly closed under composition (see \cref{anodyne-decomp-is-anodyne}).
In particular, we use this both in the proof of \cref{clf-from-adjunction} and in \cref{sec:mEx-is-qcat} to show that our analogue of the category of fractions produces a quasicategory (\cref{mEx-qcat}).
We conclude by showing that if a marked quasicategory satisfies calculus of fractions then the full subcategory of the arrow category spanned by marked arrows also satisfies calculus of fractions (\cref{arr-cat-clf}).
This requires a lemma relating right lifting properties against the maps $\dfLJ \ito \dfLI$ between a marked quasicategory and its marked arrow category (\cref{lj-mapping-space-lift-lemma}), which we use in \cref{sec:mapping-space} when considering the ``marked slice'' of a marked quasicategory (see \cref{pathloop-qcat-clf}).

To show that a quasicategory marked at equivalences satisfies CLF (and CRF), we make use of the following result.
\begin{proposition} \label{jnk-ink-weq} \leavevmode
	\begin{enumerate}
		\item \begin{enumerate}
			\item For $n \geq 1$ and $0 \leq k \leq n$, the map $\max \from \dfLI \to \minm{(\simp{\{ k, \dots, n \}})}$ is a marked homotopy equivalence.
			\item For $n \geq 2$ and $0 < k \leq n$, the inclusion $\dfLJ \to \dfLI$ is an acyclic cofibration.
		\end{enumerate}
		\item \begin{enumerate}
			\item For $n \geq 1$ and $0 \leq k \leq n$, the map $\min \from \dfRI \to \minm{(\simp{k})}$ is a marked homotopy equivalence.
			\item For $n \geq 2$ and $0 \leq k < n$, the inclusion $\dfRJ \to \dfRI$ is an acyclic cofibration.
		\end{enumerate}
	\end{enumerate}
\end{proposition}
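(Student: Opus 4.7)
My plan is to give explicit marked deformation retractions of $\dfLI$ and $\dfRI$ onto the relevant simplex using $\max$ (respectively, $\min$).

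For (1)(a), define $s \from \minm{(\simp{\{k,\dots,n\}})} \to \dfLI$ by $s(j) = [j] := \{0,1,\dots,j\}$; this is a well-defined marked map (since $j \geq k$ implies $k \in [j]$, and the source is minimally marked), and $\max \circ s = \id$. A direct marked homotopy from $\id$ to $s \circ \max$ is the poset map $H \from \dfLI \times \maxm{(\simp{1})} \to \dfLI$ given by $H(A,0) = A$ and $H(A,1) = [\max A]$: the nontrivial monotonicity case $(A_0,0) \leq (A_1,1)$ holds because $A_0 \subseteq A_1 \subseteq [\max A_1]$, and the marking is preserved because $\max [\max A] = \max A$, so that applying $H$ to a marked $1$-simplex (one whose $\dfLI$-coordinate $A_0 \subseteq A_1$ satisfies $\max A_0 = \max A_1$) again produces an inclusion of sets with a common maximum. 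Part (2)(a) is dual, using the section $t(j) = [j,n]$ and homotopy $H'(A,0) = [\min A, n]$, $H'(A,1) = A$.

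For (1)(b), the inclusion $\dfLJ \ito \dfLI$ is evidently a cofibration, so by two-out-of-three together with (a) it suffices to show that $\max|_{\dfLJ}$ is a weak equivalence in $\msSet$. The section $s$ from (a) fails to land in $\dfLJ$ because $s(n) = [n] \notin \dfLJ$; however, using the hypothesis $0 < k$, the modified section $s'(j) = \{k, k+1, \dots, j\}$ avoids the element $0$, hence lies in $\dfLJ$, with $\max \circ s' = \id$ still holding. The principal obstacle is that the interpolation $A \mapsto [\max A]$ used in (a) no longer stays in $\dfLJ$, so there is no direct one-step marked homotopy from $\id_{\dfLJ}$ to $s' \circ \max|_{\dfLJ}$. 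I circumvent this by introducing $\psi \from \dfLJ \to \dfLJ$ with $\psi(A) = A \cap [k, n]$ (which lands in $\dfLJ$ because $[k,n] \subsetneq [n]$), and forming two natural transformations through marked $1$-simplices: $\psi \Rightarrow \id_{\dfLJ}$ via the inclusion $\psi(A) \subseteq A$, and $\psi \Rightarrow s' \circ \max|_{\dfLJ}$ via $\psi(A) \subseteq \{k, \dots, \max A\}$. Both transformations are marked since all three sets involved share the same maximum $\max A$; the resulting marked homotopies are weak equivalences in $\msSet$, and two applications of two-out-of-three yield the claim. Part (2)(b) is handled analogously, using $k < n$ to define $t'(j) = \{j, j+1, \dots, k\} \in \dfRJ$ and $\psi'(A) = A \cap [0, k]$, exploiting the strict inequality to keep the construction inside $\dfRJ$.
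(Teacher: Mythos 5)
Your argument is correct, and part (a) coincides with the paper's proof essentially verbatim (the section $i \mapsto \{0,\dots,i\}$ and the marked homotopy given by $A \subseteq \{0,\dots,\max A\}$). For part (b) the overall skeleton is also the same as the paper's --- reduce, via two-out-of-three against part (a), to showing that $\max$ restricted to $\dfLJ$ is a weak equivalence --- but the way you establish that is different. The paper exhibits $\dfLJ$ as marked homotopy equivalent to $\LI{n-1}{k-1}$ via the face map $\face{}{0}$, with retraction $A \mapsto A - \{0\}$ (marked precisely because $\max A \geq k > 0$), and then reuses part (a) one dimension down together with a commuting square; you instead work entirely inside $\dfLJ$, with the section $j \mapsto \{k,\dots,j\}$ and a zig-zag of marked homotopies through $\psi(A) = A \cap [k,n]$, the same inequality $k>0$ being what keeps everything away from $[n]$ and preserves maxima. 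Both work; yours avoids the dimension shift and the face/degeneracy bookkeeping at the cost of a zig-zag rather than a single homotopy. One small repair: from $\max|_{\dfLJ} \circ s' = \id$ together with $s' \circ \max|_{\dfLJ}$ being a weak equivalence (because it is marked homotopic, through $\psi$, to the identity), plain two-out-of-three does not yet yield that $\max|_{\dfLJ}$ is a weak equivalence; you need two-out-of-six, closure of weak equivalences under retracts, or the observation that $\max|_{\dfLJ}$ becomes invertible in the homotopy category of $\msSet$ --- all of which hold in any model category, so this is a cosmetic rather than a genuine gap. The dual parts (2)(a) and (2)(b) are handled correctly by the indicated dual constructions.
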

\begin{proof}
	We prove (1), as (2) is formally dual.
	\begin{enumerate}[label=\alph*)]
		\item The map $\max \from \LI{n}{k} \to \minm{(\simp{\{k, \dots, n\}})}$ has a section defined by $i \mapsto \{ 0, \dots, i \}$.
		For $A \in \LI{n}{k}$, the 1-simplex $A \subseteq \{ 0, \dots, \max A \}$ is marked, giving the desired homotopy.
		\item The inclusion $\face{}{0} \from \LI{n-1}{k-1} \to \dfLJ$ defined by sending a subset to its image under $\face{}{0}$ admits a retraction defined by $A \mapsto \degen{}{0}(A - \{ 0 \})$.
		For $A \in \dfLJ$, the 1-simplex $\face{}{0}(\degen{}{0}(A - \{ 0 \})) \subseteq A$ is marked as $\max A \geq k > 0$ (hence $A - \{ 0 \}$ contains $\max A$).
		Thus, $\face{}{0}$ is a marked homotopy equivalence.
		The diagram
		\[ \begin{tikzcd}
			\LI{n-1}{k-1} \ar[d, "\sim", "\max"'] \ar[r, hook, "\sim"', "\face{}{0}"] & \dfLJ \ar[r, hook] & \dfLI \ar[d, "\sim"', "\max"] \\
			\simp{\{ k-1, \dots, n-1 \}} \ar[rr, "\cong"] & {} & \simp{\{ k, \dots, n \}}
		\end{tikzcd} \]
		commutes, hence the top right map is a weak equivalence by 2-out-of-3 (applied twice). \qedhere
	\end{enumerate}
\end{proof}
\begin{corollary} \label{equivs-satisfy-cf}
	Any quasicategory marked at equivalences satisfies both CLF and CRF.
\end{corollary}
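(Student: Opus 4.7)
The plan is to deduce the corollary immediately from \cref{jnk-ink-weq} together with the fibrancy characterization in the model structure on $\msSet$.

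Observe that, by \cite[Props.~3.1.3.7 \& 3.1.4.1]{lurie:htt}, quasicategories marked at equivalences are precisely the fibrant objects of the model structure on marked simplicial sets. Consequently, $\natm{\mc{C}}$ has the right lifting property against every acyclic cofibration. By \cref{jnk-ink-weq}(1)(b), the inclusions $\dfLJ \hookrightarrow \dfLI$ are acyclic cofibrations for $n \geq 2$ and $0 < k \leq n$, so $\natm{\mc{C}}$ has the right lifting property against each of them; likewise \cref{jnk-ink-weq}(2)(b) handles the maps $\dfRJ \hookrightarrow \dfRI$ required for CRF. This takes care of the main lifting condition in \cref{def:infty-clf} for both CLF and CRF.

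It remains to verify that the set of equivalences is weakly closed under composition, i.e.~that every map $\maxm{(\horn{2}{1})} \to \natm{\mc{C}}$ extends along the inclusion $\maxm{(\horn{2}{1})} \hookrightarrow \maxm{(\simp{2})}$. Given such a map, its underlying horn $\horn{2}{1} \to \mc{C}$ admits a filler because $\mc{C}$ is a quasicategory; the resulting middle edge of the 2-simplex is then an equivalence by 2-out-of-3 for equivalences in a quasicategory, so all three edges are marked and the extension to $\maxm{(\simp{2})}$ exists.

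I do not expect any substantive obstacle: the statement is essentially an application of \cref{jnk-ink-weq} packaged through Lurie's fibrancy characterization, together with the well-known 2-out-of-3 property for equivalences in a quasicategory. One small point to double check is that \cref{jnk-ink-weq} indeed supplies acyclic cofibrations in the model structure of interest rather than merely weak equivalences of some other sort — but since the proposition concerns monomorphisms of marked simplicial sets and the underlying cofibrations of the model structure on $\msSet$ are exactly monomorphisms, this is automatic.
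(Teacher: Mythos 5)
Your proof is correct and takes essentially the same route as the paper, which likewise deduces the lifting condition from \cref{jnk-ink-weq} combined with the fact that quasicategories marked at equivalences are the fibrant objects of the model structure on $\msSet$. Your explicit verification that the equivalences are weakly closed under composition (inner horn filling plus closure of equivalences under composition) is left implicit in the paper but is exactly the standard fact needed.
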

\begin{proof}
	Follows from \cref{jnk-ink-weq}, as quasicategories marked at equivalences are fibrant objects in the model structure on $\msSet$.
\end{proof}

Our second example of when CLF is satisfied is in the case where weak equivalences are created by a colimit-preserving functor.
\begin{theorem} \label{limits-create-crf}
	Let $F \from \mc{C} \to \mc{D}$ be a functor between quasicategories and $W \subseteq \mc{C}_1$ be the 1-simplices which are sent to equivalences under $F$.
	\begin{enumerate}
		\item If $\mc{C}$ admits all finite colimits and $F$ preserves finite colimits then $(\mc{C}, \mc{W})$ satisfies CLF.
		\item If $\mc{C}$ admits all finite limits and $F$ preserves finite limits then $(\mc{C}, W)$ satisfies CRF.
	\end{enumerate}
\end{theorem}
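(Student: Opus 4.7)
The strategy is to prove (1) in detail; part (2) then follows by applying (1) to $F^\op \from \mc{C}^\op \to \mc{D}^\op$, using the duality between CRF and CLF noted after \cref{def:infty-clf} together with the fact that $F$ preserves finite limits if and only if $F^\op$ preserves finite colimits.

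For (1), $W$ is weakly closed under composition: given a marked inner $2$-horn $\maxm{(\horn{2}{1})} \to (\mc{C}, W)$, fill the underlying inner horn in the quasicategory $\mc{C}$; applying $F$ to the resulting $2$-simplex exhibits the image of the new edge as a composite of two equivalences in $\mc{D}$, hence itself an equivalence, so the new edge lies in $W$. For the lifting property against $\dfLJ \ito \dfLI$ with $n \geq 2$ and $0 < k \leq n$, given a marked map $f \from \dfLJ \to (\mc{C}, W)$, form $c := \colim f$ in $\mc{C}$ (possible since $\dfLJ$ is a finite poset and $\mc{C}$ has finite colimits). The universal cocone produces a simplicial extension $f^+ \from \dfLI \to \mc{C}$ with $f^+([n]) = c$, and the only remaining task is to verify that $f^+$ preserves markings. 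The newly introduced marked edges are precisely the cocone legs $A \to [n]$ with $\max A = n$, so we must show that each such $f(A) \to c$ lies in $W$, i.e., that $F(f(A)) \to F(c)$ is an equivalence in $\mc{D}$.

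Since $F$ preserves finite colimits, $F(c) \simeq \colim(F \circ f)$. By \cref{jnk-ink-weq}, the composite $\dfLJ \ito \dfLI \xrightarrow{\max} \minm{(\simp{\{k, \dots, n\}})}$ is a weak equivalence in $\msSet$; since the chain $\{k < \dots < n\}$ admits no non-degenerate equivalences, its minimal and natural markings agree, so by \cref{msSet-weq-is-localization} this presents $\simp{\{k, \dots, n\}}$ as the $\infty$-localization of $\dfLJ$ at its marked edges. Since $F \circ f$ inverts marked edges by construction, the universal property of localization produces a factorization $F \circ f \simeq \tilde{g} \circ \max$ for some $\tilde{g} \from \simp{\{k, \dots, n\}} \to \mc{D}$. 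Applying the same universal property to cocones---that is, to morphisms into constant diagrams, which trivially invert all marked edges---identifies cocones under $F \circ f$ with cocones under $\tilde{g}$, giving $\colim(F \circ f) \simeq \colim \tilde{g}$. Since the indexing poset $\{k, \dots, n\}$ has $n$ as terminal object, $\colim \tilde{g} \simeq \tilde{g}(n)$ via an equivalent cocone leg; tracing through the identifications, this is exactly the cocone leg $F(f(A)) \to F(c)$ for any $A$ with $\max A = n$.

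The main technical subtlety is the identification of cocones under the localization. This requires unpacking \cref{def:infty-localization} in its internal-hom form to see that the fully faithful inclusion of functors $\simp{\{k, \dots, n\}} \to \mc{D}$ into functors $\dfLJ \to \mc{D}$ (with essential image the marked-edge-inverting functors) preserves mapping spaces into constant diagrams, so that initial cocones transport to initial cocones. Everything else reduces to standard manipulations of colimits in $\infty$-categories and the formal duality used to obtain (2).
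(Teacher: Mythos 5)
Your argument is correct in substance, and it reduces the marking check to the same statement as the paper does (that the cocone leg $FD(A)\to\colim(FD)$ is an equivalence whenever $n\in A$), but it establishes that statement by a genuinely different mechanism. The paper never passes through the actual localization $\simp{\{k,\dots,n\}}$: instead it enlarges $\dfLJ$ to the preorder $\hat{\dfLJ}$ (whose terminal objects are exactly the subsets containing $n$), lifts $F\circ f$ along the acyclic cofibration $\dfLJ\ito\natm{\hat{\dfLJ}}$, and then proves \emph{by hand} that restriction induces a trivial fibration of slice quasicategories $\overline{FD}\slice\mc{D}\to FD\slice\mc{D}$, using an explicit lifting argument against boundary inclusions built on \cref{limits-create-crf-lemma} and \cref{preorder-marked-htpy-join}. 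You instead observe, via \cref{jnk-ink-weq} and \cref{msSet-weq-is-localization}, that $\max\from\dfLJ\to\simp{\{k,\dots,n\}}$ is the localization at the marked edges, factor $F\circ f$ through it up to natural equivalence, and invoke the principle that precomposition along a localization preserves colimit cocones (equivalently, localization functors are final), reducing to a colimit over a poset with terminal object. That principle is true and your sketch of it (full faithfulness of $\gamma^*$ on functor categories applied to constant diagrams, plus corepresentability of colimits by cocones) is the standard argument, but it is exactly the step the paper does not take for granted: the $\hat{\dfLJ}$/trivial-fibration construction is the paper's self-contained substitute for it, and it also quietly requires the comparison between join-defined cocones and mapping spaces into constant diagrams, which is likewise outside the paper's toolkit. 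So your route is shorter and more conceptual at the cost of importing a (standard, citable) finality result, whereas the paper's route stays within its own lemmas. Your treatment of weak closure under composition and of part (2) by duality is fine; the paper leaves the former implicit and proves (2) as the analogous dual argument.
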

Before proving \cref{limits-create-crf}, we introduce an additional construction.
This construction is only used to prove \cref{limits-create-crf} and will not be needed going forward (though a related construction is introduced in \cref{sec:limits}).

For $n \geq 2$ and $0 \leq k \leq n$, define $\hat{\dfLJ}$ to be the nerve of the preorder whose underlying set is
\[ \{ A \subseteq [n] \mid k \in A \text{ and } A \neq [n] \} \]
and where $A_0 \leq A_1$ if $\max A_0 \leq \max A_1$.
There is an evident inclusion $\dfLJ \subseteq \hat{\dfLJ}$.
The $E^1$-equivalences in $\hat{\dfLJ}$ are exactly the 1-simplices $A_0 \leq A_1$ where $\max A_0 = \max A_1$.
In particular, the map $\max \from \hat{\dfLJ} \to \simp{\{k, \dots, n\}}$ is an $E^1$-equivalence with inverse given by $i \mapsto \{ k, i \}$.
\begin{lemma} \label{limits-create-crf-lemma}
	Let $(X, W)$ be a marked simplicial set.
	For $n \geq 2$ and $0 < k \leq n$, the map
	\[ \dfLJ \join (X, W) \ito \natm{\hat{\dfLJ}} \join (X, W) \]
	is an acyclic cofibration of marked simplicial sets.
\end{lemma}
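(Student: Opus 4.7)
The plan is to show that the map is a cofibration (an immediate verification) and a weak equivalence in $\msSet$; together these give an acyclic cofibration. For the cofibration property, the underlying simplicial map is a subcomplex inclusion, and the marking on $\dfLJ$ (chains $A_0 \subseteq A_1$ with $\max A_0 = \max A_1$) is precisely the restriction of the natural marking on $\hat{\dfLJ}$ (chains $A_0 \leq A_1$ with $\max A_0 = \max A_1$); joining with $(X, W)$ preserves both properties.

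For the weak equivalence, I would consider the commutative triangle
\[
\begin{tikzcd}[column sep=small]
	\dfLJ \join (X, W) \ar[rr, hook] \ar[dr, "\max \join \id"'] & & \natm{\hat{\dfLJ}} \join (X, W) \ar[dl, "\max \join \id"] \\
	& \minm{\simp{\{k, \ldots, n\}}} \join (X, W) &
\end{tikzcd}
\]
and show that both diagonal maps are marked homotopy equivalences between preorders; joining preserves marked homotopy equivalences by \cref{preorder-marked-htpy-join}, so this will reduce the problem to an application of 2-out-of-3 in $\msSet$.

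For the right diagonal, the section $s \from \minm{\simp{\{k, \ldots, n\}}} \to \natm{\hat{\dfLJ}}$ defined by $s(i) = [k, i]$ is a preorder map into the max-ordering (and the hypothesis $k > 0$ guarantees $s(n) \neq [n]$) satisfying $\max \circ s = \id$. The assignment $H \from \hat{\dfLJ} \times [1] \to \hat{\dfLJ}$ given by $H(A, 0) = A$, $H(A, 1) = [k, \max A]$ is monotone in the max-ordering and preserves markings, furnishing a marked homotopy $\id \sim s \circ \max$. For the left diagonal, the same construction fails because the map $A \mapsto [k, \max A]$ is not monotone in the subset ordering on $\dfLJ$. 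Instead, I factor $\max \from \dfLJ \to \minm{\simp{\{k, \ldots, n\}}}$ through the composite (under the reindexing $\simp{\{k-1, \ldots, n-1\}} \cong \simp{\{k, \ldots, n\}}$) of the marked homotopy equivalences $\dfLJ \xrightarrow{r} \LI{n-1}{k-1} \xrightarrow{\max} \minm{\simp{\{k-1, \ldots, n-1\}}}$ supplied by \cref{jnk-ink-weq}, where $r(A) = \degen{}{0}(A - \{0\})$ is the inverse of the marked homotopy equivalence $\face{}{0}$. A direct computation confirms $\max \circ r = \max$ after reindexing, and composites of marked homotopy equivalences between preorders remain such, so the left diagonal is itself a marked homotopy equivalence.

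Applying 2-out-of-3 to the triangle then yields that the horizontal inclusion is a weak equivalence, completing the proof. The main subtlety is the mismatch between the subset-ordering on $\dfLJ$ and the max-ordering on $\hat{\dfLJ}$, which is why the left diagonal requires a detour through $\LI{n-1}{k-1}$ rather than an analogous direct section and homotopy.
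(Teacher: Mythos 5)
Your architecture is the paper's: the triangle over $\minm{(\simp{\{k,\dots,n\}})} \join (X, W)$, the right diagonal handled via the max-preorder on $\hat{\dfLJ}$, the detour through $\LI{n-1}{k-1}$ and $\face{}{0}$ for the left diagonal, and a final 2-out-of-3. The one place you deviate is the one place the argument breaks: you compose the marked homotopy equivalences $r$ and $\max$ \emph{before} joining and assert that ``composites of marked homotopy equivalences between preorders remain such.'' They do not. A marked homotopy equivalence is witnessed by single homotopies through $\maxm{(\simp{1})}$, and between nerves of posets such a homotopy is exactly a pointwise inequality of monotone maps (plus the marking condition); this relation is not transitive for non-fibrant targets, so the notion is not closed under composition. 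Worse, the composite you need is genuinely \emph{not} a marked homotopy equivalence when $0 < k \leq n-2$: since the codomain is minimally marked, any candidate inverse $g$ of $\max \from \dfLJ \to \minm{(\simp{\{k,\dots,n\}})}$ is forced to satisfy $\max \circ g = \id$ on the nose, and a single homotopy between $g \circ \max$ and $\id$ forces either $g(\max A) \subseteq A$ for all $A$ or $A \subseteq g(\max A)$ for all $A$. Already for $n = 3$, $k = 1$ the first alternative forces $g(2) = \{1,2\}$ and $g(3) = \{1,3\}$, which is not monotone, while the second forces $g(3) \supseteq \{0,1,3\}$ and $g(3) \supseteq \{1,2,3\}$, i.e.\ $g(3) = [3]$, which is excluded from $\LJ{3}{1}$. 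So you cannot feed the composite into \cref{preorder-marked-htpy-join}; this is the same obstruction you yourself identified (non-monotonicity of $A \mapsto \{k,\dots,\max A\}$ in the subset order), and composing does not make it disappear.

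The repair is to reverse the order of operations, which is exactly what the paper does: apply \cref{preorder-marked-htpy-join} to $\face{}{0}$ (equivalently to your retraction $r$) and to $\max \from \LI{n-1}{k-1} \to \minm{(\simp{\{k-1,\dots,n-1\}})}$ \emph{separately}, concluding that $\face{}{0} \join (X,W)$ and $\max \join (X,W)$ are marked homotopy equivalences and hence weak equivalences in $\msSet$; then the joined left diagonal $\max \join (X,W) \from \dfLJ \join (X,W) \to \minm{(\simp{\{k,\dots,n\}})} \join (X,W)$ is a weak equivalence by 2-out-of-3 in the commutative square whose bottom edge is the reindexing isomorphism. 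Your right diagonal is fine as written (there the single homotopy $A \mapsto \{k,\dots,\max A\}$ does exist because $\hat{\dfLJ}$ carries the max-ordering), and with the above substitution your final 2-out-of-3 on the triangle goes through and recovers the paper's proof verbatim.
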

\begin{proof}
	Applying \cref{preorder-marked-htpy-join} gives that the top and left maps in
	\[ \begin{tikzcd}
		\LI{n-1}{k-1} \join (X, W) \ar[d, "\sim", "{\max \join (X, W)}"'] \ar[r, hook, "\sim"', "{\face{}{0} \join (X, W)}"] & \dfLJ \join (X, W) \ar[d, "{\max \join (X, W)}"] \\
		\simp{\{ k-1, \dots, n-1 \}} \join (X, W) \ar[r, "\cong"] & \simp{\{ k, \dots, n \}} \join (X, W)
	\end{tikzcd} \]
	are marked homotopy equivalences.
	By 2-out-of-3, the right map is a weak equivalence.
	Similarly, $\max \from \hat{\dfLJ} \to \simp{\{ k, \dots, n \}}$ is an $E^1$-homotopy equivalence, hence a weak equivalence of marked simplicial sets.
	The result follows by 2-out-of-3 on the triangle
	\[ \begin{tikzcd}
		\dfLJ \join (X, W) \ar[rr, hook] \ar[rd, "\sim"'] & {} & \natm{\hat{\dfLJ}} \join (X, W) \ar[ld, "\simeq"] \\
		{} & \simp{\{ k, \dots, n \}} \join (X, W) & {}
	\end{tikzcd} \]
\end{proof}

We now proceed with the proof of \cref{limits-create-crf}.
\begin{proof}[Proof of \cref{limits-create-crf}]
	We show (1), as (2) is analogous.
	Fix a map $D \from \dfLJ \to \mc{C}$.
	The underlying simplicial set of $\dfLI$ is the join of the underlying simplicial set of $\dfLJ$ with $\simp{0}$.
	Thus, the colimit cone $\lambda_D$ gives a lift
	\[ \begin{tikzcd}
		\dfLJ \ar[r, "D"] \ar[d, hook] & \mc{C} \\
		\dfLI \ar[ur, dotted, "\lambda_D"']
	\end{tikzcd} \]
	of underlying simplicial sets.
	It remains to show that if $A \in \dfLJ$ is such that $n \in A$ then the component map $D(A) \to \colim D$ is a weak equivalence.
	As $F$ preserves finite colimits and creates weak equivalences, it suffices to show $FD(A) \to \colim (FD)$ is an equivalence in $\mc{D}$.

	By assumption, $FD \from \dfLJ \to \mc{D}$ sends marked simplicies to equivalences.
	In the commutative triangle,
	\[ \begin{tikzcd}
		{} & \natm{\hat{\dfLJ}} \ar[rd, "\max"] & {} \\
		\dfLJ \ar[rr, "\max"] \ar[ur, hook] & {} & \minm{(\simp{\{ k, \dots, n \}})}
	\end{tikzcd} \]
	the right map is an $E^1$-homotopy equivalence and the bottom map is a weak equivalence by \cref{jnk-ink-weq}.
	Thus, the left inclusion is an acyclic cofibration of marked simplicial sets.
	Hence, there exists $\overline{FD} \from \hat{\dfLJ} \to \mc{D}$ such that the triangle
	\[ \begin{tikzcd}
		\dfLJ \ar[r, "FD"] \ar[d, hook, "\sim"'] & \natm{\mc{D}} \\
		\natm{\hat{\dfLJ}} \ar[ur, dotted, "\natm{(\overline{FD})}"']
	\end{tikzcd} \]
	commutes.
	The inclusion $\dfLJ \ito \hat{\dfLJ}$ induces a map
	\[ f \from \overline{FD} \slice \mc{D} \to FD \slice \mc{D}  \]
	by pre-composition.
	It suffices to show $f$ is a trivial fibration: this would imply that the colimit of $\overline{FD}$ (which is an initial object of $\overline{FD} \slice \mc{D}$) is equivalent to the colimit of $FD$.
	A subset $A \in \dfLJ$ such that $n \in A$ is a terminal object in $\hat{\dfLJ}$, hence the component map 
	\[ FD(A) = \overline{FD}(A) \to \colim(\overline{FD}) = \colim (FD) \] 
	is an equivalence in $\mc{D}$.

	Given a commutative square
	\[ \begin{tikzcd}
		\bd \simp{n} \ar[r, "u"] \ar[d, hook] & \overline{FD} \slice \mc{D} \ar[d, "f"] \\
		\simp{n} \ar[r, "v"] & FD \slice \mc{D}
	\end{tikzcd} \]
	the maps $u$ and $v$ may be identified as maps
	\[ u \from \hat{\dfLJ} \join \bd \simp{n} \to \mc{D}, \quad v \from \dfLJ \join \simp{n} \to \mc{D} \]
	making the triangles
	\[ \begin{tikzcd}
		{} & \hat{\dfLJ} \ar[rd, "\overline{FD}"] \ar[ld] & {}  \\
		\hat{\dfLJ} \join \bd \simp{n} \ar[rr, "u"] & {} & \mc{D} 
	\end{tikzcd} \qquad \begin{tikzcd}
		{} & {\dfLJ} \ar[rd, "{FD}"] \ar[ld] & {}  \\
		{\dfLJ} \join \simp{n} \ar[rr, "v"] & {} & \mc{D} 
	\end{tikzcd} \]
	commute.
	These maps assemble into a square
	\[ \begin{tikzcd}
		\dfLJ \join \minm{(\bd \simp{n})} \ar[r, hook] \ar[d, hook] & \dfLJ \join \minm{(\simp{n})} \ar[d, "v"] \\
		\natm{\hat{\dfLJ}} \join \minm{(\bd \simp{n})} \ar[r, "u"] & \natm{\mc{D}}
	\end{tikzcd} \]
	of marked maps, which induces a map $P \to \natm{\mc{D}}$ from the pushout $P$.
	In the square of marked maps
	\[ \begin{tikzcd}
		\dfLJ \join \minm{(\bd \simp{n})} \ar[r, hook] \ar[d, hook] & {\dfLJ} \join \minm{(\simp{n})} \ar[d, hook] \\
		\natm{\hat{\dfLJ}} \join \minm{(\bd \simp{n})} \ar[r, hook] & \natm{\hat{\dfLJ}} \join \minm{(\simp{n})}  
	\end{tikzcd} \]
	the left and right maps are acyclic cofibrations by \cref{limits-create-crf-lemma}.
	Thus, the induced map from the pushout into $\natm{\hat{\dfLJ}} \join \minm{(\simp{n})}$ is as well, since it is both a weak equivalence (by 2-out-of-3) and a monomorphism (by direct verification).
	Thus, the map $P \to \natm{\mc{D}}$ lifts to a map $\natm{\hat{\dfLJ}} \join \minm{(\simp{n})} \to \mc{D}$.
	This gives a lift of the starting square when viewed as a map $\simp{n} \to \overline{FD} \slice \mc{D}$.
\end{proof}

The following result gives a condition for when CLF (or CRF) may be ``pulled back'' from an adjunction.
\begin{theorem} \label{clf-from-adjunction}
	Let $L \from \mc{C} \to \mc{D}$ be a map between quasicategories with right adjoint $R \from \mc{D} \to \mc{C}$.
	\begin{enumerate}
		\item Suppose $W \subseteq \mc{D}_1$ is a marking on $\mc{D}$ such that $(\mc{D}, W)$ satisfies CLF.
		If every component of the unit is sent by $L$ to a marked 1-simplex $L \eta_x \from Lx \to LRLx$ then $(\mc{C}, L^{-1} W)$ satisfies CLF.
		\item Suppose $W \subseteq \mc{C}_1$ is a marking on $\mc{C}$ such that $(\mc{C}, W)$ satisfies CRF.
		If every component of the counit is sent by $R$ to a marked 1-simplex $R \varepsilon_y \from RLR y \to Ry$ then $(\mc{D}, R^{-1} W)$ satisfies CRF.
	\end{enumerate}
\end{theorem}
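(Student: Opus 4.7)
We focus on part (1); part (2) is formally dual using the opposite quasicategory $\mc{C}^\op$.

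\textbf{Overall strategy.} We must verify that $L^{-1}W$ is weakly closed under composition and that $(\mc{C}, L^{-1}W)$ has the right lifting property against $\dfLJ \ito \dfLI$ for $n \geq 2$, $0 < k \leq n$. The plan in both cases is the same: transport a lifting problem from $\mc{C}$ to $\mc{D}$ via $L$; solve it in $\mc{D}$ using CLF for $(\mc{D}, W)$; transport the solution back along $R$; and stitch the $R$-image to the original data using the unit natural transformation $\eta$, whose components lie in $L^{-1}W$ by the hypothesis $L\eta \subseteq W$.

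\textbf{The construction.} Fix a map $f \from \dfLJ \to (\mc{C}, L^{-1}W)$. By CLF for $(\mc{D}, W)$, the composite $Lf$ extends to a marked map $g \from \dfLI \to (\mc{D}, W)$. Since $\dfLI$ is, as a simplicial set, the join $\dfLJ \join \simp{0}$ with apex $[n]$, the datum $g$ is a marked cone under $Lf$ with apex $g([n])$. Applying $R$ gives $Rg \from \dfLI \to \mc{C}$ which agrees with $RLf$ on $\dfLJ$, while the unit yields a marked natural transformation $\eta \cdot f \from \dfLJ \times \maxm{(\simp{1})} \to (\mc{C}, L^{-1}W)$ from $f$ to $RLf$. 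Gluing the cylinder to the cone along their shared restriction to $\dfLJ$ produces a map out of
\[
  (\dfLJ \times \simp{1}) \push_{\dfLJ} \dfLI ,
\]
and we wish to extend it along the inclusion of this pushout into $\dfLI$ that collapses the cylinder to a single edge pointing at the apex. Weak closure of $L^{-1}W$ is handled analogously: fill the max-marked inner horn in $\mc{C}$, solve the translated problem in $\mc{D}$, and transport the resulting max-marked $2$-simplex back via $\eta$ and $R$.

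\textbf{Main obstacle.} The only non-trivial point is the ``collapsing extension'' above. We expect it to be handled by the \emph{simple inner horn decomposition} of \cref{def:simple-inner-horn-decomp} together with \cref{anodyne-decomp-is-anodyne}: the relevant inclusion decomposes as a composite of inner-horn inclusions and maxmarked inner-horn inclusions, each of which admits a filler against any quasicategory equipped with a weakly-closed marking, in particular against $(\mc{C}, L^{-1}W)$. Once $\hat f \from \dfLI \to \mc{C}$ is in hand, verification of markings on the new 1-simplices $A \to [n]$ with $\max A = n$ is routine: each such simplex is sent by $\hat f$ to a composite $f(A) \to RLf(A) \to Rg([n])$ whose $L$-image factors as $L\eta_{f(A)} \in W$ followed by a marked edge of $g$, hence lies in $W$ by weak closure.
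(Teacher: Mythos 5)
Your overall route --- push the lifting problem along $L$, solve it in $(\mc{D}, W)$ by CLF, bring the cone back along $R$, glue it to the unit cylinder, and extend over a cone on the cylinder using simple inner horn decompositions --- is exactly the paper's strategy (packaged there as \cref{endofunctor-clf}, applied to the endofunctor $RL$ with the unit as the marked homotopy). But the two steps you label as expected or routine are where the content lies, and both have gaps as written. First, the ``collapsing extension'' is not an extension along a map into $\dfLI$: one extends the glued map over the inclusion of the pushout $(\dfLJ \times \maxm{(\simp{1})}) \push_{\dfLJ} (\dfLJ \join \simp{0})$ (cone glued at the $\{1\}$-end of the cylinder) into the marked cone on the whole cylinder $(\dfLJ \times \simp{1}) \join \simp{0}$, and only afterwards restricts along the level-$0$ copy $\dfLJ \join \simp{0} \cong \dfLI$ (sending $[n]$ to the cone point). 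That this inclusion lies in the saturation of $\{ \minm{(\horn{n}{k})} \ito \minm{(\simp{n})} \} \cup \{ \maxm{(\horn{2}{1})} \ito \maxm{(\simp{2})} \}$ cannot merely be ``expected'': it is precisely \cref{prod-join-inner-anodyne}, whose proof (the explicit alternating-chain simple inner horn decomposition fed into \cref{anodyne-decomp-is-anodyne}) is the technical heart of the paper's argument, and your proposal does not supply it.

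Second, your final marking verification is invalid. The second leg of your composite $f(A) \to RLf(A) \to Rg([n])$ is $R$ applied to a marked edge of $g$, so its $L$-image is $LR(w)$ for some $w \in W$; the hypotheses only make the unit components $L\eta_x$ marked, not $LR(w)$. And even granting both legs marked $L$-images, ``hence lies in $W$ by weak closure'' misuses weak closure: weak closure produces \emph{some} marked composite, it does not say that the particular composite edge your filler happened to produce is marked (that would be strong closure). The same misuse undermines your one-line treatment of weak closure of $L^{-1}W$ (a filler chosen in $\mc{C}$ need not have its long edge in $L^{-1}W$, and the marked composite chosen in $\mc{D}$ need not lie in the image of $L$). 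The paper avoids all post-hoc checks by building the markings into the lifting problem: the cone on the cylinder is marked so that the new edges $(A,0) \to \top$ with $n \in A$ are marked, the glued map from the pushout is a morphism of $\msSet$, and the extension is produced against the marked-anodyne class above; the $\maxm{(\horn{2}{1})} \ito \maxm{(\simp{2})}$ generators are exactly what manufacture the marked long edges (using weak closure of the target marking), so the restriction to $\dfLI$ preserves markings with no further argument. To repair your write-up you would have to prove the decomposition lemma and replace your final check by this marked lifting --- that is, essentially reproduce \cref{prod-join-inner-anodyne} and the proof of \cref{endofunctor-clf}.
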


This result will require a more technical proof, in particular, we will construct explicit cellular decompositions in the marked model structure.
Before doing so, we recall a consequence of \cref{clf-from-adjunction} which follows from \cref{equivs-satisfy-cf}.
\begin{corollary} \label{reflective-localization-satisfies-clf}
	Let $i \from \mc{D} \to \mc{C}$ be a fully faithful functor between quasicategories.
	\begin{enumerate}
		\item If $i$ admits a left adjoint $L \from \mc{C} \to \mc{D}$ then $(\mc{C}, L^{-1}(\operatorname{Equiv}\mc{D}))$ satisfies CLF, where $L^{-1}(\operatorname{Equiv}\mc{D})$ denotes the set of morphisms in $\mc{C}$ which are sent to equivalences in $\mc{D}$.
		\item If $i$ admits a right adjoint $R \from \mc{C} \to \mc{D}$ then $(\mc{C}, R^{-1}(\operatorname{Equiv} \mc{D}))$ satisfies CRF, where $R^{-1}(\operatorname{Equiv}\mc{D})$ denotes the set of morphisms in $\mc{C}$ which are sent to equivalences in $\mc{D}$. \qed
	\end{enumerate}
\end{corollary}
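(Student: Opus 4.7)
The plan is to reduce both assertions to \cref{clf-from-adjunction} by taking the marking on $\mc{D}$ to be its equivalences. By \cref{equivs-satisfy-cf}, the pair $\natm{\mc{D}} = (\mc{D}, \operatorname{Equiv}\mc{D})$ satisfies both CLF and CRF, so in each part only the hypothesis on (co)unit components remains to be verified.

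For part (1), I would apply \cref{clf-from-adjunction}(1) to the adjunction $L \adj i$ with $W = \operatorname{Equiv}\mc{D}$; note that $L^{-1}W$ is then exactly the class $L^{-1}(\operatorname{Equiv}\mc{D})$ appearing in the statement. The remaining check is that the whiskered unit component $L\eta_x \from Lx \to LiLx$ is an equivalence in $\mc{D}$ for every $x \in \mc{C}$. Since $i$ is a fully faithful right adjoint, the counit $\varepsilon \from Li \to \id[\mc{D}]$ is a natural equivalence (the standard characterization of fully faithful right adjoints in quasicategory theory, cf.~\cite[Prop.~5.2.7.4]{lurie:htt}). The triangle identity $\varepsilon_{Lx} \circ L\eta_x \simeq \id[Lx]$ then exhibits $L\eta_x$ as a one-sided inverse to an equivalence, hence itself an equivalence.

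Part (2) is formally dual: I would apply \cref{clf-from-adjunction}(2) to the adjunction $i \adj R$ with $W = \operatorname{Equiv}\mc{D}$, so that $R^{-1}W$ matches the class appearing in the statement. Fully faithfulness of the left adjoint $i$ is equivalent to the unit $\eta \from \id[\mc{D}] \to Ri$ being a natural equivalence, and the triangle identity $R\varepsilon_y \circ \eta_{Ry} \simeq \id[Ry]$ then forces $R\varepsilon_y$ to be an equivalence for every $y \in \mc{C}$.

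There is essentially no obstacle here once \cref{clf-from-adjunction} and \cref{equivs-satisfy-cf} are in hand; the only nonformal ingredient is the well-known dictionary between full faithfulness of an adjoint in quasicategory theory and invertibility of the associated (co)unit, combined with the triangle identities.
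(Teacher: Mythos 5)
Your proposal is correct and matches the paper's intended argument: the paper treats this as an immediate consequence of \cref{clf-from-adjunction} combined with \cref{equivs-satisfy-cf}, which is precisely your reduction. The only content you add is spelling out the standard dictionary (fully faithful right/left adjoint $\Leftrightarrow$ invertible counit/unit) together with the triangle identities to verify the hypothesis on $L\eta_x$ and $R\varepsilon_y$, which is exactly what the paper leaves implicit.
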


As for the proof of \cref{clf-from-adjunction}, we begin by introducing the definition of a \emph{simple inner horn decomposition}, which will also be used in \cref{sec:mEx-is-qcat}.
\begin{definition} \label{def:simple-inner-horn-decomp}
	For an injective map $f \from (X, W) \ito (Y, V)$ between marked simplicial sets, a \emph{simple inner horn decomposition} for $f$ consists of
	\begin{itemize}
		\item for $n \geq 0$, a partition of the set of non-degenerate $n$-simplices in $Y$ which are not in the image of $f$ into a pair of sets $A^n \sqcup B^n$ such that $A^1$, $A^0$, and $B^0$ are empty;
		\item for $n \geq 1$, a pair of finite partitions
		\[ A^n = A^n_1 \sqcup \dots \sqcup A^n_{a(n)}, \quad B^n = B^n_1 \sqcup \dots \sqcup B^n_{b(n)} \]
		such that $b(1) = 1$ and $a(n+1) = b(n)$;
		\item for $n \geq 2$, a function $d \from \{ 1, \dots, a(n) \} \to \{ 1, \dots, n-1 \}$;
	\end{itemize}
	such that for $n \geq 2$ and $k \in \{ 1, \dots, a(n) \}$,
	\begin{enumerate}
		\item the $d(k)$-th face map restricts to a bijection
		\[ \restr{\face{}{d(k)}}{A^n_k} \from A^n_k \xrightarrow{\cong} B^{n-1}_k; \]
		\item for $u \in A^2_1$, if $u\face{}{1}$ is marked then every face of $u$ is marked;
		\item for $u \in A^n_k$ and $i \neq d(k)$, writing $u\face{}{i}$ as a degeneracy $\vec{\degen{}{}} \from \simp{n-1} \to \simp{p}$ of some non-degenerate $p$-simplex $v \from \simp{p} \to Y$, one of the following holds:
		\begin{itemize}
			\item $v$ is contained in the image of $f$;
			\item $v$ is contained in $A^p_i$ for some $i \in \{ 1, \dots, a(p) \}$;
			\item $p < n-1$ and $v$ is contained in $B^p_i$ for some $i \in \{ 1, \dots, b(p) \}$;
			\item $p = n-1$ and $v$ is contained in $B^{n-1}_i$ for some $i \in \{ 1, \dots, k-1 \}$.
		\end{itemize}
	\end{enumerate}
\end{definition}
We say $f$ \emph{admits a simple inner horn decomposition} if there exists a simple inner horn decomposition for $f$.
\begin{lemma} \label{anodyne-decomp-is-anodyne}
	Let $f \from (X, W) \to (Y, V)$ be an injective map of marked simplicial sets.
	If $f$ admits a simple inner horn decomposition then $f$ is in the saturation of the set
	\[ \{ \minm{(\horn{n}{k})} \ito \minm{(\simp{n})} \mid n \geq 2, \ 0 < k < n \} \cup \{ \maxm{(\horn{2}{1})} \ito \maxm{(\simp{2})} \}. \]
\end{lemma}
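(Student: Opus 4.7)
The plan is to construct $f$ as a transfinite composite of pushouts along maps in the given set, using the simple inner horn decomposition to enumerate the attachments in a well-founded order. Specifically, totally order the pairs $(n, k)$ with $n \geq 2$ and $1 \leq k \leq a(n)$ lexicographically, and define a filtration $X = F_{(2, 0)} \subseteq F_{(2, 1)} \subseteq F_{(2, 2)} \subseteq \dots$ where each $F_{(n, k)}$ is obtained from its predecessor by attaching the pair $(A^n_k, B^{n-1}_k)$ simultaneously.

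At stage $(n, k)$, the bijection $\restr{\face{}{d(k)}}{A^n_k} \from A^n_k \xrightarrow{\cong} B^{n-1}_k$ tells us that adding each $u \in A^n_k$ also adds its $d(k)$-th face. Condition (3) is used to show that the remaining $(n-1)$-faces $u\face{}{i}$ for $i \neq d(k)$, together with all their iterated subfaces, already lie in $F_{(n, k-1)}$ (either as simplices from $f(X)$, from some $A^p_j$ or $B^p_j$ with $(p, j)$ strictly preceding $(n, k)$ in our order, or as degeneracies thereof). Consequently, these faces assemble into a well-defined attaching map, producing a pushout
\[
\begin{tikzcd}
\coprod_{u \in A^n_k} \minm{(\horn{n}{d(k)})} \ar[r] \ar[d, hook] & F_{(n, k-1)} \ar[d, hook] \\
\coprod_{u \in A^n_k} \minm{(\simp{n})} \ar[r] & F_{(n, k)}
\end{tikzcd}
\]
of marked simplicial sets. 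Since $d(k) \in \{1, \dots, n-1\}$, the horn is inner.

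For $n \geq 3$, the minimally marked inclusion suffices: the only new 1-simplices come from $B^1_k$ at stage $n = 2$, so no marking consistency issue arises in higher dimensions. At $n = 2$, the horn is $\horn{2}{1}$ and the new edge $u\face{}{1}$ lives in $B^1_k$. Two subcases: if $u\face{}{1} \notin V$, use $\minm{(\horn{2}{1})} \ito \minm{(\simp{2})}$; if $u\face{}{1} \in V$, condition (2) forces $u\face{}{0}$ and $u\face{}{2}$ to also be in $V$, so all three edges of $u$ are marked in both $Y$ and $\maxm{(\simp{2})}$, and we may use $\maxm{(\horn{2}{1})} \ito \maxm{(\simp{2})}$ instead. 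In either case the attaching map is a marked map, and the pushout yields $F_{(n, k)}$ as a marked simplicial subset of $(Y, V)$.

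The main obstacle will be the bookkeeping in Step 2: one must verify carefully, by induction on $(n, k)$ in lexicographic order, that every face $u\face{}{i}$ with $i \neq d(k)$ (and hence every lower-dimensional subface) lies in $F_{(n, k-1)}$, using the four clauses of condition (3) exhaustively; in particular the clause permitting $B^{n-1}_j$ only for $j < k$ is exactly what is needed to ensure the partition $B^{n-1}_k$ appears only through the $d(k)$-face at the current stage. Once this is in place, one concludes by observing that $\bigcup_{(n, k)} F_{(n, k)} = Y$ (every non-degenerate simplex of $Y$ not in $f(X)$ eventually enters, since $A^n \sqcup B^n$ covers all non-degenerate $n$-simplices for every $n$), and that saturated classes are closed under transfinite composition of pushouts along their generators.
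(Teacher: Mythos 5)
Your proposal is correct and follows essentially the same route as the paper's proof: both build the same lexicographically ordered filtration starting from the image of $f$, attach each $u \in A^n_k$ (together with its $d(k)$-face in $B^{n-1}_k$) via a pushout along the inner horn $\horn{n}{d(k)}$, use condition (3) for exactly the bookkeeping you describe, and handle the $n=2$ stage by splitting into the marked case ($\maxm{(\horn{2}{1})} \ito \maxm{(\simp{2})}$, justified by condition (2)) and the unmarked case ($\minm{(\horn{2}{1})} \ito \minm{(\simp{2})}$).
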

\begin{proof}
	Let $Y^1_1 \subseteq (Y, V)$ denote the image of $f$, viewed as a marked simplicial subset of $(Y, V)$.
	For $n \geq 2$ and $k \in \{ 1, \dots, a(n) \}$, define a sequence of marked simplicial subsets of $Y$ inductively by
	\[ Y^n_k := \begin{cases}
		Y^{n-1}_{a(n)} \cup B^{n-1}_{1} \cup A^n_1 & \text{if $k = 1$} \\
		Y^n_{k-1} \cup B^{n-1}_{k} \cup A^n_k & \text{if $k > 1$}. 
	\end{cases} \]
	The conditions for a simple inner horn decomposition assert that each $Y^n_k$ is a well-defined simplicial subset of $Y$, i.e.~that if $u$ is a non-degenerate simplex of $Y^n_k$ then every face of $u$ is also contained in $Y^n_k$.
	By construction, there is a sequence of inclusions
	\[ \begin{array}{c @{ \ } c @{ \ } c @{ \ } c}
		Y^1_1 &\subseteq Y^2_1 \\
		&\subseteq Y^3_1 &\subseteq \dots &\subseteq Y^3_{a(3)}  \\
		& \vdots \\
		&\subseteq Y^n_1 &\subseteq \dots &\subseteq Y^n_{a(n)} \\
		& \vdots
	\end{array} \]
	and the colimit of this sequence is $Y$.
	We show each inclusion in this sequence is a pushout of a coproduct of maps in the set 
	\[ \{ \minm{(\horn{n}{k})} \ito \minm{(\simp{n})} \mid n \geq 2, \ 0 < k < n \} \cup \{ \maxm{(\horn{2}{1})} \ito \maxm{(\simp{2})} \}. \]

	For a 2-simplex $u \in A^2_1$, the restriction $\restr{u}{\horn{2}{1}} \from \horn{2}{1} \to Y$ to the 1-horn factors through $Y^1_1$ by definition of a simple inner horn decomposition.
	Moreover, if $u\face{}{1} \in B^1_1$ is marked then every face of $u$ is marked.
	As $\restr{\face{}{1}}{B^2_1} \from A^2_1 \to B^1_1$ is a bijection, it follows that the square
	\[ \begin{tikzcd}[column sep = 4.7em]
		\coprod\limits_{\substack{u \in A^2_1 \\ u\partial_1 \text{ marked} }} \maxm{(\Lambda^2_1)} \sqcup \coprod\limits_{\substack{u \in A^2_1 \\ u\partial_1 \text{ unmarked}}} \minm{(\Lambda^2_1)} \ar[r, "u \vert_{\Lambda^2_1}"] \ar[d, hook] & Y^1_1 \ar[d, hook] \\
		\coprod\limits_{\substack{u \in A^2_1 \\ u\partial_1 \text{ marked}}} \maxm{(\simp{2})} \sqcup \coprod\limits_{\substack{u \in A^2_1 \\ u\partial_1 \text{ unmarked} }} \minm{(\simp{2})} \ar[r, "u"] & Y^2_1
	\end{tikzcd} \]
	is a pushout in $\msSet$.
	Similarly, for $n \geq 3$ and $k > 1$, the squares
	\[ \begin{tikzcd}[column sep = 4.4em]
		\coprod\limits_{u \in A^n_{1}} \minm{(\Lambda^n_{d(1)})} \ar[r, "u \vert_{\Lambda^n_{d(1)}}"] \ar[d, hook] & Y^{n-1}_{a(n-1)} \ar[d, hook] \\
		\coprod\limits_{u \in A^n_{1}} \minm{(\simp{n})} \ar[r, "u"] & Y^n_1
	\end{tikzcd} \qquad \begin{tikzcd}[column sep = 4.4em]
		\coprod\limits_{u \in A^n_{k}} \minm{(\Lambda^n_{d(k)})} \ar[r, "u \vert_{\Lambda^n_{d(k)}}"] \ar[d, hook] & Y^{n}_{k-1} \ar[d, hook] \\
		\coprod\limits_{u \in A^n_{k}} \minm{(\simp{n})} \ar[r, "u"] & Y^n_k
	\end{tikzcd} \]
	are pushouts.
\end{proof}
We highlight an important benefit of \cref{anodyne-decomp-is-anodyne}, which is that marked quasicategories which are weakly closed under composition are exactly the objects in $\msSet$ with the right lifting property against the set $\{ \minm{(\horn{n}{k})} \ito \minm{(\simp{n})} \mid n \geq 2, \ 0 < k < n \} \cup \{ \maxm{(\horn{2}{1})} \ito \maxm{(\simp{2})} \}$.

The following lemma, which makes use of simple horn decompositions, will then allow us to prove (a generalization of) \cref{clf-from-adjunction}.
\begin{lemma} \label{prod-join-inner-anodyne}
	For a marked poset $(P, W)$ and any subset $Q \subseteq P$, viewed as a discrete set, the induced map $f$ from the pushout in the diagram
	\[ \begin{tikzcd}[cramped, column sep = tiny]
		(P, W) \ar[r, "i_1"] \ar[d, hook] \ar[rd, phantom, "\ulcorner" very near end] & (P, W) \times \maxm{(\simp{1})} \ar[d] \ar[rdd, hook, bend left] & {} \\
		(P \join \simp{0}, W \cup (Q \join \simp{0})_1) \ar[r] \ar[rrd, "i_1 \join \simp{0}", bend right] & \cdot \ar[rd, dotted, "f"] & {} \\
		{} & {} & \left( (P \times \simp{1}) \join \simp{0}, (W \times \simp{1}_1) \cup ((Q \times \{ 0, 1 \}) \join \simp{0})_1 \right)
	\end{tikzcd} \]
	is in the saturation of the set
	\[ \{ \minm{(\horn{n}{k})} \ito \minm{(\simp{n})} \mid n \geq 2, 0 < k< n \} \cup \{ \maxm{(\horn{2}{1})} \ito \maxm{(\simp{2})} \}, \]
\end{lemma}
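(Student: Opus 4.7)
My plan is to apply \cref{anodyne-decomp-is-anodyne} by exhibiting an explicit simple inner horn decomposition of $f$. Under the identification $(p, 1) \sim p$ induced by the pushout, the image of $f$ is the marked subobject $(P \times \simp{1}) \cup \bigl((P \times \{1\}) \join \simp{0}\bigr) \subseteq (P \times \simp{1}) \join \simp{0}$, whose marking contains the edges $(w, \delta)$ for $w \in W$, any $\delta$, together with the edges $(q, 1) \to \ast$ for $q \in Q$. A non-degenerate simplex of the codomain lies outside the image of $f$ precisely when it has the form $\tau \ast \ast$ for a non-degenerate chain $\tau$ in $P \times \simp{1}$ containing at least one level-$0$ vertex.

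Such a non-image $n$-simplex may be written uniquely as $(p_0, 0), \ldots, (p_{j-1}, 0), (q_0, 1), \ldots, (q_{n-j-1}, 1), \ast$ with $j \geq 1$ (possibly with no $q$'s). I would declare it to lie in $A^n_k$ (for $1 \leq k \leq n-1$) if the pair $(p_{k-1}, 0), (p_{k-1}, 1)$ appears consecutively at positions $k-1, k$ (a ``smooth switch''), and in $B^n_k$ (for $1 \leq k \leq n$) if instead either the transition from level $0$ to level $1$ is a strict jump $p_{k-1} < q_0$ at position $k$, or there is no level-$1$ vertex and the switch to $\ast$ occurs at position $k = n$. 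Setting $a(n) = n - 1$, $b(n) = n$, and $d(k) = k$, the compatibility constraints $a(n+1) = b(n)$ and $b(1) = 1$ are satisfied, and removing the duplicate $(p_{k-1}, 1)$ at position $k$ from an $A^n_k$ simplex defines a bijection $\restr{\face{}{k}}{A^n_k} \colon A^n_k \to B^{n-1}_k$ whose inverse inserts $(p_{k-1}, 1)$ at position $k$. This gives condition~(1) of \cref{def:simple-inner-horn-decomp}.

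Condition~(2) is immediate: the elements of $A^2_1$ are exactly the $2$-simplices $(p, 0), (p, 1), \ast$ for $p \in P$; if $\face{}{1} = (p, 0) \to \ast$ is marked then $p \in Q$, in which case $\face{}{0} = (p, 1) \to \ast$ is marked as it lies in $(Q \times \{0, 1\}) \join \simp{0}$, and $\face{}{2} = (p, 0) \to (p, 1)$ is marked as its $P$-component is a degenerate edge and so lies in $W \times \simp{1}_1$.

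The main, though still routine, step is condition~(3): a case analysis of the non-$d(k)$ faces of an $A^n_k$ simplex $u$. For $k \geq 2$, the faces $\face{}{i}$ with $0 \leq i \leq k-2$ delete a level-$0$ vertex strictly before the switch while preserving the adjacent pair $(p_{k-1}, 0), (p_{k-1}, 1)$, hence land in $A^{n-1}_{k-1}$; when $k = 1$, the single such face $\face{}{0}$ removes the only level-$0$ vertex and lands in the image of $f$. The face $\face{}{k-1}$ deletes $(p_{k-1}, 0)$, converting the smooth switch into a strict jump at position $k-1$, and so lies in $B^{n-1}_{k-1}$, which is permissible since $k-1 < k$. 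The faces $\face{}{i}$ with $k+1 \leq i \leq n-1$ delete a vertex strictly after the switch and remain in $A^{n-1}_k$. Finally $\face{}{n}$ removes $\ast$ and lands in $P \times \simp{1} \subseteq \im f$. The boundary case $k = n-1$ (where there are no $q$'s) is handled analogously, with $\face{}{n-1}$ serving as the principal face and landing in $B^{n-1}_{n-1}$. With conditions~(1)--(3) verified, \cref{anodyne-decomp-is-anodyne} yields the claim.
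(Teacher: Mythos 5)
Your proposal is correct and coincides with the paper's own proof essentially verbatim: the paper partitions the non-image chains in exactly the same way (a repeated $P$-component at the level-$0$/level-$1$ transition giving $A^n_k$, a strict transition or an all-level-$0$ chain giving $B^n_k$), takes $d(k)=k$, performs the same face-by-face analysis, and invokes \cref{anodyne-decomp-is-anodyne}. The only (immaterial) divergence is your handling of the face $\face{}{n}$, which you correctly place in the image of $f$, whereas the paper folds it into the $A^{n-1}_k$ case; both options are permitted by condition (3) of \cref{def:simple-inner-horn-decomp}.
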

\begin{proof}
	By \cref{anodyne-decomp-is-anodyne}, it suffices to construct a simple inner horn decomposition for $f$.
	
	Note the codomain of $f$ is the poset $(P \times [1]) \join [0]$.
	We write the cone point as $\top$ and all other elements as $(x, \varepsilon)$ for some $x \in P$ and $\varepsilon = 0, 1$.
	A non-degenerate $n$-simplex in $(P \times [1]) \join [0]$ is an alternating chain.
	An $n$-chain is not contained in the image of $f$ if and only if the first element in the chain is of the form $(x, 0)$ for some $x \in P$ and the last element in the chain is the cone point.
	Let $S^n$ denote the set of such $n$-chains.

	For $n \geq 2$ and $k = 1, \dots, n-1$, define subsets $A^n_k \subseteq S^n$ and $B^{n-1}_k \subseteq S^{n-1}$ by
	\begin{align*}
		A^n_k &:= \left\{ ((x_0, 0) \leq (x_1, \varepsilon_1) \leq \dots \leq (x_{n-1}, \varepsilon_{n-1}) \leq \top) \in S^n \mid \begin{array}{l}
			\varepsilon_{k-1} = 0 \\
			\varepsilon_{k} = 1 \\
			x_{k} = x_{k-1}
		\end{array} \right\} \\
		B^{n-1}_k &:= \left\{ ((x_0, 0) \leq (x_1, \varepsilon_1) \leq \dots \leq (x_{n-2}, \varepsilon_{n-2}) \leq \top) \in S^n \mid \begin{array}{l}
			\varepsilon_{k-1} = 0 \\
			\varepsilon_{k} = 1 \\
			x_{k} \neq x_{k-1}
		\end{array} \right\}.
	\end{align*}
	(For $B^{n-1}_{n-1}$, our convention is that the second and third conditions are vacuous.)
	The sets $A^n_k$ and $B^n_k$ are all mutually disjoint, and the pair
	\[ A^n := A^n_1 \sqcup \dots \sqcup A^n_{n-1}, \quad B^n := B^n_1 \sqcup \dots \sqcup B^n_{n-1} \]
	partitions $S^n$.
	We show the tuple
	\[ \left( \{ A^n_k \}_{k=1}^{n-1}, \{ B^n_k \}_{k=1}^{n}, \id \right) \]
	is a simple inner horn decomposition for $f$.
	
	Fix $n \geq 2$ and $k \in \{ 1, \dots, n-1 \}$.
	\begin{enumerate}
		\item For a chain $((x_0, 0) \leq (x_1, \varepsilon_1) \leq \dots \leq (x_{n-1}, \varepsilon_{n-1}) \leq \top)$ in $A^n_k$, we have that $\varepsilon_{k+1} = 1$.
		As this chain is non-degenerate and $\varepsilon_{k} = \varepsilon_{k+1}$, we have that $x_{k+1} \neq x_{k} = x_{k-1}$.
		Thus, the $k$-th face of $((x_0, 0) \leq (x_1, \varepsilon_1) \leq \dots \leq (x_{n-1}, \varepsilon_{n-1}) \leq \top)$ is an element of $B^{n-1}_{k}$.
		The inverse function is given by
		\begin{align*} 
			&((x_0, 0) \leq \dots \leq (x_{k-1}, 0) \leq (x_k, 1) \leq \dots \leq (x_{n-2}, 1) \leq \top) \\
			&\quad \mapsto ((x_0, 0) \leq \dots \leq (x_{k-1}, 0) \leq (x_{k-1}, 1) \leq (x_k, 1) \leq \dots \leq (x_{n-2}, 1) \leq \top). 
		\end{align*}

		\item For $((x_0, 0) \leq (x_0, 1) \leq \top) \in A^2_1$, the $\face{}{1}$-face $(x_0, 0) \leq \top$ is marked if $x_0 \in Q$.
		This gives that $(x_0, 1) \leq \top$ is marked.
		The 1-simplex $(x_0, 0) \leq (x_0, 1)$ is marked as a product of marked 1-simplices.

		\item For an $n$-simplex $((x_0, 0) \leq (x_1, \varepsilon_1) \leq \dots \leq (x_{n-1}, \varepsilon_{n-1}) \leq \top) \in A^n_k$ and $i \neq k$, the $i$-th face of this $n$-simplex is non-degenerate.
		We proceed by case analysis on $i$:
		\begin{itemize}
			\item if $i < k-1$ then the $i$-th face is an element of $A^{n-1}_{k-1}$;
			\item if $i = k-1$ then we proceed by case analysis on $k$:
			\begin{itemize}
				\item for $k = 1$, the 0-th face does not contain any elements of the form $(x, 0)$.
				Thus, it is in the image of $f$;
				\item for $k \geq 2$, we have that $x_{k-2} \neq x_{k-1}$ as $((x_0, 0) \leq (x_1, \varepsilon_1) \leq \dots \leq (x_{n-1}, \varepsilon_{n-1}) \leq \top)$ is non-degenerate and $\varepsilon_{k-2} = \varepsilon_{k-1} = 0$.
				As $x_k = x_{k-1}$, the $(k-1)$-th face is an element of $B^{n-1}_{k-1}$;
			\end{itemize} 
			\item if $i > k$ then we proceed by case analysis on $k$:
			\begin{itemize}
				\item if $k < n-1$ then $\varepsilon_{n-1} = 1$, thus the $i$-th face is an element of $A^{n-1}_k$.
				\item if $k = n-1$ then $i = n$.
				The $n$-th face does not contain $\top$, hence is contained in the image of $f$. \qedhere
			\end{itemize} 
		\end{itemize}
	\end{enumerate}
\end{proof}

The following theorem is a generalization of \cref{clf-from-adjunction}, giving a condition for when a calculus of fractions within a quasicategory $\mc{C}$ may be pulled back from a smaller class to a larger class.
\begin{theorem} \label{endofunctor-clf}
	Let $W_0 \subseteq W$ be two markings on a quasicategory $\mc{C}$ where $W$ is weakly closed under composition and let $F \from (\mc{C}, W) \to (\mc{C}, W_0)$ be an endofunctor which takes $W$ to $W_0$.
	\begin{enumerate}
		\item If $(\mc{C}, W_0)$ has the right lifting property against the set
		$\{ \dfLJ \ito \dfLI \mid n \geq 2 , \ 0 < k \leq n \}$
		and there exists a marked homotopy
		$ \alpha \from (\mc{C}, W) \times \maxm{(\simp{1})} \to (\mc{C}, W) $
		from $\id$ to $F$ then $(\mc{C}, W)$ satisfies CLF.
		\item If $(\mc{C}, W_0)$ has the right lifting property against the set
		$\{ \dfRJ \ito \dfRI \mid n \geq 2 , \ 0 \leq k < n \}$ 
		and there exists a marked homotopy
		$\alpha \from (\mc{C}, W) \times \maxm{(\simp{1})} \to (\mc{C}, W)$
		from $F$ to $\id$ then $(\mc{C}, W)$ satisfies CRF.
	\end{enumerate} 
\end{theorem}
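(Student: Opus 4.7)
The plan is to prove (1); statement (2) will then follow by passing to opposites, since a marked homotopy from $F$ to $\id$ on $(\mc{C}, W)$ induces one from $\id$ to $F^{\op}$ on $(\mc{C}^{\op}, W)$, reducing (2) to (1) applied to $\mc{C}^{\op}$.

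Given a lifting problem $D \from \dfLJ \to (\mc{C}, W)$, my strategy is to use $\alpha$ to transport $D$ to $FD$, solve the lifting problem there (where the hypothesis applies), and transport the solution back. First I would form the marked homotopy $H := \alpha \circ (D \times \id) \from \dfLJ \times \maxm{(\simp{1})} \to (\mc{C}, W)$, which equals $D$ at $t=0$ and $FD$ at $t=1$. Since $F$ sends $W$ into $W_0$, the composite $FD$ factors through $(\mc{C}, W_0)$, so by hypothesis it extends to some $\widehat{FD} \from \dfLI \to (\mc{C}, W_0) \subseteq (\mc{C}, W)$. Gluing $H$ and $\widehat{FD}$ along $\dfLJ \times \{1\}$ yields a single marked map out of the pushout
\[ Z := \dfLJ \times \maxm{(\simp{1})} \cup_{\dfLJ \times \{1\}} \dfLI \times \{1\} \longrightarrow (\mc{C}, W). \]

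The key structural observation is that $Z$ is precisely the domain of the map $f$ from \cref{prod-join-inner-anodyne} when one takes the marked poset there to be $\dfLJ$ with its max-equal marking and $Q := \{A \in \dfLJ : n \in A\}$: with this choice, the marking $W \cup (Q \join \simp{0})_1$ on $\dfLJ \join \simp{0}$ coincides with the standard marking on $\dfLI$, because a cone edge $A \to [n]$ is marked in $\dfLI$ precisely when $\max A = n$, i.e.~when $A \in Q$. Consequently, \cref{prod-join-inner-anodyne} asserts that the inclusion
\[ Z \hookrightarrow (\dfLJ \times \simp{1}) \join \simp{0} \]
lies in the saturation of $\{\minm{(\horn{n}{k})} \ito \minm{(\simp{n})}\} \cup \{\maxm{(\horn{2}{1})} \ito \maxm{(\simp{2})}\}$. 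Since $\mc{C}$ is a quasicategory and $W$ is weakly closed under composition, $(\mc{C}, W)$ has the right lifting property against this saturation (by \cref{anodyne-decomp-is-anodyne}), so the map $Z \to (\mc{C}, W)$ extends to some $G \from (\dfLJ \times \simp{1}) \join \simp{0} \to (\mc{C}, W)$.

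Finally I would restrict $G$ along the sub-marked-simplicial-set $(\dfLJ \times \{0\}) \join \simp{0} \hookrightarrow (\dfLJ \times \simp{1}) \join \simp{0}$. Identifying $\dfLJ \times \{0\} \cong \dfLJ$ and the cone point with $[n]$, this subobject is isomorphic to $\dfLI$ as a marked simplicial set by the same bookkeeping as above. The restriction agrees with $D$ on $\dfLJ$ because $H|_{t=0} = D$, thus providing the desired extension $\widehat{D} \from \dfLI \to (\mc{C}, W)$. The main obstacle I expect is the marking-by-marking identification of $Z$ (with its pushout marking) with the domain of $f$ in \cref{prod-join-inner-anodyne} and of $(\dfLJ \times \{0\}) \join \simp{0}$ with $\dfLI$; once these verifications are carried out, the remainder is an assembly of already-established constructions.
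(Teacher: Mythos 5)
Your proposal is correct and takes essentially the same route as the paper's proof: transport the problem along $\alpha$, lift $Ff$ into $(\mc{C}, W_0)$ using the hypothesis, glue to a map on the pushout, extend over $(\dfLJ \times \simp{1}) \join \simp{0}$ via \cref{prod-join-inner-anodyne} with $P = \dfLJ$ and $Q = \{A : n \in A\}$ (using that a quasicategory with $W$ weakly closed under composition lifts against that saturated class), and restrict along the copy of $\dfLI$ sitting at time $0$ with the cone point. The marking identification of $\dfLI$ with the marked cone on $\dfLJ$ and the duality argument for (2) match the paper's treatment.
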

\begin{proof}
	We prove (1), as (2) is formally dual.
	
	It suffices to show $(\mc{C}, W)$ has the right lifting property against $\{ \dfLJ \ito \dfLI \mid n \geq 2, \ 0 < k \leq n \}$.
	Thus, we fix a map $f \from \dfLJ \to (\mc{C}, W)$.
	The composite $Ff \from \dfLJ \to (\mc{C}, W_0)$ admits a lift $\overline{Ff} \from \dfLI \to (\mc{C}, W_0)$ by assumption.
	This gives a commutative square
	\[ \begin{tikzcd}
		\dfLJ \ar[r, "i_1"] \ar[d, hook] & \dfLJ \times \maxm{(\simp{1})} \ar[d, "\alpha \circ (f \times \id)"] \\
		\dfLI \ar[r, "\overline{Ff}"] & (\mc{C}, W_0)
	\end{tikzcd} \]
	We apply \cref{prod-join-inner-anodyne} with $(P, W) = \dfLJ$ and $Q \subseteq P$ being the collection of subsets which contain $n$.
	Thus, this square induces a map from the pushout which admits a lift to a map
	\[ \overline{[\alpha, \overline{Ff}]} \from \left( (P \times \simp{1}) \join \simp{0}, (W \times \simp{1}_1) \cup (Q \times \{0, 1\}) \join \simp{0} \right) \to (\mc{C}, W). \]
	We have an inclusion from $\dfLI$ to the domain of $\overline{[\alpha, \overline{Ff}]}$ defined, as a poset map, by sending $[n]$ to the cone point and all other subsets $A$ to $(A, 0)$.
	The composite
	\[ \dfLI \ito \left( (P \times \simp{1}) \join \simp{0}, (W \times \simp{1}_1) \cup (Q \times \{0, 1\}) \join \simp{0} \right) \xrightarrow{\overline{[\alpha, \overline{Ff}]}} (\mc{C}, W) \]
	is the desired lift.
\end{proof}
From \cref{endofunctor-clf}, we deduce \cref{clf-from-adjunction}.
\begin{proof}[Proof of \cref{clf-from-adjunction}]
	For (1), we apply \cref{endofunctor-clf} to the inclusion of markings $F(W) \subseteq L^{-1}(W)$ and the endofunctor $FL \from \mc{C} \to \mc{C}$.  
	The required homotopy is given by the unit $\eta \from \id \to RL$.
	
	Statement (2) is dual.
\end{proof}

We show that if a quasicategory satisfies CLF then its arrow category does as well.
To do this, we make use of the following lemma.
\begin{lemma} \label{lj-mapping-space-lift-lemma}
	Let $(X, W)$ be a marked simplicial set.
	\begin{enumerate}
		\item For $n \geq 2$ and $0 \leq k \leq n$, the marked simplicial set $(X, W)$ has the right lifting property against $\LJ{n+1}{k+1} \ito \LI{n+1}{k+1}$ if and only if the map 
		\[ i_0^* \from (X,W)^{\maxm{(\simp{1})}} \to (X, W) \]
		has the right lifting property against $\dfLJ \ito \dfLI$.
		\item For $n \geq 2$ and $0 \leq k \leq n$, the following are equivalent, the marked simplicial set $(X, W)$ has the right lifting property against $\RJ{n+1}{k} \ito \RI{n+1}{k}$ if and only if the map 
		\[ i_1^* \from (X,W)^{\maxm{(\simp{1})}} \to (X, W) \]
		has the right lifting property against $\dfRJ[k+1] \ito \dfRI[k+1]$.
	\end{enumerate}
\end{lemma}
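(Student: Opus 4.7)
The plan is to reduce both statements to the pushout-product/Leibniz-exponential adjunction available in the Cartesian-closed category $\msSet$. Since $(X,W) \to \ast$ has terminal codomain, this adjunction collapses to: a marked simplicial set $(X,W)$ has the right lifting property against the pushout-product of $A \ito B$ with $C \ito D$ if and only if $(X,W)^D \to (X,W)^C$ has the right lifting property against $A \ito B$. Both $i_0^*$ and $i_1^*$ in the statement are instances of this Leibniz exponential with $C \ito D$ equal to $\{0\} \ito \maxm{(\simp{1})}$ or $\{1\} \ito \maxm{(\simp{1})}$ respectively.

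For part (1), I would exhibit an explicit isomorphism of marked simplicial sets
\[ \dfLI \times \maxm{(\simp{1})} \xrightarrow{\cong} \LI{n+1}{k+1} \]
by decomposing each subset $B \subseteq [n+1]$ containing $k+1$ as a pair $(A, \varepsilon)$, where $\varepsilon = 1$ iff $0 \in B$ and $A = (B \setminus \{0\}) - 1 \subseteq [n]$ is the downward shift. Under this bijection, $k+1 \in B$ iff $k \in A$ (so $A$ belongs to $\dfLI$), the preorder structures match, and since $A$ is nonempty (as $k \in A$) one has $\max B = \max A + 1$, so the ``max-equal'' markings agree. Moreover, because $\maxm{(\simp{1})}$ is maximally marked, a 1-simplex of the product is marked iff its $\dfLI$-component is. Under the bijection $[n+1]$ corresponds to $([n], 1)$, and one checks that
\[ \bigl( \dfLJ \times \maxm{(\simp{1})} \bigr) \cup_{\dfLJ \times \{0\}} \bigl( \dfLI \times \{0\} \bigr) \]
is the maximal sub-marked-simplicial-set of $\dfLI \times \maxm{(\simp{1})}$ omitting $([n], 1)$, i.e., precisely $\LJ{n+1}{k+1}$. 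Thus the pushout-product of $\dfLJ \ito \dfLI$ with $\{0\} \ito \maxm{(\simp{1})}$ is isomorphic to $\LJ{n+1}{k+1} \ito \LI{n+1}{k+1}$, and the adjunction above completes (1).

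For part (2), the same strategy works using the $i_1$-direction together with the dual decomposition by the element $n+1$ rather than $0$; equivalently, one can invoke the duality isomorphism of \cref{lnk-rnk-iso}, which exchanges $\LI{}{}$-data for $\RI{}{}$-data and swaps $i_0$ with $i_1$ on the interval factor. The step I expect to require the most care is the verification that the product decomposition preserves markings correctly and that the excluded top element matches up under the bijection in both directions; once that identification is in hand, the remaining content is entirely formal by Cartesian closedness of $\msSet$.
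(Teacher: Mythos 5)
Your proposal is correct and essentially identical to the paper's proof: the paper likewise transposes the lifting problem through the Cartesian closedness of $\msSet$ and identifies the pushout-product of $\dfLJ \ito \dfLI$ with $\{0\} \ito \maxm{(\simp{1})}$ with the inclusion $\LJ{n+1}{k+1} \ito \LI{n+1}{k+1}$, using exactly your shift-by-one decomposition (the paper's isomorphism $f(A,0)=\face{}{0}(A)$, $f(A,1)=\face{}{0}(A)\cup\{0\}$ is the inverse of your bijection, and its pushout verification is your ``maximal subcomplex omitting $([n],1)$'' check). Like the paper, you dispose of part (2) by duality, splitting off the top element $n+1$ instead of $0$.
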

\begin{proof}
	We prove (1), as (2) is dual.
	
	The data of a commutative square
	\[ \begin{tikzcd}
		\dfLJ \ar[r, "u"] \ar[d, hook] & (X, W)^{\maxm{(\simp{1})}} \ar[d, "i_0^*"] \\
		\dfLI \ar[r, "v"] & (X, W)
	\end{tikzcd} \]
	is exactly the data of a map $\overline{u} \from \dfLJ \times \maxm{(\simp{1})} \to (X, W)$ and a map $v \from \dfLI \to (X, W)$ such that $v$ is a lift of the map $\overline{u} \circ i_0 \from \dfLJ \to (X, W)$.
	Such a square admits a lift if and only if the map from the pushout 
	\[ [\overline{u}, v] \from \dfLI \push_{\dfLJ} \dfLJ \times \maxm{(\simp{1})} \to (X, W) \]
	admits a lift to a map $\dfLI \times \maxm{(\simp{1})} \to (X, W)$.
	We show the inclusion 
	\[ \dfLI \push_{\dfLJ} \dfLJ \times \maxm{(\simp{1})} \ito \dfLI \times \maxm{(\simp{1})} \] 
	is naturally isomorphic to the inclusion $\LJ{n+1}{k+1} \ito \LI{n+1}{k+1}$, from which the result follows.

	Define a map $f \from \dfLI \times \maxm{(\simp{1})} \to \LI{n+1}{k+1}$ by
	\[ f(A, i) := \begin{cases}
		\face{}{0}(A) & \text{if } i=0 \\
		\face{}{0}(A) \cup \{ 0 \} & \text{if } i=1.
	\end{cases}  \]
	This map has an inverse defined by
	\[ A \mapsto \begin{cases}
		(\degen{}{0}(A), 1) & \text{if } 0 \in A \\
		(\degen{}{0}(A), 0) & \text{if } 0 \not\in A.
	\end{cases} \]
	This gives an isomorphism $\dfLI \times \maxm{(\simp{1})} \cong \LI{n+1}{k+1}$.
	Moreover, the map $f$ restricts to a map 
	\[ \restr{f}{\dfLJ \times \maxm{(\simp{1})}} \from \dfLJ \times \maxm{(\simp{1})} \to \LJ{n+1}{k+1}. \] 
	It remains to show the square
	\[ \begin{tikzcd}
		\dfLJ \ar[r, "i_0", hook] \ar[d, hook] & \dfLJ \times \maxm{(\simp{1})} \ar[d, "\restr{f}{\dfLJ \times \maxm{(\simp{1})}}"] \\
		\dfLI \ar[r, "\face{}{0}"] & \LJ{n+1}{k+1}
	\end{tikzcd}  \]
	is a pushout.
	A non-degenerate $n$-simplex $(A_0 \subsetneq \dots \subsetneq A_n)$ in $\LJ{n+1}{k+1}$ is in the image of $\face{}{0} \from \dfLI \to \LJ{n+1}{k+1}$ if and only if $A_n$ does not contain 0.
	Moreover, if $A_n \neq [n+1] - \{ 0 \}$ then this $n$-simplex is also in the image of $f i_0$.
	This shows the intersection of the images of the bottom and right maps is the image of the composite map.
	If $A_n$ contains 0 then $(A_0 \subsetneq \dots \subsetneq A_n)$ in $\LJ{n+1}{k+1}$ is in the image under $f$ of $\dfLJ \times \maxm{(\simp{1})}$,
	which shows the union of the images of the bottom and right maps is all of $\LJ{n+1}{k+1}$, which suffices.
\end{proof}
\begin{corollary} \label{arr-cat-clf} 
	Let $(\mc{C}, W)$ be a marked quasicategory such that $W$ is strongly closed under composition.
	\begin{enumerate}
		\item If $(\mc{C}, W)$ satisfies CLF then the marked arrow category $(\mc{C}, W)^{\maxm{(\simp{1})}}$ satisfies CLF.
		\item If $(\mc{C}, W)$ satisfies CRF then the marked arrow category $(\mc{C}, W)^{\maxm{(\simp{1})}}$ satisfies CRF.
	\end{enumerate}
\end{corollary}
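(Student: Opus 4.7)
The plan is to treat (1) in detail; part (2) will follow by an entirely dual argument using \cref{lj-mapping-space-lift-lemma}(2) and the projection $i_1^*$ in place of their (1)-variants. Write $\mc{D} := (\mc{C}, W)^{\maxm{(\simp{1})}}$ for brevity.

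I will first verify that the marked $1$-simplices of $\mc{D}$ are weakly closed under composition. By the internal-hom adjunction, a lifting problem $\maxm{(\horn{2}{1})} \to \mc{D}$ to be extended over $\maxm{(\simp{2})}$ is the same datum as a filling of a map $\maxm{(\horn{2}{1})} \times \maxm{(\simp{1})} \to (\mc{C}, W)$ along the monomorphism into $\maxm{(\simp{2})} \times \maxm{(\simp{1})}$. The underlying inclusion $\horn{2}{1} \times \simp{1} \ito \simp{2} \times \simp{1}$ is inner anodyne (being a pushout-product of the inner horn inclusion with $\emptyset \ito \simp{1}$), so an underlying extension into the quasicategory $\mc{C}$ exists. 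Since both $\maxm{(\horn{2}{1})} \times \maxm{(\simp{1})}$ and $\maxm{(\simp{2})} \times \maxm{(\simp{1})}$ have every non-degenerate $1$-simplex marked, what remains is to check that the three $1$-simplices added by the inclusion---namely the edges of the form $(0,\varepsilon) \to (2,\delta)$ in $\simp{2} \times \simp{1}$---are carried into $W$. Each such edge appears as the $d_1$-face of a non-degenerate $2$-simplex in $\simp{2} \times \simp{1}$ whose other two faces lie in $\horn{2}{1} \times \simp{1}$ (and hence land in $W$); strong closure of $W$ then forces the $d_1$-face into $W$ as well. This is the step where the strong-closure hypothesis is needed, and enumerating the new simplices along with witnessing $2$-simplices is the main technical point.

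For the principal lifting property, fix $n \geq 2$ and $0 < k \leq n$, and let $f \from \dfLJ \to \mc{D}$ be arbitrary. Post-composing with the evaluation $i_0^* \from \mc{D} \to (\mc{C}, W)$ gives a map $\dfLJ \to (\mc{C}, W)$; applying CLF of $(\mc{C}, W)$ extends it to some $\bar{g} \from \dfLI \to (\mc{C}, W)$. This assembles into the solid square
\[
\begin{tikzcd}
\dfLJ \ar[r, "f"] \ar[d, hook] & \mc{D} \ar[d, "i_0^*"] \\
\dfLI \ar[r, "\bar g"'] \ar[ur, dotted] & (\mc{C}, W).
\end{tikzcd}
\]
By \cref{lj-mapping-space-lift-lemma}(1), a diagonal lift exists in this square whenever $(\mc{C}, W)$ has the right lifting property against $\LJ{n+1}{k+1} \ito \LI{n+1}{k+1}$; since $n+1 \geq 3$ and $0 < k+1 \leq n+1$, this is immediate from CLF of $(\mc{C}, W)$. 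The resulting diagonal is the required extension of $f$ to $\dfLI$, finishing the argument.
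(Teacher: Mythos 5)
Your proof is correct and follows essentially the same route as the paper: the lifting property comes from \cref{lj-mapping-space-lift-lemma}(1) (combined, as you do via the solid square, with CLF of $(\mc{C},W)$ itself), and part (2) is dual. Your explicit verification that the marking on $(\mc{C},W)^{\maxm{(\simp{1})}}$ is weakly closed under composition — inner-anodyne extension of the underlying map plus strong closure of $W$ applied to the three new long edges of $\simp{2}\times\simp{1}$ — just fills in what the paper dispatches with ``by assumption.''
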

\begin{proof}
	Marked 1-simplices are weakly closed under composition by assumption.
	For (1), \cref{lj-mapping-space-lift-lemma} shows the marked arrow category has the right lifting property with respect to the set $\{ \dfLJ \ito \dfLI \mid n \geq 2, \ 0 < k \leq n \}$.
	Statement (2) is dual.
\end{proof}

\chapter*{Model for the localization}
\cftaddtitleline{toc}{chapter}{Model for the localization}{}

\section{The marked Ex functor} \label{sec:mEx-is-qcat}

We now describe the analogue of the category of fractions $C W^{-1}$ (or $W^{-1} C$) for quasicategories. 
Given a marked simplicial set $(X,W)$, we form a simplicial set whose 0-simplices are those of $X$ and whose 1-simplices are (co)spans of the form $x \to y' \overset{\sim}{\leftarrow} y$ (or $x \overset{\sim}{\leftarrow} x' \to y$). 
This is done using a marked variant of the $\Ex$ functor.
The main theorem of this section is \cref{mEx-qcat}, which generalizes the analogous result for classical calculus of fractions: if $(\mc{C}, W)$ is a marked quasicategory where $W$ is weakly closed under composition then calculus of fractions is a sufficient \emph{and necessary} condition for the ``category of fractions'' to be a quasicategory.

For $n \geq 0$, let $\msd [n]$ be the poset $\mathcal{P}_{\neq \varnothing}[n]$ of non-empty subsets of $[n]$ (regarded as a simplicial set).
We view a 1-simplex $A_0 \subseteq A_1$ as marked if $\max A_0 = \max A_1$.
This gives the data of a cosimplicial object $\Delta \to \msSet$, with simplicial operators defined using the image.
\begin{example}
	We give example depictions of $\msd [0]$, $\msd [1]$, and $\msd [2]$, respectively.
	\begin{figure}[H]
		\centering
		\begin{minipage}{0.38\textwidth}
			\begin{tikzpicture}[commutative diagrams/every diagram, node distance=2em]
				\node (0) {$\{ 0 \}$};
				\node (01) [right=of 0] {$\{ 0, 1 \}$};
				\node (1) [right=of 01] {$\{ 1 \}$};

				\node (0b) [above=4.8em of 01] {$\{ 0 \}$};

				\path[commutative diagrams/.cd, every arrow, every label]
					(0) edge (01)
					(1) edge node[swap] {$\sim$} (01);
			\end{tikzpicture}
		\end{minipage} \begin{minipage}{0.38\textwidth}
			\begin{tikzpicture}[commutative diagrams/every diagram]
				\node (0) {$\{ 0 \}$};
				\path (0) ++(52:12em) node (1) {$\{ 1 \}$};
				\path (1) ++(-52:12em) node (2) {$\{ 2 \}$};
				\node (01) at (barycentric cs:0=1,1=1) {};
				\node (02) at (barycentric cs:0=1,2=1) {};
				\node (12) at (barycentric cs:1=1,2=1) {};
				\path (01) ++(150:0.5em) node (01b) {$\{ 0, 1 \}$};
				\path (12) ++(30:0.5em) node (12b) {$\{ 1, 2 \}$};
				\path (02) ++(270:0.5em) node (02b) {$\{ 0, 2 \}$};
				\node (012) at (barycentric cs:0=1,1=1,2=1) {$\{ 0, 1, 2 \}$};

				\path[commutative diagrams/.cd, every arrow, every label]
					(0) edge (01b)
						edge (02b)
						edge (012)
					(1) edge node[swap] {$\sim$} (01b)
						edge (12b)
						edge (012)
					(2) edge node[swap] {$\sim$} (02b)
						edge node[swap] {$\sim$} (12b)
						edge node[swap] {$\sim$} (012)
					(01b) edge (012)
					(02b) edge node {$\sim$} (012)
					(12b) edge node[swap] {$\sim$} (012);
			\end{tikzpicture}
		\end{minipage}
		\caption*{The marked simplicial sets $\msd [0]$, $\msd [1]$, and $\msd [2]$.}
	\end{figure}
\end{example}
\begin{definition}
	The \emph{marked subdivision} functor $\mSd \from \sSet \to \msSet$ is the extension by colimits of the cosimplicial object $\msd \from \Delta \to \msSet$.
	\[ \begin{tikzcd}
		\Delta \ar[d, hook] \ar[r, "\msd"] & \msSet \\
		\sSet \ar[ur, dotted, "\mSd"']
	\end{tikzcd} \]
	The \emph{marked $\mathrm{Ex}$-functor} is its right adjoint $\mEx \from \msSet \to \sSet$, defined by
	\[ \mEx(X, W)_n := \msSet(\msd [n], (X, W)). \]
\end{definition}

\begin{remark} \label{Ex-v-mEx}
  The composite
  \[ \sSet \xrightarrow{\maxm{(-)}} \msSet \xrightarrow{\mEx} \sSet \]
  recovers Kan's original $\Ex$-functor \cite{kan:css}.
\end{remark}

We write $\msdop [n]$ for the opposite poset of $\msd [n]$.
We view a 1-simplex $A_0 \supseteq A_1$ of $\msdop [n]$ as marked if $\min A_0 = \min A_1$.
\begin{example}
	Below are example depictions of $\msdop [0]$, $\msdop [1]$, and $\msdop [2]$, respectively.
	\begin{figure}[H]
		\centering
		\begin{minipage}{0.38\textwidth}
			\begin{tikzpicture}[commutative diagrams/every diagram, node distance=2em]
				\node (0) {$\{ 0 \}$};
				\node (01) [right=of 0] {$\{ 0, 1 \}$};
				\node (1) [right=of 01] {$\{ 1 \}$};

				\node (0b) [above=4.8em of 01] {$\{ 0 \}$};

				\path[commutative diagrams/.cd, every arrow, every label]
					(01) edge node[swap] {$\sim$} (0)
						 edge 				(1);
			\end{tikzpicture}
		\end{minipage} \begin{minipage}{0.38\textwidth}
			\begin{tikzpicture}[commutative diagrams/every diagram]
				\node (0) {$\{ 0 \}$};
				\path (0) ++(52:12em) node (1) {$\{ 1 \}$};
				\path (1) ++(-52:12em) node (2) {$\{ 2 \}$};
				\node (01) at (barycentric cs:0=1,1=1) {};
				\node (02) at (barycentric cs:0=1,2=1) {};
				\node (12) at (barycentric cs:1=1,2=1) {};
				\path (01) ++(150:0.5em) node (01b) {$\{ 0, 1 \}$};
				\path (12) ++(30:0.5em) node (12b) {$\{ 1, 2 \}$};
				\path (02) ++(270:0.5em) node (02b) {$\{ 0, 2 \}$};
				\node (012) at (barycentric cs:0=1,1=1,2=1) {$\{ 0, 1, 2 \}$};

				\path[commutative diagrams/.cd, every arrow, every label]
					(012) edge node[swap] {$\sim$} (01b)
						  edge node {$\sim$} (02b)
						  edge 				 (12b)
						  edge node[swap] {$\sim$} (0)
						  edge 				 (1)
						  edge 				 (2)
					(01b) edge node[swap] {$\sim$} (0)
						 edge 				(1)
					(02b) edge node[swap] {$\sim$} (0)
						 edge				(2)
					(12b) edge node[swap] {$\sim$} (1)
						 edge				(2);
			\end{tikzpicture}
		\end{minipage}
		\caption*{The marked simplicial sets $\msdop [0]$, $\msdop [1]$, and $\msdop [2]$.}
	\end{figure}
\end{example}
The respective extension by colimits is denoted $\mSdop \from \sSet \to \msSet$, and its right adjoint $\mExop \from \msSet \to \sSet$ is defined by
\[ \mExop(X, W)_n := \msSet(\msdop [n], (X, W)). \]
By construction, we have a natural isomorphism $\mSdop \simp{n} \cong (\mSd (\simp{n})^\op)^\op$ of cosimplicial objects.
This induces isomorphisms
\[ \mSdop X \cong (\mSd X^\op)^\op , \quad \mExop(X, W) \cong \left( \mEx(X^\op, W) \right)^\op \]
natural in $X \in \sSet$ and $(X, W) \in \msSet$, respectively.

The goal of this section is to prove the following result.
\begin{theorem} \label{mEx-qcat}
	Let $(X, W)$ be a marked simplicial set.
	\begin{enumerate}
		\item \begin{enumerate}
			\item If $\mEx (X, W)$ is a quasicategory then $(X,W)$ has the right lifting property with respect to the set $\{ \dfLJ \ito \dfLI \mid n \geq 2, \ 0 < k \leq n \}$.
			\item If $X$ is a quasicategory and $(X, W)$ satisfies CLF then $\mEx (X, W)$ is a quasicategory.
		\end{enumerate}
		\item \begin{enumerate}
			\item If $\mExop (X, W)$ is a quasicategory then $(X,W)$ has the right lifting property with respect to the set $\{ \dfRJ \ito \dfRI \mid n \geq 2, \ 0 \leq k < n \}$.
			\item If $X$ is a quasicategory and $(X, W)$ satisfies CRF then $\mExop (X, W)$ is a quasicategory.
		\end{enumerate}
	\end{enumerate}
\end{theorem}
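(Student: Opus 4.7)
\medskip
\noindent\textbf{Proof proposal.}
Part (2) will follow from part (1) by passing to opposites, using the natural isomorphism $\mExop(X, W) \cong (\mEx(X^\op, W))^\op$ together with the symmetry between $\dfLI, \dfLJ$ and $\dfRI, \dfRJ$ of Proposition \ref{lnk-rnk-iso}. So the task reduces to (1).

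For (1)(b), the $\mSd \dashv \mEx$ adjunction reformulates the goal: $\mEx(X, W)$ is a quasicategory if and only if $(X, W)$ has the right lifting property against $\mSd\horn{m}{j} \hookrightarrow \mSd \simp{m}$ for every inner horn. The plan is to decompose each such marked inclusion as a finite composite of pushouts of maps from
\[ \{\minm{(\horn{n}{k})} \hookrightarrow \minm{(\simp{n})} \mid n \geq 2,\ 0 < k < n\} \cup \{\maxm{(\horn{2}{1})} \hookrightarrow \maxm{(\simp{2})}\} \cup \{\dfLJ \hookrightarrow \dfLI \mid n \geq 2,\ 0 < k \leq n\}. \]
Each pushout in this decomposition admits a lift against $(X, W)$: the first two families because $X$ is a quasicategory and $W$ is weakly closed under composition (via Lemma \ref{anodyne-decomp-is-anodyne} and the remark following it), and the third by the CLF hypothesis. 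The non-degenerate cells of $\mSd\simp{m}$ absent from $\mSd\horn{m}{j}$ are the chains $A_0 \subsetneq \cdots \subsetneq A_p$ of non-empty subsets of $[m]$ with $A_p \in \{[m],\, [m]\smallsetminus\{j\}\}$. I would order the attachment of these cells so that those lying in the upward star of the vertex $\{j\}$ (appropriately defined in terms of the position of $j$ within the chain) are attached via $\dfLJ \hookrightarrow \dfLI$ pushouts, while the remaining cells are attached via inner-horn pushouts organized into a simple inner horn decomposition in the sense of Definition \ref{def:simple-inner-horn-decomp}.

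For (1)(a), conversely, suppose $\mEx(X, W)$ is a quasicategory and one is given a lifting problem $f \from \dfLJ \to (X, W)$ against $\dfLJ \hookrightarrow \dfLI$. The plan is to realize this inclusion as a sub-lifting-problem of $\mSd\horn{m}{j} \hookrightarrow \mSd\simp{m}$ for suitably chosen $m$ and $j$: concretely, by embedding $\dfLI$ as the upward star of $\{j\}$ inside $\msd[m]$ (and $\dfLJ$ as the intersection of that star with $\mSd\horn{m}{j}$), in such a way that the markings match. After extending $f$ over the complementary cells of $\mSd\horn{m}{j}$ (using already-available faces and degeneracies), the hypothesized inner-horn filling in $\mEx(X, W)$ produces a map $\mSd\simp{m} \to (X, W)$ whose restriction to the upward star of $\{j\}$ is the required lift.

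The main obstacle lies in the combinatorial bookkeeping of (1)(b): verifying that the proposed ordering on the cells of $\mSd\simp{m}\smallsetminus\mSd\horn{m}{j}$ yields a valid decomposition (every attaching face is available from an earlier stage), and that the marking induced on each attaching horn by the marking of $\msd[m]$ matches the marking required by the corresponding cell (so that maximally marked $2$-horns arise precisely where they are needed, and that the marked edges of the $\dfLJ$ portion agree with the $\max$-condition defining the marking of $\msd[m]$). A secondary but nontrivial subtlety in (1)(a) is extending $f$ to $\mSd\horn{m}{j}$ compatibly with these markings so that the resulting extension factors through $(X, W)$ as a map of marked simplicial sets.
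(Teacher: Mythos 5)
Your reduction of (2) to (1) and your argument for (1)(a) essentially match the paper: embedding $\dfLI$ as the upward star of $\{k\}$ in $\msd[n]$ and extending the given map over $\mSd\horn{n}{k}$ is exactly the retract argument of \cref{i-j-retract}, whose retraction $A \mapsto A \cup \{k\}$ is the systematic version of your ``extend using already-available faces and degeneracies.'' (You do still owe the case $k = n$ of the statement: $\horn{n}{n}$ is an outer horn, so quasicategoricity of $\mEx(X,W)$ gives nothing directly; the paper reduces this case to the inner one via \cref{crf-implies-good-crf}.)

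The genuine gap is in (1)(b). Your first two classes of cells agree with the paper: the chains lying in the upward star of $\{j\}$ and ending at $[m]$ are attached by a single pushout of $\dfLJ \ito \dfLI$ (this is $\mSd\horn{m}{j} \subseteq J^m_j$), and the chains ending at $[m]$ with $j \notin A_0$ and avoiding $[m] - \{j\}$ are attached by a simple inner horn decomposition ($J^m_j \subseteq K^m_j$, \cref{knk-lnk-inner-anodyne}). But your plan to attach the \emph{remaining} cells --- those whose chain passes through the vertex $[m] - \{j\}$ --- ``via inner-horn pushouts'' cannot succeed, and this is not a bookkeeping issue. The $0$-simplex $[m] - \{j\}$ is not in $\mSd\horn{m}{j}$, and none of your generators can create it in the required position: inner horn inclusions (minimally or maximally marked) are bijective on $0$-simplices, and \cref{def:simple-inner-horn-decomp} forbids new $0$-simplices outright; the only vertex-creating generator, $\dfLJ \ito \dfLI$, produces a cone vertex whose incoming marked edges form a star of the prescribed shape, whereas $[m] - \{j\}$ must receive an edge from \emph{every} nonempty subset of $[m] - \{j\}$ (marked exactly when its maximum is $m$) and must itself map to $[m]$ by a marked edge --- and those edges cannot be built first, since they require the vertex. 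So $\mSd\horn{m}{j} \ito \mSd\simp{m}$ is not a relative cell complex on your generating set, and no ordering of attachments repairs this. The paper sidesteps the problem: it never attaches the cells through $[m] - \{j\}$ at all, but instead uses the marked retraction $\mSd\simp{m} \to K^m_j$ sending $[m] - \{j\}$ to $[m]$ (\cref{mEx-qcat-retraction}; markings are preserved precisely because $j$ is inner), so that the lift constructed over $K^m_j$ is transported to all of $\mSd\simp{m}$ by precomposition. That retraction step is the missing idea in your proposal; with it, your argument collapses into the paper's.
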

To prove this, we establish some auxiliary definitions and results.
As \cref{mEx-qcat} is our primary focus in this section, we work with calculus of left fractions and $\mSd \from \sSet \to \msSet$, omitting the dual statements.

We identify $\mSd \Lambda^n_k$ with the maximal simplicial subset of $\mSd \simp{n}$ which omits the 0-simplices $[n]$ and $[n] - \{ k \}$.
Note that there are inclusions $\dfLI \subseteq \mSd \simp{n}$ and $\dfLJ \subseteq \mSd \Lambda^n_i$.
\begin{proposition} \label{i-j-retract}
	For $n \geq 1$ and $0 \leq k \leq n$, the inclusion $\dfLJ \ito \dfLI$ is a retract of the inclusion $\mSd \horn{n}{k} \ito \mSd \simp{n}$. 
\end{proposition}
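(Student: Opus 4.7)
The plan is to exhibit an explicit retraction $r \from \mSd \simp{n} \to \dfLI$ which restricts to a retraction $\mSd \horn{n}{k} \to \dfLJ$. The natural candidate is the map defined, on 0-simplices, by
\[ r(A) := A \cup \{ k \}, \]
which is evidently monotone and hence extends uniquely to a map of simplicial sets.

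First I would check that $r$ preserves markings. If $A_0 \subseteq A_1$ is a marked $1$-simplex of $\mSd \simp{n}$, then $\max A_0 = \max A_1$; since $\max (A_i \cup \{k\}) = \max(\max A_i, k)$, the $1$-simplex $r(A_0) \subseteq r(A_1)$ satisfies the same condition and is thus marked in $\dfLI$. Next, $r$ restricts to the identity on $\dfLI$ since $A \cup \{k\} = A$ whenever $k \in A$, giving the upper retraction square $\dfLI \hookrightarrow \mSd \simp{n} \xrightarrow{r} \dfLI$.

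Then I would verify compatibility with the horn inclusions, namely that $r$ restricts to a map $\mSd \horn{n}{k} \to \dfLJ$. A subset $A \in \mSd \horn{n}{k}$ satisfies $A \neq [n]$ and $A \neq [n] - \{k\}$, so $A$ omits some element $j \neq k$; hence $A \cup \{k\}$ still omits $j$, giving $r(A) \neq [n]$, as required for membership in $\dfLJ$. This together with the identity-on-$\dfLJ$ property (inherited from the case above) yields the lower retraction square and completes the diagram
\begin{equation*}
\begin{tikzcd}
\dfLJ \ar[r, hook] \ar[d, hook] & \mSd \horn{n}{k} \ar[r, "r"] \ar[d, hook] & \dfLJ \ar[d, hook] \\
\dfLI \ar[r, hook] & \mSd \simp{n} \ar[r, "r"] & \dfLI
\end{tikzcd}
\end{equation*}
exhibiting $\dfLJ \ito \dfLI$ as a retract of $\mSd \horn{n}{k} \ito \mSd \simp{n}$.

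There is no real obstacle here; the proof is essentially a bookkeeping verification, with the only subtlety being to check that the two ``problem'' $0$-simplices $[n]$ and $[n] - \{k\}$ excluded from $\mSd \horn{n}{k}$ are exactly those whose $r$-image is the forbidden $[n] \in \dfLI \setminus \dfLJ$. For the dual statement about $\dfRJ \ito \dfRI$ and $\mSdop \horn{n}{k} \ito \mSdop \simp{n}$, the analogous retraction is $A \mapsto A \cap \{k, k+1, \dots\} \cup \{k\}$ — or simply deduced from the present case by applying $(\uvar)^\op$ together with the isomorphism $\mSdop \simp{n} \cong (\mSd (\simp{n})^\op)^\op$.
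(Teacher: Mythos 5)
Your proof is correct and uses exactly the same retraction as the paper, namely $A \mapsto A \cup \{ k \}$ on $\mSd \simp{n}$, restricted to $\mSd \horn{n}{k}$; your verification that markings are preserved and that the two omitted $0$-simplices $[n]$ and $[n] - \{ k \}$ are precisely those sent to $[n]$ is the same bookkeeping the paper leaves implicit.
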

\begin{proof}
	The inclusion $\dfLI \subseteq \mSd \simp{n}$ admits a retraction defined, as a poset map, by $A \mapsto A \cup \{ k \}$.
	The restriction of this retraction to $\mSd \horn{n}{k}$ factors through $\dfLJ$, as desired.
	\[ \begin{tikzcd}
		\dfLJ \ar[r, hook] \ar[d, hook] & \mSd \horn{n}{k} \ar[d, hook] \ar[r, "\cup \{ k \}"] & \dfLJ \ar[d, hook] \\
		\dfLI \ar[r, hook] & \mSd \simp{n} \ar[r, "\cup \{ k \}"] & \dfLI
	\end{tikzcd} \]
\end{proof}

\begin{definition}
	For $n \geq 0$ and $0 < k < n$, 
	\begin{enumerate}
		\item let $J^n_k \subseteq \mSd \simp{n}$ be the marked simplicial subset which contains $\mSd \Lambda^n_i$ and, for $k \geq 0$, all $m$-simplices $(A_0 \subseteq \dots \subseteq A_m)$ such that $k \in A_0$; and
		\item let $K^n_k \subseteq \mSd \simp{n}$ be the maximal marked simplicial subset which omits the 0-simplex $[n] - \{ k \}$. 
	\end{enumerate}
\end{definition}
By definition, there is a sequence of inclusions
\[ \mSd \Lambda^n_k \subseteq J^n_k \subseteq K^n_k \subseteq \mSd \simp{n}. \]
Note there is an inclusion $\dfLI \subseteq J^n_k$ which restricts to an inclusion $\dfLJ \subseteq \mSd \Lambda^n_i$.
\begin{proposition}
	For $n \geq 2$ and $0 \leq k \leq n$, the square
	\[ \begin{tikzcd}
		\dfLJ \ar[r, hook] \ar[d, hook] & \mSd \horn{n}{k} \ar[d, hook] \\
		\dfLI \ar[r, hook] & J^n_k
	\end{tikzcd} \]
	is a pushout.
\end{proposition}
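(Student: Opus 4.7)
The plan is to verify the pushout directly by identifying $J^n_k$ as the union of $\dfLI$ and $\mSd \horn{n}{k}$ inside $\mSd \simp{n}$, with intersection equal to $\dfLJ$. Since all four marked simplicial sets in the square are marked simplicial subsets of $\mSd \simp{n}$ with their induced markings, this reduces the problem to a set-theoretic verification on simplices.

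First I would recall that pushouts in $\msSet$ along monomorphisms are computed by taking the pushout of underlying simplicial sets and marking a $1$-simplex in the pushout if and only if it arises as the image of a marked $1$-simplex from either summand. Thus it suffices to check the following two identities of subsimplicial sets of $\mSd \simp{n}$: that $\dfLI \cap \mSd \horn{n}{k} = \dfLJ$, and that $\dfLI \cup \mSd \horn{n}{k} = J^n_k$; then verify that the marking induced on $J^n_k$ from this union is its given marking.

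For the intersection, a non-degenerate simplex $(A_0 \subsetneq \dots \subsetneq A_m)$ of $\mSd \simp{n}$ lies in $\dfLI$ precisely when $k \in A_i$ for all $i$ (equivalently $k \in A_0$), and it lies in $\mSd \horn{n}{k}$ precisely when no $A_i$ equals $[n]$ or $[n]-\{k\}$. The first condition automatically forces $A_i \neq [n]-\{k\}$, so the conjunction amounts to $k \in A_0$ together with $A_m \neq [n]$, which is the defining condition for $\dfLJ$. For the union, the definition of $J^n_k$ as $\mSd \horn{n}{k}$ together with all simplices whose smallest vertex contains $k$ is, by the previous identification of $\dfLI$, precisely $\mSd \horn{n}{k} \cup \dfLI$.

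Finally, the markings on $\dfLJ$, $\dfLI$, and $\mSd \horn{n}{k}$ are all the restrictions of the marking on $\mSd \simp{n}$, so a $1$-simplex $A_0 \subseteq A_1$ in $J^n_k$ is marked in the pushout exactly when $\max A_0 = \max A_1$, which matches the marking $J^n_k$ inherits from $\mSd \simp{n}$. No significant obstacle is expected; the argument is a direct unwinding of definitions.
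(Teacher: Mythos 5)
Your proof is correct and follows essentially the same route as the paper's: the paper's one-line argument simply observes that the simplices of $J^n_k$ not contained in $\mSd \horn{n}{k}$ are exactly the chains with $k \in A_0$ and top element $[n]$, i.e.\ the simplices of $\dfLI$ not in $\dfLJ$, which is precisely your union/intersection computation spelled out in more detail (including the marking check, which the paper leaves implicit).
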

\begin{proof}
	Follows since the simplices in $J^n_k$ which are not contained in $\mSd \horn{n}{i}$ are exactly the chains $A_0 \subseteq \dots \subseteq A_m$ where $A_m = [n]$ and all subsets contain $k$ (i.e.~$k \in A_0$).
\end{proof}
\begin{corollary} \label{mEx-qcat-CRF-lift}
	Let $(X, W)$ be a marked simplicial set.
	If $(X, W)$ has the right lifting property against $\{ \dfLJ \ito \dfLI \mid n \geq 2, \ 0 < k \leq n \}$
	then it has the right lifting property against
	\[ \{ \mSd \horn{n}{k} \ito J^n_k \mid n \geq 2, \ 0 < k \leq n \}. \] 
	\qed
\end{corollary}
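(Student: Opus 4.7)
The plan is to observe that this corollary follows immediately from the preceding proposition combined with the fact that right lifting properties are preserved under pushouts. Specifically, the proposition asserts that for each $n \geq 2$ and $0 < k \leq n$, the inclusion $\mSd \horn{n}{k} \ito J^n_k$ is the pushout of $\dfLJ \ito \dfLI$ along the inclusion $\dfLJ \ito \mSd \horn{n}{k}$.

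Given this, the proof reduces to the standard observation: given a lifting problem
\begin{equation*}
\begin{tikzcd}
\mSd \horn{n}{k} \ar[r] \ar[d, hook] & (X, W) \\
J^n_k \ar[ur, dotted]
\end{tikzcd}
\end{equation*}
one precomposes the top map with $\dfLJ \ito \mSd \horn{n}{k}$ and the bottom map with $\dfLI \ito J^n_k$ to obtain a lifting problem for $\dfLJ \ito \dfLI$ against $(X, W)$. By hypothesis, a lift $\dfLI \to (X, W)$ exists. This lift, together with the original map $\mSd \horn{n}{k} \to (X, W)$, assembles via the universal property of the pushout square into the desired lift $J^n_k \to (X, W)$.

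There is no real obstacle here; the entire content of the corollary is already contained in the proposition that identifies $\mSd \horn{n}{k} \ito J^n_k$ as a pushout of $\dfLJ \ito \dfLI$. The proof is one sentence: the class of maps against which $(X, W)$ has the right lifting property is closed under pushouts, and in particular contains the pushouts $\mSd \horn{n}{k} \ito J^n_k$ of the generating maps $\dfLJ \ito \dfLI$.
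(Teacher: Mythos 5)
Your argument is correct and is exactly the one the paper intends: the corollary is stated with no proof precisely because the preceding proposition exhibits $\mSd \horn{n}{k} \ito J^n_k$ as a pushout of $\dfLJ \ito \dfLI$, and extension problems against a pushout of a map reduce, via the universal property, to extension problems against the original map. Nothing is missing.
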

\begin{proposition} \label{mEx-qcat-retraction}
	For $n \geq 2$ and $0 \leq k < n$, the inclusion $K^n_k \subseteq \mSd \simp{n}$ admits a retraction.
\end{proposition}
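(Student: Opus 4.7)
The plan is to construct the retraction as a poset map $r \from \mSd \simp{n} \to \mSd \simp{n}$ landing in $K^n_k$. Since $K^n_k$ is the maximal marked simplicial subset omitting the single 0-simplex $[n] - \{k\}$, any retraction must agree with the identity on all other vertices and send $[n] - \{k\}$ somewhere in $K^n_k$. Monotonicity then forces the target to be a subset containing $[n] - \{k\}$ and not equal to $[n] - \{k\}$; since $[n]$ is the only such candidate, there is essentially one possibility:
\[ r(A) := \begin{cases} A & \text{if } A \neq [n] - \{k\}, \\ [n] & \text{otherwise}. \end{cases} \]

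First I would verify that $r$ is a well-defined poset map. The only subsets of $[n]$ containing $[n] - \{k\}$ are $[n] - \{k\}$ itself and $[n]$, so monotonicity reduces to a short case analysis on whether $A_0$ or $A_1$ equals $[n] - \{k\}$ in an inclusion $A_0 \subseteq A_1$. The map is clearly the identity on $K^n_k$ and its image lies in $K^n_k$ by construction, so $r$ restricts to a retraction once we verify it preserves markings.

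The marking check is where the hypothesis $k < n$ is used. A 1-simplex $A_0 \subseteq A_1$ of $\mSd \simp{n}$ is marked iff $\max A_0 = \max A_1$. The only non-trivial case is when $A_1 = [n] - \{k\}$ (if $A_0 = [n] - \{k\}$ and $A_1 \neq [n] - \{k\}$, then $A_1 = [n]$ and $r$ sends the 1-simplex to the degenerate $[n] \subseteq [n]$). Because $k < n$, we have $\max([n] - \{k\}) = n$, so any marked inclusion $A_0 \subseteq [n] - \{k\}$ has $n \in A_0$, hence $\max r(A_0) = \max A_0 = n = \max [n] = \max r([n] - \{k\})$, and the image is marked.

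There is no real obstacle here beyond spotting the candidate retraction and noticing where the hypothesis $k < n$ enters; the construction fails precisely when $k = n$, since then $\max([n] - \{k\}) = n - 1 \neq n$ and the marking $\{n\} \subseteq [n] - \{n\}$ (should it appear) cannot be sent to a marked 1-simplex with target $[n]$.
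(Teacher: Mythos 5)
Your proof is correct and uses exactly the same retraction as the paper: send $[n]-\{k\}$ to $[n]$ and fix everything else, with marking preservation resting on $k<n$ so that $\max([n]-\{k\})=n$. (Only a cosmetic slip in your closing aside: $\{n\}\subseteq [n]-\{n\}$ is not an actual inclusion, but this remark about the $k=n$ failure does not affect the proof of the stated claim.)
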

\begin{proof}
	The retraction is defined by
	\[ S \mapsto \begin{cases}
		[n] & \text{if $S = [n] - \{ i \}$} \\
		S & \text{otherwise}.
	\end{cases} \]
	This map preserves marked 1-simplices as $i \neq n$ by assumption.
\end{proof}

\begin{lemma} \label{knk-lnk-inner-anodyne}
	The inclusion $J^n_k \subseteq K^n_k$ is in the saturation of the set
	\[ \{ \minm{(\horn{n}{k})} \ito \minm{(\simp{n})} \mid n \geq 2, \ 0 < k < n \} \cup \{ \maxm{(\horn{2}{1})} \ito \maxm{(\simp{2})} \}. \]
\end{lemma}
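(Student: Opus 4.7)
The plan is to construct an explicit simple inner horn decomposition (\cref{def:simple-inner-horn-decomp}) for the inclusion $J^n_k \subseteq K^n_k$ and then invoke \cref{anodyne-decomp-is-anodyne}.

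Every non-degenerate simplex of $K^n_k$ that is not in $J^n_k$ is a chain $\sigma = (A_0 \subsetneq \cdots \subsetneq A_m)$ with $A_m = [n]$, $k \notin A_0$, and $A_i \neq [n] - \{k\}$ for every $i$. For such a $\sigma$ let $j(\sigma) \in \{1, \ldots, m\}$ be the least index $l$ with $k \in A_l$. Call $\sigma$ \emph{tight} if $A_{j(\sigma)} = A_{j(\sigma)-1} \cup \{k\}$ and \emph{gap} otherwise; note that the exclusion of $[n]-\{k\}$ from the chain forces $j(\sigma) \leq m-1$ in the tight case. Define $A^m$ and $B^m$ to consist of the tight and gap $m$-simplices respectively, sub-partitioned by
\[ A^m_l = \{\tau \in A^m : j(\tau) = l\} \text{ for } l \in \{1, \ldots, m-1\}, \quad B^m_l = \{\sigma \in B^m : j(\sigma) = l\} \text{ for } l \in \{1, \ldots, m\}, \]
and set $d(l) = l$. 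Then $A^0 = B^0 = A^1 = \emptyset$, $b(1) = 1$, and $a(m+1) = m = b(m)$.

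For the bijection $\restr{\face{}{l}}{A^m_l} \from A^m_l \to B^{m-1}_l$, deleting $A_l$ from a tight $\tau$ with $j(\tau) = l$ puts $A_{l+1}$ at position $l$ with predecessor $A_{l-1}$, and $A_{l+1} \supsetneq A_l = A_{l-1} \cup \{k\}$ shows this face is gap at position $l$; the inverse inserts $A_{l-1} \cup \{k\}$ between positions $l-1$ and $l$. For condition (2), if the $\face{}{1}$-face $(A_0, [n])$ of $\tau = (A_0, A_0 \cup \{k\}, [n]) \in A^2_1$ is marked, then $\max A_0 = n$, and since $k < n$ this immediately forces the other two faces of $\tau$ to be marked.

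Condition (3) is verified by case analysis on the deleted index $i \neq l$ of $\tau = (A_0, \ldots, A_m) \in A^m_l$. If $i = 0$ and $l = 1$, the face has $A_1 \ni k$ as first element, so it lies in $J^n_k$. If $i = m$, the face deletes $[n]$, producing a chain avoiding both $[n]$ and $[n] - \{k\}$ and hence lying in $\mSd \horn{n}{k} \subseteq J^n_k$. Deleting $A_i$ for $0 \leq i \leq l-2$ (with $l \geq 2$) preserves the predecessor $A_{l-1}$ of $A_l$ and shifts $A_l$ to position $l-1$, yielding a tight face in $A^{m-1}_{l-1}$. Deleting $A_{l-1}$ (for $l \geq 2$) puts $A_l$ at position $l-1$ with predecessor $A_{l-2}$, giving a gap face in $B^{m-1}_{l-1}$, which is permitted since $l-1 < l$. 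Finally, deleting $A_i$ with $l < i < m$ preserves tightness at position $l$, placing the face in $A^{m-1}_l$ (the inequality $l \leq m-2$ being automatic when such an $i$ exists).

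The main obstacle is the bookkeeping of how $j$ and the tight/gap classification transform under face maps; no subtler point intervenes. Once the decomposition is in place, \cref{anodyne-decomp-is-anodyne} immediately gives the conclusion.
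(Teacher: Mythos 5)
Your proof is correct and follows essentially the same route as the paper: the tight/gap classification at the first index containing $k$ is exactly the paper's partition into the sets $S^m_j$ and $T^m_j$ with $d = \id$, followed by the same appeal to \cref{anodyne-decomp-is-anodyne}. Your case analysis for condition (3) is, if anything, slightly more careful than the paper's (you separate out the $i = l-1$ face as landing in $B^{m-1}_{l-1}$ rather than in the tight class), but the argument is the same.
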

\begin{proof}
	We construct a simple inner horn decomposition and apply \cref{anodyne-decomp-is-anodyne}.

	The simplices of $K^n_k$ which are not contained in $J^n_k$ are exactly chains $(A_0 \subseteq \dots \subseteq A_m)$ such that $A_m = [n]$ and $k \not\in A_0$.
	For $m \geq 2$ and $j = 1, \dots, m-1$, define subsets $S^m_j \subseteq (K^n_k)_m$ of non-degenerate $m$-simplices by
	\begin{align*}
		S^m_j &:= \left\{ (A_0 \subsetneq \dots \subsetneq A_m) \in (K^n_k)_m \mid \begin{array}{l}
			A_m = [n] \\
			k \not\in A_{j-1} \\
			A_{j} = A_{j-1} \cup \{ k \}
		\end{array} \right\}.
	\end{align*}
	For $m \geq 1$ and $j = 1, \dots, m$, define subsets $T^m_j \subseteq (K^n_k)_m$ of non-degenerate $m$-simplices by
	\begin{align*}
		T^m_j &= \left\{ (A_0 \subsetneq \dots \subsetneq A_m) \in (K^n_k)_m \mid \begin{array}{l}
			A_m = [n] \\
			k \not\in A_{j-1} \\
			A_{j} \supsetneq A_{j-1} \cup \{ k \}
		\end{array} \right \}.
	\end{align*}
	We show that 
	\[ (\{ S^m_j \}_{j=1}^{m-1}, \{ T^m_j \}_{j=1}^m, \id[\{ 1, \dots, m-1 \}]) \] 
	is a simple inner horn decomposition for the inclusion $J^n_k \subseteq K^n_k$.
	
	Fix $m \geq 2$ and $j \in \{ 1, \dots, m-1 \}$.
	\begin{enumerate}
		\item Every simplex $(A_0 \subsetneq \dots \subsetneq A_m)$ in $S^m_j$ is non-degenerate, thus $A_{j+1}$ contains an element which is not contained in $A_j = A_{j-1} \cup \{ k \}$.
		This shows the image of $\partial_j \vert_{S^m_j}$ is contained in $T^{m-1}_{j}$.
		The inverse function $T^{m-1}_{j} \to S^m_j$ is defined by
		\[ (A_0 \subsetneq \dots \subsetneq A_{m-1}) \mapsto (A_0 \subsetneq \dots \subsetneq A_{j-1} \subsetneq A_{j-1} \cup \{ i \} \subsetneq A_j \subsetneq \dots \subsetneq A_{m-1} ). \]
		\item If $(A_0 \subsetneq A_1 \subsetneq A_2)$ is such that $\max A_0 = \max A_2$, then $\max A_0 \leq \max A_1$ and $\max A_1 \leq \max A_2$ implies $(A_0 \subsetneq A_1 \subsetneq A_2)$ is maximally-marked.
		\item For $u \in S^m_j$ and $l \neq j$, we proceed by case analysis on $u\face{}{l}$:
		\begin{itemize}
			\item if $l < j$ then $u\face{}{l} \in S^{m-1}_{j-1}$;
			\item if $l > j$ then $f\partial_l \in S^{m-1}_j$;
			\item if $l = m$ then $u\face{}{l} \in \mSd \Lambda^n_i$ since, writing $u = (A_0 \subseteq \dots \subseteq A_m)$, we have that $A_{m-1} \neq [n]$ as $u$ is non-degenerate. \qedhere
		\end{itemize}
	\end{enumerate}
\end{proof}

With this, we may prove \cref{mEx-qcat}.
\begin{proof}[Proof of \cref{mEx-qcat}]
	Statement (1) follows from \cref{i-j-retract}.

	For (2), fix a map $\Lambda^n_k \to \mEx(X, W)$ for $n \geq 2$ and $0 < i < n$.
	By adjointness, it suffices to show the tranpose $\mSd \Lambda^n_k \to (X, W)$ admits a lift to $\mSd \simp{n} \to (X, W)$.
	A lift from $\mSd \Lambda^n_i$ to $J^n_k$ exists by \cref{mEx-qcat-CRF-lift}.
	A lift from $J^n_k$ to $K^n_k$ exists by \cref{knk-lnk-inner-anodyne}.
	As $\mSd \simp{n}$ retracts onto $K^n_k$ by \cref{mEx-qcat-retraction}, this gives the desired lift.
	\[ \begin{tikzcd}
		\mSd \Lambda^n_i \ar[r] \ar[d, hook] & (X, W) \\
		J^n_k \ar[ur, dotted, "\text{by \ref{mEx-qcat-CRF-lift}}" very near start, swap] \ar[d, hook] \\
		K^n_k \ar[d, hook] \ar[uur, dotted, "\text{by \ref{knk-lnk-inner-anodyne}}", swap, bend right] \\
		\mSd \simp{n} \ar[u, bend right, dotted, "\text{by \ref{mEx-qcat-retraction}}", swap]
	\end{tikzcd} \]
\end{proof}

\section{Localization at equivalences via marked Ex} \label{sec:localization-at-equivs}

In this section, we move towards showing that if $(\mc{C}, W)$ satisfies CLF then $\mEx(\mc{C},W)$ is a model for the localization of $\mc{C}$ at $W$.
We construct the canonical map $\mc{C} \to \mEx(\mc{C}, W)$ (or $\mc{C} \to \mExop(\mc{C, W})$) and show that it sends weak equivalences to equivalences (\cref{min-ascends}).
We then consider the case where $W$ is exactly the set of equivalences in $\mc{C}$.
In this case, one has that the localization is a categorical equivalence (\cref{ex-msup-weq}), which we prove by adapting the argument given in \cite[Thm.~4.1]{latch-thomason-wilson} to show that $X \to \Ex X$ is a weak homotopy equivalence.

The map of posets $\max \from \msd[n] \to [n]$ is natural in $n$; that is, it induces a natural transformation between functors $\Delta \to \msSet$.
For a simplicial set $X$, this induces a map 
\[ \msub \from \mSd X \to \minm{X}\] 
natural in $X$ via extension by colimits.
For a marked simplicial set $(X, W)$, pre-composition by $\max$ induces a map 
\[ \msup \from X \to \mEx(X, W) \] 
which sends an $n$-simplex $f \from \simp{n} \to X$ of $X$ to the $n$-simplex $f \circ \max \from \mSd \simp{n} \to (X,W)$ of $\mEx(X, W)$.
Dually, the map $\min \from \msdop [n] \to [n]$ induces a map 
\[ \minsub \from \mSdop X \to \minm{X} \] 
and a map 
\[ \minsup \from X \to \mExop(X, W). \]

The map $\msup \from \mc{C} \to \mEx(\mc{C}, W)$ (and its dual) will be our candidate for the localization map.
Note that for a weak equivalence $w \in W$, the diagram
\begin{center}
	\begin{tikzpicture}[commutative diagrams/every diagram,node distance=3.7em]
		\node (MM) {$\bullet$};
		\node (MT) [above=2.8em of MM] {$\bullet$};
		\node (MB) [below=2.8em of MM] {$\bullet$};
		\node (LT) [left=of MT] {$\bullet$};
		\node (LLT) [left=of LT] {$\bullet$};
		\node (RB) [right=of MB] {$\bullet$};
		\node (RRB) [right=of RB] {$\bullet$};
		\node (LM) at (barycentric cs:LLT=1,MB=1) {$\bullet$};
		\node (RM) at (barycentric cs:MT=1,RRB=1) {$\bullet$};
		\node (CL) at (barycentric cs:LLT=1,MT=1,MB=1) {$\bullet$};
		\node (CR) at (barycentric cs:MT=1,MB=1,RRB=1)
		{$\bullet$};

		\path[commutative diagrams/.cd, every arrow, every label]
			(LLT) edge[commutative diagrams/equal] (LT)
				  edge[commutative diagrams/equal] (LM)
				  edge[commutative diagrams/equal] (CL)
			(MT) edge node[swap] {$w$} (LT)
				 edge node {$w$} (MM)
				 edge[commutative diagrams/equal] (RM)
				 edge[commutative diagrams/equal] (CL)
				 edge node[near end] {$w$} (CR)
			(MB) edge[commutative diagrams/equal] (MM)
				 edge[commutative diagrams/equal] (RB)
				 edge[commutative diagrams/equal] (LM)
				 edge[commutative diagrams/equal] (CL)
				 edge[commutative diagrams/equal] (CR)
			(RRB) edge node[swap] {$w$} (RB)
				  edge[commutative diagrams/equal] (RM)
				  edge node[swap, near end] {$w$} (CR)
			(LT) edge[commutative diagrams/equal] (CL)
			(LM) edge[commutative diagrams/equal] (CL)
			(MM) edge[commutative diagrams/equal] (CL)
				 edge[commutative diagrams/equal] (CR)
			(RM) edge node {$w$} (CR)
			(RB) edge[commutative diagrams/equal] (CR);
	\end{tikzpicture}
\end{center}
defines a map $J \to \mEx(X, W)$ which witnesses that $(\msup)_1(w) \in {\mEx(X, W)}_1$ is invertible.
Dualizing this diagram gives that
\begin{proposition} \label{min-ascends}
	For a marked simplicial set $(X, W)$, the maps 
	\[ \msup \from X \to \mEx(X, W), \quad \minsup \from X \to \mExop(X, W) \] 
	send weak equivalences to $J$-equivalences. \qed
\end{proposition}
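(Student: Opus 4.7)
The plan is to produce, for each $w \in W$, a simplicial map $J \to \mEx(X,W)$ whose restriction along the canonical inclusion $\simp{1} \ito J$ recovers $(\msup)_1(w)$; this directly exhibits $(\msup)_1(w)$ as a $J$-equivalence in $\mEx(X,W)$. The diagram displayed just before the statement is precisely this map, and the task is to verify it assembles into a well-defined simplicial map.

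The first step is to unpack the data. An $n$-simplex of $\mEx(X,W)$ is, by adjunction, a marking-preserving map $\mSd\simp{n} \to (X,W)$, so a simplicial map $J \to \mEx(X,W)$ amounts to a family of such maps indexed by the simplices of $J$ and compatible with faces and degeneracies. Since $J$ is the nerve of the walking isomorphism, its non-degenerate simplices are alternating sequences in $\{0,1\}$, and most of the higher data is forced once the two non-degenerate $1$-simplices are specified.

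Next I would read off, from the displayed diagram, the image of each vertex, each non-degenerate edge, and each non-degenerate $2$-simplex of $J$, and check three things: (i) markings are preserved, since each marked edge in every displayed zigzag is either $w$ or an identity, hence lies in $W$; (ii) the listed pieces glue along faces, so that the data extends uniquely to a simplicial map; and (iii) restriction along $\simp{1} \ito J$ (the edge from $0$ to $1$) yields the zigzag consisting of $w$ followed by the identity on $\cod w$, which is $(\msup)_1(w)$ by definition. The dual statement, that $\minsup$ sends $W$ to $J$-equivalences, then follows by applying the result just proved to $(X^\op, W)$ and invoking the natural isomorphism $\mExop(X,W) \cong \mEx(X^\op,W)^\op$, which identifies $\minsup$ with the opposite of $\msup$.

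The only real obstacle is the combinatorial bookkeeping in step (ii): checking that every face of each $2$-simplex in the diagram coincides with one of the listed $1$-simplices, so that no ambiguity arises. This is a direct verification with no hidden subtlety, which is presumably why the authors present the construction as a picture rather than as a symbolic computation.
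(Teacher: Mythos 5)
Your proposal is correct and follows essentially the same route as the paper: the paper's proof consists precisely of the displayed diagram defining a map $J \to \mEx(X,W)$ that restricts along $\simp{1} \ito J$ to $(\msup)_1(w)$ (the cospan $w$ followed by the identity), with the claim for $\minsup$ obtained by dualizing. Your explicit checklist (markings, gluing of the simplices of $J$, restriction to the distinguished edge) and the dualization via $\mExop(X,W) \cong \mEx(X^\op,W)^\op$ are exactly the verifications the paper leaves implicit in the picture.
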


Via \cref{min-ascends}, the maps 
\[ \msup \from X \to \mEx(X, W), \quad \minsup \from X \to \mExop(X, W) \] 
ascend to marked maps
\[ \msupb \from (X, W) \to \natm{\mEx(X, W)}, \quad \minsupb \from (X, W) \to \natm{\mExop(X, W)} \] 
A further corollary of \cref{min-ascends} is that the functors $\mEx, \mExop \from \msSet \to \sSet$ send marked homotopy equivalances to $J$-homotopy equivalences.
\begin{proposition} \label{mEx-preserves-htpy-equivs}
	The functors $\mEx, \mExop \from \msSet \to \sSet$ send marked homotopy equivalences to $J$-homotopy equivalences.
\end{proposition}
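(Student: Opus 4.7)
The plan is to exploit the fact that $\mEx$, as a right adjoint to $\mSd$, preserves binary products, and then use \cref{min-ascends} to promote a given marked homotopy into a $J$-homotopy. The $\mExop$ case will then be handled dually, by replacing $\msup$ with $\minsup$.

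First, I would record the natural isomorphism $\mEx((X, W) \times (Y, V)) \cong \mEx(X, W) \times \mEx(Y, V)$ coming from adjointness. Specializing to $(Y, V) = \maxm{(\simp{1})}$ and applying \cref{min-ascends}, the map $\msup \from \simp{1} \to \mEx(\maxm{(\simp{1})})$ sends the unique non-degenerate 1-simplex (which is marked) to a $J$-equivalence, so by definition it extends to a map
\[ \widetilde{\alpha} \from J \to \mEx(\maxm{(\simp{1})}) \]
whose restriction along $\simp{1} \ito J$ is $\msup$, and whose two vertex restrictions agree with $\msup$ on $\{0\}$ and $\{1\}$.

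Given a marked homotopy $H \from (X, W) \times \maxm{(\simp{1})} \to (X', W')$ from $f$ to $g$, I would then form the composite
\[ \mEx(X, W) \times J \xrightarrow{\id \times \widetilde{\alpha}} \mEx(X, W) \times \mEx(\maxm{(\simp{1})}) \xrightarrow{\cong} \mEx((X, W) \times \maxm{(\simp{1})}) \xrightarrow{\mEx(H)} \mEx(X', W'). \]
Tracing naturality squares for $\msup$ and for the product-preservation isomorphism along the two inclusions $\simp{0} \rightrightarrows \maxm{(\simp{1})}$ shows that the restrictions of this composite to $\mEx(X, W) \times \{0\}$ and $\mEx(X, W) \times \{1\}$ are $\mEx f$ and $\mEx g$, respectively; hence it exhibits a $J$-homotopy $\mEx f \simeq \mEx g$.

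Finally, applying this construction to the two marked homotopies $gf \sim \id$ and $fg \sim \id$ witnessing that $f$ is a marked homotopy equivalence with inverse $g$, and invoking functoriality of $\mEx$, produces $J$-homotopies $\mEx g \circ \mEx f \sim \id$ and $\mEx f \circ \mEx g \sim \id$, so $\mEx f$ is a $J$-homotopy equivalence. The only point requiring care is the endpoint verification for the composite above, but this is a short diagram chase using naturality of $\msup$ and of the product isomorphism, so no substantive obstacle is expected.
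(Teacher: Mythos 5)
Your argument is correct and is essentially the paper's own proof: both use that $\mEx$, as a right adjoint, preserves products, together with the endpoint-preserving lift $J \to \mEx\,\maxm{(\simp{1})}$ of $\msup$ coming from \cref{min-ascends}, to convert a marked homotopy into a $J$-homotopy, with the $\mExop$ case handled dually. The extra endpoint/naturality verifications you flag are exactly the routine checks the paper leaves implicit.
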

\begin{proof}
	For $\mEx$, the map
	\[ J \to \mEx \maxm{(\simp{1})} \]
	which witnesses the invertibility of the 1-simplex $(\msup)_1(\id[\maxm{(\simp{1})}]) \in \mEx(\maxm{(\simp{1})})_1$ preserves endpoints.
	Thus, a marked homotopy 
	\[ \alpha \from (X, W) \times \maxm{(\simp{1})} \to (X', W')\] 
	induces a $J$-homotopy
	\[ \mEx(X, W) \times J \to \mEx(X, W) \times \mEx \maxm{(\simp{1})} \to \mEx(X', W'). \]
	The argument for $\mExop$ is analogous.
\end{proof}

In order to show $\mEx(\mc{C}, W)$ computes the localization, we first consider the case where $W$ is exactly the equivalences in $\mc{C}$.
In this case, we wish to show $\mEx \natm{\mc{C}}$ is weakly equivalent to $\mc{C}$.
To show this, we prove the following lemma.
\begin{lemma} \label{max-weq}
	For $n \geq 0$, the map 
	\[ \max \from \mSd \simp{n} \to \minm{(\simp{n})} \] 
	is a marked homotopy equivalence.
\end{lemma}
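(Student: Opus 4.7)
The plan is to exhibit an explicit section together with a straight-line style marked homotopy, following the same template as the proof of Proposition~\ref{jnk-ink-weq}(1a).

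First, I would define the section $s \from \minm{(\simp{n})} \to \mSd \simp{n}$ by the poset map $i \mapsto \{0, 1, \ldots, i\}$. This is manifestly monotone, and it is a marked map because the only marked $1$-simplices of $\minm{(\simp{n})}$ are the degenerate ones, which are forced to be sent to marked $1$-simplices. By construction, $\max \circ s = \id[\minm{(\simp{n})}]$, so no homotopy is needed in one direction.

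Next, I would produce a marked homotopy
\[ H \from \mSd \simp{n} \times \maxm{(\simp{1})} \to \mSd \simp{n} \]
from $\id$ to $s \circ \max$, defined on the underlying product poset by $H(A, 0) = A$ and $H(A, 1) = \{0, 1, \ldots, \max A\}$. Monotonicity follows immediately from the fact that $A \subseteq A'$ implies $\max A \leq \max A'$ (giving $H(A,1) \subseteq H(A',1)$) and that $A \subseteq \{0, \ldots, \max A\}$ (giving $H(A,0) \leq H(A,1)$).

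The only non-trivial verification is that $H$ preserves the marking. A non-degenerate marked $1$-simplex in the source has the form $((A \subseteq A'), (\varepsilon \leq \varepsilon'))$ with $\max A = \max A'$, and one splits on the three cases for $(\varepsilon, \varepsilon')$: the image is $A \subseteq A'$, or $\{0,\ldots,\max A\} \subseteq \{0,\ldots,\max A'\}$, or (in the mixed case) $A \subseteq \{0,\ldots,\max A'\}$; in each situation the maxima on both sides agree, since $\max A = \max A' = \max\{0,\ldots,\max A'\}$. Hence $H$ is a marked map, and the restrictions at $0$ and $1$ are $\id$ and $s \circ \max$ respectively, completing the marked homotopy equivalence. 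There is no real obstacle here beyond bookkeeping the markings.
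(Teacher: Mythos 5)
Your proof is correct and is essentially the paper's own argument: the same section $i \mapsto \{0,\dots,i\}$ and the same homotopy induced by the inclusion $A \subseteq \{0,\dots,\max A\}$, with the marking check spelled out in more detail (and with the monotone direction of the homotopy stated correctly). Nothing further is needed.
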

\begin{proof}
	The map of posets $\max \from \msd[n] \to [n]$ has a section $f \from [n] \to \msd [n]$ defined by
	\[ f(i) := \{ 0, \dots, i \}. \]
	For $A \in \msd [n]$, the inclusion $A \subseteq \{ 0, \dots, \max A \}$ induces a marked homotopy $\mSd \simp{n} \times \maxm{(\simp{1})} \to \mSd \simp{n}$ from the composite $f \circ \max$ to the identity map on $\mSd \simp{n}$.
\end{proof}

Our proof that $\msup \from \mc{C} \to \mEx \natm{\mc{C}}$ is a categorical equivalence uses an analogous argument to \cite[Thm.~4.1]{latch-thomason-wilson}, which shows $X \to \operatorname{Ex} X$ is a Kan--Quillen weak equivalence.
In particular, we make use of the generalized diagonal lemma of \cite[Thm.~2.5]{carranza-kapulkin-wong:diagonal} and specifically its instantiation to the Joyal model structure \cite[Ex.~3.6]{carranza-kapulkin-wong:diagonal}.

\begin{theorem} \label{ex-nat-weq}
	For a quasicategory $\mc{C}$, the maps
	\[ \msup \from \mc{C} \to \mEx \natm{\mc{C}}, \quad \minsup \from \mc{C} \to \mExop \natm{\mc{C}} \]
	are equivalences of quasicategories.
\end{theorem}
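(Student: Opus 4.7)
The strategy follows the approach of Latch-Thomason-Wilson \cite{latch-thomason-wilson} for the classical $\Ex$ functor, adapted to our marked setting. I will treat $\msup$; the statement for $\minsup$ then follows by dualization via the natural isomorphism $\mExop(X, W) \iso (\mEx(X^\op, W))^\op$ recorded just before this section. Both source and target are quasicategories, the latter by \cref{mEx-qcat} applied to $\natm{\mc{C}}$ (which satisfies CLF by \cref{equivs-satisfy-cf}).

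First, I produce a retraction of $\msup$: the poset section $s_n \from [n] \to \msd[n]$ sending $i \mapsto \{0, 1, \dots, i\}$ is natural in $n$ and, viewed as a marked map $\minm{\simp{n}} \to \mSd \simp{n}$ (the source has no non-degenerate marked $1$-simplices), satisfies $\max \circ s = \id$. Pre-composition yields a simplicial map $s^* \from \mEx \natm{\mc{C}} \to \mc{C}$ with $s^* \circ \msup = \id$.

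Next, I upgrade $(\msup, s^*)$ to a $J$-homotopy equivalence by exhibiting a $J$-homotopy from $\msup \circ s^*$ to $\id$. The marked homotopy $H \from \mSd \simp{\bullet} \times \maxm{\simp{1}} \to \mSd \simp{\bullet}$ from $s \circ \max$ to $\id$ produced in the proof of \cref{max-weq} is natural in the cosimplicial index, so it transposes under the $\mSd \dashv \mEx$ adjunction and the Cartesian closedness of $\msSet$ to a simplicial map $\tilde H \from \mEx \natm{\mc{C}} \to \mEx (\natm{\mc{C}}^{\maxm{\simp{1}}})$. As $\mEx$ preserves products (being a right adjoint), the evaluation $\natm{\mc{C}}^{\maxm{\simp{1}}} \times \maxm{\simp{1}} \to \natm{\mc{C}}$ induces a map $\mEx (\natm{\mc{C}}^{\maxm{\simp{1}}}) \times \mEx \maxm{\simp{1}} \to \mEx \natm{\mc{C}}$, and pre-composing with $\tilde H \times \id$ and with the lift $J \to \mEx \maxm{\simp{1}}$ constructed just before \cref{min-ascends} yields a simplicial map $\mEx \natm{\mc{C}} \times J \to \mEx \natm{\mc{C}}$. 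Tracing through the construction, its restrictions to the two vertices of $J$ recover $\msup \circ s^*$ and $\id$, since $H$ specialises to $s \circ \max$ and $\id$ at the corresponding vertices of $\maxm{\simp{1}}$.

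Since $J$-homotopy equivalences between quasicategories are categorical equivalences (recalled in \cref{sec:quasicategories}), combining the $J$-homotopy just constructed with $s^* \circ \msup = \id$ shows that $\msup$ is a categorical equivalence. The main obstacle is the third paragraph: correctly converting the marked homotopy $H$ in $\msSet$ into a genuine $J$-homotopy in $\sSet$, which requires a somewhat delicate assembly of the $\mSd \dashv \mEx$ adjunction, Cartesian closedness of $\msSet$, product-preservation of $\mEx$, and the lift $J \to \mEx \maxm{\simp{1}}$ witnessing invertibility of $\max$.
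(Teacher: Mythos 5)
Your proof breaks at its foundation: the section $s_n \from [n] \to \msd[n]$, $i \mapsto \{0,\dots,i\}$, is \emph{not} natural in the cosimplicial index, and neither is the marked homotopy of \cref{max-weq} built from the inclusions $A \subseteq \{0,\dots,\max A\}$. The cosimplicial operators of $\msd$ act by taking images, so for the coface $\face{}{0} \from [0] \to [1]$ the induced map $\msd[0] \to \msd[1]$ sends $s_0(0) = \{0\}$ to $\{1\}$, whereas $s_1(\face{}{0}(0)) = \{0,1\}$; in general $\varphi(\{0,\dots,i\}) \neq \{0,\dots,\varphi(i)\}$ whenever $\varphi$ skips a value below $\varphi(i)$. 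Consequently precomposition with $s$ does not define a simplicial map $s^* \from \mEx\natm{\mc{C}} \to \mc{C}$: already for $\mc{C} = E^1$ and the $1$-simplex $g \from \msd[1] \to E^1$ with $g(\{0\}) = g(\{1\}) = 0$ and $g(\{0,1\}) = 1$ (a legitimate marked map, since all edges of $E^1$ are equivalences), the $0$-th face of $g \circ s_1$ is the vertex $1$, while precomposing the $0$-th face of $g$ with $s_0$ gives the vertex $0$. The same failure of naturality invalidates the transpose $\widetilde{H}$ in your third paragraph, so neither the retraction nor the $J$-homotopy you build on it exists, and nothing of the argument survives. Note also that $i \mapsto \{i\}$, the only cosimplicially natural candidate for a section of $\max$ on vertices, is not monotone, so it gives no map $\simp{n} \to \mSd\simp{n}$ either.

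This obstruction is intrinsic rather than a repairable slip: the (marked) $\Ex$ functor admits no natural retraction or natural deformation onto the identity, which is exactly why the classical statement that $X \to \Ex X$ is a weak equivalence is delicate and why the paper follows Latch--Thomason--Wilson \cite{latch-thomason-wilson} instead of a direct homotopy-equivalence argument. The paper's proof of \cref{ex-nat-weq} forms the square of bisimplicial sets $(m,n) \mapsto \msSet(\mSd\simp{m} \times \maxm{(\simp{n})}, \natm{\mc{C}})$ together with its three companions, uses the homotopy equivalence $\max \from \mSd\simp{m} \to \minm{(\simp{m})}$ of \cref{max-weq} only for each fixed $m$, uses the marked homotopy equivalence $!^* \from (\natm{\mc{C}})^{\simp{0}} \to (\natm{\mc{C}})^{\maxm{(\simp{n})}}$ only for each fixed $n$ --- in both cases no naturality of the chosen section or homotopy across the (co)simplicial index is ever needed --- and then concludes with the generalized diagonal lemma of \cite{carranza-kapulkin-wong:diagonal} and 2-out-of-3. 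If you want a more direct route, you would have to produce a homotopy inverse to $\msup$ by genuinely different means; the levelwise sections of $\max$ cannot be assembled into one.
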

\begin{proof}
	We show the map $\msup$ is an equivalence.
	The argument for $\minsup$ is dual.

	We regard the square
	\[ \begin{tikzcd}
		\msSet\left( \minm{(\simp{m})} \times \simp{0}, \natm{\mc{C}} \right) \ar[r] \ar[d] & \msSet \left( \mSd \simp{m} \times \simp{0}, \natm{\mc{C}} \right) \ar[d] \\
		\msSet\left( \minm{(\simp{m})} \times \maxm{(\simp{n})}, \natm{\mc{C}} \right) \ar[r] & \msSet \left( \mSd \simp{m} \times \maxm{(\simp{n})}, \natm{\mc{C}} \right)
	\end{tikzcd} \]
	as a diagram of bisimplicial sets by varying $m$ and $n$.
	Fixing $n$, the left map in the square
	\[ \begin{tikzcd}
		\mc{C} \ar[r] \ar[d] & \mEx \natm{C} \ar[d] \\
		U ((\natm{\mc{C}})^{\maxm{(\simp{n})}}) \ar[r] & \mEx((\natm{\mc{C}})^{\maxm{(\simp{n})}})
	\end{tikzcd} \]
	is obtained by applying the forgetful functor $U \from \msSet \to \sSet$ to the map
	\[ !^* \from (\natm{\mc{C}})^{\simp{0}} \to (\natm{\mc{C}})^{\maxm{(\simp{n})}}. \]
	This map is a marked homotopy equivalence as $(\natm{\mc{C}})^{\relbar} \from (\msSet)^\op \to \msSet$ preserves marked homotopies.
	The codomain is a quasicategory since $\mc{C}$ is.
	A marked 1-simplex of the codomain $H \from \maxm{(\simp{n})} \times \maxm{(\simp{1})} \to \natm{\mc{C}}$ is a marked homotopy.
	As $\mc{C}$ is a quasicategory marked at equivalences, every such $H$ ascends to an $E^1$-homotopy, thus the codomain is marked at exactly equivalences.
	This gives that $!^*$ is a weak equivalance between fibrant-cofibrant objects in $\msSet$, hence the left map is an equivalence of quasicategories.
	The right map is obtained by applying $\mEx \from \msSet \to \sSet$ to $!^*$.
	By \cref{mEx-preserves-htpy-equivs}, the right map is a weak equivalence.

	Fixing $m$, the bottom map
	\[ \begin{tikzcd}
		\cdot \ar[r] \ar[d] & \cdot \ar[d] \\
		\mcore \left( (\natm{\mc{C}})^{\minm{(\simp{m})}} \right) \ar[r] & \mcore \left( (\natm{\mc{C}})^{\mSd \simp{m}} \right)
	\end{tikzcd} \]
	is obtained by applying $\mcore \from \msSet \to \sSet$ to the map
	\[ (\natm{\mc{C}})^{\max } \from (\natm{\mc{C}})^{\minm{(\simp{m})}} \to (\natm{\mc{C}})^{\mSd \simp{m}}. \]
	This map is a marked homotopy equivalence by \cref{max-weq}.
	It is, moreover, a map between quasicategories marked at equivalences, hence this map is an equivalence of quasicategories.
	Thus, it is an equivalence of categories after applying $\mcore$.

	Applying the diagonal functor to this square,
	\[ \begin{tikzcd}
		\mc{C} \ar[r] \ar[d, "\sim"'] & \mEx \natm{C} \ar[d, "\sim"] \\
		\cdot \ar[r, "\sim"] & \cdot
	\end{tikzcd} \]
	the left, right, and bottom maps are weak equivalences by \cite[Ex.~3.6]{carranza-kapulkin-wong:diagonal}.
	Thus, the top map is a weak equivalence by 2-out-of-3.
\end{proof}

\section{Localization via marked Ex} \label{sec:localization-general}

In this section, we show that $\mEx(\mc{C}, W)$ and $\mExop(\mc{C}, W)$ are models for the localization for an arbitrary $W$, using the ``base case'' where $W$ is the set of equivalences.
Our main theorem is \cref{mEx-computes-localization}, which is the quasicategorical analogue of \cref{classical-fractions-compute-localization}.
In particular, this result requires multiple auxiliary lemmas, most important of which is \cref{min-ex-min-J-htpy}.
As these statements are more technical in nature and are only used to prove the main theorem of this section (\cref{mEx-computes-localization}), we state \cref{msup-msub-transpose,min-ex-min-diagram,min-ex-min-htpy,ex-msup-weq} only for $\mEx$, omitting the dual versions.

\begin{lemma} \label{msup-msub-transpose}
	For a marked simplicial set $(X, W)$, the composite
	\[ \mSd X \xrightarrow{\msub} \minm{X} \hookrightarrow (X, W) \]
	is adjoint transpose to the map
	\[ X \xrightarrow{\msup} \mEx(X, W) \]
	under the $(\mSd \dashv \mEx)$-adjunction.
\end{lemma}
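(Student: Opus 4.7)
The plan is to unwind the $(\mSd \dashv \mEx)$ adjunction together with the defining formulas for $\msub$ and $\msup$. Recall that $\mSd$ is the extension by colimits of the cosimplicial object $\msd \from \Delta \to \msSet$, so that $\msub \from \mSd X \to \minm{X}$ is itself obtained by extension from $\max \from \msd [n] \to [n]$. By the construction of $\mEx$ as the right adjoint of $\mSd$, the transpose of an arbitrary map $g \from \mSd X \to (X,W)$ is the map $X \to \mEx(X,W)$ sending an $n$-simplex $f \from \simp{n} \to X$ to the composite
$$\mSd \simp{n} \xrightarrow{\mSd f} \mSd X \xrightarrow{g} (X,W).$$

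I would then take $g$ to be the composite $\mSd X \xrightarrow{\msub} \minm{X} \hookrightarrow (X,W)$ appearing in the statement. Naturality of $\msub$ in $X$ produces a commutative square
$$\begin{tikzcd}
\mSd \simp{n} \ar[r, "\mSd f"] \ar[d, "\msub"'] & \mSd X \ar[d, "\msub"] \\
\minm{\simp{n}} \ar[r, "\minm{f}"'] & \minm{X}
\end{tikzcd}$$
in which, by the identification of $\mSd$ on representables, the left vertical map is $\max \from \mSd \simp{n} \to \minm{\simp{n}}$. Post-composing with the inclusion $\minm{X} \hookrightarrow (X,W)$ turns the bottom row into $f$ itself (viewed as a map of underlying simplicial sets), so the transpose of $g$ at $f$ is $f \circ \max \from \mSd \simp{n} \to (X,W)$. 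By the explicit description of $\msup$ on $n$-simplices, this is precisely $\msup(f)$; since this holds for every $n$-simplex $f$, the transpose of $g$ coincides with $\msup$.

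The only real step here is careful bookkeeping through the hom-set bijection; there is no genuine obstacle, as everything reduces to the naturality of $\msub$ and the identification of $\mSd$ and $\msub$ on the representable $\simp{n}$.
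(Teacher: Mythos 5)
Your proof is correct, and it reaches the same identity as the paper but from the opposite side of the adjunction. The paper shows that the transpose of $\msup$, namely $\varepsilon_{(X,W)} \circ \mSd(\msup)$, agrees with $\minm{X} \hookrightarrow (X,W)$ precomposed with $\msub$, and it does so by an explicit levelwise computation using the description of $n$-simplices of $\mSd X$ as equivalence classes of pairs $(u \from \simp{m} \to X,\ v \from \simp{n} \to \mSd\simp{m})$. You instead transpose the composite $\mSd X \xrightarrow{\msub} \minm{X} \hookrightarrow (X,W)$ forward, using the unit-side formula for a transposition along an extension-by-colimits adjunction (the transpose of $g$ sends $f \from \simp{n} \to X$ to $g \circ \mSd f$), and then invoke naturality of $\msub$ together with its value $\max$ on the representable $\simp{n}$; since transposition is a bijection, identifying this transpose with $\msup$ simplex-by-simplex gives the lemma. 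The two arguments have the same mathematical content, but yours avoids the explicit coend-style presentation of $\mSd X$ at the cost of taking the unit formula $\eta_X(f) = \mSd f$ as known (which is standard for nerve--realization adjunctions), whereas the paper's counit-side computation makes that bookkeeping explicit. One small point worth stating in your writeup: the identification of $\minm{X} \hookrightarrow (X,W)$ composed with $\minm{f} \circ \max$ with the marked map $f \circ \max \from \mSd\simp{n} \to (X,W)$ uses that any simplicial map out of a minimally marked object automatically preserves markings, which is exactly why $\msup(f) = f \circ \max$ is well defined; with that noted, there is no gap.
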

\begin{proof}
	We wish to show the diagram
	\[ \begin{tikzcd}
		\mSd X \ar[r, "\msub"] \ar[d, "\mSd \msup"'] & \minm{X} \ar[d, hook] \\
		\mSd \mEx(X, W) \ar[r, "\varepsilon_{(X, W)}"] & (X, W)
	\end{tikzcd} \]
	commutes.
	It suffices to show it commutes after applying $U$.
	That is, it suffices to show the triangle
	\[ \begin{tikzcd}
		U \mSd X \ar[rd, "\mSd \msup"'] \ar[rr, "\msub"] & {} & X \\
		{} & U \mSd \mEx (X, W) \ar[ur, "\varepsilon_{(X, W)}"']
	\end{tikzcd} \]
	commutes.

	Recall the $n$-simplices of $\mSd X$ are pairs $(u, v)$ where
	\begin{itemize}
		\item $u \from \simp{m} \to X$ is a non-degenerate $m$-simplex of $X$ for some $m \geq 0$; and
		\item $v \from \simp{n} \to \mSd \simp{m}$ is an $n$-simplex of $\mSd \simp{m}$
	\end{itemize}
	subject to the identification $(u \vec{\partial}, v) = (u, \mSd (\vec{\partial}) \circ v)$ for any injective map $\vec{\partial} \from [m] \to [m']$ in the simplex category.
	The top map is given by the formula
	\[ (\msub)_n(u, v) = u \circ \max \circ v. \]
	We calculate the bottom composite as
	\begin{align*}
		(\varepsilon_{(X, W)})_m ((\mSd \msup)_m (u, v)) &= (\varepsilon_{(X, W)})_m (u \circ \max, v) \\
		&= (u \circ \max) \circ v. \qedhere
	\end{align*}
\end{proof}
\begin{lemma} \label{min-ex-min-diagram}
	For a marked simplicial set $(X, W)$, the squares
	\[ \begin{tikzcd}[column sep = 5em, row sep = 4em]
		\sSet(\simp{n}, \mEx(X, W)) \ar[r, "(\msup)_*"] \ar[d, "\cong"'] & \sSet(\simp{n}, \mEx ( \maxm{\mEx(X, W)} ) ) \ar[d, "\cong"] \\
		\msSet(\mSd \simp{n}, (X, W)) \ar[r, "\mSd (U \msup)"] & \msSet(\mSd (U \mSd \simp{n}), (X, W))
	\end{tikzcd} \]
	and
	\[ \begin{tikzcd}[cramped, sep = 4em]
		\sSet(\simp{n}, \mEx(X, W)) \ar[r, "\mEx(\eta_{(X,W)})_*"] \ar[d, "\cong"'] & \sSet(\simp{n}, \mEx \maxm{X}) \ar[r, "\mEx(\maxm{(\msup)})_*"] & \sSet(\simp{n}, \mEx (\maxm{\mEx(X, W)}) ) \ar[d, "\cong"] \\
		\msSet(\mSd \simp{n}, (X, W)) \ar[r, "(\varepsilon_{\mSd \simp{n}})^*"] & \msSet(\minm{(U \mSd \simp{n})}, (X, W)) \ar[r, "(\msub)^*"] & \msSet(\mSd(U \mSd \simp{n}), (X, W))
	\end{tikzcd} \] 
	commute for all $n \geq 0$.
\end{lemma}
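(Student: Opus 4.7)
The plan is to verify both squares by adjunction chases whose only non-formal ingredients are \cref{msup-msub-transpose} and the naturality of $\msub = \max$ as a transformation $\mSd \to \minm{(\uvar)}$.

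First I would make the vertical isomorphisms explicit. The left vertical in both squares is the $(\mSd \adj \mEx)$-adjunction at $(X, W)$, sending $f$ to $\tilde f = \varepsilon_{(X,W)} \circ \mSd f$. The right vertical factors as the composite of three isomorphisms: the $(\mSd \adj \mEx)$-adjunction at $\maxm{\mEx(X,W)}$, then the identification $\msSet(\mSd \simp{n}, \maxm{Z}) \cong \sSet(U \mSd \simp{n}, Z)$ at $Z = \mEx(X,W)$ (an instance of $U \adj \maxm{(\uvar)}$), then the $(\mSd \adj \mEx)$-adjunction with source $U \mSd \simp{n}$. Each bottom row is then pre-composition with a canonical map $\mSd(U \mSd \simp{n}) \to \mSd \simp{n}$ (or a factorisation thereof through $\minm{(U \mSd \simp{n})}$) induced by $\max$.

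For the first square, starting from $f \from \simp{n} \to \mEx(X,W)$, I would trace both paths. The right-then-down route transposes $\msup \circ f$ through the three right-hand isomorphisms: the first transpose gives $\varepsilon_{\maxm{\mEx(X,W)}} \circ \mSd \msup \circ \mSd f$, which by \cref{msup-msub-transpose} equals $\msub \circ \mSd f$ landing in the minimal marking of $\mEx(X,W)$; taking $U$ and using naturality of $\max$ at $f$ identifies this with $f \circ \max \from U \mSd \simp{n} \to \mEx(X,W)$; the third transpose then gives $\varepsilon_{(X,W)} \circ \mSd(f \circ \max) = \tilde f \circ \mSd \max$, which is precisely the down-then-right route applied to $f$.

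For the second square, the argument is the same in spirit. Starting from $f$, transposing $\mEx(\maxm{\msup} \circ \eta_{(X,W)}) \circ f$ in three stages --- using naturality of $\varepsilon$ in the first step to absorb the outer $\mEx$ as $(\maxm{\msup} \circ \eta_{(X,W)}) \circ \tilde f$, and $U(\maxm{\msup} \circ \eta_{(X,W)}) = \msup$ in the second --- yields $\varepsilon_{(X,W)} \circ \mSd \msup \circ \mSd(U \tilde f)$. By \cref{msup-msub-transpose} this equals $\msub_X \circ \mSd(U \tilde f)$, and naturality of $\msub$ at $U \tilde f \from U \mSd \simp{n} \to X$ rewrites it as $\minm{(U \tilde f)} \circ \msub_{U \mSd \simp{n}}$. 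Including into $(X, W)$ and recognising $\tilde f \circ \varepsilon_{\mSd \simp{n}}$ as the marked map with underlying simplicial map $U \tilde f$, this equals $\tilde f \circ \varepsilon_{\mSd \simp{n}} \circ \msub_{U \mSd \simp{n}}$, i.e., $(\msub)^*(\varepsilon_{\mSd \simp{n}})^*(\tilde f)$, the bottom composite applied to $\tilde f$.

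The principal obstacle is purely notational: the same letters $\msup, \msub, \max, \eta, \varepsilon$ appear instantiated at several different objects, and the right vertical iso blends three distinct adjunctions ($\mSd \adj \mEx$, $U \adj \maxm{(\uvar)}$, and implicitly $\minm{(\uvar)} \adj U$). I would therefore fix notation for each component iso and each naturality square before beginning, so that \cref{msup-msub-transpose} and naturality of $\max$ remain the only non-formal inputs.
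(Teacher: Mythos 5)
Your proof is correct and follows essentially the same route as the paper's: both are adjunction chases whose only substantive inputs are \cref{msup-msub-transpose} and naturality (of $\msub$ and of the unit/counit maps), with your reading of the bottom rows as pre-composition with the $\max$-induced maps being exactly the intended one. The paper merely packages the same chase as a pasting of smaller commuting cells (naturality squares, a definitional square, and two explicitly computed cells) rather than tracing a single element through the composite isomorphisms as you do.
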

\begin{proof}
	For the first square, the top square in
	\[ \begin{tikzcd}[column sep = large]
		\sSet(\simp{n}, \mEx(X, W)) \ar[r, "(\msup)_*"] \ar[d, "\cong"'] & \sSet(\simp{n}, \mEx ( \maxm{\mEx(X, W)} ) ) \ar[d, "\cong"] \\
		\msSet(\minm{(\simp{n})}, \maxm{\mEx(X, W)}) \ar[r, "\msup"] \ar[d, "\cong"'] & \msSet(\mSd \simp{n}, \maxm{\mEx(X, W)}) \ar[d, "\cong"] \\
		\msSet(\mSd \simp{n}, (X, W)) \ar[r, "\mSd (U \msup)"] & \msSet(\mSd (U \mSd \simp{n}), (X, W))
	\end{tikzcd} \]
	commutes by definition and the bottom square commutes by naturality of adjunction bijections.
	The desired square is identical to the outer square in this diagram.
	
	For the second square, we show that each sub-diagram of
	\[ \begin{tikzcd}[cramped, column sep = 4em]
		\sSet(\simp{n}, \mEx(X, W)) \ar[r, "\mEx(\eta_{(X,W)})_*"] \ar[d, "\cong"'] & \sSet(\simp{n}, \mEx \maxm{X}) \ar[r, "\mEx(\maxm{(\msup)})_*"] \ar[d, "\cong"'] & \sSet(\simp{n}, \mEx (\maxm{\mEx(X, W)}) ) \ar[d, "\cong"] \\
		\msSet(\mSd \simp{n}, (X, W)) \ar[r, "(\eta_{(X, W)})_*"] \ar[ddr, "(\varepsilon_{\mSd \simp{n}})^*"'] & \msSet(\mSd \simp{n}, \maxm{X}) \ar[r, "(\maxm{(\msup)})_*"] \ar[d, "\cong"', "\varphi"] & \msSet(\mSd \simp{n}, \maxm{\mEx(X, W)}) \ar[d, "\cong"] \\
		{} & \sSet(U(\mSd \simp{n}), X) \ar[d, "\cong"', "\psi"] \ar[r, "{(\msup)_*}"] & \sSet(U(\mSd \simp{n}), \mEx(X, W)) \ar[d, "\cong", "\theta"'] \\
		{} & \msSet(\minm{(U \mSd \simp{n})}, (X, W)) \ar[r, "(\msub)^*"] & \msSet(\mSd(U \mSd \simp{n}), (X, W))
	\end{tikzcd} \]
	commutes.
	The desired equality is exactly the outer composite.

	The top left and right squares commute by naturality of adjunction bijections.
	The middle right square commutes by definition.
	We show the bottom left triangle and bottom right square commute by explicit computation.
	
	For the bottom left triangle, fix $f \from \mSd \simp{n} \to (X, W)$. 
	We apply the forgetful functor $U \from \msSet \to \sSet$ and calculate
	\[ \begin{array}{r@{ \ }l l}
		U (\psi \varphi(\eta_{(X, W)} \circ f)) &= U (\psi (\varphi(\eta_{(X, W)}) \circ Uf)) \\
		&= U (\psi (Uf)) \\
		&= Uf & (\ast) \\
		&= Uf \circ \id \\
		&= Uf \circ U \varepsilon_{\mSd \simp{n}} \\
		&= U ( f \circ \varepsilon_{\mSd \simp{n}}),
	\end{array} \]
	where $(\ast)$ holds since applying $U$ to any map $g$ obtained by the adjunction bijection $\psi$ is exactly $g$.

	For the bottom right square, fix $f \from U \mSd \simp{n} \to X$.
	We calculate
	\[ \begin{array}{r@{ \ }l l}
		\theta (\msup \circ f) &= \theta(\msup) \circ \mSd f \\
		&= \varepsilon_{X} \circ \msub \circ \mSd f & \text{by \cref{msup-msub-transpose}} \\
		&= \varepsilon_X \circ f \circ \msub & \text{by naturality} \\
		&= \theta(f) \circ \msub ,
	\end{array} \]
	where $\varepsilon_X$ denotes the counit $\minm{X} \to (X, W)$ of the minimal marking adjunction.
\end{proof}

\begin{lemma} \label{min-ex-min-J-htpy}
	For a marked simplicial set $(X, W)$, 
	\begin{enumerate}
		\item the maps
		\[ \begin{tikzcd}[column sep = 3em]
			\mEx (X, W) \ar[r, yshift=-0.75ex, "\msup", swap] \ar[r, yshift=0.75ex, "\mEx \left( \msupb \right)"] & \mEx \left( \natm{\mEx (X, W)} \right)
		\end{tikzcd} \]
		are $J$-homotopic; and
		\item the maps
		\[ \begin{tikzcd}[column sep = 3em]
			\mExop (X, W) \ar[r, yshift=-0.75ex, "\minsup", swap] \ar[r, yshift=0.75ex, "\mEx \left( \minsupb \right)"] & \mExop \left( \natm{\mExop (X, W)} \right)
		\end{tikzcd} \]
		are $J$-homotopic.
	\end{enumerate}
\end{lemma}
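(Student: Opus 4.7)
The plan, focusing on (1) since (2) follows by the duality $\mExop \cong (\mEx((-)^{\op}))^{\op}$, is to construct an explicit $J$-homotopy between the two maps. Using \cref{msup-msub-transpose} and the $(\mSd \dashv \mEx)$-adjunction, one identifies the adjoint transposes: the transpose of $\msup$ is the composite $\mSd \mEx(X,W) \xrightarrow{\msub} \minm{\mEx(X,W)} \hookrightarrow \natm{\mEx(X,W)}$, while the transpose of $\mEx(\msupb)$ is $\mSd \mEx(X,W) \xrightarrow{\varepsilon} (X,W) \xrightarrow{\msupb} \natm{\mEx(X,W)}$, where $\varepsilon$ is the counit of $\mSd \dashv \mEx$. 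These two transposes agree on $0$-simplices but differ in general, so equality of the original maps fails and a genuine homotopy is required.

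Giving a $J$-homotopy $H \from \mEx(X,W) \times J \to \mEx(\natm{\mEx(X,W)})$ amounts to specifying, for each $g \in \mEx(X,W)_n = \msSet(\mSd \simp{n}, (X,W))$ and each $\vec{j} \in J_n = \{0,1\}^{n+1}$, a marked map $H(g, \vec{j}) \from \mSd \simp{n} \to \natm{\mEx(X,W)}$, restricting to $g \circ \max$ when $\vec{j}$ is constantly $0$ and to $\msupb \circ g$ when $\vec{j}$ is constantly $1$. I would define $H(g, \vec{j})$ on a $0$-simplex $A \subseteq [n]$ of $\mSd \simp{n}$ by setting $H(g, \vec{j})(A) = g(\{\max A\})$ when $j_{\max A} = 0$ and $H(g, \vec{j})(A) = g(A)$ when $j_{\max A} = 1$, then extend to higher-dimensional simplices of $\mSd \simp{n}$ by choosing natural cospans whose apex vertices are determined by the switch pattern of $\vec{j}$.

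The key input ensuring that the resulting map lands in $\natm{\mEx(X,W)}$ is \cref{min-ascends}: marked $1$-simplices $A \subseteq B$ of $\mSd \simp{n}$ (those with $\max A = \max B$) are always sent to equivalences. Indeed, when $j_{\max A} = 0$ both endpoints collapse to $g(\{\max A\})$ and the image is a degeneracy, while when $j_{\max A} = 1$ the image is $\msupb$ applied to a marked $1$-simplex of $(X, W)$, which is sent to a $J$-equivalence in $\mEx(X,W)$ by the definition of $\msupb$.

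The main obstacle is carrying out this construction consistently on higher-dimensional simplices: for a chain $A_0 \subsetneq \cdots \subsetneq A_k$ of $\mSd \simp{n}$ with $\max A_0 \leq \cdots \leq \max A_k$, one must build a $k$-simplex of $\mEx(X,W)$ whose boundary interpolates between the ``max-projection'' and the ``full subdivision'' versions of $g$ according to the pattern of $0$s and $1$s in $(j_{\max A_0}, \ldots, j_{\max A_k})$. A case analysis based on where $\vec{j}$ switches from $0$ to $1$, together with carefully chosen intermediary apex vertices built from $\tilde{g}$ evaluated on appropriate subsets, should yield a well-defined formula; simplicial naturality in both factors then reduces to verifying compatibility under face and degeneracy operators, which should follow from the functoriality of the defining formula in $A$ and $\vec{j}$ separately.
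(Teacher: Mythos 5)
There is a genuine gap: the construction you defer to ``a case analysis \dots should yield a well-defined formula'' is exactly the hard part of this lemma, and it is not carried out. Specifying the homotopy on vertices of $\mSd \simp{n}$ and checking markedness of images of marked edges is the easy portion; what is needed is a simplicial map $\mEx(X, W) \times J \to \mEx(\natm{\mEx(X, W)})$, i.e.\ for every $n$-simplex $g \from \mSd\simp{n} \to (X, W)$ and every $n$-simplex of $J$ a full marked map $\mSd\simp{n} \to \natm{\mEx(X,W)}$ on all chains $A_0 \subseteq \dots \subseteq A_k$, compatible with the faces and degeneracies of both factors. Your proposal never produces this formula, and the choices of ``intermediary apex vertices'' for mixed patterns are precisely where consistency can fail. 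Moreover your identification $J_n = \{0,1\}^{n+1}$ is wrong --- that is $E^1_n$, not $J_n$; $J$ has nondegenerate $2$-simplices encoding the invertibility relations, and a $J$-homotopy must specify where these go. This matters here because $(X,W)$ is an arbitrary marked simplicial set, so $\mEx(\natm{\mEx(X,W)})$ need not be a quasicategory and you cannot build a weaker ($\simp{1}$- or vertex-wise) homotopy and then upgrade it; the upgrade to an $E^1$-homotopy happens only later (\cref{min-ex-min-htpy}) under the CLF hypothesis.

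For comparison, the paper avoids the hands-on combinatorics by working one level up: it constructs a single cosimplicially natural marked homotopy $\mSd(U \mSd \simp{n}) \times \maxm{(\simp{1})} \to \mSd\simp{n}$ coming from the containment $\{\max A_0, \dots, \max A_k\} \subseteq A_k$, splices in the map $\mSd J \to \maxm{(\simp{1})}$ witnessing invertibility of $(\msup)_1(\id[{\maxm{(\simp{1})}}])$ to get a map $\alpha$, and then obtains the homotopy by precomposition $\alpha^*$; the endpoints are identified with $\msup$ and $\mEx(\msupb)$ via \cref{min-ex-min-diagram} and the monomorphy of $\mEx\eta$, and the factorization through the natural marking uses \cref{min-ascends}, as you anticipated. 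If you want to salvage your explicit approach, you would need to actually write down the value on an arbitrary chain together with an arbitrary simplex of $J$ (including its $2$-simplices) and verify simplicial naturality --- at which point you would essentially be reproving the naturality that the paper gets for free from precomposition.
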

\begin{proof}
	We show (1), as (2) is dual.

	For $n \geq 0$, consider the (non-commuting) triangle
	\[ \label[diagram]{diagm:sd-triangle}
	\begin{tikzcd}[column sep = 3em]
		\mSd(U \mSd \simp{n}) \ar[rd, "\msub", swap] \ar[rr, "\mSd \left( U \max \right)"] & {} & \mSd \simp{n}\\
		{} & \minm{(U \mSd \simp{n})} \ar[ur, hook]
	\end{tikzcd} \tag{$\ast$} \]
	The underlying simplicial set of the right and middle terms is the poset of subsets of $[n]$ ordered by containment.
	The underlying simplicial set of the left term is the poset of chains $A_0 \subseteq \dots \subseteq A_k$ of subsets of $[n]$ ordered by containment.
	We identify the (underlying) maps above as monotone functions 
	\[ \begin{tikzcd}[column sep = 3em]
		\sd (\sd [n]) \ar[rd, "\msub", swap] \ar[rr, "\mSd \left( U \max \right)"] & {} & \sd [n] \\
		{} & \sd [n] \ar[ur, "{\mathrm{id}}", swap]
	\end{tikzcd} \]
	defined by
	\begin{align*}
		\mSd (U \max) (A_0 \subseteq \dots \subseteq A_k) &= \{ \max A_0, \dots, \max A_k \} \\
		\msub (A_0 \subseteq \dots \subseteq A_k) &= A_k.
	\end{align*}
	The natural containment $\{ \max A_0, \dots, \max A_k  \} \subseteq A_k$ induces a marked homotopy 
	\[ \mSd(U \mSd \simp{n}) \times \maxm{(\simp{1})} \to \mSd \simp{n} \] 
	between the composites in Diagram (\ref{diagm:sd-triangle}). 
	The map
	\[ \mSd J \to \maxm{(\simp{1})} \]
	which witnesses invertibility of $(\msup)_1(\id[\maxm{(\simp{1})}]) \in (\mEx \maxm{(\simp{1})})_1$ induces a map
	\[ \begin{tikzcd}[column sep = 2.7em]
		\mSd(U \mSd J) \ar[r, "\msub"] & \mSd J \ar[r] & \maxm{(\simp{1})}.
	\end{tikzcd} \]
	Let $\alpha$ denote the composite
	\[ \begin{tikzcd}[column sep = small]
		\mSd(U \mSd (\simp{n} \times J)) \ar[r] & \mSd(U \mSd \simp{n}) \times \mSd(U \mSd J) \ar[r] & \mSd(U \mSd \simp{n}) \times \maxm{(\simp{1})} \ar[r] & \mSd \simp{n}
	\end{tikzcd} \]
	(where the first map is induced by the universal property of the product).
	Pre-composition by $\alpha$ induces a map
	\[ \alpha^* \from \mEx(X, W) \to  \mEx \left( \maxm{\mEx(X, W)} \right)^{J}. \]
	By \cref{min-ex-min-diagram}, this map witnesses that the diagram
	\[ \begin{tikzcd}
		\mEx (X, W) \ar[rr, "\msup"] \ar[rd, hook, "\mEx \eta_{(X, W)}", swap] & {} & \mEx \left( \maxm{\mEx (K, W)} \right) \\
		{} & \mEx \maxm{X} \ar[ur, "\mEx (\maxm{(\msup)})"'] & {}
	\end{tikzcd} \]
	commutes up to $J$-homotopy.

	Note that, for $f \from \mSd \simp{n} \to (X, W)$, the map 
	\[ f \alpha \from \mSd(U \mSd \simp{n}) \to (X, W) \]
	sends weak equivalences to equivalences as $f$ does by \cref{min-ascends}.
	Thus, $\alpha^*$ factors as
	\[ \begin{tikzcd}
		\mEx(X, W) \times J \ar[rd, dotted, "\beta"'] \ar[rr, "{\alpha^*}"] & {} & \mEx(\maxm{\mEx(X, W)}) \\
		{} & \mEx(\natm{\mEx(X, W)}) \ar[ur, hook, "\mEx \eta"'] & {}
	\end{tikzcd} \]
	We show that $\beta$ is the desired homotopy by verifying its endpoints evaluate to $\msup$ and $\mEx(\msupb)$, respectively.
	The diagram
	\[ \begin{tikzcd}[column sep = 5em]
		\mEx(X, W) \ar[r, "\msup_{\natm{\mEx(X, W)}}"] \ar[rd, "\msup_{\maxm{\mEx(X, W)}}"'] & \mEx(\natm{\mEx(X, W)}) \ar[d, hook, "\mEx \eta"] \\
		{} & \mEx(\maxm{\mEx(X, W)})
	\end{tikzcd} \]
	commutes by naturality of $\msup \from U \Rightarrow \mEx$ applied to the inclusion $\natm{\mEx(X, W)} \hookrightarrow \maxm{\mEx(X, W)}$.
	From this, we have that
	\[ \mEx \eta \circ \msup_{\natm{\mEx(X, W)}} = \msup_{\maxm{\mEx(X, W)}} = \alpha^* \vert_{\mEx(X, W) \times \{ 0 \}} =  \mEx \eta \circ \beta \vert_{\mEx(X, W) \times \{ 0 \}}, \]
	which implies $\msup_{\natm{\mEx(X, W)}} = \beta \vert_{\mEx(X, W) \times \{ 0 \}}$ as $\mEx \eta$ is monic.
	For the other endpoint, the diagram
	\[ \begin{tikzcd}
		(X, W) \ar[r, "\msupb"] \ar[d, hook, "\eta'"'] & \natm{\mEx(X, W)} \ar[d, hook, "\eta"] \\
		\maxm{X} \ar[r, "\maxm{(\msup)}"] & \maxm{\mEx(X, W)}
	\end{tikzcd} \]
	commutes as it commutes after applying $U \from \msSet \to \sSet$.
	Thus, this diagram commutes after applying $\mEx \from \msSet \to \sSet$.
	Hence,
	\[ \mEx \eta \circ \mEx \msupb = \mEx \maxm{(\msup)} \circ \mEx \eta' = \alpha^* \vert_{\mEx(X, W) \times \{ 1 \}} = \mEx \eta \circ \beta \vert_{\mEx(X, W) \times \{ 1 \} }, \]
	which implies $\mEx \msupb = \beta \vert_{\mEx(X, W) \times \{ 1 \}}$ as $\mEx \eta$ is monic.
\end{proof}

\begin{corollary} \label{min-ex-min-htpy}
	For a marked quasicategory $(\mc{C}, W)$ satisfying CLF, the (underlying) maps in the diagram
	\[ \begin{tikzcd}[column sep = 3.7em]
		\natm{\mEx (\mc{C}, W)} \ar[r, yshift=-0.75ex, "\msupb", swap] \ar[r, yshift=0.75ex, "\natm{\mEx \left( \msupb \right)}"] & \natm{\mEx \left( \natm{\mEx (\mc{C}, W)} \right)}
	\end{tikzcd} \]
	are $E^1$-homotopic.
\end{corollary}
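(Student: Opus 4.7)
The plan is to upgrade the $J$-homotopy provided by \cref{min-ex-min-J-htpy}(1) to an $E^1$-homotopy. The only real content is setting up the right environment so that the general fact recorded after \cref{e1-equiv-j-equiv-eq} applies.

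First, I would verify that both the domain and the codomain above are quasicategories, so that we may freely identify $J$-equivalences with $E^1$-equivalences via \cref{e1-equiv-j-equiv-eq}. Since $(\mc{C}, W)$ satisfies CLF and $\mc{C}$ is a quasicategory, \cref{mEx-qcat}(1b) gives that $\mEx(\mc{C}, W)$ is a quasicategory; then by \cref{equivs-satisfy-cf}, the marked simplicial set $\natm{\mEx(\mc{C}, W)}$ satisfies CLF, and applying \cref{mEx-qcat}(1b) once more shows that $\mEx(\natm{\mEx(\mc{C}, W)})$ is also a quasicategory.

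Next, I would invoke \cref{min-ex-min-J-htpy}(1) to produce a $J$-homotopy
\[ \alpha \colon \mEx(\mc{C}, W) \times J \to \mEx \left( \natm{\mEx(\mc{C}, W)} \right) \]
from $\msup$ to $\mEx(\msupb)$. Restricting along the canonical inclusion $\simp{1} \hookrightarrow J$ yields a simplicial homotopy $\alpha'$ with the same endpoints. For each $0$-simplex $x$, the restriction $\restr{\alpha'}{\{x\} \times \simp{1}}$ extends to a map $\{x\} \times J \to \mEx(\natm{\mEx(\mc{C}, W)})$, so it is a $J$-equivalence and hence, by \cref{e1-equiv-j-equiv-eq} applied to the quasicategory $\mEx(\natm{\mEx(\mc{C}, W)})$, an $E^1$-equivalence.

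Finally, I would apply the remark immediately following \cref{e1-equiv-j-equiv-eq}: any map $X \times \simp{1} \to Y$ into a quasicategory whose component at each $0$-simplex of $X$ is invertible lifts along $\simp{1} \hookrightarrow E^1$ to a natural equivalence. Applying this to $\alpha'$ produces an $E^1$-homotopy $\mEx(\mc{C}, W) \times E^1 \to \mEx(\natm{\mEx(\mc{C}, W)})$, and the endpoints $\msup$ and $\mEx(\msupb)$ are automatically preserved since $\{0\}, \{1\} \subseteq \simp{1} \subseteq E^1$. No step is really difficult; the argument is essentially translation between interval objects, with the substantive work already packaged in \cref{min-ex-min-J-htpy}(1) and the fibrancy of the iterated $\mEx$.
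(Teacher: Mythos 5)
Your proposal is correct and follows essentially the same route as the paper: the paper's proof is precisely that the maps are $J$-homotopic by \cref{min-ex-min-J-htpy} and that the conclusion follows because the codomain $\mEx(\natm{\mEx(\mc{C},W)})$ is a quasicategory, which is exactly the fibrancy check and interval-translation you spell out. Your extra verification that $\mEx(\mc{C},W)$ is a quasicategory and $\natm{\mEx(\mc{C},W)}$ satisfies CLF (via \cref{mEx-qcat} and \cref{equivs-satisfy-cf}) is the correct unpacking of the paper's one-line justification.
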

\begin{proof}
	The underlying maps are $J$-homotopic by \cref{min-ex-min-J-htpy}.
	The result follows since the codomain is a quasicategory.
\end{proof}
\begin{corollary} \label{ex-msup-weq}
	For a marked quasicategory $(\mc{C}, W)$ satisfying CLF, the map
	\[ \mEx(\msupb) \from \mEx(\mc{C}, W) \to \mEx \left( \natm{\mEx(\mc{C}, W)} \right) \]
	is an equivalence of categories.
\end{corollary}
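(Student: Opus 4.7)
The plan is to deduce this from the two preceding results: \cref{ex-nat-weq}, which handles the case where the marking consists of equivalences, and \cref{min-ex-min-htpy}, which tells us that $\mEx(\msupb)$ is $E^1$-homotopic to the map $\msup$ into $\natm{\mEx(\mc{C},W)}$. The strategy is to reduce the claim about $\mEx(\msupb)$ to a claim about $\msup$, which we already understand.

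First, I would observe that $\mEx(\mc{C},W)$ is itself a quasicategory: this is \cref{mEx-qcat}(1)(b), using that $(\mc{C},W)$ satisfies CLF and $\mc{C}$ is a quasicategory. Consequently $\natm{\mEx(\mc{C},W)}$ is a quasicategory marked at equivalences, hence satisfies CLF by \cref{equivs-satisfy-cf}, and so $\mEx(\natm{\mEx(\mc{C},W)})$ is a quasicategory by another application of \cref{mEx-qcat}. Thus both the domain and codomain of $\mEx(\msupb)$ are quasicategories, and equivalence of quasicategories can be detected by equivalence in the Joyal homotopy category.

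Next, applying \cref{ex-nat-weq} to the quasicategory $\mEx(\mc{C},W)$ yields that the map
\[ \msup \from \mEx(\mc{C},W) \to \mEx\left(\natm{\mEx(\mc{C},W)}\right) \]
is an equivalence of quasicategories. By \cref{min-ex-min-htpy}, the underlying maps of $\msupb$ and $\mEx(\msupb)$ into $\natm{\mEx(\natm{\mEx(\mc{C},W)})}$ are $E^1$-homotopic, and since both maps factor through the subobject $\natm{\mEx(\natm{\mEx(\mc{C},W)})}$ (which is monic into the maximally-marked version), we recover an $E^1$-homotopy between $\msup$ and $\mEx(\msupb)$ as maps of quasicategories. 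Because $E^1$-homotopic maps between quasicategories represent the same morphism in the Joyal homotopy category, $\mEx(\msupb)$ is an equivalence if and only if $\msup$ is. Combining these observations gives the result.

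No step here is an obstacle, since \cref{ex-nat-weq,min-ex-min-htpy} do all the heavy lifting; the only subtlety worth noting is the bookkeeping between marked and unmarked versions of the relevant maps, which is what ensures that the $E^1$-homotopy produced in \cref{min-ex-min-htpy} really transfers to an $E^1$-homotopy between $\msup$ and $\mEx(\msupb)$ as maps whose codomain is $\mEx(\natm{\mEx(\mc{C},W)})$ rather than $\mEx(\maxm{\mEx(\mc{C},W)})$.
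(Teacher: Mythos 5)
Your proposal is correct and follows essentially the same route as the paper: the paper's proof is exactly the two-step argument that $\mEx(\msupb)$ is $E^1$-homotopic to $\msup$ by \cref{min-ex-min-htpy} and that $\msup$ is an equivalence by \cref{ex-nat-weq} applied to the quasicategory $\mEx(\mc{C},W)$. The extra bookkeeping you include (that both source and target are quasicategories, and that the homotopy lands in the naturally-marked codomain) is already packaged into the cited results, so your write-up is just a more explicit version of the paper's proof.
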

\begin{proof}
	This map is $E^1$-homotopic to $\msup$ (\cref{min-ex-min-htpy}), which is an equivalence (\cref{ex-nat-weq}).
\end{proof}

With this, we may prove the main theorem of this section.
\begin{theorem} \label{mEx-computes-localization}
	Let $(\mc{C}, W)$ be a marked quasicategory
	\begin{enumerate}
		\item If $(\mc{C}, W)$ satisfies CLF then $\msup \from \mc{C} \to \mEx(\mc{C}, W)$ is the localization of $\mc{C}$ at $W$. 
		\item If $(\mc{C}, W)$ satisfies CRF then $\minsup \from \mc{C} \to \mExop(\mc{C}, W)$ is the localization of $\mc{C}$ at $W$. 
	\end{enumerate}
\end{theorem}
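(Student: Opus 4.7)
The plan is to invoke \cref{msSet-weq-is-localization} to reduce the theorem to showing that the marked map $\msupb \from (\mc{C}, W) \to \natm{\mEx(\mc{C}, W)}$ is a weak equivalence in $\msSet$. Its target is fibrant by \cref{mEx-qcat} (which gives that $\mEx(\mc{C}, W)$ is a quasicategory), combined with the fact that naturally marked quasicategories are fibrant in $\msSet$. The CRF statement is formally dual via $\mExop$, so I would focus on the CLF case.

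To verify this weak equivalence, I would establish the universal property of localization directly by exhibiting a homotopy inverse to precomposition by $\msupb$. Given any quasicategory $\mc{D}$ and marked map $f \from (\mc{C}, W) \to \natm{\mc{D}}$, naturality of $\msupb$ gives a commutative square
\[
\begin{tikzcd}
(\mc{C}, W) \arrow{r}{f} \arrow{d}{\msupb} & \natm{\mc{D}} \arrow{d}{\msupb_{\natm{\mc{D}}}} \\
\natm{\mEx(\mc{C}, W)} \arrow{r}{\natm{\mEx(f)}} & \natm{\mEx\natm{\mc{D}}}
\end{tikzcd}
\]
whose right vertical is a weak equivalence between fibrant objects by \cref{ex-nat-weq}. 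Composing $\natm{\mEx(f)}$ with a chosen homotopy inverse of $\msupb_{\natm{\mc{D}}}$ produces a factorization $\tilde{f} \from \natm{\mEx(\mc{C}, W)} \to \natm{\mc{D}}$ with $\tilde{f} \circ \msupb \simeq f$, establishing essential surjectivity of the precomposition map.

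For the injectivity/fullness side, suppose $g_1, g_2 \from \natm{\mEx(\mc{C}, W)} \to \natm{\mc{D}}$ satisfy $g_1 \circ \msupb \simeq g_2 \circ \msupb$. Applying $\natm{\mEx(-)}$ and using naturality twice reduces this (via the homotopy between $\natm{\mEx(\msupb)}$ and $\msupb_{\natm{\mEx(\mc{C}, W)}}$ provided by \cref{min-ex-min-htpy}) to an equality after precomposition with the weak equivalence $\natm{\mEx(\msupb)}$ from \cref{ex-msup-weq}, which permits cancellation; a final application of the equivalence $\msupb_{\natm{\mc{D}}}$ from \cref{ex-nat-weq} then yields $g_1 \simeq g_2$.

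The main obstacle is promoting these $\pi_0$-level statements to a genuine equivalence of mapping quasicategories, not merely a bijection on homotopy classes. The cleanest route is to run the same argument internally: tensor with $\minm{(\simp{n})}$ and $\maxm{(\simp{n})}$ to build a bisimplicial comparison whose rows and columns become equivalences via \cref{ex-nat-weq} and \cref{ex-msup-weq} applied to auxiliary fibrant objects, and then conclude via a diagonal lemma argument in the spirit of the one used in the proof of \cref{ex-nat-weq}. The compatibility needed to assemble the construction $f \mapsto \tilde{f}$ into a simplicial homotopy inverse ultimately stems from naturality of both $\msupb \from \mathrm{id} \Rightarrow \natm{\mEx(-)}$ and $\msup \from U \Rightarrow \mEx$ as natural transformations of functors between $\msSet$ and $\sSet$.
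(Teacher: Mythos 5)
Your core argument is sound, but it takes a genuinely different route from the paper. The paper never verifies the universal property by hand: it takes the abstractly existing localization $\gamma \from \mc{C} \to \mc{C}[W^{-1}]$ (guaranteed by the remark following \cref{def:infty-localization}), uses \cref{min-ascends} to produce a comparison functor $F \from \mc{C}[W^{-1}] \to \mEx(\mc{C}, W)$ with $F \circ \gamma \simeq \msup$, and then shows $F$ is an equivalence by 2-out-of-6 in a homotopy-commutative square whose vertical legs are the equivalences of \cref{ex-nat-weq} and \cref{ex-msup-weq}. Your plan instead exhibits precomposition by $\msupb$ as an equivalence directly; your surjectivity and cancellation arguments --- naturality of $\msup$ combined with \cref{ex-nat-weq}, \cref{ex-msup-weq}, and \cref{min-ex-min-htpy} --- are exactly the homotopy-class shadow of what the paper's 2-out-of-6 square encodes, and they are correct as stated. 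What your route buys is independence from the abstract existence of $\mc{C}[W^{-1}]$; what the paper's route buys is never having to upgrade $\pi_0$-level statements at all.

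That upgrade is the one place where your write-up falls short of a proof, and the bisimplicial/diagonal detour you sketch for it is both vague and unnecessary. Two cheaper repairs are available. (i) Your opening reduction via \cref{msSet-weq-is-localization} already does the work: every object of $\msSet$ is cofibrant and $(X, W) \times \maxm{(\simp{1})}$ is a cylinder (marked homotopy equivalences are weak equivalences in the marked model structure), so your two statements say precisely that $\msupb$ induces bijections $\Ho(\msSet)\bigl(\natm{\mEx(\mc{C}, W)}, \natm{\mc{D}}\bigr) \to \Ho(\msSet)\bigl((\mc{C}, W), \natm{\mc{D}}\bigr)$ for every fibrant object $\natm{\mc{D}}$; a map between cofibrant objects inducing such bijections into all fibrant objects is a weak equivalence, and \cref{msSet-weq-is-localization} then finishes the proof. (ii) Alternatively, run your $\pi_0$-argument with $\mc{D}$ replaced by $\mc{D}^{K}$ for an arbitrary simplicial set $K$, using $\natm{\mc{D}}^{\minm{K}} \cong \natm{(\mc{D}^{K})}$ (pointwise-invertible transformations are exactly the equivalences of $\mc{D}^{K}$); this gives bijections on $J$-homotopy classes $[K, -]$ for every $K$, which suffices for an equivalence of the functor quasicategories in \cref{def:infty-localization}. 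Either patch makes your argument complete; as proposed, the final step is the only genuine gap.
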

\begin{proof}
	We prove (1), as (2) is formally dual.

	Let $\gamma \from \mc{C} \to \mc{C}[W^{-1}]$ be the localization of $\mc{C}$ at $W$.
	By \cref{min-ascends}, we have the following diagram of maps
	\[ \begin{tikzcd}
		\mc{C} \ar[r, "\msup"] \ar[d, "\gamma", swap] & \mEx(\mc{C}, W) \\
		\mc{C}[W^{-1}] \ar[ur, dotted, "F", swap]
	\end{tikzcd} \]
	which commutes up to natural equivalence.
	This implies the map 
	\[ \msup_{\mc{C}[W^{-1}]} \from \mc{C}[W^{-1}] \to \mEx(\natm{\mc{C}[W^{-1}]}) \] 
	is $E^1$-homotopic to the composite $\mEx(\gamma) \circ F$ since
	\[ (\mEx(\gamma) \circ F \circ \gamma) \sim (\mEx(\gamma) \circ \msup) = (\msup \circ \gamma), \]
	where the equality follows from naturality.
	Moreover, this diagram ascends to a diagram of marked simplicial sets
	\[ \begin{tikzcd}
		(\mc{C}, W) \ar[r, "\msupb"] \ar[d, "\overline{\gamma}", swap] & \natm{\mEx(\mc{C},W)} \\
		\natm{\mc{C}[W^{-1}]} \ar[ur, "\overline{F}", swap]
	\end{tikzcd} \]
	whose underlying maps commute up to $E^1$-homotopy.
	By \cref{mEx-preserves-htpy-equivs}, the bottom right triangle in
	\[ \begin{tikzcd}
		\mc{C}[W^{-1}] \ar[r, "F"] \ar[d, "\msup", swap] & \mEx(\mc{C}, W) \ar[d, "\mEx(\msupb)"', swap] \ar[ld, "\mEx(\overline{\gamma})" description] \\
		\mEx (\natm{\mc{C}[W^{-1}]}) \ar[r, "\mEx \overline{F}"'] & \mEx(\natm{\mEx(\mc{C}, W)})
	\end{tikzcd} \]
	commutes up to $E^1$-homotopy.
	As the vertical maps are equivalences (by \cref{ex-nat-weq} and \cref{ex-msup-weq}, respectively), the map $F$ is an equivalence by 2-out-of-6.
\end{proof}

\begin{corollary} \label{ExC_Kan_iff_clf}
  Let $\mc{C}$ be a quasicategory.
  Then $\Ex \mc{C}$ is a Kan complex if and only if $\maxm{\mc{C}}$ admits the calculus of left fractions. 
\end{corollary}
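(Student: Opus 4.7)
The plan is to reduce the corollary to results already proved by invoking the identification $\Ex \mc{C} = \mEx(\maxm{\mc{C}})$ from Remark~\ref{Ex-v-mEx}. With this observation in hand, both directions become essentially one-line applications of Theorem~\ref{mEx-qcat} and Theorem~\ref{mEx-computes-localization}, together with the edge-case remark following Definition~\ref{def:infty-localization}.

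For the forward direction, assume $\maxm{\mc{C}}$ satisfies CLF. Applying Theorem~\ref{mEx-computes-localization}(1) with $W = \mc{C}_1$, the map $\msup \from \mc{C} \to \mEx(\maxm{\mc{C}})$ presents the localization of $\mc{C}$ at all of its 1-simplices. By the edge-case remark following Definition~\ref{def:infty-localization}, such a localization agrees (up to categorical equivalence) with the fibrant replacement of $\mc{C}$ in the Kan--Quillen model structure, which is a Kan complex. Being a Kan complex is invariant under Joyal equivalence between quasicategories (the condition is detected on the homotopy category, which must be a groupoid, so every edge is an equivalence), and hence $\Ex \mc{C}$ is itself a Kan complex.

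For the converse, assume $\Ex \mc{C} = \mEx(\maxm{\mc{C}})$ is a Kan complex, and in particular a quasicategory. Theorem~\ref{mEx-qcat}(1)(a) then yields that $\maxm{\mc{C}}$ has the right lifting property against $\{\dfLJ \ito \dfLI \mid n \geq 2,\ 0 < k \leq n\}$. It remains only to check that $\maxm{\mc{C}}_1 = \mc{C}_1$ is weakly closed under composition; but a map $\maxm{(\horn{2}{1})} \to \maxm{\mc{C}}$ is just a map $\horn{2}{1} \to \mc{C}$ of underlying simplicial sets, which admits a filler $\simp{2} \to \mc{C}$ since $\mc{C}$ is a quasicategory, and that filler is marked automatically as every 1-simplex of $\maxm{\mc{C}}$ is. Hence $\maxm{\mc{C}}$ satisfies CLF.

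I do not anticipate any real obstacle: the result is a direct corollary of the preceding two theorems. The most subtle point is the transfer of ``being a Kan complex'' across the Joyal equivalence furnished by the localization property, but this is immediate once one passes to homotopy categories.
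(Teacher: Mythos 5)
Your proposal is correct and takes essentially the same route as the paper: one direction via \cref{mEx-qcat} and the other via \cref{mEx-computes-localization} together with \cref{Ex-v-mEx} and the edge-case remark on localizing a maximally marked quasicategory. You merely make explicit two points the paper leaves implicit, namely that weak closure under composition is automatic for $\maxm{\mc{C}}$ when $\mc{C}$ is a quasicategory, and that being a Kan complex is invariant under categorical equivalence of quasicategories.
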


\begin{proof}
  If $\Ex \mc{C}$ is a Kan complex, then by the first part of \cref{mEx-qcat}, we get that $\maxm{\mc{C}}$ admits the calculus of left fractions.
  The converse follows from \cref{mEx-computes-localization,Ex-v-mEx}.
\end{proof}

\begin{remark}
  A special case of \cref{ExC_Kan_iff_clf} for the nerve of a category was observed in \cite[Rmk.~5.8]{latch-thomason-wilson} (where it is attributed to Fritsch and Latch) and proven in \cite[Thm.~2.1]{meier-ozornova:partial-model}.
  Note that a fully marked category $\maxm{\mc{C}}$ satisfies CLF if and only if it satisfies proper CLF, and hence the result above indeed generalizes those of \cite{latch-thomason-wilson,meier-ozornova:partial-model}.
\end{remark}

\begin{example}
  Recall that a localization $\gamma \from \mc{C} \to \mc{C}[W^{-1}]$ is \emph{reflective} is $\gamma$ admits a full and faithful right adjoint.
  These are occasionally referred to simply as \emph{localizations}, e.g., \cite[Def.~5.2.7.2]{lurie:htt}.
  In view of \cref{reflective-localization-satisfies-clf}, these are therefore necessarily of the form $\msup \from \mc{C} \to \mEx(\mc{C}, W)$ (as described in \cref{mEx-computes-localization}) where $W$ is the collection of maps in $\mc{C}$ inverted by $\gamma$.
  Dually, a coreflective localization is of the form $\minsup \from \mc{C} \to \mExop(\mc{C}, W)$.
\end{example}

In the setting of (co)fibration categories, we have that the class of weak equivalences satisfies the stronger 2-out-of-6 property if and only if it is saturated, i.e.~weak equivalences are exactly the maps inverted by the localization functor (cf.~\cite[Thm.~7.2.7]{radulescu-banu}).
An analogous result was proven in the context of categories satisfying calculus of fractions by Kashiwara and Schapira \cite[Prop.~7.1.20.(2)]{kashiwara-schapira}.
To introduce our generalization, we first define when a collection of morphisms in a quasicategory is saturated.

\begin{definition}
    A collection of morphisms $W \subseteq \operatorname{Mor} \mc{C}$ is \emph{saturated} if the localization map
    \[ \gamma \from (\mc{C}, W) \to \natm{\mc{C}[W^{-1}]} \] 
    reflects markings. That is, if $\gamma(f)$ is an equivalence in $\mc{C}[W^{-1}]$ then $f \in W$.
\end{definition}
\begin{proposition}[cf.~{\cite[Prop.~7.1.20.(2)]{kashiwara-schapira}}] \label{saturated_iff_2-outta-6}
    If $(\mc{C}, W)$ satisfies CLF then $W$ is closed under 2-out-of-6 if and only if $W$ is saturated.
\end{proposition}
\begin{proof}
    The reverse direction is immediate. 
    For the forward direction, suppose $f \from a \to b$ becomes an equivalence in $\mEx(\mc{C}, W)$.
    This gives a map $\mSd J \to (\mc{C}, W)$ of the form
    \[ \begin{tikzpicture}[commutative diagrams/every diagram,node distance=3.7em]
        \node (MM) {$b$};
        \node (MT) [above=2.8em of MM] {$a$};
        \node (MB) [below=2.8em of MM] {$b$};
        \node (LT) [left=of MT] {$a'$};
        \node (LLT) [left=of LT] {$b$};
        \node (RB) [right=of MB] {$a'$};
        \node (RRB) [right=of RB] {$a$};
        \coordinate (LMcoord) at (barycentric cs:LLT=1,MB=1);
        \coordinate (RMcoord) at (barycentric cs:MT=1,RRB=1);
        \path (LMcoord) -- +(-0.25, -0.25) node (LM) {$b$};
        \path (RMcoord) -- +(0.25, 0.25) node (RM) {$a$};
        \node (CL) at (barycentric cs:LLT=1,MT=1,MB=1) {$b''$};
        \node (CR) at (barycentric cs:MT=1,MB=1,RRB=1)
        {$a''$};

        \path[commutative diagrams/.cd, every arrow, every label]
            (LLT) edge node {$g$} (LT)
                    edge[commutative diagrams/equal] (LM)
            (MT) edge node[swap] {$w$} (LT)
                    edge node {$f$} (MM)
                    edge[commutative diagrams/equal] (RM)
            (MB) edge[commutative diagrams/equal] (MM)
                    edge node {$g$} (RB)
                    edge[commutative diagrams/equal] (LM)
            (RRB) edge node[swap] {$w$} (RB)
                    edge[commutative diagrams/equal] (RM)
            (LT) edge node {$h$} (CL)
            (LM) edge node {$\sim$} (CL)
            (MM) edge (CL)
                    edge (CR)
            (RM) edge node[swap] {$\sim$} (CR)
            (RB) edge node[swap, near start] {$\sim$} (CR);
    \end{tikzpicture} \]
    from which we extract commutative diagrams
    \[ \begin{tikzcd}
        a \ar[r, "\sim"] \ar[d, "f"'] & a'' \\
        b \ar[r, "g"] & a' \ar[u, "\sim"']
    \end{tikzcd} \qquad \begin{tikzcd}
        {} & a' \ar[rd, "h"] & {} \\
        b \ar[rr, "\sim"] \ar[ur, "g"] & {} & b''
    \end{tikzcd} \]
    in $\mc{C}$.
    Specifically, the left diagram is a composite of two squares in the right subdivided triangle; the right diagram is obtained from the left-most square in the left subdivided triangle after composing.

    Recall that closure under 2-out-of-6 implies closure under 2-out-of-3, which further implies $W$ is strongly closed under composition.
    In the left diagram, since $a \to a''$ and $a' \to a''$ are in $W$, 2-out-of-3 gives that any composite of $g$ with $f$ is in $W$.
    In the right diagram, since the bottom map is in $W$, any composite of $h$ with $g$ is in $W$.
    By 2-out-of-6 on the triple
    \[ \begin{tikzcd} 
        a \ar[r, "f"] & b \ar[r, "g"] & a' \ar[r, "h"] & b'' ,
    \end{tikzcd} \]
    the morphism $f$ is in $W$.
\end{proof}

\chapter*{Properties of the localization}
\cftaddtitleline{toc}{chapter}{Properties of the localization}{}

\section{Mapping spaces in the localization} \label{sec:mapping-space}

In this section, we give explicit descriptions for the mapping spaces in $\mEx(\mc{C}, W)$ and $\mExop(\mc{C}, W)$.
To do so, we give two constructions of the ``marked slice'' category.
Mirroring the usual slice construction, both are defined by mapping out of ``marked cones'' over an $n$-simplex. 
We form one version of the marked cone using the product; the other is formed using the join.
These allow us to define the \emph{simplicial set of fractions} between two objects in a quasicategory (\cref{def:sset-of-fractions}), whic yields our first description of the mapping space (\cref{mapping-space-is-mEx-LF}).
Our second description of the mapping spaces in $\mEx(\mc{C}, W)$ is as a colimit of mapping spaces in $\mc{C}$ (\cref{mapping-space-is-colim}), indexed by the marked slice category.

Define a cosimplicial object $\lcone \from \Delta \to (\msSet)_{\ast}$ by the pushout
\[ \begin{tikzcd}
    \minm{(\simp{n})} \ar[r] \ar[d, "i_0"'] \ar[rd, phantom, "\ulcorner" very near end] & \simp{0} \ar[d, "{[i_0]}"] \\
    \minm{(\simp{n})} \times \maxm{(\simp{1})} \ar[r] & \lcone \simp{n}
\end{tikzcd} \]
We regard $\lcone \simp{n}$ as pointed with distinguished basepoint $[i_0]$.

Geometrically, one thinks of $\lcone \simp{n}$ as a prism $\simp{n} \times \simp{1}$ which has been quotiented to a cone at the 0-endpoint.
The 1-simplices which extend from the cone point to the original $n$-simplex are exactly the marked 1-simplices of $\lcone \simp{n}$.

This induces a functor $\lcone \from \sSet \to (\msSet)_{\ast}$ via extension by colimits which sends a simplicial set $X$ to the pushout
\[ \begin{tikzcd}
    X \ar[r] \ar[d, "i_0"'] \ar[rd, phantom, "\ulcorner" very near end] & \simp{0} \ar[d, "{[i_0]}"] \\
    X \times \maxm{(\simp{1})} \ar[r] & \lcone X
\end{tikzcd} \]
with distinguished basepoint $[i_0]$.
The additional assignment 
\[ \lcone (\maxm{[1]}) := \colim \left( \begin{tikzcd}
    \maxm{(\simp{1})} \ar[r] \ar[d, "i_0"'] & \simp{0} \\
    \maxm{(\simp{1})} \times \maxm{(\simp{1})}
\end{tikzcd} \right) \]
extends $\lcone$ to a functor $\msSet \to (\msSet)_{\ast}$.
Pre-composition gives a functor $\lpathloop[\uvar]{\uvar} \from (\msSet)_{\ast} \to \msSet$ defined, on an object $(X, W, x)$, by
\[ (\lpathloop[W]{x})_n := (\msSet)_{\ast}(\lcone \simp{n}, (X, W, x)). \]
We refer to $\lpathloop{x}$ as the \emph{marked slice under $x$}, as it is the simplicial subset of the ``fat'' slice under $x$ (cf.~\cite[\S4.2.1]{lurie:htt}) spanned by 0-simplices $(y, x \to y)$ such that $x \to y$ is in $W$. 
Explicitly, an $n$-simplex of $\lpathloop[W]{x}$ is a map $u \from \simp{n} \times \simp{1} \to X$ such that
\begin{itemize}
    \item the restriction $\restr{u}{\simp{n} \times \{ 0 \}}$ is constant at the 0-simplex $x$; and 
    \item for $i = 0, \dots, n$, the 1-simplex $\restr{u}{(i, 0) \leq (i, 1)} \from \simp{1} \to X$ is marked.
\end{itemize}
A 1-simplex $u \from (\simp{1})^2 \to X$ is marked if the restriction $\restr{u}{(0, 1) \leq (1, 1)} \from \simp{1} \to X$ is a marked 1-simplex.
Note the map
\[ \minm{(\simp{n})} \times \maxm{(\simp{1})} \to \lcone \simp{n} \] 
induces a natural inclusion
\[ \lpathloop{x} \to (X, W)^{\maxm{(\simp{1})}} \]
by pre-composition.
We write $\Pi_x$ for the composite
\[ \lpathloop{x} \ito (X, W)^{\maxm{(\simp{1})}} \xrightarrow{i_1^*} (X, W) \]
which we refer to as the \emph{projection map}.

We similarly define a cosimplicial object $\rcone \simp{n}$ by the pushout,
\[ \begin{tikzcd}
    \minm{(\simp{n})} \ar[r] \ar[d, "i_1"'] \ar[rd, phantom, "\ulcorner" very near end] & \simp{0} \ar[d, "{[i_1]}"] \\
    \minm{(\simp{n})} \times \maxm{(\simp{1})} \ar[r] & \rcone \simp{n}
\end{tikzcd} \]
which one thinks of as a prism $\minm{(\simp{n})} \times \maxm{(\simp{1})}$ which has been quotiented to a cone at the 1-endpoint.
This defines a functor $\rcone \from \msSet \to (\msSet)_{\ast}$ via extension by colimits (with an analogous definition on $\maxm{\simp{1}}$).
Its right adjoint $\rpathloop[\uvar]{\uvar} \from (\msSet)_{\ast} \to \msSet$ is defined by
\[ (\rpathloop[W]{x})_n := (\msSet)_{\ast}(\rcone \simp{n}, (X, W, x)). \]
We refer to $\rpathloop{x}$ as the \emph{marked slice over $x$}.
This functor comes with a natural projection map
\[ \rpathloop{x} \ito (X, W)^{\maxm{(\simp{1})}} \xrightarrow{i_0^*} (X, W) \]
Note that for $(X, W, x) \in (\msSet)_{\ast}$, there is an isomorphism
\[ (\lpathloop{x})^\op \cong \rpathloop[W^\op]{x} \]
natural in $(X, W, x)$.

The following proposition follows by construction.
\begin{proposition} \label{pb-pathloop-is-qcat}
    For a marked simplicial set $(X, W)$ and $x \in X_0$, the squares
    \[ \begin{tikzcd}
        \lpathloop{x} \ar[r, hook] \ar[d] \ar[rd, phantom, "\lrcorner" very near start] & (X, W)^{\maxm{(\simp{1})}} \ar[d, "i_0^*"] \\
        \simp{0} \ar[r, "x"] & (X, W)
    \end{tikzcd} \qquad \begin{tikzcd}
        \rpathloop{x} \ar[r, hook] \ar[d] \ar[rd, phantom, "\lrcorner" very near start] & (X, W)^{\maxm{(\simp{1})}} \ar[d, "i_1^*"] \\
        \simp{0} \ar[r, "x"] & (X, W)
    \end{tikzcd} \]
    are pullbacks. \qed
\end{proposition}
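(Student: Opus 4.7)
The plan is to unfold definitions and invoke the exponential adjunction; the result is essentially formal, with no technical obstacle.

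First I would write down what an $n$-simplex of the pullback $\lpathloop{x} \times_{(X,W)} \simp{0}$ (viewed through the upper horizontal edge) ought to be: it is an $n$-simplex of $(X,W)^{\maxm{(\simp{1})}}$ — equivalently, by the exponential adjunction, a marked map $u \from \minm{(\simp{n})} \times \maxm{(\simp{1})} \to (X,W)$ — together with the data that $i_0^* \tilde u$ is the constant $n$-simplex at $x$, i.e.\ that the restriction $u \circ (\id \times i_0) \from \minm{(\simp{n})} \to (X,W)$ factors through the point $x$.

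Next I would compare this to the definition of $\lpathloop{x}$. By construction, an $n$-simplex of $\lpathloop{x}$ is a pointed map $\lcone \simp{n} \to (X,W,x)$. Because $\lcone \simp{n}$ is defined as the pushout of $\minm{(\simp{n})} \times \maxm{(\simp{1})} \xleftarrow{i_0} \minm{(\simp{n})} \to \simp{0}$ in $\msSet$, the universal property of that pushout turns such a pointed map into exactly the same data: a marked map $u \from \minm{(\simp{n})} \times \maxm{(\simp{1})} \to (X,W)$ whose precomposition with $i_0$ equals the constant map at $x$. This bijection is natural in $[n] \in \Simp$, which establishes the left-hand pullback on the level of underlying simplicial sets, and the marked $1$-simplices on both sides are defined by the same restriction to $\restr{u}{(0,1)\leq (1,1)}$, matching the marking induced on the pullback.

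The right-hand square is formally dual: $\rcone \simp{n}$ is the analogous pushout with $i_0$ replaced by $i_1$, and repeating the unfolding yields the same identification with the pullback of $i_1^*$ along $x \from \simp{0} \to (X,W)$. No step is an obstacle; the proof is purely a rewriting of universal properties along the $- \times \maxm{(\simp{1})} \dashv (-)^{\maxm{(\simp{1})}}$ adjunction together with the defining pushouts of $\lcone$ and $\rcone$.
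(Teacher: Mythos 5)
Your argument is correct and is exactly the intended one: the paper offers no proof beyond the remark that the squares ``follow by construction,'' and your unfolding via the exponential adjunction $-\times\maxm{(\simp{1})}\dashv(-)^{\maxm{(\simp{1})}}$ together with the defining pushouts of $\lcone\simp{n}$ and $\rcone\simp{n}$ (including their values on $\maxm{(\simp{1})}$, which is what makes the markings agree) is precisely that construction. Nothing further is needed.
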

\begin{proposition} \label{pathloop-qcat-clf}
    Let $x \in X$ be a 0-simplex in a marked simplicial set $(X, W)$ such that $W$ is strongly closed under composition. 
    \begin{enumerate}
        \item If $X$ is a quasicategory then $\lpathloop{x}$ and $\rpathloop{x}$ are quasicategories.
        \item If $(X, W)$ satisfies CLF then $\lpathloop{x}$ satisfies CLF.
        \item If $(X, W)$ satisfies CRF then $\rpathloop{x}$ satisfies CRF.
    \end{enumerate}
\end{proposition}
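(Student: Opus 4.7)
The plan is to leverage the pullback representation of \cref{pb-pathloop-is-qcat} together with the isomorphism $(\lpathloop{x})^\op \cong \rpathloop[W^\op]{x}$: applying parts (1) and (2) to $(X^\op, W)$ yields the corresponding statements for $\rpathloop{x}$, so it suffices to treat $\lpathloop{x}$. In particular, (3) follows from (2) applied to the opposite quasicategory.

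For (1), I would first show that the underlying simplicial set of the marked arrow category $(X, W)^{\maxm{(\simp{1})}}$ is a quasicategory and that $i_0^* \from (X, W)^{\maxm{(\simp{1})}} \to (X, W)$ is an inner fibration of simplicial sets. Its underlying simplicial set sits inside the Joyal exponential $X^{\simp{1}}$ as the subset consisting of those squares whose ``vertical'' edges (the edges $(i, 0) \to (i, 1)$ at each vertex $i$) lie in $W$. Since for $n \geq 2$ and $0 < k < n$ every vertex of $\simp{n}$ already belongs to $\horn{n}{k}$, any inner-horn filler supplied by the Joyal fibrancy of $X^{\simp{1}}$ (and of the inner fibration $X^{\simp{1}} \to X$) introduces no new vertical edges, so the marking condition is preserved automatically. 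Pulling back along $x \from \simp{0} \to (X, W)$ then presents $\lpathloop{x}$ as the total space of an inner fibration over $\simp{0}$, and hence a quasicategory.

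For (2), the pullback converts a lifting problem of $\dfLJ \ito \dfLI$ (with $n \geq 2$, $0 < k \leq n$) against $\lpathloop{x} \to \simp{0}$ into a lifting problem of $\dfLJ \ito \dfLI$ against $i_0^*$ whose base $\dfLI \to (X, W)$ is constant at $x$. By \cref{lj-mapping-space-lift-lemma}, $i_0^*$ has the right lifting property against $\dfLJ \ito \dfLI$ if and only if $(X, W)$ has the right lifting property against $\LJ{n+1}{k+1} \ito \LI{n+1}{k+1}$, and the latter holds by the CLF hypothesis on $(X, W)$. Weak closure of the markings of $\lpathloop{x}$ follows from (1) combined with strong closure of $W$: a marked inner $2$-horn in $\lpathloop{x}$ admits a filler by the quasicategory property, and strong closure of $W$ applied to the image under $\Pi_x$ (which is a $2$-simplex in $X$ whose $\face{}{0}$- and $\face{}{2}$-faces are the marked horn edges) forces its $\face{}{1}$-face, and therefore the new edge of the filler, to be marked as well.

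I expect the main subtlety to be reconciling the two descriptions of the markings on $\lpathloop{x}$: the one given in the paper (a $1$-simplex of $\lpathloop{x}$ is marked iff its top edge lies in $W$) and the pullback/exponential marking inherited from $(X, W)^{\maxm{(\simp{1})}}$ (which, a priori, demands that every edge of the representing $\simp{1} \times \simp{1}$-square lie in $W$). Strong closure of $W$ is exactly what makes these two markings agree, via the two triangular faces of the square propagating the ``top edge in $W$'' hypothesis to the diagonal. Once this identification is in place, the arguments above apply uniformly.
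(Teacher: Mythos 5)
Your proposal is correct and follows essentially the same route as the paper: parts (2)--(3) are exactly the paper's argument (the pullback description of \cref{pb-pathloop-is-qcat} combined with \cref{lj-mapping-space-lift-lemma}, plus duality for the right-handed statements), and your observations that strong closure of $W$ reconciles the slice marking with the exponential marking and that weak closure of the marking on $\lpathloop{x}$ follows via $\Pi_x$ simply make explicit details the paper leaves implicit. For (1) you replace the paper's citation of \cite[Cor.~2.4.7.12]{lurie:htt} with a direct argument (inner horns with $n \geq 2$, $0 < k < n$ contain every vertex, so fillers in $X^{\simp{1}} \to X$ create no new vertical edges and hence respect the marking constraint), which is a sound, self-contained variant of the same fibration statement.
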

\begin{proof}
    For (1), applying \cite[Cor.~2.4.7.12]{lurie:htt} to the map $\simp{0} \xrightarrow{x} X$ gives that $\lpathloop{x} \to \simp{0}$ is a fibration.
    Dually, $\rpathloop{x} \to \simp{0}$ is a fibration.

    For (2), the result for $\lpathloop{x}$ follows from \cref{lj-mapping-space-lift-lemma}, as $\lpathloop{x} \to \simp{0}$ is a pullback of a map which has the right lifting property against $\{ \dfLJ \ito \dfLI \mid n \geq 2, \ 0 < k \leq n \}$ by \cref{pb-pathloop-is-qcat}.

    
    The proof of (3) is dual.
\end{proof}

\newcommand{\ljcone}{\lcone_{\join}} 
\newcommand{\rjcone}{\rcone_{\join}} 
We now describe an alternate construction of the marked cone and slice using the join rather than the product.
Let $\ljcone \from \sSet \to (\msSet)_*$ denote the functor which sends a simplicial set $X$ to the simplicial set $\simp{0} \join X$ (whose distinguished point is the cone point), where every 1-simplex connected to the cone point is marked.
This functor has a right adjoint $(\msSet)_* \to \sSet$ which sends $(X, W, x)$ to the simplicial set
\[ (x \slice^{\join} W)_n := (\msSet)_*(\ljcone \simp{n}, (X, W, x)). \]
Explicitly, $x \slice^{\join} W$ is the simplicial subset of the ordinary slice category under $x$ spanned by 0-simplices $(y, x \to y)$ such that $x \to y$ is in $W$.
The map $\lcone \simp{n} \to \ljcone \simp{n}$ induced by the square
\[ \begin{tikzcd}[column sep = 3.5em]
    \minm{(\simp{n})} \ar[r] \ar[d, "i_0"'] & \simp{0} \ar[d, "\bot"] \\
    \minm{(\simp{n})} \times \maxm{(\simp{1})} \ar[r, "{(i, 0) \mapsto i}", "{(i, 1) \mapsto \bot}"'] & \ljcone \simp{n}
\end{tikzcd} \]
commutes with faces and degeneracies.
Thus, pre-composition induces a map
\[ x \slice^{\join} W \to \lpathloop{x} \]
which is natural in $(X, W, x)$.
Analogously, one defines a functor $\rjcone \from \sSet \to (\msSet)_*$ and a map $\rcone \simp{n} \to \rjcone \simp{n}$, which induces a map
\[ W \slice^{\join} x \to \rpathloop{x}. \]
\begin{proposition} \label{lpathloop-weq-slice-equiv}
    Let $x$ be an object in a marked quasicategory $(\mc{C}, W)$.
    The maps
    \[ x \slice^{\join} W \to \lpathloop{x}, \quad W \slice^{\join} x \to \rpathloop{x} \]
    are categorical equivalences.
\end{proposition}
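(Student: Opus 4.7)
The plan is to exhibit the map $x \slice^{\join} W \to \lpathloop{x}$ as a pullback of the classical comparison $x \slice \mc{C} \to \mc{C}^{\simp{1}} \pull_\mc{C} \{x\}$ between the (join-based) slice and the (product-based) fat slice of a quasicategory at a vertex. Joyal's theorem (cf.~\cite[\S 4.2]{lurie:htt}) asserts that this classical comparison is a trivial fibration in the Joyal model structure, and since trivial fibrations are pullback-stable, the result follows.

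Concretely, I would first unwind the pushout definitions of $\ljcone \simp{n}$ and $\lcone \simp{n}$ to observe that both marked cones mark precisely the edges connecting the cone point to the underlying $\simp{n}$. Writing $E$ for the set of $1$-simplices of $\mc{C}$ starting at $x$ and $A \subseteq E$ for those lying in $W$, this means an $n$-simplex of $x \slice^{\join} W$ (respectively $\lpathloop{x}$) is precisely an $n$-simplex of the unmarked thin (respectively fat) slice each of whose $n+1$ vertices --- viewed as an edge out of $x$ --- lies in $A$. Writing $\operatorname{Codisc}(S)$ for the codiscrete simplicial set on a set $S$, this yields the pullback descriptions
\[
x \slice^{\join} W \iso (x \slice \mc{C}) \pull_{\operatorname{Codisc}(E)} \operatorname{Codisc}(A), \qquad \lpathloop{x} \iso (\mc{C}^{\simp{1}} \pull_\mc{C} \{x\}) \pull_{\operatorname{Codisc}(E)} \operatorname{Codisc}(A),
\]
where the maps to $\operatorname{Codisc}(E)$ record the vertex tuples.

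Since the classical comparison is a bijection on $0$-simplices (both sides have vertex set $E$) and commutes with these vertex maps, the two pullback squares assemble into a single pullback
\[ \begin{tikzcd}
x \slice^{\join} W \ar[r] \ar[d] \ar[rd, phantom, "\lrcorner" very near start] & x \slice \mc{C} \ar[d, "\sim"] \\
\lpathloop{x} \ar[r, hook] & \mc{C}^{\simp{1}} \pull_\mc{C} \{x\}
\end{tikzcd} \]
whose left vertical is the comparison map of the proposition. By pullback-stability of trivial Joyal fibrations, this left vertical is also a trivial fibration, hence a categorical equivalence. The second statement follows by the dual argument applied to $\mc{C}^\op$.

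The main technical obstacle lies in the pullback identification, which requires carefully unwinding the pushouts defining $\lcone$ and $\ljcone$ alongside the standard characterization of a ``full simplicial subset spanned by a set of vertices'' as the pullback of a codiscrete inclusion; once these descriptions are in place, the rest is straightforward. A notable feature of this approach is that it avoids requiring $\lpathloop{x}$ or $x \slice^{\join} W$ to be quasicategories (which, at this point in the paper, would demand extra closure hypotheses on $W$), since trivial-fibration stability under pullback does not appeal to fibrancy.
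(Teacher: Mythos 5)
Your reduction of the comparison map to a pullback is correct as far as it goes: both $x \slice^{\join} W$ and $\lpathloop{x}$ are indeed the full simplicial subsets of the ordinary and ``fat'' slices spanned by the $W$-edges out of $x$, and the codiscrete-pullback description does exhibit the square you draw as a pullback. The gap is in the key input you feed into it: the canonical comparison $x \slice \mc{C} \to \mc{C}^{\simp{1}} \times_{\mc{C}} \{x\}$ is \emph{not} a trivial fibration in the Joyal model structure; Joyal's/Lurie's theorem (\cite[Prop.~4.2.1.5]{lurie:htt}) only asserts that it is a categorical equivalence. One can see that strict lifting already fails against $\bd\simp{1} \ito \simp{1}$: a $1$-simplex of the fat slice is a square $H \from \simp{1} \times \simp{1} \to \mc{C}$ with one side degenerate at $x$, and it lies in the image of the comparison only if the triangle of $H$ containing the collapsed edge is a degeneracy of the diagonal; in a quasicategory with non-unique fillers one can choose that triangle to be a non-degenerate filler, and then no strict lift exists even though both endpoints lift. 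Since categorical equivalences are not stable under pullback (the Joyal model structure is not right proper, and the bottom inclusion $\lpathloop{x} \ito \mc{C}^{\simp{1}} \times_{\mc{C}} \{x\}$ is not an isofibration because $W$ need not be closed under equivalence of edges), the pullback-stability step has nothing to apply to, and the argument collapses at exactly the point where the conclusion is drawn.

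The repair is what the paper does, and it uses your full-subcategory observation directly rather than through a pullback: the comparison of \cite[Prop.~4.2.1.5]{lurie:htt} is an equivalence of quasicategories that is a bijection on vertices, and your maps are its restrictions to the full subcategories spanned by the corresponding vertex sets (the $W$-edges out of $x$). Such a restriction is fully faithful (mapping spaces in a full subcategory agree with those in the ambient quasicategory) and surjective on objects on the nose, hence an equivalence. Note also that your stated advantage of avoiding fibrancy hypotheses is moot: the proposition assumes $\mc{C}$ is a quasicategory, and Lurie's comparison theorem requires this in any case.
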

\begin{proof}
    These maps are restrictions onto full subcategories of the canonical equivalences from the ordinary slice to the ``fat'' slice (\cite[Prop.~4.2.1.5]{lurie:htt}).
\end{proof}
For the remainder of this section, we work with the marked ``fat'' slice $\lpathloop{x}$ rather than the marked ordinary slice $x \slice^{\join} W$.

With this, we introduce the following auxiliary construction.
\begin{definition} \label{def:sset-of-fractions}
    Let $(X, W)$ be a marked simplicial set and $x, y \in X$ be 0-simplices.
    \begin{enumerate}
        \item The simplicial set $\LF{(X, W)}(x, y)$ of \emph{left fractions} from $x$ to $y$ is the pullback
        \[ \begin{tikzcd}
            \LF{(X, W)}(x, y) \ar[r] \ar[d] \ar[rd, phantom, "\lrcorner" very near start] & (x \slice X, W) \ar[d, "\Pi_x"] \\
            \lpathloop{y} \ar[r, "\Pi_y"] & (X, W)
        \end{tikzcd} \]
        in $\msSet$.
        \item The simplicial set $\LF{(X, W)}(x, y)$ of \emph{right fractions} from $x$ to $y$ is the pullback
        \[ \begin{tikzcd}
            \RF{(X, W)}(x, y) \ar[r] \ar[d] \ar[rd, phantom, "\lrcorner" very near start] & (X \slice y , W) \ar[d, "\Pi_y"] \\
            \rpathloop{x} \ar[r, "\Pi_x"] & (X, W)
        \end{tikzcd} \]
        in $\msSet$.
    \end{enumerate}
\end{definition}

Our interest in the simplicial set of left (and right) fractions is in modelling the mapping space of the localization.
\begin{theorem} \label{mapping-space-is-mEx-LF}
    Let $(X, W)$ be a marked simplicial set and $x, y$ be 0-simplices.
    There are isomorphisms
    \begin{align*}
        \map_L(\mEx(X, W), x, y) &\cong \mEx(\LF{(X, W)}(x, y)), \\ 
        \map_R(\mExop(X, W), x, y) &\cong \mExop(\RF{(X, W)}(x, y))
    \end{align*} 
    natural in $(X, W)$ and $x, y$.
\end{theorem}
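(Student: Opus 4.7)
The plan is to identify both sides of the desired isomorphism with sets of marked maps out of $\mSd \simp{m+1}$ satisfying explicit boundary conditions, and then to match these via a pushout decomposition of $\mSd \simp{m+1}$. I focus on the $\mEx$/$\map_L$ statement; the $\mExop$/$\map_R$ statement will follow by applying the result to $(X^\op, W)$, using the natural isomorphism $\mExop(X,W) \iso \mEx(X^\op,W)^\op$ stated earlier together with the standard identification of right mapping spaces as left mapping spaces in the opposite.

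First I unpack both sides. By definition of the left mapping space, using $\simp{0} \join \simp{m} = \simp{m+1}$ and the adjunction $\mSd \dashv \mEx$, an $m$-simplex of $\map_L(\mEx(X,W), x, y)$ is precisely a marked map $\mSd \simp{m+1} \to (X,W)$ sending the vertex $\{0\}$ to $x$ and the sub-object $\mSd \simp{\{1, \ldots, m+1\}}$ constantly to $y$. On the other side, since $\mEx$ is a right adjoint it preserves the pullback defining $\LF{(X,W)}(x,y)$, so an $m$-simplex of $\mEx(\LF{(X,W)}(x,y))$ is a compatible pair of marked maps $\alpha \from \mSd \simp{m} \to (x \slice X, W)$ and $\beta \from \mSd \simp{m} \to \lpathloop{y}$. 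By the slice adjunction together with a natural isomorphism of marked simplicial sets $\simp{0} \join \mSd \simp{m} \iso \LI{m+1}{0}$ (identifying the join cone vertex with $\{0\}$; the cone edges are unmarked, matching the rule $\max A_0 = \max A_1$ that defines the marking on $\LI{m+1}{0}$), $\alpha$ corresponds to a marked map $\LI{m+1}{0} \to (X,W)$ sending $\{0\}$ to $x$. By the $\lcone \dashv \lpathloop{-}$ adjunction, $\beta$ corresponds to a marked map $\mSd \simp{m} \times \maxm{(\simp{1})} \to (X,W)$ constant at $y$ on $\mSd \simp{m} \times \{0\}$. The pullback compatibility then forces these two maps to agree on $\mSd \simp{m}$, embedded on one side as $\LI{m+1}{0} \setminus \{\{0\}\}$ and on the other as $\mSd \simp{m} \times \{1\}$.

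The technical heart is then the claim that the square
\[ \begin{tikzcd}
    \mSd \simp{m} \ar[r, "i_1", hook] \ar[d, hook] & \mSd \simp{m} \times \maxm{(\simp{1})} \ar[d, "h"] \\
    \LI{m+1}{0} \ar[r, hook] & \mSd \simp{m+1}
\end{tikzcd} \]
is a pushout in $\msSet$, where the left vertical is $A \mapsto (A+1) \cup \{0\}$ and $h$ is defined on vertices by $h(A, 0) := A+1$ and $h(A, 1) := (A+1) \cup \{0\}$. Under this pushout, the sub-object $\mSd \simp{\{1, \ldots, m+1\}} \subseteq \mSd \simp{m+1}$ agrees with $\mSd \simp{m} \times \{0\} \subseteq \mSd \simp{m} \times \maxm{(\simp{1})}$ via $A \mapsto (A, 0) \mapsto A+1$. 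Given this lemma, the pushout property directly converts a marked map out of $\mSd \simp{m+1}$ with the boundary data from the LHS into exactly a compatible pair as on the RHS, and naturality in $(X,W)$ and $x,y$ is manifest from the construction.

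The main obstacle is the pushout lemma. Its proof reduces to a combinatorial analysis of non-degenerate chains $A_0 \subsetneq \cdots \subsetneq A_k$ in $\mathcal{P}_{\neq \emptyset}[m+1]$: since the condition $0 \in A_i$ is upward-closed in the chain, each chain splits uniquely at a threshold into a non-$0$-containing prefix and a $0$-containing suffix, and so lies in $\LI{m+1}{0}$ (if the prefix is empty), in the image of $h$ (iff no $A_i$ equals $\{0\}$), or both (the overlap being identified with $\mSd \simp{m}$). The markings are governed by the single rule $\max A_0 = \max A_1$ on both $\mSd \simp{m+1}$ and $\LI{m+1}{0}$, and a direct case check shows that $h$ and the vertical inclusion both preserve and reflect markings, so the pushout description holds in $\msSet$ and not merely in $\sSet$.
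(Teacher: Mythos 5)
Your proposal is correct and follows essentially the same route as the paper: your key pushout square is, up to the marked isomorphism $\simp{0} \join \mSd \simp{m} \cong \LI{m+1}{0}$, precisely \cref{sd-simp-po}, and your unpacking of the two sides via the slice and cone adjunctions matches the paper's use of the cosimplicial object $\simp{m+1}/\face{}{0}$. The only difference is presentational: the paper packages the basepoint conditions through a pushout in the arrow category (\cref{rel-sd-simp-po}) and then applies $\mEx$ to the defining pullback, whereas you match $m$-simplices and their boundary conditions directly, which works equally well.
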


The proof of \cref{mapping-space-is-mEx-LF} will require some additional definitions and lemmas.
We present these only for $\mSd$ and $\mEx$, omitting their dual statements, as their primary purpose is in proving \cref{mapping-space-is-mEx-LF}.

Observe that the left mapping space of a quasicategory arises as pre-composition by a cosimplicial object.
For $n \geq 0$, let $\simp{n+1} / \face{}{0}$ denote the simplicial set obtained by the pushout
\[ \begin{tikzcd}
    \simp{n} \ar[d, "\face{}{0}"'] \ar[r] \ar[rd, phantom, "\ulcorner" very near end] & \simp{0} \ar[d, "{[\face{}{0}]}"] \\
    \simp{n+1} \ar[r] & \simp{n+1} / \face{}{0}
\end{tikzcd} \]
Let $q$ denote the quotient map $\simp{n+1} \to \simp{n+1} / \face{}{0}$.
We regard $\simp{n+1}/\face{}{0}$ as bi-pointed at $0$ and $[\face{}{0}]$.
The left mapping space $\map_L \from \sSet_{\ast \ast} \to \sSet$ is exactly the induced functor given by pre-composition, i.e.
\[ \map_L (X, x, y)_n := \sSet_{\ast \ast}((\simp{n+1} / \face{}{0}, 0, [\face{}{0}]), (X, x, y)). \]


Define a map $f \from \simp{0} \join \mSd \simp{n} \to \mSd \simp{n+1}$ by
\[ f(A) := \begin{cases}
    \{ 0 \} & A = \bot \\
    \face{}{0}(A) \cup \{ 0 \} & \text{otherwise.}
\end{cases} \]
This formula defines a map on the underlying simplicial sets as the domain and codomain are nerves of posets.
One verifies this map sends marked 1-simplices to marked 1-simplices.
We also define a map $g \from \mSd \simp{n} \times \maxm{(\simp{1})} \to \mSd \simp{n+1}$ by
\[ g(A, t) := \begin{cases}
    \face{}{0}(A) & \text{if $t = 0$} \\
    \face{}{0}(A) \cup \{ 0 \} & \text{if $t = 1$}.
\end{cases} \]
\begin{lemma} \label{sd-simp-po}
    The square
    \[ \begin{tikzcd}
        \mSd \simp{n} \ar[r, "\id \times \{ 1 \}"] \ar[d, "i_1"'] & \mSd \simp{n} \times \maxm{(\simp{1})} \ar[d, "g"] \\
        \simp{0} \join \mSd \simp{n} \ar[r, "f"] & \mSd \simp{n+1}
    \end{tikzcd} \]
    is a pushout.
\end{lemma}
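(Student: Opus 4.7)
The plan is to reduce the verification to a computation at the level of posets. Every simplicial set in the square has underlying simplicial set that is the nerve of a poset, and the forgetful functor $U \from \msSet \to \sSet$ preserves colimits (it has both adjoints), so it suffices to show (a) the square of underlying posets is a pushout in $\sSet$, and (b) the marking on $\mSd \simp{n+1}$ agrees with the marking on the pushout, i.e.~with the image markings from $\simp{0} \join \mSd \simp{n}$ and $\mSd \simp{n} \times \maxm{(\simp{1})}$.

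For (a), I would compute the pushout $P$ of posets directly. Its elements are a cone point $\bot$ (from $\simp{0} \join \mSd \simp{n}$), one copy of each $A \in \mSd \simp{n}$ (along the glued boundary, identifying $i_1(A)$ with $(A,1)$), and one copy of each $(A,0)$ (from $\mSd \simp{n} \times \simp{1}$). The induced map $P \to \mSd \simp{n+1}$ sends $\bot \mapsto \{0\}$, $A \mapsto \face{}{0}(A) \cup \{0\}$, and $(A,0) \mapsto \face{}{0}(A)$. The key observation is the trichotomy: every non-empty $B \subseteq [n+1]$ is either $\{0\}$, or $\face{}{0}(A) \cup \{0\}$ for a unique non-empty $A \subseteq [n]$ (when $0 \in B$ and $|B| \geq 2$), or $\face{}{0}(A)$ for a unique non-empty $A \subseteq [n]$ (when $0 \notin B$). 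This gives a set-level inverse. Both orderings are then checked by cases; the only subtle relation is that $\bot \not\leq (A,0)$ in $P$, which matches $\{0\} \not\subseteq \face{}{0}(A)$ since $\face{}{0}(A)$ omits $0$.

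For (b), recall that a $1$-simplex $B_0 \subseteq B_1$ of $\mSd \simp{n+1}$ is marked iff $\max B_0 = \max B_1$, and that the marking on $\mSd \simp{n} \times \maxm{(\simp{1})}$ is $(A_0, t_0) \leq (A_1, t_1)$ marked iff $\max A_0 = \max A_1$. A direct calculation shows that both $f$ and $g$ preserve the max equality when pulling a marked $1$-simplex through $\face{}{0}$ and (possibly) adjoining $\{0\}$, so every image $1$-simplex is marked. Conversely, given $B_0 \subseteq B_1$ in $\mSd \simp{n+1}$ with $\max B_0 = \max B_1$, I would split into three cases by whether $0$ lies in $B_0, B_1$ (using $\subseteq$ to rule out the fourth): if $0 \in B_0$ use $f$ on $B_i \setminus \{0\}$ shifted down; if $0 \notin B_1$ use $g$ with $t_0 = t_1 = 0$; if $0 \notin B_0$ but $0 \in B_1$ use $g$ with $t_0 = 0, t_1 = 1$. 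In each case the preimage $1$-simplex is marked in its component. The main obstacle is pure bookkeeping around these three cases and verifying the trichotomy-based inverse is order-preserving; no mathematical subtlety arises.
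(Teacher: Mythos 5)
There is a genuine gap: your argument establishes that the pushout computed in \emph{posets} is order-isomorphic to $\mathcal{P}_{\neq\varnothing}[n+1]$, but the lemma asserts a pushout of (marked) \emph{simplicial sets}, and the nerve functor does not preserve pushouts. (Standard example: $\simp{1}\cup_{\simp{0}}\simp{1}$ in $\sSet$ is the spine of $\simp{2}$, whereas the nerve of the corresponding poset pushout is $\simp{2}$ itself.) Your vertex-level trichotomy, together with checking that the comparison map is an order-isomorphism, only controls the $0$-simplices and the order relations; it does not by itself rule out the possibility that $\mSd\simp{n+1}$ contains nondegenerate chains mixing the two pieces, i.e.\ simplices lying in neither the image of $f$ nor the image of $g$, in which case the map out of the simplicial pushout would fail to be surjective even though it is bijective on vertices and edges.

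What is needed, and what the paper's proof supplies, is a simplex-level argument: since $f$ and $g$ are injective, it suffices to show that every nondegenerate chain $A_0\subsetneq\dots\subsetneq A_m$ of $\mSd\simp{n+1}$ lies entirely in the image of $f$ or entirely in the image of $g$, and that the intersection of the two images is exactly the image of the common composite $\mSd\simp{n}\to\mSd\simp{n+1}$. The paper does this by cases on $A_0$: if $A_0=\{0\}$ the chain lies in the image of $f$ only; if $0\notin A_0$ it lies in the image of $g$ only; and the chains with $\{0\}\subsetneq A_0$ are precisely the common ones. Your approach can be repaired in the same spirit, e.g.\ by noting that both legs of the span are full, upward-closed poset inclusions, so that every chain in the pushout poset is contained in one of the two pieces and the nerve does commute with this particular pushout --- but some such chain-level verification is indispensable and is currently absent. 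Your part (b) on markings is routine and fine once (a) is genuinely established.
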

\begin{proof}
    The square commutes by definition.
    Given a non-degenerate simplex $(A_0 \subsetneq \dots \subsetneq A_n)$ of $\mSd \simp{n+1}$, we show this simplex is in the image of $f$ or $g$ by case analysis on $A_0$.
    Moreover, we show the intersection of the images is the image of the composite map in the square.
    This suffices as each map in the square is injective.

    If $A_0 = \{ 0 \}$ then this simplex is in the image of $f$ as
    \[ (A_0 \subsetneq A_1 \subsetneq \dots \subsetneq A_n) = f(\bot \subsetneq \degen{}{0}(A_1 - \{ 0 \}) \subsetneq \dots \subsetneq \degen{}{0}(A_n - \{ 0 \})). \]
    Note this simplex is not contained in the image of $g$ as $\{ 0 \}$ is not contained in the image of $g$.

    If $A_0$ does not contain 0 then let $i = 0, \dots, n$ be maximal such that $0 \not\in A_i$.
    Applying $g$ to the $n$-simplex 
    \[ (\degen{}{0}(A_0), 0) \subsetneq \dots \subsetneq (\degen{}{0}(A_i), 0) \subsetneq (\degen{}{0}(A_{i+1}), 1) \subsetneq \dots \subsetneq (\degen{}{0}(A_n), 1)  \]
    gives exactly $(A_0 \subsetneq \dots \subsetneq A_n)$.
    Note this simplex is not contained in the image of $f$ as the 0-simplices in the image of $f$ are subsets that contain 0.

    The simplices that remain in our case analysis are exactly those in the set of non-degenerate simplices defined by
    \[ \{ (A_0 \subsetneq \dots \subsetneq A_n) \mid \{ 0 \} \subsetneq A_0 \}. \]
    Note the previous argumens show that the intersection of the images of $f$ and $g$ must be contained in this set.
    However, this set is contained in the image of the composite map since, for an $n$-simplex $(A_0 \subsetneq \dots \subsetneq A_n)$ in this set, we have equalities
    \begin{align*} 
        (f \circ i_1)(\degen{}{0}(A_0 - \{ 0 \}) \subsetneq \dots \subsetneq \degen{}{0}(A_n - \{ 0 \})) &= \face{}{0} \degen{}{0}(A_0 - \{ 0 \}) \cup \{ 0 \} \subsetneq \dots \subsetneq  \face{}{0}\degen{}{0}(A_n - \{ 0 \}) \cup \{ 0 \} ) \\
        &= (A_0 \subsetneq \dots \subsetneq A_n), 
    \end{align*}
    and
    \begin{align*}
        &(g \circ (\id \times \{ 1 \}))(\degen{}{0}(A_0 - \{ 0 \}) \subsetneq \dots \subsetneq \degen{}{0}(A_n - \{ 0 \})) \\
        &\quad = \face{}{0} \degen{}{0}(A_0 - \{ 0 \}) \cup \{ 0 \} \subsetneq \dots \subsetneq  \face{}{0}\degen{}{0}(A_n - \{ 0 \}) \cup \{ 0 \} ) \\
        &\quad = (A_0 \subsetneq \dots \subsetneq A_n). 
    \end{align*}
    This gives both that the intersection of the images is the image of the composite and that every non-degenerate $n$-simplex of $\mSd \simp{n+1}$ is contained in either the image of $f$ or the image of $g$.
\end{proof}

The square
\[ \begin{tikzcd}
    \mSd \simp{n} \ar[rr] \ar[d, "\id \times \{ 0 \}"'] & {} & \simp{0} \ar[d, "{[\face{}{0}]}"] \\
    \mSd \simp{n} \times \maxm{(\simp{1})} \ar[r, "g"] & \mSd \simp{n+1} \ar[r, "q"] & \mSd (\simp{n+1} / \face{}{0})
\end{tikzcd} \]
induces a map $[g] \from \lcone(\mSd \simp{n}) \to \mSd (\simp{n+1} / \face{}{0})$ by universal property of the pushout.
This map gives a commutative square
\[ \begin{tikzcd}
    \left( \varnothing \to \mSd \simp{n} \right) \ar[r, "i_1"] \ar[d] & \left( \{ [i_0] \} \to \lcone ( \mSd \simp{n}) \right) \ar[d, "{[g]}"] \\
    \left( \{ \bot \} \to \simp{0} \join \mSd \simp{n} \right) \ar[r, "{qf}"] & \left( \{ 0, [\face{}{0}] \} \to \mSd (\simp{n+1} / \face{}{0}) \right)
\end{tikzcd} \]
in the arrow category $\fcat{[1]}{\msSet}$.
One verifies these maps are natural in $n$, so that for a marked simplicial set $(X, W)$ and $x, y \in X$, this induces a commutative square
\[ \begin{tikzcd}
    \map_L(\mEx(X, W), x, y) \ar[r, "(qf)^*"] \ar[d, "{[g]^*}"'] & \mEx(x \slice (X, W)) \ar[d, "\mEx \Pi_x"] \\
    \mEx(\lpathloop{y}) \ar[r, "\mEx \Pi_y"] & \mEx(X, W)
\end{tikzcd} \]
by pre-composition, where one verifies that $\mEx \Pi_x$ is identical to the pre-composition map with $(\varnothing \ito \mSd \simp{n}) \ito (\{ \bot \} \to \simp{0} \join \mSd \simp{n})$ and $\mEx \Pi_y$ is identical to the pre-composition map with $(\varnothing \ito \mSd \simp{n}) \ito (\{ [i_0] \} \to \lcone (\mSd \simp{n}) )$.
\begin{lemma} \label{rel-sd-simp-po}
    \begin{enumerate}
        \item The square
        \[ \begin{tikzcd}
            \left( \varnothing \to \mSd \simp{n} \right) \ar[r, "i_1"] \ar[d] & \left( \{ [i_0] \} \to \lcone (\mSd \simp{n}) \right) \ar[d, "{[g]}"] \\
            \left( \{ \bot \} \to \simp{0} \join \mSd \simp{n} \right) \ar[r, "{qf}"] & \left( \{ 0, [\face{}{0}] \} \to \mSd (\simp{n+1} / \face{}{0}) \right)
        \end{tikzcd} \]
        is a pushout in the arrow category $\fcat{[1]}{\msSet}$.
        \item For a marked simplicial set $(X, W)$ and $x, y \in X$, the square
        \[ \begin{tikzcd}
            \map_L(\mEx(X, W), x, y) \ar[r, "(qf)^*"] \ar[d, "{[g]^*}"'] & \mEx(x \slice X, W) \ar[d, "\mEx \Pi_x"] \\
            \mEx(\lpathloop{y}) \ar[r, "\mEx \Pi_y"] & \mEx(X, W)
        \end{tikzcd} \]
        is a pullback in $\sSet$.
    \end{enumerate}
\end{lemma}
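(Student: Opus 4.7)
The plan is to deduce both parts from an explicit pushout presentation of $\mSd(\simp{n+1}/\face{}{0})$, obtained by combining \cref{sd-simp-po} with the fact that $\mSd$ preserves colimits.

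\textbf{For (1).} A pushout in $\fcat{[1]}{\msSet}$ is computed pointwise, so it suffices to verify that the ``row of domains'' and the ``row of codomains'' of the displayed square are each pushouts in $\msSet$. The domain row is straightforward: $\{\bot\} \sqcup_{\varnothing} \{[i_0]\}$ is a two-element set, and $qf$ together with $[g]$ exhibit a bijection onto $\{0, [\face{}{0}]\}$ via $\bot \mapsto 0$ and $[i_0] \mapsto [\face{}{0}]$. For the codomain row, I would start from the defining pushout $\simp{n+1}/\face{}{0} = \simp{n+1} \cup_{\simp{n}} \simp{0}$, apply $\mSd$ (which preserves colimits as a left adjoint), and then use \cref{sd-simp-po} to decompose $\mSd\simp{n+1}$ as $(\simp{0}\join\mSd\simp{n}) \cup_{\mSd\simp{n}} (\mSd\simp{n} \times \maxm{(\simp{1})})$. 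The collapse of $\mSd\simp{n} \times \{0\}$ forced by quotienting $\face{}{0}$ then identifies the second summand with $\lcone(\mSd\simp{n})$, yielding the desired pushout presentation.

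\textbf{For (2).} I would apply the $\mSd \dashv \mEx$ adjunction together with (1). An $n$-simplex of $\map_L(\mEx(X, W), x, y)$ is a bi-pointed simplicial map $(\simp{n+1}/\face{}{0}, 0, [\face{}{0}]) \to (\mEx(X, W), x, y)$, which transposes to a marked map $\mSd(\simp{n+1}/\face{}{0}) \to (X, W)$ sending the 0-simplex $\{0\}$ to $x$ and the collapsed 0-simplex $[\face{}{0}]$ to $y$. By the pushout presentation from (1), such a map is equivalent to a compatible pair: a map $\simp{0} \join \mSd\simp{n} \to (X, W)$ sending $\bot \mapsto x$, together with a map $\lcone(\mSd\simp{n}) \to (X, W)$ sending $[i_0] \mapsto y$, whose restrictions to $\mSd\simp{n}$ agree. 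The first datum is, by the description of $\mEx \Pi_x$ given before the lemma, precisely an $n$-simplex of $\mEx(x \slice X, W)$; the second, via the $\lcone \dashv \lpathloop$ adjunction, is an $n$-simplex of $\mEx(\lpathloop{y})$; and the agreement condition translates to equality of their images in $\mEx(X, W)$ under $\mEx \Pi_x$ and $\mEx \Pi_y$. This identification is natural in $n$ and exhibits $\map_L(\mEx(X, W), x, y)$ as the claimed pullback.

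\textbf{The main obstacle} is careful bookkeeping with markings. One must verify that the markings induced on $\simp{0} \join \mSd\simp{n}$ and $\lcone(\mSd\simp{n})$ from the ambient $\mSd(\simp{n+1}/\face{}{0})$ agree with those used in the pushout of (1), and that the $\mSd \dashv \mEx$ and $\lcone \dashv \lpathloop$ adjunctions really identify bi-pointed marked maps with $n$-simplices of the appropriate slice-type objects. Once this translation of structures is established, part (2) is a formal consequence of part (1) and the universal property of the pushout.
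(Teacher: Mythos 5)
Your proposal is correct and takes essentially the same route as the paper: part (1) rests on the same three ingredients (the pushout of \cref{sd-simp-po}, the fact that $\mSd$ preserves the defining pushout of $\simp{n+1}/\face{}{0}$, and the defining pushout of $\lcone$), assembled by the same pasting/substitution of pushouts, and part (2) is deduced by mapping the pushout of (1) into $(\{x,y\} \to (X,W))$ and transposing along the $\mSd \dashv \mEx$, join--slice, and $\lcone \dashv \lpathloop{(\uvar)}$ adjunctions. The only difference is organizational: you fix the basepoint conditions and chase elements directly, whereas the paper encodes the bi-pointing via a $3\times 3$ diagram of hom-sets and an interchange of limits, which amounts to the same verification.
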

\begin{proof}
    For (1), it is clear that the domains form a pushout square.
    For the codomains, we consider the diagram
    \[ \begin{tikzcd}
        {} & \mSd \simp{n} \ar[r] \ar[d, "i_0"'] \ar[rd, phantom, "\ulcorner" very near end] & \simp{0} \ar[d, "{[i_0]}"] \\
        \mSd \simp{n} \ar[r, "i_1"] \ar[d, "i_1"'] \ar[rd, phantom, "\ulcorner" very near end] & \mSd \simp{n} \times \maxm{(\simp{1})} \ar[r] \ar[d, "g"'] & \lcone (\mSd \simp{n}) \ar[d, "{[g]}"] \\
        \simp{0} \join \mSd \simp{n} \ar[r, "f"] & \mSd \simp{n+1} \ar[r] & \mSd(\simp{n+1} / \face{}{0} )
    \end{tikzcd} \]
    The bottom left square is a pushout by \cref{sd-simp-po}.
    The top right square is a pushout by definition of $\lcone \from \sSet \to (\msSet)_{\ast}$.
    The right composite square is a pushout as $\mSd$ preserves colimits, hence the bottom right square is a pushout.
    This gives that the bottom composite square is a pushout.

    For statement (2), fix $n \geq 0$ and consider the product of co-spans
    \[ \begin{tikzcd}
        1 \ar[r, "\bot \mapsto x"] \ar[d, "\id"'] 
            & \msSet( \{ \bot \} , \{ x, y \} ) 
                \ar[d, "{(\varnothing \to \{ \bot \})^*}"] 
            & \fcat{[1]}{\msSet}\left( (\{ \bot \} \to \simp{0} \join \mSd \simp{n}), (\{ x, y \} \to (X, W)) \right) 
                \ar[l] \ar[d, "i_1^*"] \\
        1 \ar[r, "!"]
            & \msSet( \varnothing, \{ x, y \} ) & \fcat{[1]}{\msSet}\left( (\varnothing \to \mSd \simp{n}), (\{ x, y \} \to (X, W)) \right)
                \ar[l] \\
        1 \ar[r, "{[i_0] \mapsto y}"] \ar[u, "\id"]
            & \msSet( \{ [i_0] \}, \{ x, y \} ) 
                \ar[u, "{(\varnothing \to \{ [i_0] \})^*}"']
            & \fcat{[1]}{\msSet}((\{ [i_0] \} \to \lcone (\mSd \simp{n}) ), (\{ x, y \} \to (X, W)))
                \ar[l] \ar[u, "i_1^*"']
    \end{tikzcd} \]
    where the right horizontal arrows are given by applying the domain projection $\fcat{[1]}{\msSet} \to \msSet$.
    The pullback of each horizontal co-span gives the co-span
    \[ \begin{tikzcd}
        {} & \mEx(x \slice X, W)_n \ar[d, "(\mEx \Pi_x)_n"] \\
        \mEx(\lpathloop[(X, W)]{y})_n \ar[r, "(\mEx \Pi_y)_n"] & \mEx(X, W)_n
    \end{tikzcd} \]
    As limits commute with limits, this pullback is computed by the pullback of the pullbacks of the vertical co-spans.
    The co-span
    \[ \begin{tikzcd}
        {} & \fcat{[1]}{\msSet}((\{ 0, [\face{}{0}] \} \to \mSd(\simp{n+1} / \face{}{0}) ), (\{ x, y \} \to (X, W))) \ar[d] \\
        1 \ar[r, "\bot \mapsto x", "{[i_0] \mapsto y}"'] & \msSet(\{ \bot, [i_0] \} \to \{ x, y \})
    \end{tikzcd} \]
    is the pullback of the vertical co-spans as the contravariant Yoneda embedding takes colimits to limits.
    In particular, our computation of the top right corner follows from statement (1).
    This pullback is exactly the $n$-simplices of the left mapping space of $\mEx(X, W)$ from $x$ to $y$.
\end{proof}

With this, we may prove \cref{mapping-space-is-mEx-LF}, giving our first description of the mapping spaces in the localization.
\begin{proof}[Proof of \cref{mapping-space-is-mEx-LF}]
    The first isomorphism follows from \cref{pb-pathloop-is-qcat} and item (2) of \cref{rel-sd-simp-po}, as $\mEx \from \msSet \to \sSet$ preserves pullbacks.
    The second is dual.
\end{proof}

For our second description of the mapping spaces in the localization, which will mirror the classical description of mapping spaces (cf.~\cref{classical-fractions-colims}), we make use of the following colimit computation.
\begin{theorem} \label{pb-colim-for-mapping-space}
    Let $\mc{C}$ be a quasicategory with objects $x, y$. 
    Then,
    \begin{enumerate}
        \item The Kan fibrant replacement of the (underlying) simplicial set of left fractions $\LF{(\mc{C}, W)}(x, y)$ from $x$ to $y$ models the $\infty$-colimit of the functor
        \[ U(\lpathloop{y}) \xrightarrow{U \Pi} \mc{C} \xrightarrow{\mc{C}(x, -)} \Sp. \]
        That is, there is an equivalence of Kan complexes 
        \[ \colim \left( \mc{C}(x, -) \circ U \Pi \circ i \right) \simeq U (\LF{(\mc{C}, W)}(x, y))[\LF{(\mc{C}, W)}(x, y)_1^{-1}]. \]
        \item The Kan fibrant replacement of the (underlying) simplicial set of right fractions $\RF{(\mc{C}, W)}(x, y)$ from $x$ to $y$ models the $\infty$-colimit of the functor
        \[ U(\rpathloop{x})^\op \xrightarrow{(U \Pi)^\op} \mc{C}^\op \xrightarrow{\mc{C}(-, y)} \Sp. \]
        That is, there is an equivalence of Kan complexes 
        \[ \colim \left( \mc{C}(-, y) \circ (U \Pi)^\op \circ i \right) \simeq U (\RF{(\mc{C}, W)}(x, y))[\RF{(\mc{C}, W)}(x, y)_1^{-1}]. \]
    \end{enumerate}
\end{theorem}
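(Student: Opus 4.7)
The plan is to recognize the underlying simplicial map
\[ U(\LF{(\mc{C}, W)}(x, y)) \to U(\lpathloop{y}) \]
as a left fibration classifying the functor $\mc{C}(x,-) \circ U\Pi$, and then invoke the standard result that the $\infty$-colimit of a space-valued functor is computed by the $\infty$-groupoid completion of the total space of the corresponding left fibration. I would prove (1); statement (2) is formally dual.

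First, since $\mc{C}$ is a quasicategory, the underlying slice projection $U\Pi_x \from U(x \slice \mc{C}, W) \to \mc{C}$ is a left fibration with fiber at $y' \in \mc{C}_0$ equivalent to $\map_L(\mc{C}, x, y')$ (cf.\ \cite[Prop.~4.2.1.6]{lurie:htt}). The forgetful functor $U \from \msSet \to \sSet$ preserves pullbacks, so applying $U$ to the pullback square of \cref{def:sset-of-fractions} and using stability of left fibrations under pullback yields a left fibration $U(\LF{(\mc{C}, W)}(x, y)) \to U(\lpathloop{y})$ whose fiber over an object $(y \xrightarrow{\sim} y') \in U(\lpathloop{y})$ is the left mapping space $\map_L(\mc{C}, x, y')$.

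Next, covariant straightening (\cite[Thm.~2.2.1.2]{lurie:htt}) applied to this left fibration produces a functor $U(\lpathloop{y}) \to \Sp$ whose value at $(y \xrightarrow{\sim} y')$ is $\map_L(\mc{C}, x, y') \simeq \mc{C}(x, y')$, with functoriality in $y'$ induced by post-composition in $\mc{C}$; this is precisely $\mc{C}(x,-) \circ U\Pi$. One concludes by invoking \cite[Cor.~3.3.4.6]{lurie:htt}, which identifies the colimit of a space-valued functor with the $\infty$-groupoid completion of the total space of the associated left fibration. Under the finally $\mathcal{U}$-small hypothesis, restricting along the final inclusion $i$ keeps this colimit in $\mathcal{U}$-small spaces, yielding
\[ \colim\bigl( \mc{C}(x, -) \circ U\Pi \circ i \bigr) \simeq U(\LF{(\mc{C}, W)}(x, y))[\LF{(\mc{C}, W)}(x, y)_1^{-1}], \]
with the right-hand side being precisely the localization of the underlying simplicial set at all 1-simplices, i.e.~its Kan fibrant replacement.

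The main obstacle I expect is the explicit identification of the straightened functor. A priori, unstraightening produces only an abstract functor with correct fiberwise behavior, and pinning down the functoriality as post-composition requires either tracking the equivalence between the fat slice and the Joyal slice (which presents post-composition as the natural action of $\mc{C}$) or reconstructing the classified functor via the Yoneda-style coend formulas for the straightening of a left fibration. A careful check is also needed to see that the presentation of $\lpathloop{y}$ via marked cones $\lcone \simp{n}$ is compatible, under $U$, with the standard fat-slice construction; this is where \cref{lpathloop-weq-slice-equiv} becomes useful, as it identifies the marked-cone slice with the ordinary (join-based) marked slice up to categorical equivalence.
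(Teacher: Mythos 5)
Your skeleton is the same as the paper's: realize $U(\LF{(\mc{C},W)}(x,y)) \to U(\lpathloop{y})$ as the pullback of the slice left fibration $x \slice \mc{C} \to \mc{C}$ along $U\Pi$, conclude that it is the left fibration classified by $\mc{C}(x,-)\circ U\Pi$, and finish with \cite[Cor.~3.3.4.6]{lurie:htt}. The step you flag as the main obstacle is indeed the only real content, but the remedies you sketch are not quite the right tool: knowing that the fibers are $\map_L(\mc{C},x,y')$ does not determine the classified functor, and neither a fat-slice/Joyal-slice comparison nor a coend formula is how this is closed. What is needed is base change for classification --- if $F$ classifies a left fibration $p$, then $F\circ g$ classifies the pullback of $p$ along $g$ --- and the paper obtains exactly this by pasting homotopy pullback squares over $\Sp$ via \cite[Cor.~5.3.21]{cisinski:higher-categories}: the square exhibiting $x\slice\mc{C}$ over $\mc{C}$ mapping to the universal left fibration $\ULF \to \Sp$ is a homotopy pullback because $\mc{C}(x,-)$ classifies the slice fibration, the square over $\Pi$ is a homotopy pullback because it is a strict pullback along a left fibration, and hence the composite square is a homotopy pullback, which says precisely that $\mc{C}(x,-)\circ\Pi$ classifies $\LF{(\mc{C},W)}(x,y)\to\lpathloop{y}$. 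With that citation (or any form of the naturality of straightening under pullback) inserted, your argument goes through. Also note that the compatibility worry at the end is moot: the indexing object in the statement is $U(\lpathloop{y})$ itself, and \cref{def:sset-of-fractions} defines $\LF{(\mc{C},W)}(x,y)$ as a pullback over the fat marked slice directly, so \cref{lpathloop-weq-slice-equiv} plays no role in this proof.
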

Before proceeding to the proof of \cref{pb-colim-for-mapping-space}, we make the following remark to clarify any potential size issues.
\begin{remark}
    For the colimit formulas given in \cref{pb-colim-for-mapping-space}, we caution that the indexing category may fail to be $\mathcal{U}$-small, even if $\mc{C}$ is locally $\mathcal{U}$-small.
    One way of guaranteeing that the colimit is $\mathcal{U}$-small is to assume that $y \slice W$ admits a final functor from a $\mathcal{U}$-small $\infty$-category (and analogously, that $W \slice x$ admits an initial functor from a $\mathcal{U}$-small $\infty$-category).

    Going forward, whenever we speak of a marked category $(\mc{C}, W)$, we will always assume the existence of a large enough universe $\mathcal{U}$ so that colimits indexed by $y \slice W$ or $W \slice x$ exist in the $\infty$-category of $\mathcal{U}$-small $\infty$-groupoids.
    For this reason, we will supress any smallness conditions from our statements.
\end{remark}
\begin{proof}[Proof of \cref{pb-colim-for-mapping-space}]
    We prove (1), as (2) is formally dual.

    By \cite[Cor.~3.3.4.6]{lurie:htt}, the colimit of any functor $F \from \mc{C} \to \Sp$ is computed by the Kan fibrant replacement of the domain of the left fibration classified by $F$.
    Thus, it suffices to show the functor $\mc{C}(x, -) \circ \Pi$ classifies the left fibration $\LF{(\mc{C}, W)}(x, y) \to \lpathloop{y}$.
    
    Applying \cite[Cor.~5.3.21]{cisinski:higher-categories}, the right square in
    \[ \begin{tikzcd}
        \LF{(\mc{C}, W)}(x, y) \ar[r] \ar[d, two heads] \ar[rd, phantom, "\lrcorner" very near start] & x \slice \mc{C} \ar[d, two heads] \ar[r] & \ULF \ar[d, two heads] \\
        \lpathloop{y} \ar[r, "\Pi"] & \mc{C} \ar[r, "{\mc{C}(x, -)}"] & \Sp
    \end{tikzcd} \]
    is a homotopy pullback (in the Joyal model structure) as $\mc{C}(x, -)$ classifies the left fibration $x \slice \mc{C} \to \mc{C}$.
    The left square is a homotopy pullback as a pullback along a left fibration.
    Thus, the composite square is a homotopy pullback by \cite[Cor.~5.3.21]{cisinski:higher-categories}, which suffices.
\end{proof}
\Cref{pb-colim-for-mapping-space} gives a description of mapping spaces in the localization as a colimit of mapping spaces in $\mc{C}$ indexed by the marked slice category.
\begin{corollary} \label{mapping-space-is-colim}
    Let $x, y$ be objects in a marked quasicategory $(\mc{C}, W)$.
    \begin{enumerate}
        \item If $(\mc{C}, W)$ satisfies CLF then there is an equivalence
        \[ \map_L(\mEx(\mc{C}, W), x, y) \simeq \colim(\mc{C}(x, \uvar) \circ U\Pi ) \]
        which is natural in $x, y$ and $(\mc{C}, W)$.
        \item If $(\mc{C}, W)$ satisfies CRF then there is an equivalence
        \[ \map_R(\mExop(\mc{C}, W), x, y) \simeq \colim(\mc{C}(\uvar, y) \circ U\Pi^\op ) \]
        which is natural in $x, y$ and $(\mc{C}, W)$.
    \end{enumerate} 
\end{corollary}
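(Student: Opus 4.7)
The plan is to chain together the two preceding theorems, \cref{mapping-space-is-mEx-LF} and \cref{pb-colim-for-mapping-space}, using \cref{mEx-computes-localization} as the bridge. I will prove (1) in detail; statement (2) is formally dual.

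First I would apply \cref{mapping-space-is-mEx-LF} to obtain a natural isomorphism
\[ \map_L(\mEx(\mc{C}, W), x, y) \cong \mEx(\LF{(\mc{C}, W)}(x, y)). \]
Since $\mEx(\mc{C}, W)$ is a quasicategory (by \cref{mEx-qcat}) and $(\mc{C},W)$ satisfies CLF, the left-hand side is a Kan complex, so $\mEx(\LF{(\mc{C}, W)}(x, y))$ is a Kan complex as well.

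Next I would verify that the marked simplicial set $\LF{(\mc{C}, W)}(x, y)$ satisfies CLF. By \cref{def:sset-of-fractions} it is the pullback in $\msSet$ of $\lpathloop{y}$ and $(x \slice \mc{C}, W)$ over $(\mc{C}, W)$. The object $\lpathloop{y}$ satisfies CLF by \cref{pathloop-qcat-clf}, while $(x \slice \mc{C}, W)$ satisfies CLF via \cref{arr-cat-clf} (applied to the marked arrow category and then pulled back along $x \from \simp{0} \to (\mc{C}, W)$, using \cref{pb-pathloop-is-qcat}). Since CLF is defined by a right lifting property and such properties are preserved under limits in $\msSet$, the pullback $\LF{(\mc{C}, W)}(x, y)$ satisfies CLF. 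Therefore \cref{mEx-computes-localization} applies, giving that $\msupb \from U(\LF{(\mc{C}, W)}(x, y)) \to \mEx(\LF{(\mc{C}, W)}(x, y))$ is the localization at the marked $1$-simplices.

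Because $\mEx(\LF{(\mc{C}, W)}(x, y))$ is a Kan complex, every $1$-simplex in it is invertible, so the localization map inverts \emph{all} $1$-simplices of $U(\LF{(\mc{C}, W)}(x, y))$. By the universal property characterization of localization among quasicategorical targets, a localization at the marked edges whose target happens to be a Kan complex is also a localization at all edges; equivalently, the two universal properties coincide on Kan complex targets. Thus $\mEx(\LF{(\mc{C}, W)}(x, y))$ is equivalent to the Kan fibrant replacement $U(\LF{(\mc{C}, W)}(x, y))[\LF{(\mc{C}, W)}(x, y)_1^{-1}]$. Composing this equivalence with the one from \cref{pb-colim-for-mapping-space} yields
\[ \map_L(\mEx(\mc{C}, W), x, y) \simeq \colim\left( \mc{C}(x, \uvar) \circ U\Pi \right), \]
with naturality in $x$, $y$, and $(\mc{C}, W)$ following from the naturality statements in \cref{mapping-space-is-mEx-LF,pb-colim-for-mapping-space}.

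The main obstacle I anticipate is the justification that a Kan complex which is the localization at a subset of edges is automatically the localization at all edges — this is the step that glues the ``marked Ex'' description to the ``Kan fibrant replacement'' description, and it is the only place where the fact that $\map_L$ is a Kan complex (rather than just a quasicategory) is essential. Everything else is either direct application of the cited theorems or a routine verification that CLF propagates through the pullback constructing $\LF{(\mc{C}, W)}(x, y)$.
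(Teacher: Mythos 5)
Your overall skeleton matches the paper's: identify $\map_L(\mEx(\mc{C},W),x,y)$ with $\mEx(\LF{(\mc{C},W)}(x,y))$ via \cref{mapping-space-is-mEx-LF}, argue that $\LF{(\mc{C},W)}(x,y)$ satisfies CLF so that \cref{mEx-computes-localization} applies, upgrade the localization at marked edges to the Kan fibrant replacement because the target is a Kan complex, and finish with \cref{pb-colim-for-mapping-space}. The last step you worry about is fine and is exactly the paper's one-line remark. The genuine gap is in your verification that $\LF{(\mc{C},W)}(x,y)$ satisfies CLF. The principle you invoke --- ``CLF is defined by a right lifting property and such properties are preserved under limits in $\msSet$'' --- is false for objects under fiber products: if $P = X \times_Z Y$ and both $X$ and $Y$ have the RLP against a set of maps, the two separately chosen lifts into $X$ and $Y$ need not agree in $Z$, so $P$ need not have the RLP (this is the same reason a strict pullback of Kan complexes need not be Kan). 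To run a pullback argument you would need one of the legs $\Pi_x \from (x \slice \mc{C}, W) \to (\mc{C},W)$ or $\Pi_y \from \lpathloop{y} \to (\mc{C},W)$ to have the RLP against $\{\dfLJ \ito \dfLI\}$ \emph{as a map}, which is neither among the cited results nor obviously true. Moreover, your justification that $(x \slice \mc{C}, W)$ satisfies CLF is off target: the pullback of $i_0^* \from (\mc{C},W)^{\maxm{(\simp{1})}} \to (\mc{C},W)$ along $x$ appearing in \cref{pb-pathloop-is-qcat} is the marked slice $\lpathloop{x}$ (spanned by \emph{marked} arrows out of $x$), not the ordinary slice $(x \slice \mc{C}, W)$ used in \cref{def:sset-of-fractions}. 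Finally, you never address the other half of CLF for $\LF{(\mc{C},W)}(x,y)$, namely that its marked edges are weakly closed under composition.

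The paper sidesteps all of this with a short argument you have the ingredients for: since $\mEx(\LF{(\mc{C},W)}(x,y)) \cong \map_L(\mEx(\mc{C},W),x,y)$ is a Kan complex, in particular a quasicategory, the \emph{converse} direction of \cref{mEx-qcat}, part (1)(a), immediately gives that $\LF{(\mc{C},W)}(x,y)$ has the right lifting property against $\{\dfLJ \ito \dfLI\}$; closure of its marked edges under composition is checked directly from (strong) closure of $W$ under composition. With CLF for $\LF{(\mc{C},W)}(x,y)$ obtained this way, the remainder of your argument --- \cref{mEx-computes-localization}, the observation that a localization with Kan target is the localization at all edges (hence the Kan fibrant replacement), and \cref{pb-colim-for-mapping-space} --- coincides with the paper's proof.
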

\begin{proof}
    We prove (1), as (2) is analogous.
    
    By \cref{mapping-space-is-mEx-LF}, we have an isomorphism
    \[ \map_L(\mEx(\mc{C}, W), x, y) \cong \mEx(\LF{(\mc{C}, W)}(x, y)), \]
    natural in $(\mc{C}, W)$, $x$, and $y$.
    As the left simplicial set is Kan, the right one is.
    The marked 1-simplices in $\LF{(\mc{C}, W)}(x, y)$ are closed under composition as $W$ is strongly closed under composition.
    Thus, \cref{mEx-qcat} gives that $\LF{(\mc{C}, W)}(x, y)$ satisfies CLF.
    By \cref{mEx-computes-localization}, the map
    \[ \msup \from U\LF{(\mc{C}, W)}(x, y) \to \mEx (\LF{(\mc{C}, W)}(x, y)) \]
    is the localization of $\LF{(\mc{C}, W)}(x, y)$.
    As the codomain is Kan, this map is also the localization of $U \LF{(\mc{C}, W)}(x, y)$ at all maps, i.e.~the Kan fibrant replacement of $U \LF{(\mc{C}, W)}(x, y)$.
    The result then follows by \cref{pb-colim-for-mapping-space}.
\end{proof}

\section{(Co)limits in the localization} \label{sec:limits}

This section fully establishes the quasicategorical analogue of \cref{classical-fractions-colims}, giving conditions for the existence of (co)limits in the localization (\cref{localization-has-limits}).
Following the argument given in \cite[Ch.~1]{gabriel-zisman}, we prove this by showing the categories $\lpathloop{x}$ and $\rpathloop{x}$ are filtered if $W$ is closed under 2-out-of-3 (\cref{lpathloop-filtered}).
In fact, we show more generally that, in the presence of either CLF (or CRF) on a maximally-marked quasicategory $\maxm{\mc{C}}$, being (co)filtered is \emph{equivalent} to being weakly contractible (\cref{filtered-equiv-fibr-replacement-contr}).
From this, one concludes that the mapping spaces in $\mEx(\mc{C}, W)$ are \emph{filtered} colimits of the mapping spaces in $\mc{C}$.

Define a cosimplicial object $\eqsd \from \Delta \to \msSet$ which sends $[n]$ to the preorder on the set
\[ \{ A \subseteq [n] \mid A \neq \varnothing \} \]
where
\[ A_0 \leq A_1 := \max A_0 \leq \max A_1. \]
We view this preorder as a simplicial set marked at $E^1$-equivalences (i.e.~a 1-simplex $A_0 \leq A_1$ is marked if $\max A_0 = \max A_1$).
Let $\eqSd \from \sSet \to \msSet$ denote its extension by colimits.
Note the commutative triangle of cosimplicial objects
\[ \begin{tikzcd}
    \msd {[n]} \ar[rr, hook] \ar[rd, "\max"'] & {} & \eqsd {[n]} \ar[ld, "\max"] \\
    {} & {[n]} & {}
\end{tikzcd} \]
induces a commutative triangle
\[ \begin{tikzcd}
    \mSd X \ar[rr, hook] \ar[rd, "\msub"'] & {} & \eqSd X \ar[ld, "\msub"] \\
    {} & \minm{X} & {}
\end{tikzcd} \]
where each map is natural in $X$.
\begin{proposition} \label{weq-sd-triangle}
    For a simplicial set $X$, 
    \begin{enumerate}
        \item the map $\msub \from \eqSd X \to \minm{X}$ is an $E^1$-homotopy equivalence;
        \item each map in the triangle
        \[ \begin{tikzcd}
            \mSd X \ar[rr, hook] \ar[rd, "\msub"'] & {} & \eqSd X \ar[ld, "\msub"] \\
            {} & \minm{X} & {}
        \end{tikzcd} \]
        is a weak equivalence.
    \end{enumerate}
\end{proposition}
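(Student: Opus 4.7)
I would prove both parts by constructing explicit deformation retracts, exploiting the fact that the $\max$-ordering defining $\eqsd$ is more permissive than the subset-ordering defining $\msd$.

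For part~(1), the singleton section $s_n \from \minm{(\simp{n})} \to \eqsd[n]$, $i \mapsto \{i\}$, is \emph{natural} in $[n]$ since $\varphi(\{i\}) = \{\varphi(i)\}$ for any monotone $\varphi \from [m] \to [n]$; this is the crucial contrast with the section $i \mapsto \{0, \dots, i\}$ used for $\mSd$ in \cref{max-weq}, which fails to be natural. The section is moreover marked, and $\msub \circ s = \id$. For the homotopy, I would set $H_n \from \eqsd[n] \times E^1 \to \eqsd[n]$ by $H(A, 0) = A$ and $H(A, 1) = \{\max A\}$. This is a well-defined preorder map: the product order on $\eqsd[n] \times E^1$ amounts to $\max A \leq \max B$ alone (the $E^1$-coordinate being trivially ordered in both directions), and in each of the four cases of $(s, t)$ the images have $\max$-values bounded in the required way. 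Naturality in $[n]$ follows from $\max \varphi(A) = \varphi(\max A)$. Extending $s$ and $H$ to all $X$ by cocontinuity of $\eqSd$, $\minm{(-)}$, and $- \times E^1$ then yields the required $E^1$-homotopy equivalence.

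For part~(2), I would first observe that $H$ restricts to a marked homotopy $\eqSd X \times \maxm{(\simp{1})} \to \eqSd X$: a 1-simplex $(A, 0) \leq (A', 1)$ is marked in the product exactly when $\max A = \max A'$, which is precisely the marking condition satisfied by the image 1-simplex $A \leq \{\max A'\}$. Hence $\msub \from \eqSd X \to \minm{X}$ is a marked homotopy equivalence, and in particular a weak equivalence in the marked model structure. By \cref{max-weq}, $\msub \from \mSd \simp{n} \to \minm{(\simp{n})}$ is a marked homotopy equivalence for each $n$. Two-out-of-three on the triangle at $\simp{n}$ then yields that the inclusion $\mSd \simp{n} \hookrightarrow \eqsd[n]$ is a weak equivalence; being also a monomorphism, it is an acyclic cofibration.

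To extend the acyclicity of $\mSd X \hookrightarrow \eqSd X$ from representables to general $X$, I would use a skeletal induction: write $X = \colim_n X^{(n)}$, with each stage a pushout of $\bigsqcup \bd\simp{n} \hookrightarrow \bigsqcup \simp{n}$, and exploit cocontinuity of both $\mSd$ and $\eqSd$ to propagate the cell structure. The natural transformation $\mSd \hookrightarrow \eqSd$ is then built cell-by-cell from the acyclic cofibrations established above, and so is itself an acyclic cofibration by closure under pushouts, transfinite composition, and the gluing lemma (using left properness of the marked model structure). Two-out-of-three on the triangle at $X$ finally yields that $\msub \from \mSd X \to \minm{X}$ is also a weak equivalence. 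The main technical obstacle is precisely this last promotion from representables to general $X$, because, unlike for $\eqSd$, no natural retraction of $\mSd \hookrightarrow \eqSd$ (nor of $\msub_{\mSd}$) exists, forcing a cofibration-theoretic argument rather than an explicit deformation retract.
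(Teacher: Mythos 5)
Your part (1) is exactly the paper's argument: the singleton section $i \mapsto \{i\}$, its naturality in $[n]$, the homotopy $A \leq \{\max A\}$ (an equivalence in $\eqsd[n]$), and extension by colimits. For part (2) the overall skeleton also agrees with the paper -- a 2-out-of-3 argument on the triangle fed by \cref{max-weq} on representables -- but you handle the promotion from representables to arbitrary $X$ differently: the paper simply invokes \cite[Prop.~3.1.14]{cisinski:higher-categories} (a natural transformation between cocontinuous, monomorphism-preserving functors that is a weak equivalence on all $\simp{n}$ is a weak equivalence on every simplicial set), applies it to the left leg $\msub \from \mSd X \to \minm{X}$, and gets the top inclusion by 2-out-of-3; you instead hand-roll that lemma by skeletal induction for the top inclusion $\mSd X \ito \eqSd X$ and recover the left leg by 2-out-of-3. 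Either direction works, since all three functors involved are cocontinuous and preserve monomorphisms, and you correctly identify the failure of naturality of the section $i \mapsto \{0, \dots, i\}$ as the reason an explicit retraction is unavailable for $\mSd$. One caution on your phrasing: the naturality squares of $\mSd \ito \eqSd$ do not literally exhibit $\mSd X \ito \eqSd X$ as a relative cell complex built by pushouts and transfinite compositions of the maps $\mSd \simp{n} \ito \eqsd[n]$, so ``closure of acyclic cofibrations under pushouts and transfinite composition'' is not by itself the mechanism; the correct mechanism is the one you also name -- the gluing (cube) lemma applied to the comparison of the attaching pushout squares (legitimate here since all objects of $\msSet$ are cofibrant, whence left properness), together with compatibility of weak equivalences with coproducts and with sequential colimits along cofibrations -- and spelling this out is precisely the content of the lemma the paper cites. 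With that bookkeeping made precise, your proof is correct; it buys self-containedness at the cost of reproving a standard result.
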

\begin{proof}
    \begin{enumerate}
        \item The map $f \from [n] \to \eqsd [n]$ defined by $i \mapsto \{ i \}$ commutes with faces and degeneracies.
        Moreover, for $A \in \eqsd [n]$, we have that $A \leq \{ \max A \}$ is an equivalence.
        This induces a homotopy $\eqsd [n] \times E[1] \to \eqsd [n]$ from the identity to $f \circ \max$ which also commutes with faces and degeneracies.
        Thus, this map induces a section $\minm{X} \to \eqSd X$ to the map $\msub \from \eqSd X \to \minm{X}$, along with a homotopy $\eqSd X \times E^1 \to \eqSd X$ from the identity to $f_! \circ \msub$.
        
        \item \cref{max-weq} shows the map $\max \from \msd [n] \to [n]$ is a homotopy equivalence.
        By \cite[Prop.~3.1.14]{cisinski:higher-categories}, the left map in the triangle is a weak equivalence.
        The right map is a weak equivalence by (1).
        Thus, the top map is by 2-out-of-3.
    \end{enumerate}
\end{proof}

\begin{corollary} \label{weq-sd-triangle-join-simp0}
    For a simplicial set $X$, each map in the triangle
    \[ \begin{tikzcd}
        (\mSd X) \join \simp{0} \ar[rr, hook] \ar[rd, "\msub \join \simp{0}"'] & {} & (\eqSd X) \join \simp{0} \ar[ld, "\msub \join \simp{0}"] \\
        {} & \minm{X} \join \simp{0} & {}
    \end{tikzcd} \]
    is a weak equivalence.
\end{corollary}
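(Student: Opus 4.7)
The plan is to reduce \cref{weq-sd-triangle-join-simp0} to \cref{weq-sd-triangle} by observing that the functor $- \join \simp{0} \from \msSet \to \msSet$ preserves the weak equivalences appearing in the original triangle. The cosimplicial objects $\msd \from \Delta \to \msSet$ and $\eqsd \from \Delta \to \msSet$ take values in markings of preorders, and the natural transformations $\max \from \msd[n] \to \minm{[n]}$ and $\max \from \eqsd[n] \to \minm{[n]}$ are, componentwise, marked homotopy equivalences between preorders: the first is explicitly exhibited in the proof of \cref{max-weq}, and the second in the proof of \cref{weq-sd-triangle}(1).

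First I would apply \cref{preorder-marked-htpy-join} componentwise. This gives that at each $[n] \in \Delta$, the joined maps
\[ \msd[n] \join \simp{0} \to \minm{[n]} \join \simp{0}, \qquad \eqsd[n] \join \simp{0} \to \minm{[n]} \join \simp{0} \]
are marked homotopy equivalences, and hence weak equivalences in $\msSet$. Next, since $- \join \simp{0}$ is a left adjoint (to the marked slice construction described in \cref{sec:marked-sSet}), it commutes with colimits; thus the three objects in the displayed triangle of the corollary are canonically identified with the extensions by colimits of the three cosimplicial objects $[n] \mapsto \msd[n] \join \simp{0}$, $[n] \mapsto \eqsd[n] \join \simp{0}$, $[n] \mapsto \minm{[n]} \join \simp{0}$. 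Applying \cite[Prop.~3.1.14]{cisinski:higher-categories} in the marked model structure on $\msSet$, exactly as in the proof of \cref{weq-sd-triangle}(2), the two ``$\msub \join \simp{0}$'' legs of the triangle become weak equivalences for an arbitrary simplicial set $X$. The remaining inclusion $\mSd X \join \simp{0} \ito \eqSd X \join \simp{0}$ is then a weak equivalence by 2-out-of-3.

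The main technical point, as in \cref{weq-sd-triangle}, is ensuring the hypotheses for extension by colimits: one needs Reedy cofibrancy of the cosimplicial objects $[n] \mapsto \msd[n] \join \simp{0}$ and $[n] \mapsto \eqsd[n] \join \simp{0}$. This follows routinely from Reedy cofibrancy of $\msd$ and $\eqsd$ together with the fact that $- \join \simp{0}$ preserves monomorphisms, since cofibrations in $\msSet$ are monomorphisms and the join of simplicial sets acts injectively on non-degenerate simplices.
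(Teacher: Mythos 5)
Your first step is sound and matches the paper's intent: $\max \from \mSd\simp{n}\to\minm{(\simp{n})}$ and $\msub \from \eqSd\simp{n}\to\minm{(\simp{n})}$ are marked homotopy equivalences between (marked nerves of) preorders, so \cref{preorder-marked-htpy-join} shows that joining with $\simp{0}$ yields marked homotopy equivalences, hence weak equivalences, on each $\simp{n}$. The gap is in your globalization. The functor $\uvar\join\simp{0}$ is \emph{not} cocontinuous as an endofunctor of $\msSet$: the adjunction recalled in \cref{sec:marked-sSet} exhibits it as a left adjoint only into the coslice under $\simp{0}$, and as an endofunctor of $\msSet$ it preserves neither the initial object ($\varnothing\join\simp{0}=\simp{0}$) nor coproducts --- for example $(\simp{0}\sqcup\simp{0})\join\simp{0}$ has three vertices, whereas the extension by colimits of $[n]\mapsto\minm{(\simp{n})}\join\simp{0}$ evaluated at $\simp{0}\sqcup\simp{0}$ would be $\simp{1}\sqcup\simp{1}$. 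So the three vertices of the triangle are \emph{not} ``canonically identified with the extensions by colimits'' of the joined cosimplicial objects, and \cite[Prop.~3.1.14]{cisinski:higher-categories} cannot be invoked verbatim for these functors. This --- not Reedy cofibrancy of the cosimplicial objects, which your last paragraph treats as the main technical point --- is the actual issue in passing from simplices to general $X$.

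The conclusion is still reachable by repairs close to what you propose. The functors $X\mapsto(\mSd X)\join\simp{0}$, $X\mapsto(\eqSd X)\join\simp{0}$, $X\mapsto\minm{X}\join\simp{0}$ do preserve connected colimits (pushouts and sequential compositions), and the comparison maps are isomorphisms at $X=\varnothing$; since every object of $\msSet$ is cofibrant, the cell-by-cell skeletal induction underlying the cited proposition can be rerun in this setting (attach nondegenerate simplices one at a time and use the gluing lemma), or equivalently one can regard the joined functors as cocontinuous functors into the coslice $\simp{0}\slice\msSet$ with its induced model structure, where weak equivalences are created in $\msSet$, and apply the proposition there. Alternatively, and closer to the paper's own route, for the $\eqSd$-leg one can join the explicit section and homotopy from the proof of \cref{weq-sd-triangle}, which are natural in $[n]$, using the interchange map constructed in the proof of \cref{preorder-marked-htpy-join}; then only the $\mSd$-leg needs a skeletal argument, and the inclusion follows by two-out-of-three. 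As written, however, your bridge from $\simp{n}$ to arbitrary $X$ rests on a false cocontinuity claim and needs one of these fixes.
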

\begin{proof}
    As in the proof of \cref{weq-sd-triangle}, when $X = \minm{(\simp{n})}$, these maps are homotopy equivalences by \cref{preorder-marked-htpy-join}.
    Applying \cite[Prop.~3.1.14]{cisinski:higher-categories} in the slice model category $\simp{0} \slice \msSet$ gives the desired result.
\end{proof}

Recall (cf.~\cite[Def.~5.3.1.7]{lurie:htt}) that a simplicial set $X$ is \emph{filtered} if every map $K \to X$ from a finite simplicial set $K$ (i.e.~$K$ has finitely many non-degenerate simplices) admits a lift to a map $K \join \simp{0} \to X$.
Dually, $X$ is \emph{cofiltered} if every such map $K \to X$ admits a lift to a map $\simp{0} \join K \to X$.
\begin{lemma} \label{filtered-lemma}
    If a quasicategory has the right lifting property against the set of maps
    \[ \{ \Sd \bd \simp{n} \ito \Sd \simp{n} \mid n \geq 0 \} \]
    then it is filtered.
\end{lemma}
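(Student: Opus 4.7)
Plan. By a standard characterization for quasicategories (cf.~\cite[Prop.~5.3.1.13]{lurie:htt}), a simplicial set $\mc{C}$ is filtered if and only if every map $f \from \bd \simp{n} \to \mc{C}$, $n \geq 0$, extends to a cocone $\bd \simp{n} \join \simp{0} \to \mc{C}$. So I would fix $n$ and such an $f$ and construct the extension. The case $n = 0$ is immediate, since $\Sd \bd \simp{0} = \varnothing$ and $\Sd \simp{0} = \simp{0}$, so the hypothesis simply supplies a vertex of $\mc{C}$.

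For $n \geq 1$, the crucial observation is that the poset $\mathcal{P}_{\neq \varnothing}[n]$ has $[n]$ as its maximum, so as simplicial sets $\Sd \simp{n} = \Sd \bd \simp{n} \join \simp{0}$, with the cone vertex realised by $[n]$. I would pre-compose $f$ with the last-vertex map $\Sd \bd \simp{n} \to \bd \simp{n}$ to form $g \from \Sd \bd \simp{n} \to \mc{C}$, and apply the hypothesis to extend $g$ to a map $\bar g \from \Sd \simp{n} \to \mc{C}$. Setting $d := \bar g([n])$, the map $\bar g$ now supplies, for each $\varnothing \neq A \subsetneq [n]$, a distinguished $1$-simplex $g(A) \to d$ together with coherence $k$-simplices coming from chains $A_0 \subseteq \cdots \subseteq A_{k-1} \subseteq [n]$.

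From $\bar g$ I would assemble the desired cocone $\bd \simp{n} \join \simp{0} \to \mc{C}$ with cone vertex $d$. For each non-degenerate $k$-face $\simp{J}$ of $\bd \simp{n}$ with $J = \{j_0 < \cdots < j_k\} \subsetneq [n]$, the corresponding cone cell $\simp{J} \join \simp{0} = \simp{k+1} \to \mc{C}$ is obtained by evaluating $\bar g$ on the chain $\{j_0\} \subsetneq \{j_0, j_1\} \subsetneq \cdots \subsetneq J \subsetneq [n]$ in $\Sd \simp{n}$; by construction this simplex restricts to $f(\simp{J})$ on its face opposite $d$ and has all other edges landing on $d$.

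The main obstacle is coherence: the ``first-vertex'' section of the last-vertex map used above is not compatible with all face-operators. For instance, the $0$-face of the chain $\{j_0\} \subsetneq \{j_0, j_1\} \subsetneq \cdots \subsetneq J \subsetneq [n]$ begins with $\{j_0, j_1\}$, whereas the chain assigned to the subface $J \setminus \{j_0\}$ begins with $\{j_1\}$, so the candidate cone cells do not strictly glue along common subfaces of $\bd \simp{n} \join \simp{0}$. To fix this one either (a) replaces the naive section by a more delicate coherent system of chains that commutes with face-restriction, or (b) exploits the abundant higher simplices of the form $\bar g(A_0 \subseteq A_1 \subseteq \cdots \subseteq [n])$ already present in $\bar g$, together with inner-horn filling in the quasicategory $\mc{C}$, to homotope the various candidate cone faces into a genuinely compatible system. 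Either route produces the required cocone and hence establishes that $\mc{C}$ is filtered.
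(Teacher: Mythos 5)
Your reduction and setup match the paper's: reduce via Lurie to extending every $\bd \simp{n} \to \mc{C}$ to a cocone $\bd \simp{n} \join \simp{0} \to \mc{C}$, observe $\Sd \simp{n} \cong \Sd \bd \simp{n} \join \simp{0}$, precompose with the last-vertex (max) map, and lift using the hypothesis. But the step you flag as "the main obstacle" is not a technicality to be waved off with "either route produces the required cocone" --- it is the actual mathematical content of the lemma, and neither of your two proposed fixes works as stated. For (a): a section of the last-vertex map $\Sd \bd\simp{n} \to \bd\simp{n}$ that is strictly compatible with face restriction does not exist in general; the mismatch you computed (the chain for $J$ restricted along its $0$-face starts at $\{j_0,j_1\}$, while the chain assigned to $J \setminus \{j_0\}$ starts at $\{j_1\}$) is precisely the obstruction, and no cleverer choice of chains removes it --- this is the same reason the last-vertex map admits no strictly natural section on all of $\sSet$. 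For (b): inner-horn filling is the wrong tool; the cells you need to correct are cocone cells involving the vertex $d$ and the "collapse" edges $g(A)\to \bar g(A')$ with $\max A = \max A'$, so what is really needed is extension along equivalences (special outer-horn / $J$-type filling), i.e.\ fibrancy of $\natm{\mc{C}}$ in the marked model structure, together with the knowledge that those collapse edges are sent to (degenerate, hence invertible) edges of $\mc{C}$.

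What the paper does to close exactly this gap is introduce the auxiliary "fat subdivision" $\eqSd$, the nerve of the preorder on nonempty subsets ordered by maxima: unlike $\mSd$, it \emph{does} admit a strictly cosimplicial section $i \mapsto \{i\}$ of the max map (\cref{weq-sd-triangle}). One then shows the inclusions $\mSd \bd\simp{n} \ito \eqSd \bd\simp{n}$ and their coned versions are weak equivalences of marked simplicial sets (\cref{weq-sd-triangle,weq-sd-triangle-join-simp0}), so the induced map from the pushout $P = \eqSd\bd\simp{n} \cup_{\mSd\bd\simp{n}} \bigl((\mSd\bd\simp{n})\join\simp{0}\bigr)$ into $(\eqSd\bd\simp{n})\join\simp{0}$ is an acyclic cofibration; the lift supplied by your hypothesis lives on $P$, factors through $\natm{\mc{C}}$ because the marked edges go to degeneracies, extends over $(\eqSd\bd\simp{n})\join\simp{0}$ by fibrancy of $\natm{\mc{C}}$ in $\msSet$, and is finally restricted along the natural section $i \join \simp{0}$ to produce the cocone. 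So your instinct about where the difficulty lies is correct, but the proposal stops exactly where the proof has to begin.
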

\begin{proof}
    Let $\mc{C}$ be a quasi-category with the right lifting property against $\{ \Sd \bd \simp{n} \ito \Sd \simp{n} \mid n \geq 0 \}$.
    By \cite[Lem.~5.3.1.12]{lurie:htt}, it suffices to check that every map $\bd \simp{n} \to \mc{C}$ admits a lift to a map $\bd \simp{n} \join \simp{0} \to \mc{C}$.
    That is, it suffices to show $\mc{C}$ has the right lifting property against the set of maps
    \[ \{ \bd \simp{n} \ito (\bd \simp{n}) \join \simp{0} \mid n \geq 0 \}. \]
    
    The square
    \[ \begin{tikzcd}
        \mSd \bd \simp{n} \ar[r, hook] \ar[d, hook] & \eqSd \bd \simp{n} \ar[d, hook] \\
        (\mSd \bd \simp{n}) \join \simp{0} \ar[r, hook] & (\eqSd \bd \simp{n}) \join \simp{0}
    \end{tikzcd} \]
    induces a map from the pushout, which we denote by $f \from P \to (\eqSd \bd \simp{n}) \join \simp{0}$.
    The horizontal maps are weak equivalences by \cref{weq-sd-triangle} and \cref{weq-sd-triangle-join-simp0}, respectively.
    Thus, $f$ is a weak equivalence by 2-out-of-3.
    One verifies $f$ is a monomorphism, thus it is an acyclic cofibration.

    As $\mc{C}$ has the right lifting property against the set $\{ \Sd \bd \simp{n} \ito \Sd \simp{n} \mid n \geq 0 \}$, its maximal marking $\maxm{\mc{C}}$ has the right lifting property against the set of maps
    \[ \{ \mSd \bd \simp{n} \ito (\mSd \bd \simp{n}) \join \simp{0} \mid n \geq 0 \} \]
    by adjointness (as $\Sd \simp{n} \cong U (\mSd \bd \simp{n} \join \simp{0})$).
    Thus, it has the right lifting property against the map $\eqSd \bd \simp{n} \ito P$.

    Given a map $u \from \bd \simp{n} \to \mc{C}$, this gives that the composite
    \[ \eqSd \bd \simp{n} \xrightarrow{\msub} \minm{(\bd \simp{n})} \xrightarrow{\minm{u}} \minm{\mc{C}} \ito \maxm{\mc{C}} \]
    lifts to a map
    \[ v \from P \to \maxm{\mc{C}}. \]
    As this composite sends marked 1-simplices to degeneracies and $\eqSd \bd \simp{n} \ito P$ is surjective on 1-simplices, we may view $v$ as a map $P \to \natm{\mc{C}}$.

    The map $P \to (\eqSd \bd \simp{n}) \join \simp{0}$ is an acyclic cofibration and $\natm{\mc{C}}$ is fibrant, hence $v$ lifts to a map 
    \[ \overline{v} \from (\eqSd \bd \simp{n}) \join \simp{0} \to \natm{\mc{C}}. \]
    Applying naturality to the section $i \from \minm{(\bd \simp{n})} \to \eqSd \bd \simp{n}$ defined in \cref{weq-sd-triangle} gives a commutative square:
    \[ \begin{tikzcd}
        \minm{(\bd \simp{n})} \ar[r, "i"] \ar[d, hook] & \eqSd \bd \simp{n} \ar[d, hook] \\
        \minm{(\bd \simp{n})} \join \simp{0} \ar[r, "i \join \simp{0}"] & (\eqSd \bd \simp{n}) \join \simp{0}
    \end{tikzcd} \]
    The (underlying) map $\overline{v} \circ (i \join \simp{0})$ is the desired lift.
    \[ \begin{tikzcd}
        \minm{(\bd \simp{n})} \ar[r, "i"] \ar[d, hook] & \eqSd \bd \simp{n} \ar[r, "\msub"] \ar[d, hook] & \minm{(\bd \simp{n})} \ar[r, "\minm{u}"] & \minm{\mc{C}} \ar[r, hook] & \natm{\mc{C}} \\
        \minm{(\bd \simp{n})} \join \simp{0} \ar[r, "i \join \simp{0}"] & (\eqSd \bd \simp{n}) \join \simp{0} \ar[urrr, "\overline{v}"']
    \end{tikzcd} \]
\end{proof}
\begin{corollary} \label{filtered-equiv-fibr-replacement-contr}
    Let $\mc{C}$ be a quasicategory.
    \begin{enumerate}
        \item If $\maxm{\mc{C}}$ satisfies CLF then $\mc{C}$ is filtered if and only if the Kan fibrant replacement of $\mc{C}$ is contractible.
        \item If $\maxm{\mc{C}}$ satisfies CRF then $\mc{C}$ is cofiltered if and only if the Kan fibrant replacemnt of $\mc{C}$ is contractible.
    \end{enumerate}
\end{corollary}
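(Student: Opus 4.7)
The plan is to reduce both sides of each biconditional to a single right lifting property, bridged by the $\Sd \dashv \Ex$ adjunction. Under the CLF hypothesis in part (1), combining \cref{ExC_Kan_iff_clf} with \cref{mEx-computes-localization} and \cref{Ex-v-mEx} gives that $\msup \from \mc{C} \to \Ex \mc{C}$ is the Kan fibrant replacement, with $\Ex \mc{C}$ itself already a Kan complex. Hence contractibility of the Kan fibrant replacement of $\mc{C}$ is equivalent to $\Ex \mc{C}$ being a contractible Kan complex, i.e., to $\Ex \mc{C}$ having the right lifting property against $\{\bd\simp{n} \ito \simp{n} \mid n \geq 0\}$. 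By the $\Sd \dashv \Ex$ adjunction (applied via the maximal marking) this is equivalent to $\mc{C}$ having the right lifting property against $\{\Sd \bd\simp{n} \ito \Sd \simp{n} \mid n \geq 0\}$.

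The combinatorial key is the identification $\Sd \simp{n} \cong (\Sd \bd\simp{n}) \join \simp{0}$, where the cone point corresponds to the maximal element $[n]$ of the poset $\mathcal{P}_{\neq \varnothing}[n]$ underlying $\Sd \simp{n}$; one checks this on posets, noting that every non-empty proper subset of $[n]$ is strictly contained in $[n]$ and that no proper subset of $[n]$ contains $[n]$. With this identification, the direction ($\Leftarrow$) of part (1) is immediate from \cref{filtered-lemma}, which proves exactly that the displayed RLP forces $\mc{C}$ to be filtered. For the direction ($\Rightarrow$), since $\Sd \bd\simp{n}$ is a finite simplicial set (its non-degenerate simplices correspond to chains of non-degenerate simplices of $\bd\simp{n}$), any map $\Sd \bd\simp{n} \to \mc{C}$ extends to $(\Sd \bd\simp{n}) \join \simp{0} \cong \Sd \simp{n}$ by the defining property of filtered quasicategories, yielding the RLP.

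Part (2) follows by duality: using the Remark after \cref{def:infty-clf}, $\maxm{\mc{C}}$ satisfies CRF iff $(\maxm{\mc{C}})^\op = \maxm{(\mc{C}^\op)}$ satisfies CLF; moreover $\mc{C}$ is cofiltered iff $\mc{C}^\op$ is filtered, and the Kan fibrant replacements of $\mc{C}$ and $\mc{C}^\op$ have identical homotopy type. Applying part (1) to $\mc{C}^\op$ then yields part (2) directly. There is no real obstacle in the argument: the substantive content is already packaged in \cref{ExC_Kan_iff_clf} and \cref{filtered-lemma}, and the only genuinely new input is the cone-point identification of $\Sd\simp{n}$; the mildest care is needed in keeping straight that the CLF hypothesis is what lets us identify $\Ex \mc{C}$ itself (rather than $\Ex^\infty \mc{C}$) with the Kan fibrant replacement, a step that makes the adjoint reformulation cleanly bidirectional.
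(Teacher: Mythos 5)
Your proof is correct, and the converse direction (contractible fibrant replacement implies filtered) is exactly the paper's argument: identify $\Ex \mc{C} \cong \mEx\maxm{\mc{C}}$ as the Kan fibrant replacement via \cref{mEx-computes-localization} and \cref{Ex-v-mEx}, transpose contractibility across $\Sd \dashv \Ex$ into the right lifting property against $\{\Sd\bd\simp{n} \ito \Sd\simp{n}\}$, and conclude with \cref{filtered-lemma}. Where you genuinely diverge is the forward direction. The paper disposes of it by citing the general fact that a filtered simplicial set is weakly contractible (Lurie, Lem.~5.3.1.18), which uses no calculus-of-fractions hypothesis at all; you instead observe that $\Sd\simp{n} \cong (\Sd\bd\simp{n}) \join \simp{0}$ compatibly with the inclusions (an identification the paper itself records inside the proof of \cref{filtered-lemma}), so that the defining extension property of a filtered quasicategory applied to the finite simplicial set $\Sd\bd\simp{n}$ gives the same lifting property, and then transpose back to get that $\Ex\mc{C}$ has the right lifting property against all boundary inclusions, hence is a contractible Kan complex. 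This makes the easy direction self-contained and pleasantly symmetric with the converse, at the cost of routing it through the CLF hypothesis (needed to know $\Ex\mc{C}$ is Kan and computes the fibrant replacement); since CLF is assumed in the statement this loses nothing here, though unlike the paper's citation it does not recover the unconditional fact that filtered quasicategories are weakly contractible. Your treatment of (2) by passing to opposites is exactly what the paper means by ``formally dual'' and is fine.
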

\begin{proof}
    We prove (1), as (2) is formally dual.

    For the forward direction, apply \cite[Lem.~5.3.1.18]{lurie:htt}.
    For the converse, \cref{mEx-computes-localization} shows that $\mEx \maxm{\mc{C}} \cong \Ex \mc{C}$ is a Kan fibrant replacement of $\mc{C}$.
    By adjointness, $\Ex \mc{C}$ is contractible if and only if $\mc{C}$ has the right lifting property with respect to the set of maps $\{ \Sd \bd \simp{n} \ito \Sd \simp{n} \mid n \geq 0 \}$.
    This then follows from \cref{filtered-lemma}.
\end{proof}
We can now show that if $W$ is closed under 2-out-of-3 then the marked slice category is filtered.
\begin{corollary} \label{lpathloop-filtered}
    Let $x$ be an object in a marked quasicategory $(\mc{C}, W)$ such that $W$ is closed under 2-out-of-3.
    \begin{enumerate}
        \item The underlying simplicial set $U(\lpathloop{x})$ of the marked slice under $x$ is filtered.
        \item The underlying simplicial set $U(\rpathloop{x})$ of the marked slice over $x$ is cofiltered.
    \end{enumerate}
\end{corollary}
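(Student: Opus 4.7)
The plan is to deduce filteredness of $U(\lpathloop{x})$ from weak contractibility, via \cref{filtered-equiv-fibr-replacement-contr}(1). Writing $\mc{C}' := U(\lpathloop{x})$, the target is to verify (i) that $\maxm{\mc{C}'}$ satisfies CLF, and (ii) that the Kan fibrant replacement of $\mc{C}'$ is contractible. Once both are in hand, \cref{filtered-equiv-fibr-replacement-contr}(1) delivers filteredness immediately.

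For (i), the starting observation is that under $2$-out-of-$3$, the natural marking on $\lpathloop{x}$ coincides with the maximal marking. Given any $1$-simplex $u \from \simp{1} \times \simp{1} \to \mc{C}$ of $\lpathloop{x}$, the two ``vertical'' edges $f_0 \from x \to y_0$ and $f_1 \from x \to y_1$ are marked by construction; the upper triangle is a $2$-simplex $x \to y_0 \to y_1$ whose edge $f_0$ and hypotenuse $f_1$ lie in $W$, so $2$-out-of-$3$ places the top edge $y_0 \to y_1$ in $W$, making $u$ marked. Given this identification $\lpathloop{x} = \maxm{\mc{C}'}$, \cref{pathloop-qcat-clf}(2) --- applied to $(\mc{C}, W)$, which satisfies CLF in the setting of this chapter and has $W$ strongly closed under composition by $2$-out-of-$3$ --- supplies CLF for $\maxm{\mc{C}'}$.

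For (ii), the plan is to pass along the categorical equivalence $x \slice^{\join} W \weto \lpathloop{x}$ of \cref{lpathloop-weq-slice-equiv}. The quasicategory $x \slice^{\join} W$ is the full sub-quasicategory of $x \slice \mc{C}$ on those objects $(y, f)$ with $f$ marked; it contains $(x, \id_x)$ because $\id_x$ is marked by the degeneracy convention, and this object is initial in $x \slice \mc{C}$, hence --- by fullness, which preserves mapping spaces --- initial in $x \slice^{\join} W$. Any $\infty$-category with an initial object is weakly contractible, and categorical equivalences between quasicategories induce weak homotopy equivalences on underlying simplicial sets, so the Kan fibrant replacement of $\mc{C}'$ is contractible. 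Assembling (i) and (ii), \cref{filtered-equiv-fibr-replacement-contr}(1) yields part (1); part (2) is formally dual, replacing ``initial'' by ``terminal'' in $W \slice^{\join} x \simeq \rpathloop{x}$ and invoking \cref{pathloop-qcat-clf}(3) together with \cref{filtered-equiv-fibr-replacement-contr}(2).

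The principal conceptual obstacle is psychological: an initial object in the marked slice \emph{under} $x$ supplies cones \emph{from} $(x, \id_x)$, which superficially looks like evidence for cofilteredness rather than filteredness. In the plan above, the initial object enters only as a witness for weak contractibility of the underlying space; filteredness itself is extracted from weak contractibility \emph{together with} CLF on the maximal marking, via the $\Sd$--$\Ex$ lifting argument of \cref{filtered-lemma} that powers \cref{filtered-equiv-fibr-replacement-contr}.
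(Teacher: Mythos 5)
Your proposal is correct and follows essentially the paper's own route: 2-out-of-3 makes every 1-simplex of $\lpathloop{x}$ marked so that $\lpathloop{x} = \maxm{U(\lpathloop{x})}$, CLF for this maximal marking comes from \cref{pathloop-qcat-clf}, and \cref{filtered-equiv-fibr-replacement-contr} reduces filteredness to contractibility of the Kan fibrant replacement, which is supplied by an initial object. The only (harmless) deviations are that the paper exhibits the initial object $(x, x\degen{}{1})$ directly in $\lpathloop{x}$ rather than detouring through $x \slice^{\join} W$ via \cref{lpathloop-weq-slice-equiv}, and that in your marking argument the long edge of the triangle $x \to y_0 \to y_1$ is the diagonal of the square rather than $f_1$ itself; it is marked by first applying closure under composition to the other triangle of the square, whose bottom edge is degenerate.
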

\begin{proof}
    We prove (1), as (2) is formally dual.

    As $W$ is closed under 2-out-of-3, every 1-simplex of $\lpathloop{x}$ is marked.
    That is, $\lpathloop{x} = \maxm{U (\lpathloop{x})}$.
    As $\lpathloop{x}$ satisfies CLF by \cref{pathloop-qcat-clf,filtered-equiv-fibr-replacement-contr}, it suffices to show the Kan fibrant replacement of $U (\lpathloop{x})$ is contractible.
    This follows as $\lpathloop{x}$ is a quasicategory with an initial object $(x, x \xrightarrow{x\degen{}{1}} x)$.
\end{proof}

From this, it follows that the localization functor $\mc{C} \to \mEx(\mc{C}, W)$ preserves colimits.
The proofs presented here follow the arguments given in \cite[Ch.~1]{gabriel-zisman}.
\begin{lemma} \label{mEx-coeq-equiv}
    Let $(\mc{C}, W)$ be a marked quasicategory.
    \begin{enumerate}
        \item If $(\mc{C}, W)$ satisfies CLF then every co-equalizer diagram $\simp{1} \push_{\bd \simp{1}} \simp{1} \to \mEx(\mc{C}, W)$ is $E^1$-homotopic to one which factors through $\msup \from \mc{C} \to \mEx(\mc{C}, W)$.
        \item If $(\mc{C}, W)$ satisfies CRF then every equalizer diagram $\simp{1} \push_{\bd \simp{1}} \simp{1} \to \mExop(\mc{C}, W)$ is $E^1$-homotopic to one which factors through $\minsup \from \mc{C} \to \mExop(\mc{C}, W)$.
    \end{enumerate}
\end{lemma}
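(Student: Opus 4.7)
We focus on (1); statement (2) is formally dual. The strategy is to use CLF to align the two parallel cospans at a common codomain, and then to contract the common right leg via the equivalence it provides in $\mEx(\mc{C}, W)$.

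Via the $(\mSd \dashv \mEx)$-adjunction, a coequalizer diagram $\simp{1} \push_{\bd \simp{1}} \simp{1} \to \mEx(\mc{C}, W)$ corresponds to two parallel cospans $\alpha_i = (x \overset{f_i}{\to} z_i \overset{w_i}{\leftarrow} y)$ with $w_i \in W$, while a diagram factoring through $\msup$ corresponds to a pair of cospans with trivial right leg. The first step will be to apply the right lifting property of $(\mc{C}, W)$ against $\LJ{2}{1} \ito \LI{2}{1}$ with the data $\{1\} \mapsto y$, $\{0,1\} \mapsto z_2$ (sending the marked edge $\{1\} \subseteq \{0,1\}$ to $w_2$), and $\{1,2\} \mapsto z_1$ (sending the unmarked edge $\{1\} \subseteq \{1,2\}$ to $w_1$). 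The lift yields $z \in \mc{C}$, a morphism $w'_1 \from z_1 \to z$ in $W$, a morphism $w'_2 \from z_2 \to z$, and a morphism $w \from y \to z$ together with 2-simplices witnessing $w = w'_1 w_1 = w'_2 w_2$. Since $\msup$ sends $W$-morphisms to equivalences by \cref{min-ascends} and the 2-simplex $y \to z_1 \to z$ persists under $\msup$, the 1-simplex $\msup(w)$ is a composite of equivalences in $\mEx(\mc{C}, W)$, and hence is itself an equivalence.

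Next, for each $i$, I will assemble this data into a map $\sigma_i \from \mSd \simp{2} \to (\mc{C}, W)$ sending $\{0\}, \{1\}, \{2\}, \{0,1\}, \{0,2\}, \{1,2\}, \{0,1,2\}$ to $x, y, z, z_i, z, z, z$ respectively, with edges $f_i, w_i, w'_i, w, w'_i f_i$ in the evident places and identities elsewhere; well-definedness reduces to $w = w'_i w_i$ and the fact that every edge landing inside a collection sent to $z$ is degenerate. The resulting 2-simplex in $\mEx(\mc{C}, W)$ has $\face{}{2}$-edge $\alpha_i$, $\face{}{0}$-edge $\msup(w)$, and $\face{}{1}$-edge $\msup(w'_i f_i)$. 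After choosing, once and for all, an $E^1$-lift of the equivalence $\msup(w)$, I will extract from $\sigma_i$ an $E^1$-homotopy $h_i \from \simp{1} \times E^1 \to \mEx(\mc{C}, W)$ from $\alpha_i$ to $\msup(w'_i f_i)$ whose restrictions to $\{0\} \times E^1$ and $\{1\} \times E^1$ are, respectively, the constant equivalence at $x$ and the chosen lift of $\msup(w)$. Since these side data are independent of $i$, the homotopies $h_1$ and $h_2$ agree on $\bd \simp{1} \times E^1$ and glue to a map $(\simp{1} \push_{\bd \simp{1}} \simp{1}) \times E^1 \to \mEx(\mc{C}, W)$; its restriction at $1$ is the coequalizer diagram with components $\msup(w'_1 f_1), \msup(w'_2 f_2)$, which visibly factors through $\msup$.

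I expect the main obstacle to be the coherent construction of $h_i$ from $\sigma_i$ — that is, turning a 2-simplex with one edge an equivalence into a square-shaped diagram with prescribed, uniform side data. This should follow from a standard quasicategorical manipulation (viewing $\sigma_i$ as a morphism in the slice $x \slice \mEx(\mc{C}, W)$ which is an equivalence precisely because its underlying 1-simplex $\msup(w)$ is), but it must be executed with the single chosen $E^1$-lift of $\msup(w)$ so that the gluing of $h_1$ and $h_2$ along $\bd \simp{1} \times E^1$ is strict.
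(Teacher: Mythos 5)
Your strategy can be made to work, but it takes a genuinely different and heavier route than the paper, and it leaves its hardest step unproved. The paper completes the span $(w_1, w_2)$ using the lift against $\LJ{2}{2} \ito \LI{2}{2}$: since \emph{both} legs of the span are marked, CLF returns a square in which all of the new morphisms $v_0, v_1, v_2$ lie in $W$. That is exactly what lets the whole argument happen upstairs in $(\mc{C}, W)$: one writes down a single marked homotopy $(\mSd\simp{1} \push_{\{0,1\}} \mSd\simp{1}) \times \maxm{(\simp{1})} \to (\mc{C}, W)$ from the adjoint of the given diagram to $[v_1f_1, v_2f_2] \circ \msub$ (the vertical edges are $\id$, $v_i$, $v_0$, which must be marked because of the product marking — this is where full markedness is used), and then transports it to an $E^1$-homotopy in $\mEx(\mc{C}, W)$. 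Because the homotopy is built on the pushout domain all at once, no gluing of separately constructed homotopies is ever needed. You instead lift only against $\LJ{2}{1} \ito \LI{2}{1}$, so $w'_2$ and $w$ need not lie in $W$; this forces you to pass immediately into the quasicategory $\mEx(\mc{C}, W)$ (your argument that $\msup(w)$ is an equivalence is fine, using \cref{min-ascends} and \cref{mEx-qcat}), and it is this choice that creates all the subsequent difficulty.

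Two steps then need more than you give. First, $\sigma_i$ is a map $\mSd\simp{2} \to (\mc{C}, W)$, so it must be specified on the nondegenerate $2$-simplices of $\msd[2]$, not only on vertices and edges; the phrase ``well-definedness reduces to $w = w'_i w_i$'' is $1$-categorical and has no content in a quasicategory. The needed data are available — the chain $\{1\} \subset \{0,1\} \subset [2]$ is filled by the $2$-simplex supplied by your $\LI{2}{1}$-lift, the chain $\{0\} \subset \{0,1\} \subset [2]$ by a chosen composition $2$-simplex defining $w'_i f_i$, and the remaining chains by degeneracies — but this has to be said. Second, and more seriously, producing $h_i$ from $\sigma_i$ with \emph{strictly} prescribed restrictions on $\{0\} \times E^1$ (constant at $x$) and $\{1\} \times E^1$ (one fixed lift of $\msup(w)$, the same for $i = 1, 2$) is precisely the crux, and you only gesture at it. The naive argument — regard the square built from $\sigma_i$ and a degeneracy as an edge of $\mEx(\mc{C}, W)^{\simp{1}}$, note it is an equivalence since its components are, and extend along $\simp{1} \ito E^1$ — yields \emph{some} extension, not one whose boundary agrees with the fixed data, so $h_1$ and $h_2$ need not glue. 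You would need a relative lifting lemma, e.g.\ that the pushout-product of $\minm{(\bd\simp{1})} \ito \minm{(\simp{1})}$ with $\maxm{(\simp{1})} \ito \maxm{(E^1)}$ is a trivial cofibration of marked simplicial sets, hence lifts against $\natm{\mEx(\mc{C}, W)}$. This is true, but it is an extra lemma your write-up must supply, and it is exactly the work that the paper's fully marked completion of the span avoids.
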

\begin{proof}
    We prove (1), as (2) is formally dual.

    Fix a co-equaliser diagram $F$, which we view as a map $\mSd \simp{1} \push_{\{ 0, 1 \}} \mSd \simp{1} \to (\mc{C}, W)$
    as in the diagram:
    \[ \begin{tikzcd}[row sep = 1em]
        {} & y_1 & {} \\
        x \ar[ur, "f_1"] \ar[rd, "f_2"'] & {} & y \ar[ul, "w_1"', "\sim"] \ar[dl, "w_2", "\sim"'] \\
        {} & y_2 & {}
    \end{tikzcd} \]
    As $(\mc{C}, W)$ satisfies CLF, the span $(w_1, w_2)$ may be completed to a commutative square as in the diagram:
    \[ \begin{tikzcd}[sep = large]
        y \ar[r, "w_1", "\sim"'] \ar[d, "w_2"', "\sim"] \ar[rd, "v_0", dotted, "\sim"'] & y_1 \ar[d, "v_1", dotted, "\sim"'] \\
        y_2 \ar[r, "v_2", dotted, "\sim"'] & y_3
    \end{tikzcd} \]
    Let $v_1 f_1 \from x \to y_3$ be a composite of $f_1$ and $v_1$ and let $v_2 f_2 \from x \to y_3$ be a composite of $f_2$ and $v_2$.
    The diagram
    \[ \begin{tikzcd}[sep = large]
        x \ar[r, "f_1"] \ar[d, equal] \ar[rd, "v_1 f_1" description] & y_1 \ar[d, "v_1"] & y \ar[l, "w_1"'] \ar[d, "v_0"] \ar[ld, "v_0"' description] \\
        x \ar[r, "v_1 f_1"] & y_3 & y_3 \ar[l, equal]
    \end{tikzcd} \qquad \begin{tikzcd}[sep = large]
        x \ar[r, "f_2"] \ar[d, equal] \ar[rd, "v_2 f_2" description] & y_2 \ar[d, "v_2"] & y \ar[l, "w_2"'] \ar[d, "v_0"] \ar[ld, "v_0"' description] \\
        x \ar[r, "v_2 f_2"] & y_3 & y_3 \ar[l, equal]
    \end{tikzcd} \]
    depicts a marked homotopy $(\mSd \simp{1} \push_{\{ 0, 1 \}} \mSd \simp{1}) \times \maxm{(\simp{1})} \to (\mc{C}, W)$ from $F$ to $[v_1 f_1, v_2 f_2] \circ \msub$ (note the outer vertical 1-simplices in each component of the diagram are identified). 
    This induces an $E^1$-homotopy in $\mEx(\mc{C}, W)$ from $F$ to $\msup \circ [v_1 f_1, v_2 f_2]$.
\end{proof}

With this, we may state a complete quasicategorical analogue of \cref{classical-fractions-colims}, giving a criterion for the existence of colimits in the localization.
\begin{theorem} \label{localization-has-limits}
    Let $(\mc{C}, W)$ be a marked quasicategory where $W$ is closed under 2-out-of-3.
    \begin{enumerate}
        \item If $(\mc{C}, W)$ satisfies CLF then:
        \begin{enumerate}
            \item for any finite colimit cone $\lambda \from K \join \simp{0} \to \mc{C}$, the composite 
            \[ \msup \circ \lambda \from K \join \simp{0} \to \mEx(\mc{C}, W) \] 
            is a colimit cone; and
            \item if $\mc{C}$ admits all finite colimits then $\mEx(\mc{C}, W)$ admits all finite colimits.
        \end{enumerate} 
        \item If $(\mc{C}, W)$ satisfies CRF then:
        \begin{enumerate}
            \item for any finite limit cone $\lambda \from K \join \simp{0} \to \mc{C}$, the composite 
            \[ \minsup \circ \lambda \from K \join \simp{0} \to \mExop(\mc{C}, W) \] 
            is a limit cone; and
            \item if $\mc{C}$ admits all finite limits then $\mExop(\mc{C}, W)$ admits all finite limits.
        \end{enumerate}
    \end{enumerate}
\end{theorem}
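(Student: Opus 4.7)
The plan is to follow the strategy of Gabriel--Zisman's proof of the classical analogue (\cref{classical-fractions-colims}), upgraded to quasicategories via the mapping space formula \cref{mapping-space-is-colim} and the filteredness established in \cref{lpathloop-filtered}. I treat part (1); part (2) is formally dual.

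For (1)(a), fix a finite colimit cone $\lambda \from K \join \simp{0} \to \mc{C}$ with apex $c$ and an object $y$ of $\mEx(\mc{C}, W)$. It suffices to show that the canonical comparison
\[ \map_L(\mEx(\mc{C}, W), c, y) \longrightarrow \lim_{k \in K} \map_L\bigl(\mEx(\mc{C}, W), \msup \lambda(k), y\bigr) \]
is an equivalence of Kan complexes. By \cref{mapping-space-is-colim}, each side is naturally equivalent to a colimit indexed by $U(\lpathloop{y})$ of mapping spaces in $\mc{C}$. By \cref{lpathloop-filtered}, this indexing quasicategory is filtered, so we may interchange the finite limit over $K$ with the filtered colimit (finite limits commute with filtered colimits in $\Sp$). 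Combined with the hypothesis that $\lambda$ is a colimit cone in $\mc{C}$, this yields a chain of equivalences
\[ \map_L(\mEx(\mc{C}, W), c, y) \simeq \colim_{U(\lpathloop{y})} \mc{C}(c, \Pi_y \uvar) \simeq \colim_{U(\lpathloop{y})} \lim_K \mc{C}(\lambda(\uvar), \Pi_y \uvar) \simeq \lim_K \colim_{U(\lpathloop{y})} \mc{C}(\lambda(\uvar), \Pi_y \uvar) \simeq \lim_K \map_L(\mEx(\mc{C}, W), \msup \lambda(\uvar), y). \]

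For (1)(b), given a finite diagram $D \from K \to \mEx(\mc{C}, W)$, the strategy is to rectify $D$ up to $E^1$-homotopy to the image $\msup \circ D'$ of some $D' \from K \to \mc{C}$, and then apply (a). Granted such a rectification, the finite colimit of $D'$ exists in $\mc{C}$ by hypothesis with colimit cone $\lambda$, and (a) then shows that $\msup \circ \lambda$ is a colimit cone for $\msup \circ D' \simeq D$ in $\mEx(\mc{C}, W)$. The rectification generalises \cref{mEx-coeq-equiv}: one inducts on the non-degenerate simplices of the finite complex $K$, at each stage using CLF to complete the relevant cospans so that the newly chosen target is compatible with the previously rectified boundary, and assembles the choices into a single marked homotopy $K \times \maxm{(\simp{1})} \to \mEx(\mc{C}, W)$ in the manner of \cref{mEx-coeq-equiv}.

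The main obstacle will be this rectification step: the coequaliser case treated in \cref{mEx-coeq-equiv} requires only one choice of commuting square, whereas general finite $K$ demands a coherent family of compatible choices. A cleaner alternative would be to decompose finite colimits into initial objects, binary coproducts and pushouts, and handle each atomic case separately: the initial-object case reduces to showing $\msup(c)$ is initial whenever $c$ is initial in $\mc{C}$, which follows from weak contractibility of $U(\lpathloop{y})$ (since filtered $\infty$-categories are weakly contractible), while the coproduct and pushout cases are direct generalisations of \cref{mEx-coeq-equiv} using the cospans supplied by CLF.
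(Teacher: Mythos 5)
Your part (1)(a) is essentially the paper's argument: both sides are identified via \cref{mapping-space-is-colim} with colimits over $U(\lpathloop{y})$ of mapping spaces in $\mc{C}$, one uses \cref{lpathloop-filtered} together with the commutation of filtered colimits with finite limits, and the fact that the Yoneda embedding carries the colimit cone $\lambda$ to a limit cone. The only point to make explicit is that your chain of equivalences composes, up to natural $E^1$-homotopy, to the comparison map induced by the specific cone $\msup \circ \lambda$; the paper handles this by showing that comparison map is naturally $E^1$-homotopic to the colimit--limit interchange map for the bifunctor $G \from K^\op \times U(\lpathloop{y}) \to \Sp$.

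The gap is in (1)(b). Your primary plan --- rectifying an arbitrary finite diagram $K \to \mEx(\mc{C}, W)$ by induction on nondegenerate simplices --- is precisely the step you flag and do not carry out, and it is both harder than needed and not something CLF hands you: the coherence data grows with the dimension of $K$, and the square-completion property gives no mechanism for making the choices compatible across faces. The paper never rectifies general diagrams; it reduces (b) to the two generating shapes for finite colimits, namely finite coproducts and coequalizers. A finite coproduct diagram is discrete, and since $\msup$ is a bijection on objects it already factors through $\msup$, so (a) applies with no rectification at all (the empty case handles the initial object, making your weak-contractibility detour unnecessary); a coequalizer diagram is rectified up to $E^1$-homotopy by the already-proven \cref{mEx-coeq-equiv}, which is exactly where CLF is needed, because the two parallel edges share their target and the two ``denominators'' must be equalized by a common square-completion. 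Your ``cleaner alternative'' (initial object, coproducts, pushouts) could be made to work, but not by the route you describe: rectifying a span $b \leftarrow a \rightarrow c$ in $\mEx(\mc{C}, W)$ is not a direct generalisation of \cref{mEx-coeq-equiv} via CLF, since the two legs share only their source and no cospan-completion is involved; instead one must absorb each leg's marked denominator into its own outer vertex (replacing $b$, $c$ by the targets of the respective fractions) and then actually construct the natural equivalence of span diagrams, e.g.\ by exhibiting explicit simplices $\msd[2] \to (\mc{C}, W)$ or a marked homotopy in the style of the proof of \cref{mEx-coeq-equiv}. That rectification lemma is asserted rather than proved in your proposal, so as written (b) is not established; with either the paper's coproduct-plus-coequalizer reduction or a proved span-rectification lemma, the rest of your outline goes through.
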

\begin{proof}
    We prove (1), as (2) is formally dual.

    For (a), let $F$ denote the restriction $\restr{\lambda}{K} \from K \to \mc{C}$ of $\lambda$ to $K$ and let $\colim F$ denote the value of $\lambda$ at the cone point.
    It suffices to show $\colim F$, as an object in $\mEx(\mc{C}, W)$, represents the functor of cones under $\msup \circ F$.
    That is, for every object $x \in \mEx(\mc{C}, W)$, we show that the canonical map
    \[ \varphi \from \mEx(\mc{C}, W)(\colim F, x) \to \lim \left( K^\op \xrightarrow{\msup \circ F} \mEx(\mc{C}, W)^\op \xrightarrow{\mEx(\mc{C}, W)(\uvar, x)} \Sp \right) \]
    is an equivalence of Kan complexes.

    Let $G \from K^\op \times U(\lpathloop{x}) \to \Sp$ denote the composite map
    \[ K^\op \times U(\lpathloop{x}) \xrightarrow{F^\op \times \Pi_x} \mc{C}^\op \times \mc{C} \xrightarrow{\mc{C}(-,-)} \Sp. \]
    We show there is a natural $E^1$-homotopy from $\varphi$ to the canonical map
    \[ \colim_{U(\lpathloop{x})} (\lim_{K^\op} G) \to \lim_{K^\op} ( \colim_{U(\lpathloop{x})} G ). \]
    As $U(\lpathloop{x})$ is filtered (\cref{lpathloop-filtered}), the result would then follow since filtered colimits commute with finite limits (cf.~(\cite[Prop.~5.3.3.3]{lurie:htt})).
    
    Comparing the codomains of $\varphi$ and the canonical map, the diagram $\colim_{U(\lpathloop{x})} G \from K^\op \to \Sp$ is naturally $E^1$-homotopic to the diagram
    \[ K^\op \xrightarrow{\msup \circ F} \mEx(\mc{C}, W) \xrightarrow{\mEx(\mc{C}, W)(-, x)} \Sp \]
    by \cref{mapping-space-is-colim}.
    Comparing the domains, the diagram $\lim_{K^\op} G \from U(\lpathloop{x}) \to \Sp$ is naturally $E^1$-homotopic to the diagram 
    \[ U(\lpathloop{x}) \xrightarrow{\Pi_x} \mc{C} \xrightarrow{\mc{C}(\colim F, -)} \Sp \] 
    as the Yoneda embedding $Y \from \mc{C}^\op \to \fcat{\mc{C}}{\Sp}$ takes $\colim F = \lim (F^\op)$ to $\lim(Y \circ F^\op)$ (cf.~\cite[Prop.~5.1.3.2]{lurie:htt}).
    Thus, the colimit of $\lim_{K^\op} G$ is naturally equivalent to the mapping space $\mEx(\mc{C}, W)(\colim F, x)$ by \cref{mapping-space-is-colim}.

    For (b), this follows from (a) by \cref{mEx-coeq-equiv}.
\end{proof}

The formula presented in \cref{mapping-space-is-colim} in particular implies that if a 1-category satisfies CLF then its localization as an $\infty$-category is already a 1-category. 
\begin{corollary} \label{1-cat-clf-localization-is-1-cat}
    Let $(\mc{C}, W)$ be a marked 1-category.
    \begin{enumerate}
        \item If $(\mc{C}, W)$ satisfies CLF then the map
        \[ \mEx(\mc{C}, W) \to \Ho \mEx(\mc{C}, W) \]
        is a categorical equivalence.
        \item If $(\mc{C}, W)$ satisfies CRF then the map
        \[ \mExop(\mc{C}, W) \to \Ho \mExop(\mc{C}, W) \]
        is a categorical equivalence.
    \end{enumerate} 
\end{corollary}
\begin{proof}
    These maps are equivalences on the homotopy category.
    By \cref{mapping-space-is-colim} and \cref{lpathloop-filtered}, mapping spaces in the localization are filtered colimits of discrete $\infty$-groupoids, hence discrete.
    This shows the map is an equivalence (in fact, an isomorphism) on mapping spaces.
\end{proof}

We also use \cref{localization-has-limits} to deduce a characterization of CLF in terms of properties of the localization.
Recall a collection of weak equivalences $W$ is \emph{saturated} if every morphism that becomes an equivalence under $\mc{C} \to \mc{C}[W^{-1}]$ is already in $W$.
\begin{corollary} \label{clf-iff-fin-lim-preserve}
    Let $(\mc{C}, W)$ be a marked quasicategory which is finitely cocomplete. 
    Suppose $W$ is saturated.
    Then,
    \begin{enumerate}
        \item the pair $(\mc{C}, W)$ satisfies CLF if and only if $\mc{C}[W^{-1}]$ admits finite colimits and the localization functor $\mc{C} \to \mc{C}[W^{-1}]$ preserves them.
        \item the pair $(\mc{C}, W)$ satisfies CRF if and only if $\mc{C}[W^{-1}]$ admits finite limits and the localization functor $\mc{C} \to \mc{C}[W^{-1}]$ preserves them.
    \end{enumerate}
\end{corollary}
\begin{proof}
    Note that $W$ is closed under 2-out-of-3 since it saturated.
    For both (1) and (2), the forward direction is \cref{localization-has-limits}, and the reverse direction is \cref{limits-create-crf}.
\end{proof}

We conclude the paper by proving a generalization of the fact that if an abelian category satisfies both CLF and CRF, then its localization is again abelian \cite{gabriel-zisman}.
The generalization to the context of quasicategory theory is straightforward; indeed, the only change necessary is to replace abelian categories with stable quasicategories, which we now define.

\begin{definition}
    An $\infty$-category $\mc{C}$ is \emph{stable} if
    \begin{enumerate}
        \item there is an object $0 \in \mc{C}$ which is both initial and terminal;
        \item $\mc{C}$ has all finite limits and colimits;
        \item a square of the form
        \[ \begin{tikzcd}
            x \ar[r] \ar[d] & y \ar[d] \\
            0 \ar[r] & z
        \end{tikzcd} \]
        in $\mc{C}$ is a pullback if and only if it is a pushout.
    \end{enumerate}
\end{definition}
\begin{theorem}[cf.~{\cite[p.~18--19]{gabriel-zisman}}] \label{stable_clf_crf}
    Let $(\mc{C}, W)$ be a marked quasicategory where $\mc{C}$ is stable and $W$ is closed under 2-out-of-3.
    If $(\mc{C}, W)$ satisfies CLF and CRF then the localization $\mEx(\mc{C}, W)$ is stable.
\end{theorem}
\begin{proof}
    Axioms (1) and (2) follow from \cref{localization-has-limits}.
    It remains to show axiom 3.

    Suppose
    \[ \begin{tikzcd}[column sep = 1.9em]
        x \ar[r] & y' & y \ar[l, "\sim"'] \ar[r] & z' & z \ar[l, "\sim"']
    \end{tikzcd} \]
    is a fiber sequence in $\mEx(\mc{C}, W)$.
    Using CLF, we construct a diagram
    \[ \begin{tikzcd}
        y \ar[r] \ar[d, "\sim"'] & z' \ar[d, "\sim", dotted] \\
        y' \ar[r, dotted] & z''
    \end{tikzcd} \]
    in $\mc{C}$.
    The diagram
    \[ \begin{tikzcd}[column sep = 1.9em]
        x \ar[r] \ar[d, equal] & y' \ar[d, equal] & y \ar[l, "\sim"'] \ar[r] \ar[d, "\sim"] & z' \ar[d, "\sim"] & z \ar[l, "\sim"'] \ar[d, "\sim"] \\
        x \ar[r] & y' & y' \ar[r] \ar[l, equal] & z'' & z'' \ar[l, equal]
    \end{tikzcd} \]
    exhibits the bottom row as a fiber sequence in $\mEx(\mc{C}, W)$, since the top sequence is a fiber sequence and the vertical maps are inverted in $\mEx(\mc{C}, W)$ (note the map $z \to z''$ is given by composition of weak equivalences).
    The bottom row is the image of the diagram
    \[ x \to y' \to z'' \]
    under $\msup \from \mc{C} \to \mEx(\mc{C}, W)$.
    This map preserves limits since it is a model for the localization and $(\mc{C}, W)$ satisfies CRF (\cref{localization-has-limits}).
    Denoting the fiber of $y' \to z''$ in $\mc{C}$ by $k \in \mc{C}$, there are equivalences of sequences
    \[ \begin{tikzcd}
        x \ar[r] \ar[d, phantom, "\simeq" marking] & y' \ar[d, phantom, "\simeq" marking] & y  \ar[d, phantom, "\simeq" marking] \ar[l] \ar[r] & z'  \ar[d, phantom, "\simeq" marking] & z \ar[d, phantom, "\simeq" marking] \ar[l] \\
        x \ar[d, phantom, "\simeq" marking] \ar[r] & y' \ar[r, equal] \ar[d, phantom, "\simeq" marking] & y' \ar[r] \ar[d, phantom, "\simeq" marking] & z'' \ar[r, equal] \ar[d, phantom, "\simeq" marking] & z'' \ar[d, phantom, "\simeq" marking] \\
        k \ar[r] & y' \ar[r, equal] & y' \ar[r] & z'' \ar[r, equal] & z'' 
    \end{tikzcd} \]
    in $\mEx(\mc{C}, W)$.
    Since $k \to y' \to z''$ is a fiber sequence in $\mc{C}$, it is also a cofiber sequence.
    As $(\mc{C}, W)$ satisfies CLF, the map $\msup$ also preserves colimits, hence the bottom row is a cofiber sequence.
    Thus, the top row is a cofiber sequence.
    
    This shows every fiber sequence in $\mEx(\mc{C}, W)$ is a cofiber sequence.
    The reverse implication follows from the forward implication instantiated at $\mExop(\mc{C}^\op, W^\op)$.
\end{proof}





\bibliographystyle{amsalphaurlmod}
\bibliography{all-refs.bib}

\end{document}